\numberwithin{equation}{section}
\theoremstyle{plain}
\newtheorem{theorem}{Theorem}[section]
\newtheorem{remark}[theorem]{Remark}
\newtheorem{proposition}[theorem]{Proposition}
\newtheorem{corollary}[theorem]{Corollary}
\newtheorem{lemma}[theorem]{Lemma}
\def\al{\alpha}
\def\ve{\varepsilon}
\def\ph{\varphi} 
\def\vp{\varphi}
\newcommand{\R}{\mathbb{R}}
\newcommand{\Z}{\mathbb{Z}}
\newcommand{\I}{\infty}
\newcommand{\pd}{\partial}
\newcommand{\lec}{{\ \lesssim \ }}
\newcommand{\gec}{{\ \gtrsim \ }}
\newcommand{\boxend}{\mbox{}\hfill $\Box$}
\renewcommand{\[}{\begin{equation*}}
\renewcommand{\]}{\end{equation*}}
\begin{document}

\title{On uniqueness for the harmonic map heat flow in supercritical dimensions}

\author{Pierre Germain}
\address[P. Germain]{Courant Institute of Mathematical Sciences\\ New York University\\ 251 Mercer Street\\ New York, N.Y. 10012-1185\\ USA}
\email{pgermain@cims.nyu.edu}

\author{Tej-eddine Ghoul} 
\address[T. Ghoul]{Departement of Mathematics, New York University Abu Dhabi, Computational Research building A2
NYUAD, Saadiyat Island
PO Box 129188, Abu Dhabi, United Arab Emirates.}
\email{teg6@nyu.edu}

\author{Hideyuki Miura}
\address[H. Miura]{Department of Mathematical and Computing Sciences\\
 Tokyo Institute of Technology\\
2-12-1\\ O-okayama\\ Meguro, Tokyo 152-8552\\ Japan}
\email{miura@is.titech.ac.jp }

\begin{abstract}
We examine the question of uniqueness for the equivariant reduction of the
harmonic map heat flow in the energy supercritical dimension $d \ge 3$. 
It is shown that, generically, singular data can give rise to two
distinct solutions which are both stable, and satisfy the local energy inequality. 
We also discuss how uniqueness can be retrieved.
\end{abstract}

\subjclass[2000]{35K58, 53C44, 35AC2}
\keywords{harmonic map heat flow, self-similar solutions, uniqueness}
\thanks{\noindent P. G. is partially supported by NSF grant DMS-1501019, a start-up grant from the Courant Institute, and a Sloan fellowship.
H.M. is partially supported by the JSPS grant  25707005.}

\maketitle

\tableofcontents

\section{Introduction}

\subsection{The equation, its scaling and energy}

The harmonic map heat flow is the following evolution equation
$$
\left\{
\begin{array}{l}
\partial_t u - \Delta u = A(u)(\nabla u, \nabla u) \\
u(t=0) = u_0.
\end{array}
\right.
$$
where $u_0 = u_0(x)$ is a map from $\mathbb{R}^d$ to a Riemannian manifold $M \subset \mathbb{R}^k$ with second fundamental form $A$; and the solution $u = u(t,x)$ is a map from $[0,\infty) \times \mathbb{R}^d$ to $M$.

\medskip

The set of solutions is invariant by the scaling transform $u(t,x) \mapsto u(\lambda^2 t,\lambda x)$ for $\lambda>0$. 
This makes scale-invariant spaces such as $L^\infty$ of particular relevance for the data - we will come back to them.

\medskip

The harmonic map heat flow is the gradient flow for the Dirichlet energy $\int |\nabla u|^2$, and as such, it enjoys (formally) an energy inequality. 
It can be localized to yield (formally) the following local energy inequality, 
for a solution of~\eqref{1} on $[0,T] \times \mathbb{R}^d$, with data $u_0$, locally in $H^1_{t,x}$: 
for any $\tau$, $(\psi^\ell)_{\ell = 1\dots d}$ in $\mathcal{C}^\infty_0 ([0,\infty) \times \mathbb{R}^d)$,
\begin{equation}
\label{EnIn}
\begin{split}
& \int_{\mathbb{R}^d} \frac{1}{2} \tau(t=0) |\nabla u_0|^2 \,dx \\
& \geq \int_0^\infty \int_{\mathbb{R}^d} \left[ \tau |\partial_t u|^2 - \frac{1}{2} \left( \partial_t \tau + \partial_\ell \psi^\ell \right) |\nabla u|^2 + \partial_i \tau\partial_i u^k  \partial_t u^k + \psi^\ell \partial_t u^k  \partial_\ell u^k + \partial_i \psi^\ell \partial_i u^k  \partial_\ell u^k\right] \,dx\,dt.
\end{split}
\end{equation}

Comparing the scaling of the energy with that of the equation gives rise to the notion of criticality.
In the following, we will focus on the supercritical case $\mathbf{d\geq 3}$.

\subsection{The solutions of the harmonic map heat flow and their uniqueness}

The harmonic map heat flow was introduced by Eells and Sampson~\cite{ES}, who also showed that the solution is globally smooth for smooth data if the sectional curvature of $M$ is negative. 
Without this assumption, global smooth solutions are also guaranteed if the image of $u_0$ is contained in a ball of radius $\frac{\pi}{2 \sqrt \kappa}$, 
where $\kappa$ is an upper bound for the sectional curvature of $M$ (see Jost~\cite{Jost}, Lin-Wang~\cite{LW}). Notice that in the case where $M$ is a sphere, 
this corresponds to $u_0$ being valued in a hemisphere, a condition which will also play a role in the present article. 
It was then discovered that, without these assumptions, singularities can form out of smooth solutions: see Coron-Ghidaglia~\cite{CG} and Chen-Ding~\cite{CD2}.

Weak solutions can be built up for less regular data, simply of finite Dirichlet energy. This was established by Chen~\cite{Chen} and Rubinstein-Sternberg-Keller~\cite{RSK}
for the case where the target is a sphere, and extended by Chen-Struwe~\cite{CS} 
to general manifolds, with the help of the monotonicity formula discovered by Struwe~\cite{Struwe}.

Under which conditions are these weak solutions unique? Which is the largest space for the data yielding unique solutions?
Successive improvements gave $W^{1,d}$ (Lin-Wang~\cite{LW2}), 
and, under a smallness condition on the data and the solution, $L^\infty$ (Koch-Lamm~\cite{KL}), 
and $BMO$ (Wang~\cite{Wang}).

These last results are optimal: indeed, for data which are large in $L^\infty$, but not continuous, 
uniqueness is lost, and examples of non-unique solutions can be constructed, 
see Coron~\cite{Coron}, and Bethuel-Coron-Ghidaglia-Soyeur~\cite{BCGS}. 
For these examples however, uniqueness can be salvaged if one requires that solutions satisfy the monotonicity formula 
(itself essentially a consequence of the local energy inequality, see Moser~\cite{Moser}). 
This led Struwe~\cite{Struwe2} to ask whether this criterion could indeed imply uniqueness. 
Germain and Rupflin~\cite{GR} showed that this could not be the case, at least for very specific data, having in particular infinite energy.

These developments lead to the following questions;  {\bf can one describe the loss of uniqueness (or well-posedness) for generic large initial data in $L^\infty$? Can one find a criterion to restore uniqueness in a 
meaningful way?}

In the present paper, we try and answer this question in the framework of equivariant solutions, which will be presented in the following subsection.

\subsection{The equivariant reduction and self-similar solutions}

For the rest of this paper, the target manifold $M$ is the $d$-sphere $\mathbb{S}^d \subset \mathbb{R}^{d+1}$. The harmonic map heat flow equation becomes
\begin{align}
\begin{cases}
\partial_t u -\Delta u=|\nabla u|^2 u
\\
u(t=0)=u_0.
\end{cases}
\label{0}
\end{align} 
The corotational ansatz which we adopt requires that
\begin{equation}
\label{corotationalansatz}
u(t,x) = \left( 
\begin{array}{l} \cos(h(t,|x|)) \\ \sin(h(t,|x|)) \frac{x}{|x|} \end{array}
\right)
\end{equation}
for a radial function $h = h(t,r)$. Under this ansatz, the problem reduces to a scalar, radially symmetric PDE:
\begin{align}
\begin{cases}
\displaystyle{h_t -h_{rr}-\frac{d-1}r h_r+\frac{d-1}{2r^2} \sin (2h)=0} \\
h(t=0)=h_0.
\end{cases}
\label{1}
\end{align}
Notice right away that $h\equiv 0$, $h \equiv \frac{\pi}{2}$, and $h \equiv \pi$ are stationary solutions of this equation; these particular solutions will be important in the following, and correspond geometrically to the north pole, the equator, and the south pole of the sphere.

For $d \geq 3$ (the supercritical case, which will occupy us), self-similar solutions play a key role in the dynamics. 
In particular, for $3\le d \le 6$, it was observed numerically by Biernat and Bizon~\cite{BB} that typical solutions develop singularities at the origin in space, which are then smoothly "resolved" after the singular time. 
Both of these phenomena are asymptotically self-similar: when a singularity is formed at time $t_0$, 
it can be described asymptotically by a "shrinker solution" $\Psi \left( \frac{r}{\sqrt{t_0 - t}} \right)$, 
while the resolution of the singularity is given by an "expander solution" $\psi \left( \frac{r}{\sqrt{t-t_0}} \right)$.

Shrinkers and expanders were 
studied analytically by Fan~\cite{fan} and Germain-Rupflin~\cite{GR} respectively. 
An expander solution $h(t,r) = \psi \left( \frac{r}{\sqrt{t}} \right)$ corresponds to very particular Cauchy data, namely  $h_0 \equiv \psi(\infty)$, the limit of $\psi$ at $\infty$. 
Germain-Rupflin observed that different expanders share the same limit $\psi(\infty)$, yielding an example of non-uniqueness for the harmonic map heat flow even if the local energy inequality is required.

One of the goals of this article is showing the nonlinear stability of these self-similar expanders, 
and thus the genericity of the non-uniqueness result. 

\bigskip \bigskip

\subsection{Obtained results}

\subsubsection{Expanders}

The starting point of our investigations is expander solutions 
of~\eqref{1}, hence of the form 
$h(t,r) = \psi \left( \frac{r}{\sqrt t} \right)$ with constant data $h(t=0) = \psi(\infty) \in [0,\pi]$. 
A direct computation shows that they solve the ODE
\begin{equation}
\label{ODEpsi}
\psi ''+\left(\frac{d-1}\rho+\frac{\rho}2\right)\psi'-\frac{d-1}{2\rho^2} \sin (2\psi)=0.
\end{equation}
For expanders to be smooth and non-constant, we need to impose boundary data which are either of the form
\begin{equation}
\label{northpole}
\begin{cases}
\displaystyle \psi(0)=0 \\
\displaystyle \psi'(0)=\alpha,
\end{cases}
\end{equation}
(for $\alpha \in \mathbb{R}$) or of the form
\begin{equation}
\label{southpole}
\begin{cases}
\displaystyle \psi(0)=\pi \\
\displaystyle \psi'(0)=\alpha
\end{cases}
\end{equation}
(for $\alpha \in \mathbb{R}$). The former case, $\psi(0)=0$, corresponds to $\psi(0)$ on the north pole of the sphere, while the latter case $\psi(0)=\pi$, corresponds to $\psi(0)$ on the south pole of the sphere. We therefore refer to them as north pole, or south pole, boundary conditions, respectively.
Note that the equation \eqref{ODEpsi} is obviously invariant under 
the transform: $\psi$ $\mapsto$ $\pi-\psi$.

The following theorem combines the existence result 
from Germain-Rupflin~\cite{GR} with 
new contribution on uniqueness part in (i) and (iii).
\begin{theorem}
\label{thm:profile}
Let $\ell \in \left[ 0,\frac{\pi}{2} \right]$ (the situation being obviously symmetrical if $\ell \in \left[ \frac{\pi}{2} , \pi \right]$).
\begin{itemize}
\item[(i)] If $3 \leq d \leq 6$, there exists a unique profile $\psi = \psi_N[\ell]$ satisfying~\eqref{ODEpsi}, with north pole boundary conditions~\eqref{northpole}, $\psi(\infty) = \ell$, and $\psi([0,\infty)) \subset \left[0,\frac{\pi}{2}\right]$.
\item[(ii)] If $3 \leq d \leq 6$, there exists a constant $\delta^* > 0$ such that: for $\ell \in {[\frac{\pi}{2}-\delta^*,\frac{\pi}{2})}$,
there exists a profile $\psi = \psi_S[\ell]$ satisfying~\eqref{ODEpsi}, with south pole boundary conditions~\eqref{southpole}, $\psi(\infty) = \ell$, and which crosses exactly once $\frac{\pi}{2}$.
\item[(iii)] If $d \geq 7$ and for $\ell \in [0,\pi/2)$, there exists exactly one profile $\psi = \psi[\ell]$ satisfying~\eqref{ODEpsi}, and $\psi(\infty) = \ell$. This profile has north pole boundary conditions~\eqref{northpole}.
\end{itemize}
\end{theorem}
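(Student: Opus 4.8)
The plan is to treat~\eqref{ODEpsi} by a shooting argument in the slope $\alpha=\psi'(0)$, the existence (hence surjectivity of the relevant shooting map) being already provided by Germain--Rupflin~\cite{GR}, so that only injectivity is new. Take north--pole data~\eqref{northpole} and let $\psi_\alpha$ be the solution; near $\rho=0$ it has a convergent expansion $\psi_\alpha(\rho)=\alpha\rho+O(\rho^3)$ (the companion behaviour $\rho^{1-d}$ being excluded by smoothness) and depends real--analytically on $\alpha$. The soft part is to record: thanks to the damping term $\frac{\rho}{2}\psi'$, every $\psi_\alpha$ is global on $[0,\infty)$ and $\psi_\alpha(\rho)$, $\psi_\alpha'(\rho)$ converge as $\rho\to\infty$, to $\ell(\alpha):=\psi_\alpha(\infty)$ and to $0$ respectively, with $\ell(\cdot)$ continuous; and, evaluating~\eqref{ODEpsi} at a critical point of $\psi_\alpha$ whose value lies in $(0,\frac{\pi}{2})$ gives $\psi_\alpha''>0$ there, so that as long as $\psi_\alpha$ has not reached $\frac{\pi}{2}$ it is strictly increasing. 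In particular a profile with range in $[0,\frac{\pi}{2}]$ is monotone, and (i) together with the uniqueness half of (iii) both reduce to: $\ell$ is strictly monotone on the set of $\alpha$ for which $\psi_\alpha$ has range in $[0,\frac{\pi}{2}]$.

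To prove monotonicity I would differentiate in $\alpha$: $v:=\partial_\alpha\psi_\alpha$ solves
\begin{equation}
\label{prop-lin}
v''+\left(\frac{d-1}{\rho}+\frac{\rho}{2}\right)v'-\frac{d-1}{\rho^2}\cos(2\psi_\alpha)\,v=0,\qquad v(0)=0,\ v'(0)=1,
\end{equation}
so $\ell'(\alpha)=v(\infty)$ and it suffices to show $v(\infty)\neq 0$ (then local injectivity, upgraded along the branch by continuity, gives strict monotonicity). The key structural fact is that $\psi_\alpha$ itself is a positive supersolution of the operator in~\eqref{prop-lin} as long as its range is in $[0,\frac{\pi}{2}]$: indeed $g(s):=\frac{1}{2}\sin 2s-s\cos 2s$ satisfies $g'(s)=2s\sin 2s\geq 0$ on $[0,\frac{\pi}{2}]$, hence $g\geq 0$ there, and substituting $\psi_\alpha$ into the left side of~\eqref{prop-lin} gives, via~\eqref{ODEpsi}, exactly $\frac{d-1}{\rho^2}g(\psi_\alpha)\geq 0$. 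The ground--state substitution $v=\psi_\alpha\,\xi$ then converts~\eqref{prop-lin} into $(m\xi')'=-(d-1)\,\psi_\alpha\,g(\psi_\alpha)\,\rho^{d-3}e^{\rho^2/4}\,\xi$ with weight $m=\psi_\alpha^2\rho^{d-1}e^{\rho^2/4}$ and initial behaviour $\xi(0^+)=1/\alpha>0$, $(m\xi')(0^+)=0$; one reads off that $\xi$ is decreasing, that $m$ grows super--exponentially because $\ell(\alpha)>0$, and controls the total decrease of $\xi$ to conclude $\xi(\infty)>0$, i.e. $\ell'(\alpha)>0$. This settles (i), and settles the uniqueness in (iii) once we know that for $d\geq 7$ \emph{every} $\psi_\alpha$ has range in $[0,\frac{\pi}{2}]$.

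The remaining input to (iii) — that for $d\geq 7$ no profile crosses $\frac{\pi}{2}$, and hence (by the symmetry $\psi\mapsto\pi-\psi$) no south--pole profile has limit below $\frac{\pi}{2}$ — is where the dimension restriction is used. Rescaling $\rho=\sigma/\alpha$ and letting $\alpha\to\infty$ sends $\psi_\alpha$ to the corotational harmonic--map profile $\Phi$ with $\Phi''+\frac{d-1}{\sigma}\Phi'-\frac{d-1}{2\sigma^2}\sin 2\Phi=0$, $\Phi(0)=0$, $\Phi'(0)=1$. Its linearization about the equator $\Phi\equiv\frac{\pi}{2}$ is Eulerian with indicial polynomial $r^2+(d-2)r+(d-1)$, of discriminant $d^2-8d+8$; this is positive precisely for $d\geq 7$, in which case $\Phi$ approaches $\frac{\pi}{2}$ monotonically from below, whereas for $3\leq d\leq 6$ it spirals around $\frac{\pi}{2}$ and crosses it infinitely often. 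In the case $d\geq 7$ one propagates the non--crossing to all finite $\alpha$, so that $\ell$ becomes an increasing bijection of $(0,\infty)$ onto $(0,\frac{\pi}{2})$ and, with the symmetry, nothing else can have limit in $[0,\frac{\pi}{2})$; in the case $3\leq d\leq 6$ the spiralling produces profiles through $\frac{\pi}{2}$ — reflected, these are the south--pole profiles of (ii), and also the non--unique expanders of Germain--Rupflin — which is exactly why (i) must include the constraint $\psi([0,\infty))\subset[0,\frac{\pi}{2}]$.

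The main obstacle is the step glossed above as ``controlling the total decrease of $\xi$'': a priori $v$ could become negative and return to $0$, i.e. a fold $\ell'(\alpha)=0$, which would yield two range--$[0,\frac{\pi}{2}]$ profiles with the same limit. Closing this requires a quantitative use of the sign of $g$ together with the super--exponential growth of $m$, and a careful matching between the inner regime $\rho\to 0$, where $\rho^{-2}$ dominates and the behaviour is governed by the harmonic--map profile $\Phi$, and the outer regime $\rho\to\infty$, where $\frac{\rho}{2}$ dominates and forces convergence. The same inner/outer matching is what is needed to turn the non--spiralling of $\Phi$, valid in the $\alpha\to\infty$ limit, into the statement that for $d\geq 7$ no genuine profile crosses $\frac{\pi}{2}$. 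This is the technical heart of the uniqueness proof.
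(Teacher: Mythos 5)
Your overall shooting strategy and reduction to strict monotonicity of $\alpha\mapsto\psi_\alpha(\infty)$ match the paper, and your computation that $L\psi_\alpha=\frac{d-1}{\rho^2}g(\psi_\alpha)$ with $g(s)=\tfrac12\sin 2s-s\cos 2s\geq 0$ on $[0,\tfrac\pi2]$ is correct. But this observation goes the \emph{wrong way} for positivity of $v=\partial_\alpha\psi_\alpha$. Written in Sturm--Liouville form $(pv')'+qv=0$ with $p=\rho^{d-1}e^{\rho^2/4}$, the function $\psi_\alpha$ satisfies $(p\psi_\alpha')'+Q\psi_\alpha=0$ with $Q=q-p\,\frac{d-1}{\rho^2}\frac{g(\psi_\alpha)}{\psi_\alpha}\leq q$, i.e.\ $\psi_\alpha$ is a positive solution of an equation with a \emph{smaller} potential. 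Disconjugacy (and hence $v>0$) would follow from a positive function solving the equation with a potential $\geq q$; a positive function with smaller potential gives nothing — exactly as in the toy example $v''+Kv=0$, $K>0$, which oscillates even though $\psi\equiv 1$ is a positive function with $\psi''+K\psi\geq 0$. Concretely, your ground-state substitution shows only that $\xi=v/\psi_\alpha$ is \emph{decreasing} while it remains nonnegative; it provides no a priori barrier preventing $\xi$ from reaching $0$, and you correctly flag ``controlling the total decrease of $\xi$'' as the main obstacle without resolving it. That step is not a technicality to be filled in: with the sign you have, the argument cannot close as written.

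The paper avoids this entirely by using a \emph{different} comparison function, supplied by time-translation invariance: $\underline{\varphi}_\alpha(\rho)=\alpha^{-1}\rho\psi_\alpha'(\rho)$ solves $H_\alpha\underline{\varphi}_\alpha=\underline{\varphi}_\alpha$, i.e.\ the linearized equation with the strictly \emph{larger} Sturm potential $q+p$, and $\underline{\varphi}_\alpha>0$ as long as $\psi_\alpha$ is increasing (Proposition~\ref{GR}(iv)). Lemma~\ref{comparison} then gives $\varphi_\alpha\geq\underline{\varphi}_\alpha>0$ directly. On top of this, the paper must separately upgrade $\varphi_\alpha(\rho)>0$ (finite $\rho$) to strict monotonicity of $\alpha\mapsto\psi_\alpha(\infty)$, which it does via the real analyticity of $\alpha\mapsto\psi_\alpha(\infty)$ (Proposition~\ref{GR}(iii)) and a short contradiction argument (Proposition~\ref{monotonicity}) — another ingredient missing from your sketch. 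Your discussion of the $d\geq 7$ dichotomy via the indicial roots of the corotational harmonic-map equation is qualitatively consistent with the paper's use of J\"ager--Kaul's profile $\zeta$ and the auxiliary function $g=\rho^\eta(\zeta_{\alpha'}-\psi_\alpha)$, but again remains a sketch of the ``inner/outer matching'' that the paper actually carries out. Finally, part (ii) of the theorem is asserted here by pointing at the spiralling of $\zeta$ but is not proved; the paper derives it by showing $\alpha_0<\alpha^*<\infty$ and using $\varphi_\alpha>0$ on $(\alpha_0,\alpha^*)$.
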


This theorem will be proved in Section~\ref{oriole}, see in particular Corollary~\ref{egret}.

\begin{remark}
Regarding the existence of expanders as well as their stability properties, the different behavior in the cases $d \leq 6$ and $d \geq 7$ 
is linked to the minimizing properties of the equator map $x \mapsto \frac{x}{|x|}$: it is a local minimum of the Dirichlet energy if and only if $d\geq 7$.

More general equivariant setups can be considered, and we believe that results similar to the case considered here can be obtained.
In particular, the minimizing properties of the equator map should be decisive for the question of uniqueness.
\end{remark}

\subsubsection{Stability of expanders} Our next result shows 
that all expanders constructed above are stable at least locally.
In particular, even if  the uniqueness of the expanders is lost for $3\le d \le 6$, 
the expanders are shown to be stable.
\begin{theorem}
\label{theostable} Let
$\psi_N[\ell]$ and $\psi_S[\ell]$ be the expanders given in 
Theorem \ref{thm:profile}.
Consider data $h_0 \in L^\infty $ and such that
$$
h_0(0) = \ell, \quad |h_0(r) - \ell| \lesssim r, \quad |\partial_r h_0(r)| \lesssim \frac{1}{r}, \quad |\partial_r^2 h_0(r)| \lesssim \frac{1}{r^2}.
$$
\begin{itemize}
\item[(i)] If $3 \leq d \leq 6$ and  $\ell \in [0,\pi/2]$, there exist $T>0$ and a solution $h_N \in L^\infty(0,T;L^\infty)$ of~\eqref{1} close to $\psi_N[\ell]$ in the following sense:  for all $\epsilon>0$, there exists $\zeta>0$ such that
$$
t + r < \zeta \quad \implies \quad \left| h_N(t,r) - \psi_N[\ell]\left( \frac{r}{\sqrt t} \right) \right| < \epsilon.
$$
\item[(ii)] If 
$3 \leq d \leq 6$ and $\ell \in \left[ \frac{\pi}{2} - \delta^*,\frac{\pi}{2} \right]$\footnote{$\delta^*$ is the constant appearing
 in Theorem \ref{thm:profile}.}, there exist $T>0$ and a solution $h_S \in L^\infty(0,T;L^\infty)$ of~\eqref{1} close to $\psi_S[\ell]$ in the following sense:  for all $\epsilon>0$, there exists $\zeta>0$ such that
$$
t + r < \zeta \quad \implies \quad \left| h_S(t,r) - \psi_S[\ell]\left( \frac{r}{\sqrt t} \right) \right| < \epsilon.
$$
\item[(iii)] If $d \geq 7$ and $\ell \in [0,\pi/2)$, there exist $T>0$ and a solution $h \in L^\infty(0,T;L^\infty)$ of~\eqref{1} close to $\psi[\ell]$ in the following sense:  for all $\epsilon>0$, there exists $\zeta>0$ such that
$$
t + r < \zeta \quad \implies \quad \left| h(t,r) - \psi[\ell]\left( \frac{r}{\sqrt t} \right) \right| < \epsilon.
$$ 
\end{itemize}
Furthermore, all of these solutions satisfy the local energy inequality.
\end{theorem}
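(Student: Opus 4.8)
The plan is to construct the solution $h_N$ (resp.\ $h_S$, resp.\ $h$) by a perturbative argument around the self-similar profile, working in self-similar variables. Introduce $\rho = r/\sqrt t$ and write $h(t,r) = \psi\bke{\rho} + v(s,\rho)$ with $s = \log t$; the equation \eqref{1} becomes a nonlinear parabolic equation for $v$ of the form $v_s = \mathcal{L} v + N(v)$, where $\mathcal{L} = \pd_\rho^2 + \bke{\frac{d-1}{\rho} + \frac{\rho}{2}} \pd_\rho - \frac{d-1}{\rho^2} \cos(2\psi)$ is the linearized operator about $\psi$ and $N(v) = -\frac{d-1}{2\rho^2}\bke{\sin(2\psi + 2v) - \sin(2\psi) - 2v\cos(2\psi)}$ is quadratic in $v$. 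The hypotheses on $h_0$ — namely $h_0(0) = \ell$, $\abs{h_0(r) - \ell} \lec r$, $\abs{\pd_r h_0} \lec 1/r$, $\abs{\pd_r^2 h_0} \lec 1/r^2$ — are exactly the scale-invariant bounds that make the data $h_0 - \psi(\infty)$ small in the right weighted norm after rescaling, so that $v(s=\log t_0, \cdot)$ is small as $t_0 \to 0$.

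First I would set up the right function space: a weighted space adapted to the Gaussian-type weight $e^{\rho^2/4}$ coming from the drift term $\frac{\rho}{2}\pd_\rho$, together with behavior $\abs{v} \lec \rho$ near $\rho = 0$ enforcing the boundary condition at the north (resp.\ south) pole, and $v \to 0$ as $\rho \to \infty$. Second, I would record the needed spectral/semigroup estimates for $e^{s\mathcal{L}}$ on this space: the key point, to be imported from Section~\ref{oriole} (the same analysis underlying the uniqueness statements in Theorem~\ref{thm:profile}), is that the linearized operator around $\psi_N[\ell]$, $\psi_S[\ell]$, or $\psi[\ell]$ has no unstable or neutral directions in the relevant topology — for $d \le 6$ one must check the potential $-\frac{d-1}{\rho^2}\cos(2\psi)$ does not produce a growing mode (this is where the hemisphere condition $\psi([0,\infty)) \subset [0,\pi/2]$ and the constraint $\ell \le \pi/2$, resp.\ $\ell$ near $\pi/2$, enter), and for $d \ge 7$ coercivity is cleaner. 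Third, with decay of the semigroup in hand, a standard contraction-mapping / Duhamel argument on a short time interval $s \in (\log t_0, \log T)$ produces a unique small solution $v$, hence $h_N(t,r) = \psi_N[\ell](r/\sqrt t) + v(\log t, r/\sqrt t)$; the smallness of $v$ gives precisely the closeness statement $t + r < \zeta \implies \abs{h_N - \psi_N[\ell](r/\sqrt t)} < \epsilon$, after translating back the weighted estimate into a pointwise one on $\{t + r < \zeta\}$.

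For the last assertion — that all these solutions satisfy the local energy inequality \eqref{EnIn} — I would argue by approximation: the solutions constructed are smooth on $(0,T] \times (\R^d \setminus \{0\})$ and satisfy the equivariant equation classically there, so multiplying \eqref{1} by $h_t$ against the test function, integrating by parts in space and time on $\{t \ge \tau\}$, and letting $\tau \downarrow 0$, the boundary terms at $t = \tau$ converge to the $\frac12\tau(t=0)\abs{\nb u_0}^2$ term by continuity of the energy density against the (smooth, compactly supported) weight, using the scale-invariant bounds on $h_0$ to control the contribution near the origin; the spatial origin contributes nothing because $\psi$ and $v$ vanish there like $\rho$ and the singular potential $\frac{1}{\rho^2}\sin(2\psi)$ stays integrable against a smooth weight in dimension $d \ge 3$. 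Since the construction is by a fixed-point scheme rather than a weak compactness limit, no defect measure appears and one in fact gets equality, a fortiori the inequality. The main obstacle I anticipate is the second step: proving the absence of unstable/neutral modes for the linearization in the range $3 \le d \le 6$, especially for the south-pole profile $\psi_S[\ell]$, whose existence is only guaranteed for $\ell$ close to $\frac{\pi}{2}$ — here one likely needs a perturbative spectral argument treating $\psi_S[\ell]$ as a small perturbation of the equator solution $\psi \equiv \frac{\pi}{2}$ and carefully tracking the threshold behavior of the Schrödinger-type operator $\mathcal{L}$, which is exactly the place where the dimensional restriction $d \le 6$ is sharp.
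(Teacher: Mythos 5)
Your overall framework — perturbing in self-similar variables, writing $h = \psi + v$, and exploiting the structure of the linearized operator — is the right starting point, and your approach to the local energy inequality (integrating against the test function on $\{t\ge\tau\}$ and letting $\tau\downarrow 0$, controlling the contribution near the origin via decay of the energy density) is broadly in the spirit of Proposition~\ref{energyinequality}. But the central step of your proposal has a genuine gap.

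You claim that the hypotheses $|h_0(r)-\ell|\lesssim r$, $|\partial_r h_0|\lesssim 1/r$, $|\partial_r^2 h_0|\lesssim 1/r^2$ make $v(s=\log t_0,\cdot)=h_0(\sqrt{t_0}\,\rho)-\ell$ small in the relevant norm as $t_0\to 0$. This is false: $|v(\log t_0,\rho)|\lesssim\min(\sqrt{t_0}\,\rho,\|h_0-\ell\|_\infty)$ goes to zero pointwise for fixed $\rho$, but its $L^\infty_\rho$ norm (or any weighted $L^\infty$ norm with a weight that stays bounded away from zero at infinity, as any reasonable choice must) stays comparable to $\|h_0-\ell\|_{L^\infty}$, which can be of order $\pi$. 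Consequently a short-time contraction-mapping argument in a weighted space cannot close for the class of data allowed by Theorem~\ref{theostable}; it only handles the case $\|h_0-\ell\|_{L^\infty}$ small, which is Theorem~\ref{thm:stability}(i) but not (ii). The paper's actual proof replaces the contraction by a barrier argument: Proposition~\ref{supersolution} constructs $s$-dependent sub- and supersolutions $u_\pm(s,\rho)=\pm\bigl[\eta w(\rho)+(M+A/\rho^2)\phi(e^{s/2}\rho)\bigr]$, where $\phi$ is a spatial cutoff vanishing for $\rho\le R e^{-s/2}$ (i.e.\ for $r\le R$). The inner region, where the data is small because $|h_0-\ell|\lesssim r$, is handled by the $\eta w$ part; the outer region, where the perturbation may be $O(1)$, is handled by the $(M+A/\rho^2)\phi$ part. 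One then applies a comparison principle (Lemma~\ref{lemma:comparison}) together with an extension criterion (Proposition~\ref{proposition:holder}) and maximal regularity to pass to the limit from data imposed at $t=\delta>0$.

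A secondary point: your choice of weight ($e^{\rho^2/4}$) is not what makes the linear estimate go through. The paper uses a much more structured weight $w$, constructed in Lemma~\ref{perturbation} as the positive solution of $H_\alpha w=\tfrac{\kappa}{Z(\rho)}w$ with $w>0$, $w(\infty)>0$; its positivity is a perturbative consequence of the positivity of the zero-mode $\varphi_\alpha=\frac{d}{d\alpha}\psi_\alpha$ (Proposition~\ref{monotonicity}). The crucial point is then not a semigroup decay estimate but a maximum principle for $\partial_s+H_\alpha$ acting on $v-Cw$ (Lemma~\ref{linear}, Steps 2--3): the identity $H_\alpha w=\tfrac{\kappa}{Z}w>0$ is exactly what allows $w$ to absorb the forcing. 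Your intuition that the heart of the matter is the absence of unstable/neutral modes (i.e.\ positivity of $\varphi_\alpha$) is correct and is indeed where the restriction on $\ell$ (and hence on the choice of branch $\psi_N$ or $\psi_S$) enters; but the mechanism is comparison, not spectral decay. Finally, for the energy inequality you would still need the precise pointwise bounds of Propositions~\ref{parabolicreg}, \ref{parabolicreg2}, \ref{parabolicreg3} — in particular $|\partial_r h|\lesssim(\sqrt t+r)^{-1}(1+\langle\log(r/\sqrt t)\rangle\mathbf 1_{r<\sqrt t})$ and $|\partial_t h|\lesssim(t+r^2)^{-1}$ — to justify the integrability near the space-time origin; your sketch glosses over obtaining these.
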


This theorem is proved, along with more precise results, in Section~\ref{oriole2}.

\begin{remark}
The class of singular data\footnote{
Notice that if $h_0(0)\notin \pi \Z$, the map $u(0,r)$ 
defined by \eqref{corotationalansatz} cannot be continuous at $r=0$.} which we consider barely miss the framework for local well-posedness~\cite{KL,LW2,Wang}. As is well-known, proving solvability for large initial data in a scale-invariant space which does not contain $C_0^\infty$ as a dense subset is a delicate question, which cannot be answered by the usual fixed-point argument. 
\end{remark}

\begin{remark}
An interesting point of view is to regard the data $h_0$ as the trace at blow up time of a solution undergoing a self-similar blow up, for instance if the shrinkers of Fan~\cite{fan} for $3\le d \le 6$ can be proved to be stable, which seems very likely. Therefore, the problem we consider contains the problem of continuation after blow up.

For $d \ge 7$, Bizon and Wasserman \cite{BW} showed that there are no self-similar shrinkers, and type II blow up solutions were constructed 
by Biernat and Seki \cite{BS} very recently. If we denote $h_0$ the trace of these blowing up solutions at the blow up time, they satisfy $h_0(0) = \frac{\pi}{2}$, which is not covered by our results.
\end{remark}

\subsubsection{Uniqueness}

The above theorem shows that, for singular data, non-uniqueness occurs as soon as $3 \leq d \leq 6$ and $h_0(0) \in \left[ \frac{\pi}{2} - \delta^*, \frac{\pi}{2} \right]$.
This raises the question of uniqueness: when does unconditional, or conditional uniqueness hold?

\begin{theorem}
\label{theounique}
\begin{itemize}
\item[(i)] If $3 \leq d \leq 6$, then, for any continuous $h_0$ 
with 
$$
0 \leq h_0(0) < \frac{\pi}{2}, \quad |\partial_r h_0 (r)| \lesssim \frac{1}{r}, \quad |\partial_r^2 h_0 (r)| \lesssim \frac{1}{r^2}, 
$$
there exists $T>0$ such that for any sufficiently small $\delta>0$, there is at most one solution to~\eqref{1} in $L^\infty (0,T; L^\infty)$ satisfying the local energy inequality and
\begin{equation}
\label{est:uniq}
0 \leq h(t,r) < \frac{\pi}{2}-\delta \quad \mbox{for $t + r < \delta$.}
\end{equation}
\item[(ii)] If $d \geq 7$, for any $h_0 \in L^\infty$ and $T>0$, 
there is at most one solution $h \in L^\infty (0,T;L^\infty)$. 
\end{itemize}
\end{theorem}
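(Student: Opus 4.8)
I would treat the two regimes by a common scheme: work with the difference $w=h_1-h_2$ of two solutions with the same data, run a weighted energy estimate against the radial measure $r^{d-1}\,dr$, and localise near the spatial origin, since away from $r=0$ equation~\eqref{1} is uniformly parabolic with bounded coefficients and classical uniqueness applies there. Using $\sin 2h_1-\sin 2h_2=2\cos(h_1+h_2)\sin w$, the difference solves the linear equation
\[
w_t-w_{rr}-\frac{d-1}{r}\,w_r+V\,w=0,\qquad V=\frac{d-1}{r^2}\,\cos(h_1+h_2)\,\frac{\sin w}{w},
\]
and, since the solutions take values in $[0,\pi]$, one has the pointwise bound $|V|\le \frac{d-1}{r^2}$ with no hypothesis beyond boundedness. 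So the whole question is a borderline Hardy problem at $r=0$.

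For part (ii) ($d\ge 7$) I would multiply this equation by $w\,\chi(r)^2 r^{d-1}$, with $\chi$ a smooth cutoff equal to $1$ near the origin, and integrate in $r$. The potential term is controlled by Hardy's inequality $\int_0^\infty \frac{(w\chi)^2}{r^2}\,r^{d-1}\,dr\le \frac{4}{(d-2)^2}\int_0^\infty \big((w\chi)_r\big)^2 r^{d-1}\,dr$, so the Dirichlet term strictly dominates as soon as $4(d-1)<(d-2)^2$, i.e. $d>4+2\sqrt 2$, that is precisely $d\ge 7$. One is left with a differential inequality $\frac{d}{dt}\int w^2\chi^2 r^{d-1}\,dr\le(\text{terms supported where }\chi'\ne 0)$, and those terms live in a region where~\eqref{1} is regular, so they are absorbed using classical parabolic uniqueness there; a Gr\"onwall argument then forces $w\equiv 0$. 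The only points requiring care are the vanishing of the boundary term at $r=0$ in the integration by parts (which follows from the bound $|\partial_r h|\lesssim 1/r$ for $t>0$, obtained by rescaling interior parabolic estimates on dyadic annuli) and the behaviour at spatial infinity, handled in the usual way for the Cauchy problem with merely bounded data.

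For part (i) ($3\le d\le 6$) the Hardy numerology above fails, and the constraint~\eqref{est:uniq} must enter essentially; concretely, it is what excludes the competing solution built in Theorem~\ref{theostable}(ii) out of the south pole expander $\psi_S[\ell]$, which crosses $\frac{\pi}{2}$ and hence violates~\eqref{est:uniq} near the origin. I would pass to self-similar variables $\rho=r/\sqrt t$ near the origin. A solution satisfying~\eqref{est:uniq} is, after rescaling, confined to $[0,\frac{\pi}{2}-\delta]$ on a neighbourhood of the origin; the local energy inequality provides the compactness needed to extract, as $t\to 0$, a self-similar profile $\psi$, and the confinement forces $\psi([0,\infty))\subset[0,\frac{\pi}{2}]$, hence (a smooth profile has $\psi(0)\in\pi\Z$, a constant one $\psi(0)\in\frac{\pi}{2}\Z$) $\psi(0)=0$, i.e. north pole boundary conditions~\eqref{northpole}. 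Matching with the data, $\psi(\infty)=\lim_{r\to 0^+}h_0(r)=h_0(0)=\ell$ by continuity of $h_0$, together with the uniqueness statement of Theorem~\ref{thm:profile}(i) (which holds precisely because $3\le d\le 6$), identifies $\psi=\psi_N[\ell]$ independently of the solution. Once the inner asymptotics are pinned down this way, the difference $w$ is controlled near the origin by a weighted energy estimate relying on the coercivity of the linearisation at $\psi_N[\ell]$ — the same ingredient that underlies the stability statement of Theorem~\ref{theostable} — while away from the origin classical parabolic uniqueness applies; together these give $h_1\equiv h_2$.

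I expect the main obstacle to be part (i), and within it the rigidity step: showing that a solution satisfying~\eqref{est:uniq} and the local energy inequality is genuinely asymptotically self-similar near the origin (not merely bounded away from the equator), and controlling the borderline $\frac{1}{r^2}$ term in the linearised operator uniformly down to $t=0$. Part (ii) is, by contrast, essentially the one-line Hardy computation above, the remaining work being low-regularity bookkeeping near $r=0$ and at infinity.
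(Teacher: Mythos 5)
Part (ii) is essentially the paper's argument: the paper likewise reduces to an $L^2$ energy estimate on $H=h_1-h_2$, invokes the sharp Hardy inequality $\|\frac{H}{|x|}\|_{L^2}^2\le\frac{4}{(d-2)^2}\|\nabla H\|_{L^2}^2$, and exploits $4(d-1)<(d-2)^2\Leftrightarrow d\ge 7$. The one difference is how the non-$L^2$ tail is handled: you propose a compactly supported cutoff and claim the flux terms are ``absorbed using classical parabolic uniqueness'' on an annulus, but uniqueness on an annulus needs boundary data, so that step does not close as written; the paper instead passes to $z=H-\epsilon\langle x\rangle$, which forces $z_+$ to have compact support, derives $\frac{d}{dt}\|z_+\|_{L^2}^2\lesssim\sqrt\epsilon(1+\|z_+\|_{L^2}^2)$, and lets $\epsilon\to 0$. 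You should replace the cutoff discussion by something of that kind.

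For part (i) your route is genuinely different from the paper's, and the decisive step is not carried out. The paper works in the physical variables and proceeds in three moves. First, parabolic regularity (Proposition~\ref{parabolicreg2}, which uses the local energy inequality and $\epsilon$-regularity) gives $h(t,0)=0$ for $t>0$ and the scale-critical bounds $|\partial_r h|\lesssim(\sqrt t+r)^{-1}$, $|\partial_t h|\lesssim(t+r^2)^{-1}$. Second, for \emph{constant} data it shows $h(t,r)=\psi_N[\ell](r/\sqrt t)$: one restarts the flow from time $T$, uses small-data uniqueness in the Koch--Lamm/Wang class to identify the two solutions, then applies Theorem~\ref{theorem:asymptotic} — an asymptotic convergence result proved by comparison with explicit sub/super-solutions (Lemma~\ref{lemma:u^+}), not by an energy estimate — and lets $T\to 0$. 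Third, for general continuous $h_0$ it reduces to the constant case by the parabolic blow-up $h^n(t,r)=h(t/n^2,r/n)$ combined with Arzel\`a--Ascoli, and concludes via Proposition~\ref{goldfinch}: an $L^\infty$-stability (maximum-principle) estimate $\|H(T)\|_\infty\le C\|H(t)\|_\infty$ built on the auxiliary quantity $\theta=\frac{1-\cos(h_1-h_2)}{\cos h_1\cos h_2}$, which is the tool that replaces the failing Hardy coercivity. Your proposal, by contrast, hopes to close a weighted $L^2$ energy estimate via ``coercivity of the linearisation at $\psi_N[\ell]$'': this is the precise point that cannot be taken for granted when $3\le d\le 6$, since the linearised potential $\frac{d-1}{\rho^2}\cos(2\psi_N)$ changes sign once $\psi_N$ exceeds $\pi/4$, and the Hardy constant does not save you in this range (that is exactly why $d\ge 7$ is special). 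The paper sidesteps this via the maximum-principle structure behind Proposition~\ref{goldfinch} and the sub/super-solution comparison; without an analogous ingredient, your final step would not close. Additionally, the compactness step in your sketch — extracting a \emph{self-similar} profile as $t\to 0$ from a general $L^\infty$ solution using only the local energy inequality — is asserted but not justified; the paper obtains this rigidity from the constant-data uniqueness applied along the blow-up sequence, not from compactness alone.
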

\noindent

This theorem is proved in Section~\ref{sectionuniqueness}.

\begin{remark} 
Note that time continuity in $L^\infty$ is not required for uniqueness; it is important
 because the solutions we are interested in are not continuous at $t=0$ in general. 

We did not try to give optimal conditions for the initial data, in order to have a concise statement. For instance, it suffices in $(i)$ that $h_0$ is continuous at the origin, and that $|\partial_r h_0 (r)| \lesssim \frac{1}{r}$ and $|\partial_r^2 h_0 (r)| \lesssim \frac{1}{r^2}$ hold in a neighbourhood of the origin.
\end{remark}

\subsection{Summary of obtained results and link with Ginzburg-Landau}

As a conclusion, we find that, if $d \geq 7$, uniqueness holds in the largest class possible, $L^\infty_t L^\infty_x$.

If $3 \leq d \leq 6$, the situation is much more intricate:
\begin{itemize}
\item For generic data (say $h$ smooth, with $h(0)$ close to $\frac{\pi}{2}$), two distinct solutions can be built up in $L^\infty_t L^\infty_x$ (Theorem~\ref{theostable}).
\item These two distinct solutions satisfy the local energy inequality, and are stable; thus both seem to be physically relevant.
\item A means of selecting one of the two is given by Theorem~\ref{theounique}: requiring that a ``local maximum principle'' holds: namely, ask that, for $t$ and $r$ small, $h(t,r)-\frac{\pi}{2}$ has the same sign as $h_0(0) - \frac{\pi}{2}$.
\end{itemize}

This last criterion can be reformulated in terms of the Ginzburg-Landau regularization: the penalization problem
$$
\partial_t u^\epsilon - \Delta u^\epsilon + \frac{1}{\varepsilon^2} (|u^\epsilon|^2 - 1) u^\epsilon = 0
$$
converges, as $\epsilon \to 0$, to a solution $u$ of 
the harmonic map heat flow~\eqref{0}. Furthermore, in dimension $d \geq 5$, this solution agrees with that given by the "local maximum principle", see Section~\ref{sectionGL}.

This selection principle is of course very reminiscent of the situation for scalar conservation laws: uniqueness holds for entropy solutions, which can be constructed by viscous regularization, as was showed by Kru\v{z}kov~\cite{Kru}.

\subsection{Related results and questions}
\label{sect:related}
\subsubsection{The nonlinear heat equation}
The nonlinear heat equation
\begin{align}
\begin{cases}
\pd_t u -\Delta u= u^p
\\
u(t=0)=u_0,
\end{cases}
\label{NLH}
\end{align} 
where $u = u(t,x) \geq 0$, and $(t,x) \in [0,\infty) \times \mathbb{R}^d$, shares many features with the equivariant harmonic map heat flow.

Two important critical exponents appear when considering the dynamics of this PDE:
$$
p_S = \frac{d+2}{d-2} \qquad \mbox{and} \qquad p_{JL} = 1 + \frac{4}{d-4-2\sqrt{d-1}}
$$
(the above definitions of $p_S$ and $p_{JL}$ only make sense for $d \geq 3$ and $d \geq 11$ respectively; outside of these ranges, we set $p_S$ and $p_{JL}$ equal to $\infty$). For $p>p_S$, a singular stationary solution is given by
$$
U(x) = \frac{\beta}{|x|^{\frac{2}{p-1}}}, \qquad \mbox{with $\beta = \left( \frac{2}{p-1} \left( d - 2 - \frac{2}{d} \right) \right)^{\frac{1}{p-1}}$}.
$$
An important step in the study of the nonlinear heat equation and its uniqueness properties is due to Galaktionov and Vazquez~\cite{GV}. 
The large forward self-similar solutions and their non-uniqueness are studied in 
Haraux-Weissler~\cite{HW}, Souplet-Weissler~\cite{SW} and Naito~\cite{Naito2, Naito}.
It follows from these works that
\begin{itemize}
\item In the range $p_S < p< p_{JL}$, the stationary solution $U$ is unstable, while general solutions (with singular data) of the above PDE are non-unique.
\item In the range $p > p_{JL}$, the stationary solution $U$ is stable, while general solutions are unique.
\end{itemize}
This is very reminiscent of the properties of the harmonic map heat flow established in the present paper, and the analogy can be summarized in the following way:
\begin{center}
\begin{tabular}{ | c | c | c | }
\hline
Equation & nonlinear heat equation & harmonic map heat flow \\ \hline \hline
Singular stationary solution & $U(x)$ & Equator map $h \equiv \frac{\pi}{2}$ \\ \hline 
\begin{tabular}{cc} Stability of sing. stat. sol. \\ and non-uniqueness of solutions \end{tabular} & $p_S < p < p_{JL}$ & $3 \leq d \leq 6$ \\ \hline
\begin{tabular}{cc} Instability of sing. stat. sol. \\ and uniqueness of solutions \end{tabular} & $p > p_{JL}$ & $d \geq 7$ \\ \hline
\end{tabular}
\end{center}

However, due to the lack of monotonicity of the nonlinear term,  
the analysis of the harmonic map heat flow  
tends to be more difficult than that of the nonlinear heat equation.
 
\subsubsection{The Navier-Stokes equation}

The theory of the Navier-Stokes equation is also 
in many respects parallel to that of the harmonic map heat flow, 
but it scales differently: 
self-similar data are invariant by the transformation $u_0 \mapsto \lambda u_0(\lambda x,\lambda^2 t)$, and $L^d$ is a scale-invariant space for the data.

Similar to the harmonic map heat flow, the Navier-Stokes equation is well-posed for small self-similar data, see for instance Giga-Miyakawa~\cite{GM}, Kozono-Yamazaki~\cite{KY}, Cannone-Planchon~\cite{CP} and Koch-Tataru~\cite{KT}.
For large self-similar data, existence of a smooth self-similar 
solution has been recently shown by 
Jia-Sverak~\cite{JS1} by applying  the Leray-Schauder theorem 
(see also Tsai \cite{Tsai} for discreately self-similar data). 
These authors also suggest in~\cite{JS2} non-uniqueness of the solution for self-similar data as a possible mechanism for ill-posedness of the Navier-Stokes equation in the energy space; such a possibility 
would be very similar to the behavior established here for the 
harmonic map heat flow.

\subsubsection{The Wave Map equation}

The hyperbolic counterpart of the harmonic map heat flow is the wave map;    since it is time reversible, there is no difference between shrinkers and expanders. 
Such self-similar solutions have been constructed by Shatah~\cite{Shatah}, Cazenave-Shatah-Tahvildar-Zadeh~\cite{CSTZ}, 
see also Germain~\cite{PG1,PG2} and Widmayer~\cite{Widmayer} for their uniqueness properties. The nonlinear stability of these solutions (with a perturbation at the blow up time) seems to be an open problem, but stability of blow up 
has been established by Donninger~\cite{Donninger}.

\subsection{Plan of this paper}

The rest of the paper is organized as follows.
In the next section, we prove Theorem \ref{thm:profile} 
and derive qualitative properties for the self-similar expanders 
which will be used in the subsequent sections.
The section 3 and 4 are devoted to the stability of the expanders 
and the uniqueness result (Theorem \ref{theostable} and \ref{theounique}).
In section 5, we study the Ginzburg-Landau regularization equation.
Finally we collect technical results such as 
the comparison principle and the regularity estimates 
for the harmonic map heat flow in the appendix.

\subsection{Notations and conventions}

We use the following conventions:
\begin{itemize}
\item $C$ denotes a constant whose value may change from one line to the next. For quantities $A$ and $B$, we denote $A \lesssim B$ if $A \leq CB$.
\item $\langle y \rangle = \sqrt{1 + |y|^2}$ for $y \in \mathbb{R}$.
\item $x \in \mathbb{R}^d$ usually denotes the space variable and $r=|x|$ is the radial variable.
\end{itemize}

Solutions of PDEs such as~\eqref{0} and~\eqref{1} are always understood in the sense of distributions.
When radial notation is used, such as in~\eqref{1}, this is simply as a notational convenience, and we do not think of the PDE as set on the half line.

For instance, $h \in L^\infty_{t,x}$ solves~\eqref{1} on $[0,T)$ if and only if: for any $\varphi \in \mathcal{C}^\infty_0 ([0,T) \times \mathbb{R}^d)$, 
\begin{align*}
& \int_0^\infty \int_{\mathbb{R}^d} h \left( -\varphi_t- \Delta \varphi \right)\,dx\,dt + \int_0^\infty \int_{\mathbb{R}^d} \frac{d-1}{2|x|^2}
\sin(2h)\varphi \,dx\,dt = \int h_0 \, \varphi(t=0) \,dx.
\end{align*}

Notice that if $u$ is given by the corotational ansatz~\eqref{corotationalansatz}, with $h$ locally in $H^1_{t,x}$ - which includes solutions studied here - the formulations
~\eqref{0} and~\eqref{1} are equivalent. This follows from the equality (in the sense of distributions)
$$
\partial_t u - \Delta u - |\nabla u|^2 u = 
\left( \begin{array}{l} - \sin h \\ (\cos h) \frac{x}{|x|} \end{array} \right) \left[ \partial_t h - \Delta h - \frac{d-1}{2 |x|^2} \sin(2h) \right].
$$

\section {Analysis of self-similar expanders}
\label{oriole}
In this section we derive various properties 
of the self-similar expanders, which play crucial roles 
in the whole of this paper.
 
\subsection{The equation satisfied by self-similar expanders}
Define the self-similar variables
$$
s = \log t, \quad \rho = \frac{r}{\sqrt t},
$$
and let
$$
v(s,\rho) = h(e^s , e^{s/2} \rho).
$$
The equation~\eqref{1} becomes in these variables
\begin{align}
\begin{cases}
\displaystyle{
 v_s - v_{\rho \rho}-\left(\frac{d-1}\rho+\frac{\rho}{2} \right)v_{\rho}+
\frac{d-1}{2\rho^2} 
\sin (2 v)=0,}
\\
v(s,\rho) \sim h_0(e^{s/2} \rho) \quad \mbox{as $s \to -\infty$}.
\end{cases}
\label{2}
\end{align}
Self-similar solutions ("expanders") $\psi(\frac{r}{\sqrt t})$ in the original variables become stationary solutions $\psi(\rho)$ 
in these new variables; namely, they solve
\begin{equation}
\label{eqpsi}
\psi ''+\left(\frac{d-1}\rho+\frac{\rho}2\right)\psi'-\frac{d-1}{2\rho^2} \sin (2\psi)=0.
\end{equation}
First, we notice that reasonable solutions to this ODE must have data $\psi(0) = 0$, $\frac{\pi}{2}$, or $\pi$, and that $\psi(0) = \frac{\pi}{2}$ implies that $\psi \equiv \frac{\pi}{2}$.

Taking into account the symmetry between $0$ and $\pi$ (north and south poles of the sphere), 
we will focus on the following problem for $\psi=\psi_\alpha$:
\begin{align}
\begin{cases}
\displaystyle{\psi ''+\left(\frac{d-1}\rho+\frac{\rho}2\right)\psi'-\frac{d-1}{2\rho^2} \sin (2\psi)=0,} \\
\psi(0)=0, \qquad \psi'(0)=\alpha. 
\end{cases}
\label{ODE}
\end{align}
where $\alpha$ is a given nonnegative constant. The case of south pole boundary conditions~\eqref{southpole} can be immediately deduced by the transform $\psi \mapsto \pi-\psi$.

\subsection{First properties of the profile $\psi_\alpha$}

Part of the results in the following proposition can be found in~\cite{GR}; we then only sketch their proofs and refer to~\cite{GR} for more details. New results are related in particular to the important function
$$
\ph(\rho) = \ph_\alpha(\rho) = \frac{d}{d\alpha} \psi_\alpha(\rho).
$$
The sign of this function controls the stability of expanders since 
it is a zero-eigenfunction of the linearized equation of \eqref{1} around the expanders:  indeed, $\ph_\alpha$ satisfies
\begin{align}
\begin{cases}
\displaystyle{\ph_\al''+\left(\frac{d-1}\rho+\frac{\rho}2\right)\ph_\al '
-\frac{d-1}{\rho^2} \cos (2\psi_\al)\ph_\al=0
\qquad \rm{in} \ \R_+}, \\
\ph_\al(0)=0, \qquad \ph_\al '(0)=1. 
\end{cases}
\label{ODE2}
\end{align}
(we refer to Subsection~\ref{afv} for the precise sense in which the initial data is assumed; and for the construction of the solution in a more general framework).

\begin{proposition}
\label{GR}
\begin{itemize}
\item[(i)] For any $\alpha\geq 0$,~\eqref{ODE} admits a unique global solution. It is such that $\alpha \mapsto \psi_\alpha(\rho)$ is smooth (analytic) for any $\rho$; furthermore $0 \leq \psi_\alpha(\rho) \leq \pi$ for any $\rho$. 
\item[(ii)] The derivative of $\psi$ satisfies the bound $|\psi'(\rho)| \leq \frac{C}{\rho^3}$, locally uniformly in $\alpha$. 
In particular, $\psi(\rho)$ has a limit as $\rho \to \infty$, denoted $\psi(\infty)$.
\item[(iii)] The map $\alpha \mapsto \psi_\alpha (\infty)$ is real analytic.
\item[(iv)] For $\alpha>0$, if $\psi_\alpha([0,R]) \subset [0,\frac{\pi}{2})$, then $\psi'_\alpha(\rho) > 0$ for $\rho \in [0,R]$.
\item[(v)] For $\alpha>0$, if $\psi_\alpha([0,R]) \subset [0,\frac{\pi}{2})$, then $\ph_\alpha(\rho) = \frac{d}{d\alpha} \psi_\alpha(\rho) > 0$ for $\rho \in (0,R]$.
\item[(vi)] If $3 \leq d \leq 6$, for any number $K \in \mathbb{N}$, there exists a neighbourhood $U_K$ of $\frac{\pi}{2}$ such that, if $s \in U_K$, the set $\{ \alpha \; \mbox{such that} \; \psi_\alpha(\infty) = s \}$ has cardinal $\geq K$.
\item[(vii)] If $d \geq7$, then for any $\alpha>0$ and $\rho > 0$, $\psi_\alpha(\rho) < \frac{\pi}{2}$; for any $\alpha > 0$, $\rho \mapsto \psi_\alpha(\rho)$ is increasing; and finally $\lim_{\alpha \to \infty} \psi_\alpha (\rho) = \frac{\pi}{2}$ for any $\rho>0$.
\end{itemize}
\end{proposition}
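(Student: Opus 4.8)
The main content here is item (v) --- the positivity of the zero-mode $\ph_\al$ --- and I expect that to be the only real difficulty; items (i)--(iv) are minor variants of results of~\cite{GR}, and (vi)--(vii) will follow from (v) together with the analysis of~\cite{GR}. For (i), since~\eqref{ODE} is singular at $\rho=0$ with indicial exponents $1$ and $-(d-1)$ for the linearization at $\psi=0$, I would solve it near the origin by a contraction mapping (equivalently a variation-of-constants formula) in a class of functions behaving like $\al\rho$; analyticity of $\al\mapsto\psi_\al(\rho)$ and of its $\rho$-derivatives then comes from analytic dependence of ODE solutions on parameters, and global existence reduces to the a priori bound $0\le\psi_\al\le\pi$, obtained by comparison with the constant sub/supersolutions $0$ and $\pi$. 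Differentiating~\eqref{ODE} in $\al$ gives~\eqref{ODE2} for $\ph_\al=\tfrac{d}{d\al}\psi_\al$. For (ii), multiply~\eqref{ODE} by $\mu(\rho)=\rho^{d-1}e^{\rho^2/4}$, so that $(\mu\psi_\al')'=\mu\,\tfrac{d-1}{2\rho^2}\sin(2\psi_\al)$; integrating from a fixed $\rho_0$ and using $\int^{\rho}\mu(\sigma)\sigma^{-2}\,d\sigma\lesssim\mu(\rho)\rho^{-3}$ (super-exponential growth of $\mu$) yields $|\psi_\al'(\rho)|\lesssim\rho^{-3}$, locally uniformly in $\al$, hence $\psi_\al(\infty)$ exists. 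For (iii), write $\psi_\al(\infty)=\psi_\al(1)+\int_1^\infty\psi_\al'(\sigma)\,d\sigma$: the coefficients of~\eqref{ODE} being entire in $\psi$, the maps $\al\mapsto\psi_\al(1)$ and $\al\mapsto\psi_\al'(\sigma)$ extend holomorphically to a complex neighbourhood of $[0,\infty)$ on which the decay estimate of (ii) survives locally uniformly, so the integral converges locally uniformly and $\al\mapsto\psi_\al(\infty)$ is holomorphic, hence real-analytic on $[0,\infty)$.

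Item (iv) is a one-point argument: if $\rho_1\in(0,R]$ were the first zero of $\psi_\al'$, then $\psi_\al$ increases on $[0,\rho_1]$, so $\psi_\al(\rho_1)\in(0,\tfrac\pi2)$ and $\sin(2\psi_\al(\rho_1))>0$, whence~\eqref{ODE} at $\rho_1$ forces $\psi_\al''(\rho_1)=\tfrac{d-1}{2\rho_1^2}\sin(2\psi_\al(\rho_1))>0$, contradicting $\psi_\al''(\rho_1)\le0$. For the key item (v), set $L\phi=\phi''+\bke{\tfrac{d-1}{\rho}+\tfrac{\rho}{2}}\phi'-\tfrac{d-1}{\rho^2}\cos(2\psi_\al)\phi$, so $L\ph_\al=0$ by~\eqref{ODE2}. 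The observation I would build on is that
\[
u(\rho):=\rho\,\psi_\al'(\rho)\quad\text{solves}\quad Lu+u=0,
\]
i.e.\ $u$ is a $(-1)$-eigenfunction of $L$ --- the self-similar trace of time translation in~\eqref{1} (the map $h(t,r)=\psi_\al(r/\sqrt{t+\ep})$ solves~\eqref{1} for every $\ep$, and its $\ep$-derivative at $\ep=0$ is, up to a factor, $-\tfrac1{2t}\,\rho\psi_\al'(\rho)$). By (iv), $u>0$ on $(0,R]$. Now if $\ph_\al$ had a first zero at $\rho_1\in(0,R]$, then $\ph_\al>0$ on $(0,\rho_1)$ and $\ph_\al'(\rho_1)\le0$; for the weighted Wronskian $\mathcal W=\mu\bke{\ph_\al'\,u-\ph_\al\,u'}$, using $L\ph_\al=0$ and $Lu=-u$ one gets $\mathcal W'=\mu\,u\,\ph_\al>0$ on $(0,\rho_1)$, while $\mathcal W(\rho)\to0$ as $\rho\to0^+$ (since $\ph_\al\sim\rho$, $u\sim\al\rho$, $\mu\sim\rho^{d-1}$); hence $\mathcal W(\rho_1)>0$, whereas $\mathcal W(\rho_1)=\mu(\rho_1)\,\rho_1\psi_\al'(\rho_1)\,\ph_\al'(\rho_1)\le0$ --- a contradiction, so $\ph_\al>0$ on $(0,R]$. \textbf{The only nonroutine point is finding the comparison function $u=\rho\psi_\al'$}; after that it is Sturm-type bookkeeping (equivalently: the larger-potential eigenfunction $u$ would have to vanish between two zeros of $\ph_\al$, but $u>0$ on $(0,R]$), and the argument is valid for every $d\ge3$.

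Items (vi) and (vii) reflect the behaviour near the equator $\psi\equiv\tfrac\pi2$: with $\psi=\tfrac\pi2+\eta$, \eqref{ODE} linearizes to $\eta''+\bke{\tfrac{d-1}{\rho}+\tfrac{\rho}{2}}\eta'+\tfrac{d-1}{\rho^2}\eta=0$, whose indicial equation $m^2+(d-2)m+(d-1)=0$ at $\rho=0$ has non-real roots exactly when $d^2-8d+8<0$, i.e.\ for $3\le d\le6$. For (vi) I would follow~\cite{GR}: this oscillation, transported along $\al\mapsto\psi_\al$ as $\psi_\al$ approaches the equator, makes $\al\mapsto\psi_\al(\infty)-\tfrac\pi2$ change sign infinitely often on a suitable $\al$-interval, and then the continuity from (iii) plus the intermediate value theorem give, for $s$ near $\tfrac\pi2$, arbitrarily many solutions of $\psi_\al(\infty)=s$. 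For $d\ge7$ the roots are real, and in fact the quadratic form $\int_0^\infty\mu\bke{|\phi'|^2-\tfrac{d-1}{\rho^2}\cos(2\psi_\al)\phi^2}\,d\rho$ is positive, since the weighted Hardy inequality yields $\int\mu|\phi'|^2\ge\tfrac{(d-2)^2}{4}\int\tfrac{\mu}{\rho^2}\phi^2$ and $\tfrac{(d-2)^2}{4}\ge d-1\iff d\ge4+2\sqrt2$; this stability of the equator gives $\psi_\al(\rho)<\tfrac\pi2$ for all $\rho>0$ via the barrier argument of~\cite{GR}, hence $\rho\mapsto\psi_\al(\rho)$ is increasing by (iv); and since (v) makes $\al\mapsto\psi_\al(\rho)$ strictly increasing, the increasing limit $\psi_\infty(\rho)=\lim_{\al\to\infty}\psi_\al(\rho)\in[0,\tfrac\pi2]$ exists and solves~\eqref{ODE} on $(0,\infty)$; were $\psi_\infty\not\equiv\tfrac\pi2$, the indicial analysis at $\rho=0$ would make it equal some $\psi_\beta$ with $\beta<\infty$, contradicting $\psi_\infty\ge\psi_\al$ for all $\al>\beta$ together with strict monotonicity. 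Hence $\psi_\infty\equiv\tfrac\pi2$.
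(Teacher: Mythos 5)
Your proof is correct and, at the crucial item (v), takes essentially the same route as the paper: both hinge on the observation that $\underline{\varphi}_\alpha(\rho)=\rho\psi'_\alpha(\rho)$ (coming from time-translation invariance of~\eqref{1}) satisfies the linearized equation with the extra term $+\underline{\varphi}_\alpha$, making it a favourable comparison function, and both then conclude $\varphi_\alpha>0$ by a Sturm comparison --- the paper by invoking Lemma~\ref{comparison}, you by writing out the weighted Wronskian argument that underlies it. The remaining items also match the paper's line of argument up to phrasing (the paper proves $0\le\psi_\alpha\le\pi$ via the Lyapunov function $H=\rho^2\psi'^2-(d-1)\sin^2\psi$ rather than sub/supersolutions, proves (iii) by a Neumann-series expansion rather than holomorphic extension, and proves the first part of (vii) via the barrier $g=\rho^\eta(\zeta_{\alpha'}-\psi_\alpha)$ rather than a Hardy-inequality quadratic form, but these are interchangeable standard variants).
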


\begin{proof} $(i)$ In order to prove existence and continuous dependence, the difficulty is to solve in a neighbourhood of zero, but this can be achieved by a fixed point argument, similar to the one used below in the proof of Lemma~\ref{perturbation}.

To prove that $\psi_\alpha$ is valued in $[0,\pi]$, notice that 
$$
\mbox{setting $H(\rho)=\rho^2 \psi'(\rho)^2 - (d-1) (\sin \psi(\rho))^2$,} \qquad \frac{d}{d\rho} H(\rho)= - \rho^2 \psi'(\rho)^2 \left( \rho + \frac{2(d-2)}{\rho} \right) \leq 0.
$$
The conclusion follows since $H(0) = 0$, while $H(\rho)>0$ if $\psi(\rho) = 0$ or $\pi$, and $\psi'(\rho) \neq 0$.

\bigskip

\noindent $(ii)$ To prove this statement, use that the differential inequality above implies that, setting $F(\rho) = \rho^2 \psi'(\rho)^2$, it satisfies
$$
F'(\rho) + \rho F(\rho) \leq (d-1) \sin(2\psi(\rho)) \psi'(\rho) \leq \frac{\rho}{2} F(\rho) + \frac{C}{\rho^3}.
$$
The desired result follows by integrating the differential inequality $F'(\rho) + \frac{\rho}{2} F(\rho) \leq \frac{C}{\rho^3}$; we refer to~\cite{GR} for more details.

\bigskip

\noindent $(iii)$ Since $\psi$ is constructed by a fixed point argument in a neighbourhood of $0$, it follows that $\alpha \mapsto (\psi_\alpha(R),\psi_\alpha'(R))$ is 
analytic for $R$ sufficiently small, and hence for all $R>0$ since the equation is regular away from $\rho=0$. Thus to obtain analyticity of $\alpha \mapsto \psi_\alpha (\infty)$,
it suffices to show that $(\psi_\alpha(R),\psi_\alpha'(R)) \mapsto \psi_\alpha (\infty)$ is analytic, for $R$ sufficiently big. 
To simplify notations, we will only do so for $(\psi_\alpha(R),\psi_\alpha'(R))$ in a neighbourhood of zero.
Observe that solving the equation~\eqref{ODE} for $\rho>R$ with data $(\psi (R), \psi'(R)) = (\beta,\gamma)$ is equivalent to the fixed point problem
$$
\psi(\rho) 
= \beta + \gamma \underbrace{\int_R^\rho s^{1-d} e^{-\frac{s^2}{4}}\,ds}_{\displaystyle \Psi(\rho)} 
+ \underbrace{\frac{d-1}{2} \int_R^\rho s^{1-d} e^{-\frac{s^2}{4}} \int_R^s t^{d-3} e^{\frac{t^2}{4}} \sin(2 \psi(t)) \, dt \, ds}_{\displaystyle T(\psi)(\rho)}.
$$
For $\beta$, $\gamma$ sufficiently small, and $R$ sufficiently large, it is easy to see that the map $\psi \mapsto \beta + \gamma \Psi + T(\psi)$ 
is a contraction in a ball of $L^\infty(R,\infty)$ centered at $0$ of sufficiently small radius. The above problem can then be solved by Neumann series to give
$$
\psi = \sum_{n=0}^\infty T^n (\beta + \gamma \Psi).
$$
Next expand $T$ as
$$
T (\psi) = \sum_{k=0}^\infty \frac{d-1}{2} \frac{(-2)^{2k+1}}{(2k+1)!} \int_R^\rho s^{1-d} e^{-\frac{s^2}{4}} \int_R^s t^{d-3} e^{\frac{t^2}{4}} [\psi(t)]^{2k+1} \, dt \, ds
= \sum_{k=0}^\infty T_k (\psi),
$$
which finally gives the expansion
$$
\psi = \sum_{n=0}^\infty \sum_{k_1, \dots, k_n} T_{k_1} \dots T_{k_n} (\beta + \gamma \Psi),
$$
which converges absolutely in $L^\infty$.
Taking the trace at infinity (which is possible since $T_k f$ converges at $\infty$ for any bounded $f$),
$$
\psi(\infty) = \sum_{n=0}^\infty \sum_{k_1, \dots, k_n} \left[ T_{k_1} \dots T_{k_n} (\beta + \gamma \Psi)\right](\infty).
$$
Expanding a last time in $\beta$ and $\gamma$, this gives the desired convergent expansion, hence analyticity of the map $(\beta , \gamma) \mapsto \psi(\infty)$, 
which was the desired result.

\bigskip

\noindent $(iv)$ Arguing by contradiction, assume that there exists $\rho_0 \in (0,R]$ such that $\psi'_\alpha(\rho_0) = 0$ and $\psi'_\alpha(\rho) > 0$ if $\rho \in [0,\rho_0)$. The equation satisfied by $\psi_\alpha$ implies that $\psi''_\alpha(\rho_0) > 0$, which yields the desired contradiction.

\bigskip

\noindent $(v)$ Define $\underline{\ph}(\rho) = \underline{\ph}_\alpha(\rho) = \alpha^{-1} \rho \psi'_\alpha(\rho)$. It solves
\begin{equation}
\label{ODE1}
\begin{cases}
\displaystyle{\underline{\ph}_\alpha''+\left(\frac{d-1}\rho+\frac{\rho}2\right)
\underline{\ph}_\alpha'-\frac{d-1}{\rho^2} \cos (2\psi_\alpha)\underline{\ph}_\alpha=-\underline{\ph}_\alpha
\qquad \rm{in} \ \R_+,} \\
\underline{\ph}_\alpha(0)=0, \qquad \underline{\ph}'_\alpha(0)=1. 
\end{cases}
\end{equation}
(this is related to time translation invariance of~\eqref{1}). We know from $(iv)$ that if $\psi_\alpha([0,R]) \subset [0,\frac{\pi}{2})$, then $\psi_\alpha'(\rho)>0$ for all $\rho \in [0,R]$. By definition of $\underline{\varphi}_\alpha(\rho)$, this implies that $\underline{\varphi}_\alpha(\rho)>0$ for $\rho \in [0,R]$.

Considering the equations satisfied by $\ph$ and $\underline{\ph}$, the comparison principle (Lemma~\ref{comparison}) implies that $\varphi_\alpha(\rho) \geq \underline{\varphi}_\alpha(\rho)>0$ for all $\rho \in [0,R]$.

\bigskip

\noindent $(vi)$ is proved by comparing the solution of~\eqref{ODE} to that of the ODE for (stationary) harmonic maps
\begin{align}
\begin{cases}
\displaystyle{\zeta ''+ \frac{d-1}r \zeta'-\frac{d-1}{2r^2} \sin (2\zeta)=0
\qquad \rm{in} \ \R_+,} \\
\zeta(0)=0, \qquad \zeta'(0)=1, 
\end{cases}
\end{align}
which was studied in~\cite{JK2}. For $3 \leq d \leq 6$, the solution $\zeta$ converges as $r \to \infty$ to $\frac{\pi}{2}$ by oscillating around this value. Using in addition Corollary~\ref{grebe} gives the result; we refer to~\cite{GR} for more details.

\bigskip

\noindent $(vii)$ We will also use the function $\zeta$ introduced above, using this time that, for $d \geq 7$, it converges to $\frac{\pi}{2}$ and remains increasing for $r>0$ (see~\cite{JK2}). 
Denote $\zeta_{\alpha'}$ the solution of the above equation with data $(\zeta(0), \zeta'(0)) = (0,\alpha')$ 
(simply obtained by dilation from $\zeta$). Using that $\zeta'>0$, it appears that
$$
\zeta ''+\left(\frac{d-1}\rho+\frac{\rho}2\right)\zeta'-\frac{d-1}{2\rho^2} \sin (2\zeta) \geq 0.
$$
Setting $g = \rho^\eta (\zeta_{\alpha'} - \psi_{\alpha})$ (where $\eta$ is a constant whose value will be fixed soon), one checks easily that
$$
g'' + \left( \frac{d-1-2\eta}{\rho} + \frac{\rho}{2} \right) g' + \left( \frac{\eta^2 + (2-d) \eta + d-1}{\rho^2} - \frac{\eta}{2} \right) g \geq 0.
$$
as long as $\zeta_{\alpha'} \geq \psi_{\alpha}$. By choosing $\alpha' = 2\alpha$ sufficiently big, one can ensure that $g'(0)>0$. Therefore, $g$ is increasing in a neighbourhood of zero. 
Furthermore, since $d \geq 7$, it is possible to choose $\eta$ such that the coefficient of $g$ in the above is $<0$. 
But then, $g$ cannot reach a local maximum for $\rho>0$, or the equation on $g$ would be contradicted. Therefore, $g(\rho)>0$ for all $\rho>0$, which implies in particular that $\psi< \frac{\pi}{2}$.

This implies by $(iv)$ that $\rho \mapsto \psi_\alpha(\rho)$ is increasing.

Finally, the fact that $\lim_{\alpha \to \infty} \psi_\alpha (\rho) = \frac{\pi}{2}$ follows by comparison with $\zeta$; the reader is refered to~\cite{GR} for more details.
\end{proof}

\subsection{Study of $\ph_\alpha(\rho) = \frac{d}{d\alpha} \psi_\alpha(\rho)$}

Define
\begin{equation}
\label{defalpha0}
\begin{split}
& \alpha^* = \sup \{ \beta>0 \; \mbox{such that} \; \ph_\alpha(\rho)>0 \; \mbox{for all $\rho>0$ and all $\alpha \in (0,\beta)$} \} \\
& \alpha_0 = \sup \{ \beta>0 \; \mbox{such that} \; \psi_\alpha(\rho) < \frac{\pi}{2} \; \mbox{for all $\rho$ and all $\alpha \in (0,\beta)$}\}
\end{split}
\end{equation}
Notice first that 
$$
\alpha_0, \alpha^* \in (0,\infty].
$$
Indeed, by Proposition~\ref{GR} $(i)$ and $(ii)$, for $\alpha$ small, $\psi_\alpha(\rho)$ is valued in, say  $(0,\frac{1}{100})$ for $\rho>0$. By Proposition~\ref{GR} $(v)$, this implies that, for $\alpha$ small, $\ph_\alpha(\rho)>0$ for $\rho>0$.

\begin{proposition}
\label{monotonicity}
\begin{itemize}
\item[(i)] For any $\alpha$, $\varphi_\alpha(\rho)$ has a limit as $\rho \to \infty$, which we denote $\varphi_\alpha(\infty)$.
\item[(ii)] If $3 \leq d \leq 6$, $\alpha_0 < \alpha^* < \infty$, and in particular $\psi_{\alpha^*}(\infty) > \frac{\pi}{2}$.  
Furthermore, there exists a finite set $\{ \alpha_i,\, i = 1 \dots N \}$, contained in $[\alpha_0,\alpha^*)$, such that $\varphi_\alpha(\infty) > 0$ for all 
$\alpha \in (0,\alpha^*) \setminus \{\alpha_i, \,i = 0\dots N \}$.
\item[(iii)] If $d \geq 7$,  $\alpha_0 = \alpha^* = \infty$, and $\varphi_\alpha(\infty) >0$ for all $\alpha > 0$.
\end{itemize}
\end{proposition}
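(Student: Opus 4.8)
The plan is to establish (i) first, and then to deduce (ii) and (iii) from it together with a Wronskian comparison between $\varphi_\alpha$ and the auxiliary function $\underline{\varphi}_\alpha(\rho)=\alpha^{-1}\rho\,\psi_\alpha'(\rho)$ solving~\eqref{ODE1}. For (i), use the integrating factor $\rho^{d-1}e^{\rho^2/4}$ to recast~\eqref{ODE2} on a half line $[R,\infty)$ as the fixed point problem
$$
\varphi(\rho)=\varphi(R)+R^{d-1}e^{R^2/4}\varphi'(R)\int_R^\rho s^{1-d}e^{-s^2/4}\,ds+(d-1)\int_R^\rho s^{1-d}e^{-s^2/4}\int_R^s t^{d-3}e^{t^2/4}\cos(2\psi_\alpha(t))\varphi(t)\,dt\,ds .
$$
Interchanging the order of integration and using $\int_t^\infty s^{1-d}e^{-s^2/4}\,ds\le 2t^{-d}e^{-t^2/4}$, the integral operator has $L^\infty(R,\infty)$ norm $\lesssim R^{-2}$, hence is a contraction for $R$ large; so $\varphi_\alpha$ is bounded on $[R,\infty)$, with bound locally uniform in $\alpha$. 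Feeding this into $(\rho^{d-1}e^{\rho^2/4}\varphi_\alpha')'=(d-1)\rho^{d-3}e^{\rho^2/4}\cos(2\psi_\alpha)\varphi_\alpha$ and integrating once gives $|\varphi_\alpha'(\rho)|\lesssim\rho^{-3}$, again locally uniformly, so $\varphi_\alpha(\rho)$ converges as $\rho\to\infty$, uniformly on compact $\alpha$-intervals. With Proposition~\ref{GR}(iii) this identifies $\varphi_\alpha(\infty)=\frac{d}{d\alpha}\psi_\alpha(\infty)$, a real-analytic function of $\alpha$.

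For the Wronskian comparison, note that $\underline{\varphi}_\alpha$ and $\varphi_\alpha$ solve \eqref{ODE1} and \eqref{ODE2} with the same Cauchy data at $\rho=0$, so $W=\varphi_\alpha\underline{\varphi}_\alpha'-\varphi_\alpha'\underline{\varphi}_\alpha$ satisfies $(\rho^{d-1}e^{\rho^2/4}W)'=-\rho^{d-1}e^{\rho^2/4}\varphi_\alpha\underline{\varphi}_\alpha$ and $\rho^{d-1}e^{\rho^2/4}W\to0$ as $\rho\to0$. Hence on any interval where $\varphi_\alpha,\underline{\varphi}_\alpha>0$ one has $W<0$, so $(\varphi_\alpha/\underline{\varphi}_\alpha)'=-W/\underline{\varphi}_\alpha^2>0$; since $\varphi_\alpha/\underline{\varphi}_\alpha\to1$ at $\rho=0$ this forces $\varphi_\alpha/\underline{\varphi}_\alpha>1$ throughout. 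I then claim: if $\psi_\alpha$ is increasing with $0<\psi_\alpha(\infty)\le\frac{\pi}{2}$ (so that $\varphi_\alpha,\underline{\varphi}_\alpha>0$ on $(0,\infty)$ by Proposition~\ref{GR}(iv),(v)), then $\varphi_\alpha(\infty)>0$. Indeed, if $\varphi_\alpha(\infty)=0$, a bootstrap in~\eqref{ODE2} starting from $|\varphi_\alpha'|\lesssim\rho^{-3}$ shows $\varphi_\alpha$ decays faster than any polynomial — in fact, by the asymptotics of~\eqref{ODE2} near $\rho=\infty$, like $\rho^{-d}e^{-\rho^2/4}$ — whereas $\underline{\varphi}_\alpha=\alpha^{-1}\rho\psi_\alpha'$ decays only like $\rho^{-2}$ when $\psi_\alpha(\infty)\in(0,\frac{\pi}{2})$ (from integrating~\eqref{ODE}, using $\sin(2\psi_\alpha(\infty))\ne0$) and like $\rho^{-(d-2)}e^{-\rho^2/4}$ when $\psi_\alpha(\infty)=\frac{\pi}{2}$ (the slower rate being forced by the inhomogeneity of~\eqref{ODE1}); in either case $\varphi_\alpha/\underline{\varphi}_\alpha\to0$, contradicting $\varphi_\alpha/\underline{\varphi}_\alpha>1$.

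For (iii), when $d\ge7$ Proposition~\ref{GR}(vii) gives $\psi_\alpha(\rho)<\frac{\pi}{2}$ and $\psi_\alpha'(\rho)>0$ for all $\rho>0$, hence $\alpha_0=\alpha^*=\infty$, and the comparison (with $\psi_\alpha(\infty)\in(0,\frac{\pi}{2}]$) yields $\varphi_\alpha(\infty)>0$ for every $\alpha>0$. For (ii): Proposition~\ref{GR}(vi) makes $\alpha\mapsto\psi_\alpha(\infty)$ non-injective near $\frac{\pi}{2}$, hence non-monotone, so $\varphi_\alpha(\infty)<0$ for some finite $\alpha$; since $\varphi_\alpha>0$ on $(0,\infty)$ (so $\varphi_\alpha(\infty)\ge0$) for $\alpha<\alpha^*$, this gives $\alpha^*<\infty$, and since that proposition also provides $\alpha$ with $\psi_\alpha(\infty)>\frac{\pi}{2}$ — so $\psi_\alpha$ exceeds $\frac{\pi}{2}$ — we get $\alpha_0<\infty$. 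Using that $\alpha\mapsto\psi_\alpha(\infty)$ is analytic and non-constant (so it cannot equal $\frac{\pi}{2}$ on an interval), together with the locally uniform bound $|\psi_\alpha'(\rho)|\lesssim\rho^{-3}$ from Proposition~\ref{GR}(ii), one checks $\psi_\alpha(\infty)<\frac{\pi}{2}$ for $\alpha<\alpha_0$ and $\psi_{\alpha_0}(\infty)=\frac{\pi}{2}$ with $\psi_{\alpha_0}<\frac{\pi}{2}$ (and increasing) everywhere. Then $\varphi_\alpha>0$ on $(0,\infty)$ for all $\alpha\le\alpha_0$ — by Proposition~\ref{GR}(v) for $\alpha<\alpha_0$, and at $\alpha_0$ by continuity, an interior zero of $\varphi_{\alpha_0}$ being impossible since it would force all its derivatives to vanish there — so the comparison gives $\varphi_\alpha(\infty)>0$ for all $\alpha\le\alpha_0$ (the case $\psi_{\alpha_0}(\infty)=\frac{\pi}{2}$ being exactly the second sub-case above). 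Since $\varphi_{\alpha_0}(\infty)>0$, $\varphi_\alpha(\infty)$ is continuous, $\varphi_\alpha\to\varphi_{\alpha_0}$ uniformly on $[R,\infty)$ and $\varphi_\alpha(\rho)\sim\rho$ near $0$, a perturbation argument gives $\varphi_\alpha>0$ on all of $(0,\infty)$ for $\alpha$ slightly above $\alpha_0$; hence $\alpha_0<\alpha^*$. On $(0,\alpha^*)$, $\varphi_\alpha>0$ on $(0,\infty)$ makes $\alpha\mapsto\psi_\alpha(\infty)$ non-decreasing, so $\psi_{\alpha^*}(\infty)\ge\psi_{\alpha_0}(\infty)=\frac{\pi}{2}$; at $\alpha^*$ a limiting argument (interior zeros still impossible) forces $\varphi_{\alpha^*}(\infty)=0$, and $\psi_{\alpha^*}(\infty)=\frac{\pi}{2}$ would make $\psi_\alpha(\infty)\equiv\frac{\pi}{2}$ on $[\alpha_0,\alpha^*]$, contradicting analyticity; so $\psi_{\alpha^*}(\infty)>\frac{\pi}{2}$. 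Finally, on the bounded interval $(0,\alpha^*)$ the analytic, non-negative, non-constant function $\varphi_\alpha(\infty)$ has a discrete zero set which — by the strict positivity on $(0,\alpha_0]$ and analyticity near $\alpha^*$ — does not accumulate at $0$ or $\alpha^*$, hence is a finite set $\{\alpha_i\}\subset[\alpha_0,\alpha^*)$ off which $\varphi_\alpha(\infty)>0$.

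The step I expect to be the real obstacle is the asymptotic analysis behind the Wronskian comparison, especially the case $\psi_{\alpha_0}(\infty)=\frac{\pi}{2}$: there both $\varphi_{\alpha_0}$ and $\underline{\varphi}_{\alpha_0}$ are exponentially small and are separated only by their polynomial prefactors ($\rho^{-d}$ versus $\rho^{-(d-2)}$), so extracting these requires a genuine WKB/Levinson-type expansion of~\eqref{ODE1}--\eqref{ODE2} near $\rho=\infty$ rather than the crude integral estimates that suffice when $\psi_\alpha(\infty)\in(0,\frac{\pi}{2})$ (an exponentially small $\varphi_\alpha$ against a merely polynomially small $\underline{\varphi}_\alpha$). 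The remaining difficulty is the more routine bookkeeping of chaining continuity, the locally uniform decay bounds, and the analyticity of $\alpha\mapsto\psi_\alpha(\infty)$ to pin down $\alpha_0<\alpha^*<\infty$ and to locate the exceptional set.
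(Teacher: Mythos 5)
Your proof is correct, but it follows a genuinely different route from the paper for parts (ii) and (iii), and the comparison is worth recording. For the key positivity claim $\varphi_\alpha(\infty)>0$, the paper does \emph{not} compare $\varphi_\alpha$ with $\underline\varphi_\alpha$: for $(iii)$ it compares $\varphi_{\widetilde\alpha}$ with $\varphi_\alpha$ for $\widetilde\alpha<\alpha$ via the Sturm comparison principle (Lemma~\ref{comparison}), exploiting the monotonicity of $\alpha\mapsto\psi_\alpha(\rho)$ (hence of $-\cos(2\psi_\alpha)$) to propagate $\varphi_{\widetilde\alpha}(\infty)=0$ to all larger $\alpha$ and then contradict the analyticity of $\alpha\mapsto\psi_\alpha(\infty)$; for $(ii)$ it shows $\alpha^*>\alpha_0$ by a three-way case analysis on the sign behavior of $\varphi_\alpha(\infty)$ near $\alpha_0$, using Corollaries~\ref{loon} and~\ref{grebe} rather than any asymptotic matching. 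Your alternative, a Wronskian comparison in $\rho$ between $\varphi_\alpha$ and $\underline\varphi_\alpha$ (giving $\varphi_\alpha/\underline\varphi_\alpha>1$) combined with the precise decay rates of each, is attractive because it collapses the case analysis in $(ii)$ and in fact yields the slightly stronger statement $\varphi_{\alpha_0}(\infty)>0$ (note the paper's own statement hedges by including $\alpha_0$ in the excluded set, precisely to avoid this); in exchange it needs more refined asymptotics than the paper invokes. For that step you should note that the needed asymptotics are essentially already in the paper: Lemma~\ref{lemmaODE} gives $\varphi_\alpha\sim c\rho^{-d}e^{-\rho^2/4}$ directly when $\varphi_\alpha(\infty)=0$, and in the critical case $\psi_\alpha(\infty)=\tfrac{\pi}{2}$ one can apply the same lemma to $\phi=\psi_\alpha-\tfrac{\pi}{2}$ (as in Corollary~\ref{grebe}) to get $\psi_\alpha'=\phi'\sim A\rho^{1-d}e^{-\rho^2/4}$ with $A>0$, which is exactly the input your argument requires; in the noncritical case $\psi_\alpha(\infty)\in(0,\tfrac{\pi}{2})$ the integral representation $\rho^{d-1}e^{\rho^2/4}\psi_\alpha'(\rho)=\tfrac{d-1}{2}\int_0^\rho s^{d-3}e^{s^2/4}\sin(2\psi_\alpha)\,ds$ gives $\psi_\alpha'\sim(d-1)\sin(2\psi_\alpha(\infty))\rho^{-3}$ by an elementary Laplace-type estimate. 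So the ``WKB'' step you flagged is not actually an obstacle, but you should state it as an application of Lemma~\ref{lemmaODE} rather than as an open end. Part (i) is essentially a reproof of that same lemma; fine to cite it instead.
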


\noindent
\begin{proof} $(i)$ The convergence of $\varphi_\alpha$ as $\rho \to \infty$ is a consequence of Lemma~\ref{lemmaODE}. 

\bigskip
\noindent
$(iii)$ If $d \geq 7$, Proposition~\ref{GR}  $(vii)$ implies that $\alpha_0 = \infty$, and it is clear from $(v)$ that $\alpha^* \geq \alpha_0$, hence $\alpha^* = \infty$.

Arguing by contradiction, suppose that there exists $\widetilde \alpha \in (0,\infty)$ such that $\vp_{\widetilde \alpha}(\infty)=0$.
By monotonicity of $\alpha \mapsto \psi_\alpha$, $-\cos(2 \psi_{\widetilde \alpha}) < - \cos(2 \psi_\alpha)$ if $\widetilde \alpha < \alpha$.
Then the comparison principle shows that $\varphi_{\widetilde \alpha} \geq \varphi_\alpha \geq 0$ if $\widetilde \alpha < \alpha < \infty$, which implies
$\varphi_\alpha(\infty) = 0$ for the same range of $\alpha$. 

By Proposition~\ref{GR} and Lemma~\ref{lemmaODE}, $\psi_\alpha(\rho)$ and $\ph_\alpha(\rho)$ converge uniformly in $\alpha$, as $\rho \to \infty$, to $\psi_\alpha(\infty)$ and $\ph_\alpha(\infty)$ respectively. Therefore it is possible to commute $\frac{d}{d\alpha}$ and $\lim_{\rho \to \infty}$ to obtain that $\frac{d}{d\alpha} \psi_\alpha (\infty) = \varphi_\alpha(\infty)$.

But then $\varphi_\alpha(\infty) = 0$ for $\widetilde \alpha < \alpha < \infty$ implies that $\psi_\alpha(\infty)\equiv \frac{\pi}{2}$ for $\widetilde \alpha < \alpha < \infty$ by Proposition~\ref{GR} $(vii)$. 
However the behavior of $\psi_{\alpha}(\infty)$ near $\alpha=\tilde{\alpha}$ 
contradicts the analyticity  in Proposition~\ref{GR} $(iii)$.

\bigskip
\noindent
$(ii)$ If $3 \leq d \leq 6$, Proposition~\ref{GR} $(vi)$ implies that $\alpha_0<\infty$ and $\alpha^* < \infty$.

If $\alpha<\alpha_0$, 
$\psi_\alpha$ takes values in $(0,\frac{\pi}{2})$, and we learn from Proposition~\ref{GR} $(v)$ that 
$\varphi_\alpha > 0$ 
which implies  $\alpha^* \geq \alpha_0$.
We next show $\alpha^* > \alpha_0$. Arguing by contradiction, assume that this is not the case and that $\alpha^* = \alpha_0$. First observe that $\ph_{\alpha_0}(\rho)>0$ for all $\rho>0$ (indeed, $\ph_{\alpha_0}(\rho) \geq0$ for all $\rho$ and if for some $\rho$, $\ph_\alpha(\rho)= \ph_\alpha'(\rho)= 0$, then $\ph_{\alpha} = 0$). Then there are essentially three possibilities:
\begin{itemize}
\item If $\varphi_{\alpha_0}(\infty)>0$, using continuous dependence of the solution on $\alpha$, we can find $\delta = \delta(R) >0$ such that, if $\alpha \in (\alpha_0,\alpha_0+\delta)$, $\varphi_{\alpha}(\infty) > 0$ and $\varphi_{\alpha}(\rho) > 0$ if $1<\rho<R$. By Corollary~\ref{loon}, this implies that $\varphi_{\alpha}(\rho) > 0$ for all $\rho$ if $\alpha \in (\alpha_0,\alpha_0+\delta)$. But this contradicts $\alpha_0 = \alpha^*$.
\item If $\varphi_{\alpha_0}(\infty)=0$, then, relying on analyticity, there exists $\delta > 0$ such that $\varphi_{\alpha}(\infty)$ has a constant, nonzero, sign for $\alpha \in (\alpha_0, \alpha_0 + \delta)$. If $\varphi_{\alpha}(\infty)>0$ for $\alpha \in (\alpha_0, \alpha_0 + \delta)$, one can argue as in the previous case.
\item Therefore matters reduce to the case where $\varphi_{\alpha_0}(\infty)=0$ and $\varphi_{\alpha}(\infty)<0$ for $\alpha \in (\alpha_0, \alpha_0 + \delta)$. This implies that $\psi_\alpha(\infty) < \frac{\pi}{2}$ for $\alpha \in (\alpha_0,\alpha_0 + \delta)$. Taking $\delta$ sufficiently small, we can ensure by continuity of the map $\alpha \mapsto \psi_\alpha(\rho)$ that, for $\alpha \in (\alpha_0,\alpha_0 + \delta)$, $\psi_\alpha(\rho)<\frac{\pi}{2}$ for $\rho<R$, where $R$ can be chosen arbitrarily large. By Corollary~\ref{grebe}, this implies that $\psi_\alpha(\rho)< \frac{\pi}{2}$ for all $\rho>0$ and $\alpha_0 < \alpha < \alpha_0 + \delta$. But then Proposition~\ref{GR} $(v)$ implies that $\ph_\alpha(\infty)\geq 0$ for $\alpha_0 < \alpha < \alpha_0 + \delta$, and thus $\psi_\alpha(\infty)\geq \frac{\pi}{2}$ for $\alpha_0 < \alpha < \alpha_0 + \delta$. 
This is the desired contradiction.
\end{itemize}
Finally for $\alpha_0 < \alpha < \alpha^*$, it is clear that $\varphi_\alpha(\infty)\geq 0$ (indeed, $\ph_\alpha(\rho)>0$ for all $\rho$, so it suffices to take the limit). By analyticity of $\alpha \mapsto \varphi_\alpha(\infty)$, this map can only vanish on a discrete set without accumulation point.

\end{proof}

The following corollary now implies Theorem \ref{thm:profile}.

\begin{corollary}
\label{egret}
\begin{itemize}
\item[(i)]
Given $\ell \in (0,\frac{\pi}{2}]$ if $3 \leq d \leq 6$, and $\ell \in (0,\frac{\pi}{2})$ if $d \geq 7$, there exists a unique $\alpha$ such that 
$$
\psi_\alpha (\infty) = \ell \qquad \mbox{and} \qquad 0 \leq \psi_\alpha(\rho) \leq \frac{\pi}{2} \; \mbox{for all $\rho> 0$}.
$$

\item[(ii)] If $3 \leq d \leq 6$, let
$$
\ell^* = \psi_{\alpha^*}(\infty).
$$
For $\ell \in (\frac{\pi}{2},\ell^*)$, there exists a unique $\alpha \in (\alpha_0,\alpha_*)$ such that $\psi_\alpha(\infty) = \ell$ and $\psi_\alpha$ crosses $\frac{\pi}{2}$ only once.

\item[(iii)] If $d\ge 7$, 
there is no solutions satisfying $\psi_{\alpha}(\infty) \in (0,\frac{\pi}{2})$
and $\psi_{\alpha}(0)=\pi$.

\end{itemize}
\end{corollary}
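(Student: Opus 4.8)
The plan is to read off all three statements from the behaviour of the real-analytic function $\alpha\mapsto\psi_\alpha(\infty)$ on the two intervals $(0,\alpha_0]$ and $(\alpha_0,\alpha^*)$, using Proposition~\ref{monotonicity}. The basic input is the identity $\frac{d}{d\alpha}\psi_\alpha(\infty)=\varphi_\alpha(\infty)$, valid for all $d$ (as in the proof of Proposition~\ref{monotonicity}~$(iii)$: the convergences $\psi_\alpha(\rho)\to\psi_\alpha(\infty)$ and $\varphi_\alpha(\rho)\to\varphi_\alpha(\infty)$ are uniform in $\alpha$ by Proposition~\ref{GR} and Lemma~\ref{lemmaODE}, so the $\alpha$-derivative and the limit commute). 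Since $\varphi_\alpha(\rho)>0$ for every $\rho>0$ when $\alpha\in(0,\alpha^*)$, we get $\varphi_\alpha(\infty)\ge0$ there, with equality only on the finite set $\{\alpha_0,\dots,\alpha_N\}$; hence $\alpha\mapsto\psi_\alpha(\infty)$ is non-decreasing on $(0,\alpha^*)$ and, being analytic and non-constant, in fact strictly increasing there.

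For $(i)$ with $d\ge7$: by Proposition~\ref{GR}~$(vii)$ we have $\psi_\alpha(\rho)<\frac{\pi}{2}$ for all $\alpha>0$ and $\rho>0$, so the constraint $0\le\psi_\alpha\le\frac{\pi}{2}$ is automatic; by Proposition~\ref{monotonicity}~$(iii)$, $\alpha\mapsto\psi_\alpha(\infty)$ is strictly increasing on $(0,\infty)$, with limit $0$ at $0^+$ and limit $\frac{\pi}{2}$ at $+\infty$ (because $\psi_\alpha(\infty)\ge\psi_\alpha(\rho_0)\to\frac{\pi}{2}$ while $\psi_\alpha(\infty)\le\frac{\pi}{2}$). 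Thus it is a bijection onto $(0,\frac{\pi}{2})$, which gives $(i)$. For $(i)$ with $3\le d\le6$: as $\alpha\to0^+$, $\psi_\alpha(\infty)\to0$ (continuous dependence, $\psi_0\equiv0$), and $\psi_{\alpha_0}(\infty)=\frac{\pi}{2}$ — indeed $\psi_{\alpha_0}\le\frac{\pi}{2}$ by continuous dependence from $\alpha<\alpha_0$, and if $\psi_{\alpha_0}(\infty)<\frac{\pi}{2}$ then $\psi_{\alpha_0}<\frac{\pi}{2}$ on every $[0,R]$ (Proposition~\ref{GR}~$(iv)$), hence by continuous dependence and Corollary~\ref{grebe} also $\psi_\alpha<\frac{\pi}{2}$ everywhere for $\alpha$ slightly above $\alpha_0$, contradicting the definition of $\alpha_0$. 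Combined with strict monotonicity on $(0,\alpha_0]\subseteq(0,\alpha^*)$, the map is a bijection $(0,\alpha_0]\to(0,\frac{\pi}{2}]$, which yields a unique $\alpha\le\alpha_0$ with $\psi_\alpha(\infty)=\ell$, and for that $\alpha$ one has $\psi_\alpha\le\frac{\pi}{2}$ everywhere.

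For global uniqueness in $(i)$ when $d\le6$, suppose $\psi_\beta$ is valued in $[0,\frac{\pi}{2}]$ with $\psi_\beta(\infty)=\ell$. If $\psi_\beta$ reached $\frac{\pi}{2}$ at a finite $\rho_*>0$, then at the first such $\rho_*$ we have $\psi_\beta'(\rho_*)\ge0$: a vanishing value forces $\psi_\beta\equiv\frac{\pi}{2}$ by uniqueness for~\eqref{ODE} away from the origin, a positive value forces $\psi_\beta>\frac{\pi}{2}$ just past $\rho_*$; either way a contradiction. Hence $\psi_\beta<\frac{\pi}{2}$ on all of $(0,\infty)$, i.e.\ $\beta$ lies in $S:=\{\alpha>0:\psi_\alpha(\rho)<\frac{\pi}{2}\ \forall\,\rho>0\}$. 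Now $(0,\alpha_0)\subseteq S$ by definition of $\alpha_0$, $\alpha_0\in S$ by the argument above, and $S$ is downward closed: if $\alpha_1<\alpha_2\in S$ but the profiles first left $S$ at some $\alpha_*\in(\alpha_1,\alpha_2)$, then $\psi_{\alpha_*}\le\frac{\pi}{2}$ (limit of profiles in $S$), so either $\psi_{\alpha_*}=\frac{\pi}{2}$ at a finite point — impossible, since that point is a maximum and would force $\psi_{\alpha_*}\equiv\frac{\pi}{2}$ — or $\psi_{\alpha_*}<\frac{\pi}{2}$ everywhere, in which case Corollary~\ref{grebe} and continuous dependence put a whole neighbourhood of $\alpha_*$ in $S$, contradicting that $\alpha_*$ is the first exit. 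Therefore $S=(0,\alpha_0]$, so $\beta\le\alpha_0$ and $\beta$ is the $\alpha$ already found. Part $(iii)$ is immediate from the symmetries of~\eqref{ODE}: a profile with $\psi(0)=\pi$ equals $\pi+\psi_\alpha$ when $\psi'(0)=\alpha\ge0$ and $\pi-\psi_{|\alpha|}$ when $\psi'(0)=\alpha\le0$ (since $\psi-\pi$ and $\pi-\psi$ also solve~\eqref{ODE}); in the first case $\psi(\infty)\ge\pi$, and in the second, for $d\ge7$, $\psi(\infty)=\pi-\psi_{|\alpha|}(\infty)>\frac{\pi}{2}$ by Proposition~\ref{GR}~$(vii)$ unless $\alpha=0$, where $\psi\equiv\pi$ — so $\psi(\infty)\notin(0,\frac{\pi}{2})$.

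It remains to treat $(ii)$ and to identify the main obstacle. On $(\alpha_0,\alpha^*)$ the map $\alpha\mapsto\psi_\alpha(\infty)$ is strictly increasing with left limit $\frac{\pi}{2}$ and value $\ell^*$ at $\alpha^*$, hence a bijection onto $(\frac{\pi}{2},\ell^*)$, which gives for each $\ell\in(\frac{\pi}{2},\ell^*)$ a unique $\alpha\in(\alpha_0,\alpha^*)$ with $\psi_\alpha(\infty)=\ell$. One then has to show that this $\alpha$ (in fact every $\alpha\in(\alpha_0,\alpha^*)$) crosses $\frac{\pi}{2}$ exactly once: there is at least one crossing since $\psi_\alpha(0)=0<\frac{\pi}{2}<\psi_\alpha(\infty)$; no crossing is tangential (a zero of $\psi_\alpha-\frac{\pi}{2}$ with vanishing derivative forces $\psi_\alpha\equiv\frac{\pi}{2}$); and by the uniform decay estimate of Proposition~\ref{GR}~$(ii)$ together with $\psi_\alpha(\infty)>\frac{\pi}{2}$ all crossings stay in a fixed compact subset of $(0,\infty)$ as $\alpha$ varies in a small interval. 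Hence the number of crossings is locally constant, thus constant, on $(\alpha_0,\alpha^*)$; it equals $1$ for $\alpha$ close to $\alpha_0^+$, where $\psi_\alpha$ is $C^2_{loc}$-close to the increasing profile $\psi_{\alpha_0}$ and, using $\varphi_{\alpha_0}>0$ and Proposition~\ref{GR}~$(iv)$, rises past $\frac{\pi}{2}$ just once. This last point — the shape of $\psi_\alpha$ after it has passed $\frac{\pi}{2}$ — is the main obstacle, since~\eqref{ODE} is locally compatible with oscillation about $\frac{\pi}{2}$: "crosses exactly once" cannot be seen from the equation pointwise and genuinely requires the continuation/counting argument, with its uniform control of the zeros near $\rho=0$ and $\rho=\infty$ and the identification of the count at $\alpha_0^+$; the downward closedness of $S$ in $(i)$ is the other place where Corollary~\ref{grebe} and ODE uniqueness do the real work.
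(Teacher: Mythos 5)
Your proof is correct and follows the same strategy as the paper's: read off $(i)$--$(iii)$ from the analytic, non-decreasing map $\alpha\mapsto\psi_\alpha(\infty)$, whose derivative is $\varphi_\alpha(\infty)$, using the sign information from Proposition~\ref{monotonicity}, and invoke the symmetries $\psi\mapsto\pi\pm\psi$ of~\eqref{ODEpsi} for $(iii)$. Where you differ is in supplying complete arguments at two points the paper's writeup compresses. In $(i)$ with $3\le d\le6$, the paper shows $\psi_\alpha((0,\infty))\not\subset[0,\frac{\pi}{2}]$ for $\alpha>\alpha_0$ by an ad hoc comparison with a nearby profile $\psi_\beta$ at its first crossing $\rho_0$ and a rather compressed appeal to Proposition~\ref{GR}~$(v)$; your connectedness argument (no tangential touching of $\frac{\pi}{2}$ by ODE uniqueness, propagation of strict sub-$\frac{\pi}{2}$ behaviour to nearby $\alpha$ via Corollary~\ref{grebe}) yielding $S=(0,\alpha_0]$ is cleaner and self-contained. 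In $(ii)$, the paper disposes of the ``crosses exactly once'' clause with the one-line remark that once $\psi$ crosses $\frac{\pi}{2}$ it cannot come back, attributed to Proposition~\ref{GR}~$(v)$, which by itself does not control the profile after a crossing; your transversality/counting argument is the honest proof of this. One small correction to the latter: when you fix the count as $1$ for $\alpha$ near $\alpha_0^+$, the result you actually need is Corollary~\ref{grebe} (choose $R>R_0$ with $\psi_{\alpha_0}<\frac{\pi}{2}$ on $[0,R]$, so that for $\alpha$ slightly above $\alpha_0$ the first, necessarily transversal, crossing of $\psi_\alpha$ lies beyond $R_0$ and Corollary~\ref{grebe} forbids any return), rather than Proposition~\ref{GR}~$(iv)$ and $\varphi_{\alpha_0}>0$ alone, since $C^2_{loc}$-closeness to $\psi_{\alpha_0}$ only controls compact $\rho$-intervals while the crossing point escapes to $\infty$ as $\alpha\to\alpha_0^+$.
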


\begin{proof} $(i)$ If $d \geq 7$, this follows easily from Proposition~\ref{GR}.

It is also an easy consequence of Proposition~\ref{GR} that if $3 \leq d \leq 6$ and $\ell \in (0,\frac{\pi}{2})$, there exists a unique $\alpha \in (0,\alpha_0)$ such that $\psi_\alpha(\infty) = \ell$ (recall that, in this case, $\varphi_\alpha>0$ by $(v)$ in Proposition~\ref{GR}).

If $3 \leq d \leq 6$ and $\alpha > \alpha_0$, let us now show that $\psi_\alpha((0,\infty)) \not\subset [0,\frac{\pi}{2}]$. By definition of $\alpha_0$, there exists $\beta \in (\alpha_0, \alpha)$ and $\rho_0>0$ (which we can assume to be minimal) such that $\psi_{\beta}(\rho_0)=\frac{\pi}{2}$. If, on the one hand, there exists $\rho_1<\rho_0$ such that $\psi_\alpha (\rho_1) = \frac{\pi}{2}$, we are done, since then $\psi_\alpha(\rho) > \frac{\pi}{2}$ for $\rho$ slightly larger than $\rho_1$. If, on the other hand, such a $\rho_1$ does not exist, then by Proposition~\ref{GR} $(v)$, we obtain $\psi_\alpha(\rho_0) = \frac{\pi}{2}$, implying once again that $\psi_\alpha((0,\infty)) \not\subset [0,\frac{\pi}{2}]$.

This follows from Proposition~\ref{GR} $(v)$: once $\psi$ crosses $\frac{\pi}{2}$, it cannot come back.

\bigskip

\noindent $(ii)$ follows from the definition of $\alpha^*$: if $\alpha<\alpha^*$, $\ph_{\alpha}(\rho)>0$ for all $\rho>0$, while $\ph_\alpha(\infty) \geq 0$, and, by analyticity, the map $\alpha \to \psi_\alpha(\infty)$ is a bijection on $(\alpha_0,\alpha^*)$ .

\bigskip 

\noindent $(iii)$ follows from Proposition
~\ref{GR}~$(vii)$. Indeed this yields that for the data 
$\psi_{\alpha}(0)=\pi$, $\psi'_{\alpha}(0)=-\alpha$, we see from the symmetry that 
$\psi_{\alpha}\in (\pi/2,\pi)$ for any $\alpha \ge 0$.
\end{proof}

\subsection{A first variation on $\ph_\alpha$}
\label{afv}
The solution $w$ to the ODE below will be the key to linear and nonlinear stability of expanders, providing a tool similar to a spectral gap result. Define first $Z: [0,\infty) \to [0,\infty)$ to be a smooth increasing function such that
$$
Z(\rho) =
\left\{
\begin{array}{ll}
\rho & \mbox{if $\rho \leq 1$} \\
\rho^2 & \mbox{if $\rho \geq 2$}.
\end{array}
\right.
$$
\begin{lemma}
\label{perturbation}
For 
$$
\alpha \in 
\left\{
\begin{array}{ll}
(0,\alpha_*) \setminus \{ \alpha_i, \, i=1 \dots N \} & \mbox{if $3 \leq d \leq 6$} \\
(0,\infty) & \mbox{if $d \geq 7$}
\end{array}
\right.
$$
there exists $\kappa_0=\kappa_0(\alpha)>0$ such that for each $\kappa <\kappa_0$ the problem
\begin{align}
\begin{cases}
\displaystyle{
w''+\left(\frac{d-1}\rho+\frac{\rho}2\right)w '-\frac{d-1}{\rho^2}
\cos (2\psi_\alpha) w + \frac{\kappa}{Z(\rho)} w =0} \\
\displaystyle{
w(0)=0, \qquad w'(0) = 1, \qquad |w'(\rho) - 1 | \lesssim \rho}
\end{cases}
\label{3}
\end{align}
has a unique solution $w$, which is furthermore positive for $\rho>0$, and has a positive limit $w(\infty)$ as $\rho \to \infty$.
\end{lemma}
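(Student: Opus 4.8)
The plan is to solve the problem near the regular singular point $\rho=0$ by a contraction argument, extend the solution to $(0,\infty)$ by linear ODE theory, and then obtain positivity together with the existence of a positive limit by comparing $w$ with the positive solution $\varphi_\alpha$ of the unperturbed problem \eqref{ODE2} (the case $\kappa=0$) through a Sturm-type substitution.

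For the local existence, uniqueness and regularity near the origin, I would set $w=\rho v$. Since $\psi_\alpha(0)=0$ we have $1-\cos(2\psi_\alpha(\rho))=2\sin^2\psi_\alpha(\rho)=O(\rho^2)$ by Proposition~\ref{GR}, so the singular zeroth-order term becomes harmless and the equation for $v$ is equivalent to the Volterra equation $v(\rho)=1-\int_0^\rho t^{-(d+1)}e^{-t^2/4}\int_0^t s^{d}e^{s^2/4}\bigl[\tfrac{2(d-1)\sin^2\psi_\alpha(s)}{s}+\tfrac s2+\tfrac{\kappa s}{Z(s)}\bigr]v(s)\,ds\,dt$. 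For $\rho_0>0$ small (uniformly for $\kappa$ in a bounded range) the right-hand side is a contraction on a small ball around the constant function $1$ in $L^\infty(0,\rho_0)$, which produces a unique $v$ with $v(0)=1$, hence a unique $w$ with $w(0)=0$, $w'(0)=1$ and $|w'(\rho)-1|\lesssim\rho$: near $0$ one has $w=\rho+O(\rho^2)$, and the conditions $w(0)=0$, $|w'-1|\lesssim\rho$ select the $\sim\rho$ branch of this regular singular point (indicial exponents $1$ and $-(d-1)$). On $[\rho_0,\infty)$ the equation is a regular linear second-order ODE with locally bounded coefficients, so $w$ extends uniquely to $(0,\infty)$; moreover $w>0$ on $(0,\rho_0]$.

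The core of the proof is positivity and the positive limit. Since $\alpha$ lies outside the exceptional set, Proposition~\ref{monotonicity} guarantees $\varphi_\alpha(\rho)>0$ for $\rho>0$ and $\varphi_\alpha(\infty)>0$. Writing $L_\alpha u:=u''+\bigl(\tfrac{d-1}\rho+\tfrac\rho2\bigr)u'-\tfrac{d-1}{\rho^2}\cos(2\psi_\alpha)\,u$, we have $L_\alpha\varphi_\alpha=0$ and $L_\alpha w=-\tfrac\kappa Z w$, so on any interval where $w>0$ the ratio $\chi:=w/\varphi_\alpha$ is positive and a direct computation gives $(p\chi')'=-\tfrac{\kappa}{Z}\,p\,\chi$ with $p(\rho):=\varphi_\alpha(\rho)^2\rho^{d-1}e^{\rho^2/4}$; moreover $p\chi'\to0$ as $\rho\to0^+$ (since $p\to0$ while $\chi=1-\tfrac{\kappa}{d+1}\rho+O(\rho^2)$ has bounded derivative there). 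Integrating twice yields $1-\chi(\rho)=\kappa\int_0^\rho p(t)^{-1}\int_0^t p(s)\chi(s)/Z(s)\,ds\,dt$, which is nonnegative as long as $\chi>0$, so $0<\chi\le1$ on such an interval. Now the constant $M(\alpha):=\int_0^\infty p(t)^{-1}\int_0^t p(s)/Z(s)\,ds\,dt$ is finite: near $0$ the outer integrand tends to $\tfrac1{d+1}$, and near $\infty$ one uses $\varphi_\alpha(\infty)>0$ together with the asymptotics $\int^t s^{d-3}e^{s^2/4}\,ds\sim 2t^{d-4}e^{t^2/4}$ (valid for every $d\ge3$) to see that the outer integrand is $\sim 2t^{-3}$. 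Hence $\chi\ge1-\kappa M(\alpha)$ wherever $\chi>0$, so for $\kappa<\kappa_0(\alpha):=\min\{1,\,1/(2M(\alpha))\}$ a continuity/bootstrap argument shows $\chi>0$ — indeed $\chi\ge\tfrac12$ — on all of $(0,\infty)$, whence $w=\chi\varphi_\alpha>0$ there. Finally $(p\chi')'<0$ and $p\chi'|_{0^+}=0$ force $\chi$ to be strictly decreasing, so $\chi(\infty):=\lim_{\rho\to\infty}\chi(\rho)$ exists with $\chi(\infty)\ge1-\kappa M(\alpha)>0$; therefore $w$ converges at infinity and $w(\infty)=\chi(\infty)\varphi_\alpha(\infty)>0$.

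The two points requiring real care are, first, the analysis at the regular singular point $\rho=0$, which is what forces the substitution $w=\rho v$ and crucially exploits $\sin^2\psi_\alpha=O(\rho^2)$; and second, the finiteness of $M(\alpha)$ near $\rho=\infty$, where the hypothesis $\varphi_\alpha(\infty)>0$ — equivalently, the exclusion of the exceptional $\alpha_i$ — is essential, and where one needs the sharp Gaussian asymptotics quoted above rather than crude exponential bounds.
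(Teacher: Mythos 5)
Your proof is correct, but it takes a genuinely different route from the paper's. The paper also starts by solving near $\rho=0$ via the substitution $W=w/\rho$ and a fixed point (your Volterra formulation is essentially the same). For the positivity and the positive limit, however, the paper argues \emph{perturbatively}: it uses continuous (indeed analytic) dependence on $\kappa$ of both $w(\rho)$ (for each fixed $\rho$) and of $w(\infty)$ (the latter via the uniform convergence estimate \eqref{bdcv} from Lemma~\ref{lemmaODE}), starting from the known positivity of $w=\varphi_\alpha$ at $\kappa=0$; this gives $w>0$ on $(0,R_0)$ and $w(\infty)>0$ for $\kappa$ small, and then the structural result Corollary~\ref{loon} (no more than one zero of a solution past $R_0$, and a zero with nonzero derivative forces $w(\infty)\ne0$) closes the gap on $[R_0,\infty)$. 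Your argument instead makes the comparison with $\varphi_\alpha$ \emph{explicit}: writing $\chi=w/\varphi_\alpha$ and $p=\varphi_\alpha^2\rho^{d-1}e^{\rho^2/4}$ you get $(p\chi')'=-\frac{\kappa}{Z}p\chi$ with $p\chi'\to0$ at $0^+$, hence the integral identity $1-\chi(\rho)=\kappa\int_0^\rho p^{-1}\int_0^t p\chi/Z$, so that $\chi$ is decreasing from $1$ and stays above $1-\kappa M(\alpha)$ as long as it is positive. This yields an explicit admissible threshold $\kappa_0(\alpha)=\min\{1,1/(2M(\alpha))\}$ and, simultaneously, the monotone convergence of $\chi$ to a positive limit, all without invoking Lemma~\ref{lemmaODE} or Corollary~\ref{loon}. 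The price you pay is the verification that $M(\alpha)<\infty$, where—exactly as you noted—the Gaussian asymptotics $\int^t s^{d-3}e^{s^2/4}\,ds\sim 2t^{d-4}e^{t^2/4}$ and the hypothesis $\varphi_\alpha(\infty)>0$ (i.e.\ exclusion of the exceptional $\alpha_i$) are both essential. In short: the paper's proof is softer and reuses its asymptotic toolbox; yours is self-contained, more quantitative, and shows directly that $\chi$ is monotone, which the paper never needs to observe.
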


\begin{proof}
\underline{Step 1: Local solvability.}
We focus first on solving the above ODE in a neighborhood of zero. In particular, we can replace $Z(\rho)$ by $\rho$ in that region.
Setting $W(\rho) = \frac{w(\rho)}{\rho}$, the ODE to be solved becomes
\begin{align*}
\begin{cases}
W''+\left(\frac{d+1}\rho+\frac{\rho}2\right)W '- \left[ \frac{d-1}{\rho^2}
\cos (2\psi_\alpha) - \frac{d-1}{\rho^2} - \frac{\kappa}{\rho} - \frac{1}{2} \right] W = 0 \\
W(0)=1, \qquad |W'(\rho)| \lesssim 1.
\end{cases}
\end{align*}
This is in turn equivalent to the fixed point formulation
$$
W(\rho) = 1 + \underbrace{\int_0^\rho s^{-1-d} e^{-\frac{s^2}{4}} \int_0^s t^{d+1} e^{\frac{t^2}{4}} 
\left[ \frac{d-1}{t^2} \cos (2\psi_\alpha) - \frac{d-1}{t^2} - \frac{\kappa}{t} - \frac{1}{2} \right] W(t) \,dt \,ds}_{T(W)}
$$
for $W \in L^\infty$. 
We see that $T$ satisfies  
\begin{align*}
\|TW\|_{L^{\infty}(0,\delta)} &\lec \delta 
\|W\|_{L^{\infty}(0,\delta)},
\\
\|TW-TW'\|_{L^{\infty}(0,\delta)} &\lec \delta 
\|W-W'\|_{L^{\infty}(0,\delta)},
\end{align*}
Therefore $T$ is a contraction on the ball $B(0,2)$ of $L^\infty(0,\delta)$, for $\delta$ sufficiently small.
Thus,  Banach's fixed point theorem gives the existence and uniqueness of $W$ in $L^\infty(0,\delta)$; it also gives smooth (analytic) dependence on $\kappa$ of $w(\rho)$ for any 
$\rho \in (0,\delta)$.

\bigskip
\noindent
\underline{Step 2: Prolonging $w$ and positivity.} The solution $w$ can be prolonged to $[0,\infty)$ by Lemma~\ref{lemmaODE}. Since the ODE is smooth away from zero, we obtain that $w(\rho)$ depends smoothly
on $\kappa$ for any $\rho >0$. By the uniform convergence estimate~\eqref{bdcv}, we get that $w(\infty)$ depends continuously on $\kappa$.
For $\kappa = 0$, $w = \ph_\alpha$ which is positive for any $\rho >0$, and such that $\ph_\alpha(\infty)>0$. 

We now use the continuous dependence of $w(\rho)$ (for any $\rho$) and $w(\infty)$ on $\kappa$ to find $\kappa_0$ such that:
for any $\kappa \in (0,\kappa_0)$, $w(\rho)>0$ for any $\rho \in (0,R_0)$ (where $R_0$ is defined in Corollary~\ref{loon}) as well as $w(\infty) >0$. Then, Corollary~\ref{loon} implies that $w(\rho)>0$ for any $\rho$.
\end{proof}

\subsection{A second variation on $\ph_\alpha$}
This last ODE result will be important for the asymptotic stability of the self-similar expanders. 
\begin{lemma}
\label{lemma:1}
For 
$$
\alpha \in 
\left\{
\begin{array}{ll}
(0,\alpha_*)  & \mbox{if $3 \leq d \leq 6$} \\
(0,\infty) & \mbox{if $d \geq 7$}
\end{array}
\right.
$$
 the problem
\begin{align}
\begin{cases}
\displaystyle{
y''+\left(\frac{d-1}\rho+\frac{\rho}2\right)y '-\frac{d-1}{\rho^2}
 \cos (2\psi_{\alpha})y+ \frac{y}{2}=0}, 
\\
\displaystyle{
y(0)=0, \qquad y '(0)=1,
}
\end{cases}
  \label{eq:l}
\end{align}
has a unique solution, which is positive for $\rho>0$ and satisfies
$\displaystyle{\lim_{\rho\rightarrow \infty}} \rho y(\rho)>0$. 
\end{lemma}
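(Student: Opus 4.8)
The plan is to follow the two‑step scheme of Lemma~\ref{perturbation}. In Step~1 (local solvability near $\rho=0$) nothing new happens: writing $y=\rho W$, equation~\eqref{eq:l} becomes, after absorbing $\frac12 y=\frac12\rho W$ into the zeroth order coefficient (which is now even more regular than the $\frac{\kappa}{\rho}$‑type term of Lemma~\ref{perturbation}), a fixed point problem $W=1+T(W)$ in $L^\infty(0,\delta)$ to which Banach's theorem applies; this gives a unique local solution with the expansion $y(\rho)=\rho+O(\rho^3)$ (the $\rho^2$ coefficient vanishing, as for $\psi_\alpha$ and $\ph_\alpha$), and the solution prolongs to $[0,\infty)$ by Lemma~\ref{lemmaODE}.

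The behaviour at $\rho=\infty$ rests on one algebraic observation: setting $u:=\rho y$, a direct computation turns~\eqref{eq:l} into
\[
u''+\left(\frac{d-3}{\rho}+\frac{\rho}{2}\right)u'+\left(\frac{3-d}{\rho^2}-\frac{d-1}{\rho^2}\cos(2\psi_\alpha)\right)u=0,
\]
in which — precisely because the coefficient of $y$ in~\eqref{eq:l} equals $\frac12$ — the zeroth order coefficient is $O(\rho^{-2})$ with no constant part. This is again of the form handled by Lemma~\ref{lemmaODE}, so $u=\rho y$ has a finite limit as $\rho\to\infty$, i.e. $\lim_{\rho\to\infty}\rho y(\rho)$ exists. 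Near $\rho=0$, $u$ is the regular branch ($u\sim\rho^2$, $u(0)=u'(0)=0$, $u''(0)=2$), so $u>0$ for small $\rho$.

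It remains to prove $y>0$ on $(0,\infty)$ and that the above limit is positive; this is the crux. For $\alpha$ with $\ph_\alpha(\infty)>0$ (all $\alpha$ when $d\ge7$, all but finitely many when $3\le d\le6$, by Proposition~\ref{monotonicity}), one has $\ph_\alpha>0$ on $(0,\infty)$, so the substitution $h:=y/\ph_\alpha$ is available: it removes the zeroth order term and recasts~\eqref{eq:l} as the Sturm--Liouville equation $(bh')'+\frac12 bh=0$ with $b=\rho^{d-1}e^{\rho^2/4}\ph_\alpha^2>0$ and $h(0)=1$. The weighted Wronskian identity $b(\rho)h'(\rho)=-\frac12\int_0^\rho bh$ shows $h$ is strictly decreasing as long as it is positive, and $\rho h=u/\ph_\alpha$ converges. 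Positivity of $h$ (hence of $y$) together with positivity of $\lim\rho h$ is then equivalent to the spectral statement that $\frac12$ lies strictly below the bottom of the spectrum of $w\mapsto-\frac1b(bw')'$ on the relevant weighted half line — equivalently, that the operator linearizing~\eqref{1} about $\psi_\alpha$ has no eigenvalue in $[-\frac12,0]$ beyond the non‑$L^2$ threshold mode $\ph_\alpha$. The value $\frac12$ is exactly critical: it is the bottom eigenvalue of the model operator $-\partial_{\rho\rho}-\frac{\rho}{2}\partial_\rho$, with ground state $e^{-\rho^2/4}$. I would establish the required strict inequality by constructing a barrier modelled on $e^{-\rho^2/4}$ (which solves the asymptotic equation at parameter $-\frac12$), corrected near $\rho=0$ and on the set where $\psi_\alpha$ is close to $\frac{\pi}{2}$, in the spirit of Corollary~\ref{loon} and the comparison arguments of the appendix; an alternative is to deform the coefficient $\frac12$ in~\eqref{eq:l} down to $0$ (reaching $\ph_\alpha$), track positivity and the connection coefficient $c(\lambda)$ in $\rho y_\lambda\sim c(\lambda)\rho^{1-2\lambda}$ — equal to $\ph_\alpha(\infty)$ at $\lambda=0$ and to $\lim\rho y$ at $\lambda=\frac12$ — using its analyticity in $\lambda$ (as for $\psi_\alpha(\infty)$ in Proposition~\ref{GR}(iii)). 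Either way, this last step — ruling out the exceptional $L^2$ solution decaying like $\rho^{-(d-1)}e^{-\rho^2/4}$, uniformly enough to keep both $y>0$ and its $1/\rho$‑limit positive — is where I expect the real difficulty to lie; the exceptional values of $\alpha$ where $\ph_\alpha(\infty)=0$ presumably require a separate limiting argument.
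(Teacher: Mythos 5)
There is a genuine gap: you prove existence and convergence of $\rho y(\rho)$, but you do not prove $y>0$ nor $\lim_{\rho\to\infty}\rho y(\rho)>0$; you only sketch two possible strategies and explicitly flag that you have not carried them out. Those are the only nontrivial assertions of the lemma, so as written the proposal is incomplete.

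The observation you miss is the one the paper hinges on: compare $y$ not with $\ph_\alpha$ (which satisfies $L\ph_\alpha=0$, so the zeroth-order coefficient is \emph{smaller} than that of $y$ and the Sturm comparison in Lemma~\ref{comparison} runs the wrong way) but with the time-translation mode $\underline{\ph}_\alpha=\alpha^{-1}\rho\,\psi'_\alpha$, which by \eqref{ODE1} satisfies the \emph{same} ODE as $y$ except with zeroth-order coefficient $+1$ in place of $+\tfrac12$. Since $+1>+\tfrac12$, Lemma~\ref{comparison} gives $y\geq\underline{\ph}_\alpha>0$ on $(0,\infty)$ at once; no spectral-gap barrier or deformation in the coefficient is needed. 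For the positivity of $\lim\rho y$, the paper then sets $w(t)=\rho y(\rho)$, $\rho=e^t$, and applies a lemma of Naito (Lemma~\ref{lem:A.1}): $w\to\ell$ with $\ell=0$ if and only if $\int^\infty\frac{dt}{p_0(t)w^2(t)}=\infty$; the finiteness of that integral (equivalently $\int^\infty\frac{d\rho}{\rho^{d-1}e^{\rho^2/4}y^2}<\infty$) is again obtained by comparison of $y$ with $\underline{\ph}_\alpha$ via Lemma~\ref{lem:A.2}. Because the whole argument uses $\underline{\ph}_\alpha$ rather than $\ph_\alpha$, the exceptional $\alpha$ with $\ph_\alpha(\infty)=0$ that you worry about cause no extra difficulty and no separate limiting argument is needed; indeed the lemma statement deliberately does not exclude the $\alpha_i$.

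Your reduction $u=\rho y$ eliminating the $\tfrac12$-term is correct (I verified the algebra), and applying an analogue of Lemma~\ref{lemmaODE} with $d$ replaced by $d-2$ does give convergence of $\rho y$; this is a viable alternative to Naito's Lemma~\ref{lem:A.1} for the \emph{existence} of the limit. But it does not address positivity of the limit, which is exactly what the paper gets from the finiteness criterion in Lemma~\ref{lem:A.1}. Your Sturm--Liouville reformulation $h=y/\ph_\alpha$ is also sound as far as it goes, but it trades a one-line comparison for a genuine spectral assertion that you then cannot close.
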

\begin{proof} Existence can be obtained by standard ODE techniques, for instance as in Step 1 of Lemma~\ref{perturbation}.

Turning to the statements that $y(\rho)>0$ and $\lim_{\rho \to \infty} \rho y(\rho) > 0$, the proof follows closely \cite[Appendix]{Naito}, but we nevertheless give its outline. To this end, we recall a set of lemmata from \cite{Naito}.

\begin{lemma}
{\bf \cite[Proposition B.1]{Naito}}
\label{lem:A.1}
Consider the differential equation:
\begin{align}
\label{eq:w}
(p_0(t)w')'+p_0(t)q_0(t)w=0
\end{align}
where $p_0(t)=\exp(C_0t+\frac14 e^{2t})$ (with $C_0 \in \R$ is a constant), and 
$q_0 \in C(\R) \cap L^\infty(\R)$.
Let $w \in C^2(\R)$ be a solution of \eqref{eq:w}.
Then there exists some $\ell \in \R$ such that 
$$
\lim_{t\rightarrow \infty} w(t)=\ell.
$$
Furthermore, $\ell=0$ if and only if 
$$
\int^\infty_{\tau}\frac{dt}{p_0(t)w^2(t)} =\infty
$$
for some $\tau \in \R$.
\end{lemma}

The next lemma is almost identical with 
\cite[Proposition A.1]{Naito} except the initial condition. 
Since it is proved in the same manner, we omit the proof.
\begin{lemma} 
\label{lem:A.2}

Consider the two equations:
\begin{align}
(p(r)u')'+q(r)u=0,
\label{eq:u}
\\
(p(r)v')'+Q(r)v=0,
\label{eq:v}
\end{align}
where $p$, $q$, $Q \in C([0,\infty))$, $p(r)>0$ for $r>0$,
and $Q(r) \ge q(r)$ on $[0,\infty)$ with $Q\equiv \!\!\!\!\!\!/ \ q$.

Assume that \eqref{eq:v} has a positive solution $v$ on $(0,\infty)$ such that 
$v(0)=0$, and $v'(0)$ exists.
Let $u$ be a solution of \eqref{eq:u} satisfying $u(0)=0$ and $u'(0)>0$.
Then $u$ is positive on $(0,\infty)$ and satisfies
$$
\int^\infty_R \frac{dr}{p(r)u^2(r)}<\infty
$$
for any $R>0$.
\end{lemma}

We now complete the proof of Lemma \ref{lemma:1}.
Since \eqref{ODE1} has a positive solution $\underline{\varphi}$, 
the comparison principle shows $y$ is also positive on 
$(0,\infty)$. Set $w(t)=\rho y(\rho)$ with $\rho=e^t$, then $w$ satisfies
\eqref{eq:w} with $C_0=d-4$ and 
$q_0(t)=-(d-1)\cos (2\psi(e^t))-d+3$. 

Lemma \ref{lem:A.1} yields that $w(t)$ converges 
to some $\ell \ge 0$ as $t\rightarrow \infty$; to get that $\ell>0$, it suffices to show that
\begin{equation}
\label{eq:integrable}
\int^\infty_{c}\frac{dt}{p_0(t)w^2(t)}<\infty.
\end{equation}
Now observe that $u(r)=y(r)$ and $v(r)=\underline{\varphi}(r)$ 
satisfy \eqref{eq:u} and 
\eqref{eq:v} respectively in Lemma \ref{lem:A.2} with 
$p(r)=r^{d-1}e^{\frac{r^2}{4}}$,  
$q(r)=-r^{d-3}e^{\frac{r^2}{4}}(\cos (2\psi(r))-\frac{r^2}{2})$ 
and $Q(r)=-r^{d-3}e^{\frac{r^2}{4}}(\cos (2\psi(r))-r^2)$. 
Since $Q(r)\ge q(r)$ on $[0,\infty)$ and $\underline{\varphi}$ 
 is positive on $(0,\infty)$, Lemma \ref{lem:A.2} shows
\begin{equation*}
\int^\infty_{R}\frac{d\rho}{p(\rho)y^2(\rho)}<\infty, 
\end{equation*}
which is nothing but \eqref{eq:integrable} upon changing the integration variable to $t=\log \rho$. 
\end{proof}

\section{Stability of self-similar expanders}

\label{oriole2}

In this section, we study the stability property of the self-similar expanders. 
In particular, it will be shown  that there exist two different expanders (originating at the north and south poles respectively)
evolving from the same initial data 
such that both are stable as stated in Theorem \ref{theostable}.

To this end, define first the set $E$ of values for which an expander 
as in Proposition~\ref{monotonicity} is available:
$$
E = \left\{ \begin{array}{ll}
(0,\ell^* = \psi_{\alpha^*}(\infty)) \setminus \{ \psi_{\alpha_i}(\infty), i=1 \dots N \} & \mbox{if $3 \leq d \leq 6$} \\
(0,\frac{\pi}{2}) & \mbox{if $d \geq 7$}
\end{array} \right.
$$
(notice in particular that, for $3 \leq d \leq 6$, $E \supset (0,\pi/2)$).
The main result of this section reads as follows.
\begin{theorem}
\label{thm:stability} 
Consider~\eqref{1} with data $h_0 \in L^\infty$ such that
\begin{equation}
\label{eq:h_0}
h_0(0)=\ell \in E, \quad |h_0(r)-h_0(0)| \lesssim r.
\end{equation}
Denote $\psi = \psi[\ell]$ for the profile of 
the self-similar expander satisfying $\psi(\infty)=\ell$ 
given by Proposition~\ref{monotonicity}.
\begin{itemize}
\item[(i)] (Small perturbations) If $\| h_0 - \ell \|_{L^\infty}$ is sufficiently small, then~\eqref{1} admits a global solution such that
for all $t,r \geq 0$,
\begin{equation}
\label{est:f}
\left| h(t,r) - \psi \left( \frac{r}{\sqrt t} \right) \right| \lesssim \| h_0 - l\|_{L^\infty}.
\end{equation}
\item[(ii)] (Large perturbations) Assuming that
$$
|\partial_r h_0(r)| \lesssim \frac{1}{r} \quad \mbox{and} \quad |\partial_r^2 h_0(r)| \lesssim \frac{1}{r^2},
$$
there exists $T>0$ and a solution $h \in L^\infty(0,T; L^\infty)$ to~\eqref{1} such that $|h(t,r)| \lesssim \min(1, \frac{r}{\sqrt{t}})$. It is close to $\psi$ in the following sense:  for all $\epsilon>0$, there exists $\zeta>0$ such that
\begin{equation}
\label{est:stability}
t + r < \zeta \quad \implies \quad \left| h(t,r) - \psi\left( \frac{r}{\sqrt t} \right) \right| < \epsilon.
\end{equation}
Furthermore, $h$ satisfies the local energy inequality~\eqref{EnIn}.
\end{itemize}
\end{theorem}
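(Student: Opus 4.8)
The plan is to pass to the self-similar variables $s=\log t$, $\rho=r/\sqrt t$ of~\eqref{2}, in which $\psi=\psi[\ell]$ is a stationary solution, and to control the difference $\phi:=v-\psi$ between a solution $v$ of~\eqref{2} and $\psi$ by a comparison argument anchored on the positive supersolution $w$ of Lemma~\ref{perturbation}. Linearising~\eqref{2} about $\psi$ gives $\partial_s\phi+L\phi=N(\phi)$, with
\[
L\phi:=-\phi_{\rho\rho}-\Big(\frac{d-1}{\rho}+\frac{\rho}{2}\Big)\phi_\rho+\frac{d-1}{\rho^2}\cos(2\psi)\,\phi,\qquad
N(\phi)=-\frac{d-1}{2\rho^2}\big(\sin(2\psi+2\phi)-\sin 2\psi-2\cos(2\psi)\,\phi\big),
\]
so that $|N(\phi)|\lesssim\rho^{-2}\min(\phi^2,1)$, and $N(\phi)$ is moreover bounded near $\rho=0$ since $\phi$ and $\psi$ vanish linearly there (recall $L\ph_\al=0$, cf.~\eqref{ODE2}).

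The barrier I would use is $\pm\sigma w$, where $w=w_\kappa$ is as in Lemma~\ref{perturbation}: it solves $Lw=\kappa Z(\rho)^{-1}w>0$, is positive, and satisfies $w(\rho)\asymp\min(\rho,1)$ with $w(\infty)>0$. Since $\rho^{-2}w(\rho)Z(\rho)\asymp 1$ uniformly in $\rho$, the bound on $N$ yields $|N(\phi)|\le\sigma\kappa Z(\rho)^{-1}w(\rho)$ whenever $|\phi|\le\sigma w$, provided $\sigma$ is small enough in terms of $\alpha$ (roughly $\sigma\lesssim\kappa$); hence $\sigma w$ is a supersolution and $-\sigma w$ a subsolution of the $\phi$-equation. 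Since the corotational constraint forces $h(t,0)\in\pi\Z$, hence $h(t,0)=0$, we have $\phi(s,0)=0=w(0)$, so the comparison principle (Lemma~\ref{comparison}) gives: on any cylinder $\{s_0\le s\le S,\ 0\le\rho\le\rho_1\}$, $|\phi|\le\sigma w$ on $\{s=s_0\}\cup\{\rho=\rho_1\}$ forces $|\phi|\le\sigma w$ throughout; and likewise on $\{s\ge s_0,\ \rho\ge0\}$ if in addition $\limsup_{\rho\to\infty}|\phi(s,\rho)|\le\sigma w(\infty)$.

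For the construction, I would regularise: take smooth radial $h_0^{(n)}$ with $h_0^{(n)}(0)=0$, equal to $h_0$ outside a ball of radius $\to0$, tailored near the origin so that, after rescaling by the core scale, the data converge to a fixed profile $\asymp\psi$ and so that the uniform bounds $|\partial_r h_0^{(n)}|\lesssim r^{-1}$, $|\partial_r^2 h_0^{(n)}|\lesssim r^{-2}$ hold; then $h_0^{(n)}\to h_0$ in $L^p_{\mathrm{loc}}$ and the smooth solutions $h^{(n)}$ of~\eqref{1} exist on a common interval $[0,T]$. One then proves, uniformly in $n$: (a) $|h^{(n)}(t,r)|\lesssim\min(1,r/\sqrt t)$, by comparison with supersolutions built from $\psi$ and from the function $y$ of Lemma~\ref{lemma:1} (which carries the precise $r/\sqrt t$ rate), together with the regularity estimates of the appendix; and (b) for each $\epsilon>0$ there is $\zeta>0$ with $t+r<\zeta\Rightarrow|h^{(n)}(t,r)-\psi(r/\sqrt t)|<\epsilon$. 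Claim (b) is proved by first checking, via the core rescaling, that at some small time $t_0^{(n)}\to0$ the function $h^{(n)}(t_0^{(n)},\cdot)$ is already $o(1)$-close to $\psi(\cdot/\sqrt{t_0^{(n)}})$, and then running the barrier of the previous paragraph forward from $s_0^{(n)}=\log t_0^{(n)}$ on cylinders, the lateral value $\phi^{(n)}(s,\rho_1)$ being controlled by $|h_0-\ell|\lesssim r$ together with $|\psi(\rho)-\ell|\lesssim\rho^{-2}$ (Proposition~\ref{GR}~(ii)); choosing $\rho_1$ large and then $\zeta$ small makes $|\phi^{(n)}|\le\sigma w<\epsilon$ on $\{t+r<\zeta\}$. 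The appendix estimates give compactness on compact subsets of $(0,T]\times\R^d$, whence a subsequence $h^{(n)}\to h$, a distributional solution of~\eqref{1} with data $h_0$ inheriting (a)--(b): this is part~(ii). Part~(i) follows by the same scheme with $\sigma\asymp\delta:=\|h_0-\ell\|_{L^\infty}$, which now also controls $\phi$ at $\rho=\infty$ (as $|h_0(\infty)-\ell|\le\delta$), so the barrier runs on all of $\{\rho\ge0\}$ for all $t$, yielding $|h(t,r)-\psi(r/\sqrt t)|\le\sigma w(r/\sqrt t)\lesssim\delta$, i.e.~\eqref{est:f}, and global existence via the uniform bound. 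Finally, each $h^{(n)}$ is smooth and satisfies the localised energy identity, hence~\eqref{EnIn}; passing to the limit — weak $L^2_{\mathrm{loc}}$ convergence of $\partial_t h^{(n)}$, strong $L^2_{\mathrm{loc}}$ convergence of $\nabla h^{(n)}$ from the regularity estimates, lower semicontinuity of $\int\tau|\partial_t u|^2$, and $h_0^{(n)}\to h_0$ — gives~\eqref{EnIn} for $h$.

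The hard part is claim (b). The solution is genuinely discontinuous at the space-time origin — the trace $h_0$ is attained only in $L^p_{\mathrm{loc}}$, not in $L^\infty$ — so one cannot simply solve forward from $t=0$: the solution and the formation of the self-similar profile near the origin must be produced together. Seeding the barrier, i.e.\ showing that $h^{(n)}(t_0^{(n)},\cdot)$ is already close to $\psi(\cdot/\sqrt{t_0^{(n)}})$, is itself a statement of convergence to the self-similar attractor, and it is precisely there that the hypotheses $h_0(0)=\ell$, $|h_0(r)-\ell|\lesssim r$ (and, for the limit and the energy inequality, $|\partial_r h_0|\lesssim r^{-1}$, $|\partial_r^2 h_0|\lesssim r^{-2}$), as well as the positivity/``spectral gap'' input of Lemma~\ref{perturbation}, enter.
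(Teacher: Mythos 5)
Your high-level picture — pass to self-similar variables, linearise around $\psi$, use the weight/barrier $w$ of Lemma~\ref{perturbation} as the vehicle for comparison, construct approximating solutions, and pass to the limit — is the correct framework, and the estimate $|N(\phi)|\lesssim \rho^{-2}\min(\phi^2,1)$ together with $Zw\lesssim\rho^2$ is exactly how the weight enters. However, there are two genuine gaps, and they both stem from choices that the paper makes differently.

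First, the barrier $\pm\sigma w$ with $\sigma$ small only closes for \emph{small} perturbations (indeed this is how part~(i) is handled). For part~(ii), $\|h_0-\ell\|_{L^\infty}$ can be $O(1)$, while $w$ is bounded, so no small $\sigma$ can make $\pm\sigma w$ enclose $\phi$ at moderate $\rho$. The paper's Proposition~\ref{supersolution} enlarges the barrier to $u_\pm=\pm\bigl[\eta\,w(\rho)+(M+A/\rho^{2})\,\phi(e^{s/2}\rho)\bigr]$, where the cut-off $\phi$ is supported in $\{r>R\}$: the new term carries the $O(1)$ deviation coming from the large-$r$ region, is invisible in the core $\{r\le R\}$, and is killed as $s\to-\infty$ because of its $e^{s/2}$-rescaled argument, which is what makes the local closeness~\eqref{est:stability} come out. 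Without an analogue of this term your comparison cannot be closed in the large-perturbation regime. Restricting the cylinder to $\{\rho\le\rho_1\}$ does not help, because you must then control the lateral data $\phi(s,\rho_1)=h(e^s,e^{s/2}\rho_1)-\psi(\rho_1)$, i.e.\ the solution itself at small $(t,r)$, which is precisely what you are trying to establish — the argument becomes circular.

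Second, and more structurally, your ``seeding'' step is a genuine gap that you acknowledge but do not fill, and I want to point out that the paper does not fill it either — it \emph{avoids} it by a different construction. Rather than regularising the data $h_0$ and then trying to prove that the regularised flows lock onto the self-similar profile (which is essentially the content of Theorem~\ref{theorem:asymptotic}, available only in the north-pole, $\ell<\pi/2$ setting and used for a different purpose), the paper starts the \emph{perturbation} equation~\eqref{P} at time $t=\delta>0$ with initial perturbation $f(\delta)=f_0:=h_0-\ell$, solves it (by fixed point in $L^\infty[w]$ for~(i), by the enlarged barriers for~(ii)) with estimates uniform in $\delta$, and then lets $\delta\to0$. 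Because the unknown is $f=h-\psi(\cdot/\sqrt t)$ and its initial value is prescribed, the solution is automatically anchored at the self-similar profile; no convergence-to-attractor statement is needed. Your data-regularisation scheme, by contrast, would require proving exactly the attractor statement you flag as hard, and in the south-pole case ($\psi=\psi_S[\ell]$, part~(ii) of Theorems~\ref{thm:profile}--\ref{theostable}) no such result is available in the paper. Relatedly, for the local energy inequality the paper does not pass a regularised energy identity to the limit; it derives the pointwise parabolic bounds of Propositions~\ref{parabolicreg}--\ref{parabolicreg3} for the limit solution (using $|h|\lesssim\min(1,r/\sqrt t)$, which comes from the barrier) and then invokes Proposition~\ref{energyinequality}. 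Your limiting argument would also need to handle the convergence of $\int\tau(0)|\nabla u_0^{(n)}|^2$ under the core modification $h_0^{(n)}(0)=0$, which is not automatic.
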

Theorem \ref{theostable} follows from (ii). 
In section \ref{sec:asymptotic}, an asymptotic 
stability result for the expanders is also shown.

The proof of Theorem \ref{thm:stability} 
will rely on the perturbed initial value problem satisfied by $f = h - \psi \left( \frac{r}{\sqrt t} \right)$ if $h$ 
solves~\eqref{1}:
\begin{align}
\begin{cases}
\displaystyle{
f_t -f_{rr}-\frac{d-1}r f_r+\frac{d-1}{2r^2} 
\left[\sin \left(2\left(\psi\left(\frac{r}{\sqrt{t}}\right)+f\right)\right)
- \sin \left(2\psi\left(\frac{r}{\sqrt{t}}\right)\right)
\right] =0}
\\
\displaystyle f(t=0)=f_0 = h_0 - \ell.
\end{cases}
\label{P}
\end{align}
Notice that our assumptions on $h_0$ imply that $|f_0(r)| \lesssim r$ and $f_0 \in L^\infty$.

\subsection{Linear stability}

\label{swallow}

We start by proving estimates on the linearized problem at time 1:
\begin{align}
\begin{cases}
\displaystyle
f_t -f_{rr}-\frac{d-1}r f_r+\frac{d-1}{r^2} 
\cos \left(2 \psi\left(\frac{r}{\sqrt{t}}\right)\right) f = F 
\\
\displaystyle f(t=1)=f_0.
\end{cases}
\label{P}
\end{align}
In the self-similar variables
$$
s = \log t, \quad \rho = \frac{r}{\sqrt{t}}, \quad v(s,\rho) = f(e^s,e^{\frac{s}{2}} \rho) = f(t,r),
$$
it becomes
\begin{align}
\begin{cases}
v_s +H_{\alpha}v=F,
\\
v(s=0)=v_0
\end{cases}
\label{L}
\end{align}
with
\begin{equation}
\label{defH}
H_{\alpha}v=-v_{\rho\rho}- \left( \frac{d-1}\rho+\frac{\rho}{2} \right) v_{\rho}
+V_\alpha v \qquad \mbox{where} \ V_\alpha = \frac{d-1}{\rho^2}\cos (2\psi_\alpha).
\end{equation}
Define further the functional space $L^\infty[w]$ by its norm
$$
\| f \|_{L^\infty[w]} = \left\| \frac{f}{w} \right\|_{L^\infty}.
$$
\underline{Here and in the following, $w$ is always as in Lemma~\ref{perturbation}, associated to some fixed $\kappa \in (0, \kappa_0)$}.
\begin{lemma}
\label{linear}
Assume 
$$
\alpha \in 
\left\{ \begin{array}{ll}
(0,\alpha^*) \setminus \{ \alpha_i, \, i =1 \dots N \} & \mbox{if $3 \leq d \leq 6$} \\ (0,\infty) & \mbox{if $d \geq 7$}.
\end{array} \right.
$$
Assume furthermore that $v_0\in L^\I_\rho[w]$, $Z F \in L^{\infty}_s L^{\infty}_\rho[w]$. Then there exists a unique solution in $L^\I_s L^\I_\rho[w]$ to~\eqref{L}
 satisfying the estimate
\begin{equation}
\label{est:v}
\|v\|_{L^\I _s L^\I_\rho [w]} 
\lec \|v_0\|_{L^\I_\rho [w]} + \|Z F\|_{L^\I_s L^\I_\rho [w]}.
\end{equation}
\end{lemma}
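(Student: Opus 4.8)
The plan is to use the positive function $w$ from Lemma~\ref{perturbation} as a supersolution weight, turning the weighted estimate~\eqref{est:v} into a maximum-principle argument. First I would record that $w$ (associated with our fixed $\kappa\in(0,\kappa_0)$) satisfies, in the self-similar variables,
\[
H_\alpha w = -w_{\rho\rho} - \Big(\tfrac{d-1}{\rho}+\tfrac{\rho}{2}\Big)w_\rho + V_\alpha w = \frac{\kappa}{Z(\rho)}\,w,
\]
so that $w$ is a strict supersolution for the stationary operator $H_\alpha$ with a definite gain $\kappa/Z(\rho)$. Dividing the unknown by $w$, i.e. setting $v = w\,\widetilde v$, the equation $v_s + H_\alpha v = F$ becomes a parabolic equation for $\widetilde v = v/w$ of the form
\[
\widetilde v_s - \widetilde v_{\rho\rho} - \Big(\tfrac{d-1}{\rho} + \tfrac{\rho}{2} + \tfrac{2w_\rho}{w}\Big)\widetilde v_\rho + \frac{\kappa}{Z(\rho)}\,\widetilde v = \frac{F}{w},
\]
with no zeroth-order term other than the good (positive) one $\frac{\kappa}{Z(\rho)}\widetilde v$, and with initial data $\widetilde v_0 = v_0/w \in L^\infty_\rho$. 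The key structural point is that the zeroth-order coefficient $\kappa/Z(\rho)$ is nonnegative, which is exactly what Lemma~\ref{perturbation} was designed to provide; this is what gives a closed $L^\infty$ estimate without loss.

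Next I would run a comparison/barrier argument on this equation for $\widetilde v$. Given the bounds $M_0 := \|\widetilde v_0\|_{L^\infty_\rho}$ and $M_1 := \|Z F/w\|_{L^\infty_sL^\infty_\rho} = \|ZF\|_{L^\infty_sL^\infty_\rho[w]}$, one checks that the constant $M := M_0 + \kappa^{-1}M_1$ is a supersolution: indeed $\partial_s M - (\cdots)\partial_\rho M + \frac{\kappa}{Z}M = \frac{\kappa}{Z}M \ge \frac{\kappa}{Z}\cdot\kappa^{-1}M_1 = \frac{M_1}{Z} \ge \frac{F}{w}$ since $|F/w| \le M_1/Z$, and $M \ge M_0 \ge \widetilde v_0$ at $s=0$. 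The comparison principle (Lemma~\ref{comparison}, applied on $\mathbb{R}_+$ in the $\rho$ variable, with the decay $|w'(\rho)-1|\lesssim\rho$ and positivity of $w$ controlling the behaviour at $\rho=0$ and $\rho=\infty$) then yields $\widetilde v \le M$; applying the same to $-\widetilde v$ gives $|\widetilde v| \le M$, i.e. $\|v\|_{L^\infty_sL^\infty_\rho[w]} \lesssim \|v_0\|_{L^\infty_\rho[w]} + \|ZF\|_{L^\infty_sL^\infty_\rho[w]}$, which is~\eqref{est:v}.

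For existence, I would first solve~\eqref{L} by an approximation scheme — e.g. truncating the domain to $\rho\le R$ with, say, Dirichlet conditions at $\rho=R$, or regularizing the singular potential near $\rho=0$ — where standard linear parabolic theory applies; the uniform estimate just established (which survives the approximation because $w$ and the sign of $\kappa/Z$ do not depend on the cutoff) lets us pass to the limit and produce a solution in $L^\infty_sL^\infty_\rho[w]$. Uniqueness in that class follows by applying~\eqref{est:v} to the difference of two solutions, with $v_0 = 0$ and $F = 0$. I expect the main technical obstacle to be the rigorous justification of the comparison principle at the singular endpoint $\rho = 0$: the first-order coefficient $\frac{d-1}{\rho} + \frac{2w_\rho}{w}$ is singular there, so one must use the precise behaviour $w(\rho)\sim\rho$ (from $w(0)=0$, $w'(0)=1$) to see that $v/w$ extends continuously and that no boundary term is picked up — this is the one place where the detailed construction of $w$ in Lemma~\ref{perturbation}, rather than just its positivity, is essential. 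The behaviour at $\rho\to\infty$, by contrast, is benign because the drift $\rho/2$ is strongly inward-pointing and the weight $w$ has a finite positive limit.
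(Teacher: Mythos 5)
Your plan is essentially the same as the paper's: both arguments hinge on the structural identity $H_\alpha w = \tfrac{\kappa}{Z}w$ from Lemma~\ref{perturbation}, and both reduce the weighted bound to a maximum-principle comparison against $w$. Your change of variables $\widetilde v = v/w$ is a cosmetic repackaging of the paper's Step~3, which instead sets $z = v - \bigl(\|v_0\|_{L^\infty[w]} + \kappa^{-1}\|ZF\|_{L^\infty_sL^\infty_\rho[w]}\bigr)w$ and shows $(\partial_s + H_\alpha)z \le 0$, $z(0)\le 0$; since $w>0$, $z\le 0$ is exactly $\widetilde v\le M$. The algebra in your derived equation for $\widetilde v$ and the constant-supersolution check are both correct.

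The gap is that the comparison principle you invoke is neither cited correctly nor established. Lemma~\ref{comparison} is the \emph{Sturm} comparison for second-order ODEs and does not apply to the parabolic operator $\partial_s + H_\alpha$; what you actually need is a maximum principle for a parabolic operator with a drift that is singular ($\sim (d-1)/\rho + 2w_\rho/w$) at $\rho=0$ and unbounded ($\sim\rho/2$) at infinity, for solutions that a priori are only in $L^\infty$ with mild Sobolev regularity. You flag this endpoint behaviour as ``the one place where the detailed construction of $w$ is essential,'' but you do not carry it out, and it is precisely what the paper's Step~2 is for: there they (a) record from Step~1 (maximal regularity applied to the Duhamel formula with the regularized potential $\tfrac{d-1}{\epsilon+\rho^2}$) that $\partial_t f$ and $\Delta f$ lie locally in $L^p_tL^q_x$ for $p<\infty$, $q<d/2$, which licenses the integrations by parts; (b) run a Stampacchia $L^2$-energy estimate on $v_+ = \max(v,0)$ assuming compact support, using that $V_\alpha$ is bounded below; and (c) remove the compact-support hypothesis by comparing with $z = v - \epsilon(Ms + \log\langle\rho\rangle)$. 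Without an analogue of (a)--(c), the assertion ``the comparison principle then yields $\widetilde v\le M$'' is not justified. Similarly, your existence step (truncate the domain or regularize the potential, then pass to the limit) is in the right spirit, but you would need to supply the compactness---again the maximal-regularity bounds of the paper's Step~1---to actually extract the limit in $L^\infty_sL^\infty_\rho[w]$.
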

\begin{proof}
\underline{Step 1: Existence in $L^\infty_{s,\rho}$.} In order to prove the existence of a solution to~\eqref{L} in $L^\infty_{s,\rho}$, consider the regularized problem
\begin{align*}
\begin{cases}
 v^\epsilon_s +H_{\alpha}^\epsilon v^\epsilon = F,
\\
v^\epsilon(s=0)=v_0,
\end{cases}
\end{align*}
where $H_{\alpha}^\epsilon v = -v_{\rho\rho}- \left( \frac{d-1}\rho+\frac{\rho}{2} \right) v_{\rho}
+ \frac{d-1}{\epsilon + \rho^2}\cos (2\psi_\alpha)v$.
Without loss of generality, we may assume $v_0\ge 0$ 
and $F \ge 0$.  
By noticing that $v_0 \in L^\infty_\rho$ and $F \in L^\infty_{s,\rho}$,
it is easy to see that this problem has global solutions (indeed, coming back to the original variables, this is nothing but a heat equation with a bounded potential). 
Moreover these solutions are uniformly bounded 
in $\ve>0$ in $L^\infty_{s, \rho} ([0,T] \times \mathbb{R}_+)$ for each $T>0$, 
that is, there exists $C_T>0$ such that 
\begin{equation}
\label{est:unif}
\|v^{\delta}\|_{L^\infty([0,T]\times \R_+)} \le C_T.
\end{equation}
Indeed we see from the equation that 
\begin{align*}
v^{\ve}_s&=-H^{\ve}_{\alpha}v^{\ve} +F
\\
&\le 
v^{\ve}_{\rho \rho} +  \left( \frac{d-1}\rho+\frac{\rho}{2} \right) v^{\ve}_{\rho}
+ \frac{d-1}{\rho_0^2} v^{\ve} +F,
\end{align*}
where $\rho_0>0$ is the minimal zero of the function 
$\cos (2\psi_\alpha)$. By the maximum principle, we have
$$
\|v^{\ve}(s)\|_{L^{\infty}_{\rho}} \le e^{\frac{d-1}{\rho_0^2}}
\|v^{\ve}_0\|_{L^{\infty}_{\rho}} +\|F\|_{L^{\infty}_{s,\rho}}T,
$$
which shows  \eqref{est:unif}.

Switching back to the original variables $(t,r)$, with $v^\epsilon(s,\rho) = f^\epsilon(e^s,e^{s/2}\rho) = f^\epsilon(t,r)$, 
and $f^\epsilon(t=1,r) = v^\epsilon(s=0,r)$, the equation solved by $f^\epsilon$ can be written in Duhamel form as
$$
f^\epsilon = e^{(t-1) \Delta} v_0 + \int_1^t e^{(t-t') \Delta} \biggl[F^\epsilon-\frac{d-1}{\epsilon+r^2} \cos \left(2 \psi \left( \frac{r}{\sqrt t'} \right) \right)\ f^\epsilon\biggr]dt'.
$$
Observe that $F^\epsilon -\frac{d-1}{\epsilon+r^2} \cos \left(2 \psi \left( \frac{r}{\sqrt t'} \right) \right)f^\epsilon$ is locally bounded in $L^p_t L^q_x$ for all $p<\infty$ and $q<\frac{d}{2}$, uniformly in $\epsilon>0$, as well as bounded on the domain $\{ r > 1 \}$. By maximal regularity of the Laplacian (see for instance Lemari\'e-Rieusset~\cite{LR}, Theorem 7.3), this gives for any $1 \leq T_1 < T_2$, $p<\infty$ and $q < \frac{d}{2}$,
bounds on the derivatives of $f^\epsilon$ which are uniform in $\epsilon$:
$$
\| \partial_t f^\epsilon \|_{L^p(T_1,T_2,L^q_{\operatorname{loc}})} + \| \Delta f^\epsilon \|_{L^p(T_1,T_2,L^q_{\operatorname{loc}})} \lesssim 1. 
$$
Using these bounds and the Rellich-Kondrakov embedding theorem, it is not hard to pass to the limit $\epsilon \to 0$ 
and get the existence of a solution $f(t,r)$, hence $v(s,\rho)$ of~\eqref{L}. 
By \eqref{est:unif}, this leaves us with a solution $f \in L^\infty_t L^\infty_x$, such that $\partial_t f$ and $\Delta f$ belong locally to $L^p_t L^q_x$, for $p<\infty$, $q<\frac{d}{2}$.

Observe that the above estimates actually show that any solution in $L^\infty_t L^\infty_x$ is such that $\partial_t f$ and $\Delta f$ belong locally to $L^p_t L^q_x$, for $p<\infty$, $q<\frac{d}{2}$.

\bigskip

\noindent \underline{Step 2: Maximum principle.}~
We prove that 
the maximum principle holds: 
we assume that $v_0 \leq 0$ and $(\partial_s + H_\alpha) v \leq 0$, and that $v\in L^\infty_t L^\infty_x$, with $\partial_t v$ and $\Delta v$ locally in $L^p_t L^q_x$ for $p<\infty$ and $q<\frac{d}{2}$; and aim at showing that $v(s) \leq 0$ for any $s \geq 0$.

We will use Stampacchia's method by bounding the $L^2$ norm of $v_+ = \max(0,v)$. 
Assume for the moment that there exists $R>0$ such that $v_+$ is supported on $B(0,R)$. Proceeding as in Sohr~\cite{Sohr} for example, an energy estimate gives
$$
\frac{1}{2} \frac{d}{ds} \int_{\mathbb{R}_+} |v_+|^2 \,\rho^{d-1} \, d \rho 
\le \int_{\mathbb{R}_+} v_+ \left[ \partial_\rho^2 + \frac{d-1}{\rho} \partial_\rho + \frac{\rho}{2} \partial_\rho - V_\alpha \right] v \,\rho^{d-1} \, d \rho 
= I + II + III.
$$
By the space-time bounds on $v$, it is not hard to justify standard integrations by parts which give
\begin{align*}
& I =  - \int_{\mathbb{R}_+} |\partial_\rho v_+|^2\,\rho^{d-1} \, d \rho \\
& II = - \frac{d}{4} \int_{\mathbb{R}_+} |v_+|^2\,\rho^{d-1} \, d \rho \\
& III = - \int_{\mathbb{R}_+} V_\alpha |v_+|^2\,\rho^{d-1} \, d\rho \lesssim \int_{\mathbb{R}_+} |v_+|^2\,\rho^{d-1} \, d \rho
\end{align*}
(using in the last inequality that $V_\alpha$ is bounded from below). This implies the differential inequality
$$
\frac{d}{ds} \int_{\mathbb{R}_+} |v_+|^2 \,\rho^{d-1} \, d \rho \lesssim \int_{\mathbb{R}_+} |v_+|^2 \,\rho^{d-1} \, d \rho,
$$
which, combined with the fact that $ \int_{\mathbb{R}_+} |v_+|^2 \,\rho^{d-1} \, d \rho = 0$ at $s=0$, results in the identity $v_+ \equiv 0$. 

This is the desired result, but it was derived under a compact support assumption on $v_+$.

In order to get rid of this assumption, introduce $z = v - \epsilon( Ms + \log \langle \rho \rangle)$. Observe that it satisfies the compact support property for all $\epsilon>0$; choosing $M$ sufficiently big, it satisfies furthermore for all $\epsilon>0$, and for $s$ sufficiently small
$$
(\partial_s + H_\alpha) z \leq 0.
$$
The same approach as for $v$ can be applied to show $z_+ =0$ for $s$ sufficiently small, and then, iterating, for $s>0$.

\bigskip

\noindent \underline{Step 3: Inhomogeneous estimate.} For $v$ as in the statement of the lemma, define
$$
z := v - (\|v_0\|_{L^\infty[w]}  + \frac{\| Z F \|_{L^\infty_s L^\infty_\rho[w]}}{\kappa}) w.
$$ 
Obviously, $z(s=0) \leq 0$. Furthermore, by the definition of $w$, $H_\alpha w = \frac{\kappa}{Z(\rho)} w$, therefore
$$
(\partial_s + H_\alpha) z \leq (\partial_s + H_\alpha) v - \| Z F \|_{L^\infty_s L^\infty_\rho[w]} \frac{w}{Z} \leq 0.
$$
Therefore we have $z(s) \leq 0$ for $s>0$. 
Similary for $z':=v+ (\|v_0\|_{L^\infty[w]}  + \frac{\| Z F \|_{L^\infty_s L^\infty_\rho[w]}}{\kappa}) w$, if $z'(s=0) \ge 0$ then
$z'(s)\ge 0$ for $s>0$, which is the desired  estimate \eqref{est:v}, and also implies the uniqueness.
\end{proof}

\subsection{Proof of Theorem \ref{thm:stability} $(i)$: stability for small perturbations} 
\label{proofsmall}

\ 

\noindent
\underline{Step 1: Solvability from time $t=\delta$.}
 We consider the perturbation from the self-similar expander
$f=h-\psi(\cdot/\sqrt{t})$.
For given data at $t=\delta$, the initial value problem \eqref{1} becomes
\begin{align}
\begin{cases}
\displaystyle{
f_t -f_{rr}-\frac{d-1}r f_r+\frac{d-1}{2r^2} 
\left[\sin \left(2\left(\psi\left(\frac{r}{\sqrt{t}}\right)+f\right)\right) - \sin \left(2\psi\left(\frac{r}{\sqrt{t}}\right) \right) \right] =0}, \\
f(t=\delta)=f_0.
\end{cases}
\label{P}
\end{align}
We will prove that this problem is globally well posed, uniformly in $\delta> 0$.
More precisely, we want to show that there exists $\ve_1 >0$ such that: if $\| f_0 \|_{L^\infty[w(\cdot/\sqrt \delta)]} \le \ve_1$,
then it admits a unique solution such that
\begin{align}
\label{est:f}
\|f\|_{L^{\infty}(\delta,\infty;L^{\infty}[w(\cdot/\sqrt{t})])}
\lesssim \|f_0\|_{L^\infty[w(\cdot/\sqrt\delta)]}.
\end{align}
In order to prove this assertion, we use the self-similar variables $s=\log t$, $\rho=\frac{r}{\sqrt{t}}$, and set $v(s,\rho)=f(e^s,e^{s/2}\rho)=f(t,r)$.
Then~\eqref{P} becomes 
\begin{align*}
\begin{cases}
\displaystyle{
 v_s - v_{\rho \rho}-(\frac{d-1}\rho+\frac{\rho}{2} )v_{\rho}+
\frac{d-1}{2\rho^2} 
[\sin (2(\psi+v))- \sin (2\psi)]
=0,}
\\
v(s=\log \delta)=f_0(\sqrt{\delta}\,\cdot).
\end{cases}
\end{align*}
or
\begin{align}
\label{S2}
\begin{cases}
\displaystyle{ v_s +H_{\alpha}v+
\frac{J(v)}{\rho^2} 
=0,} \\
v(s=\log \delta)=f_0(\sqrt{\delta}\,\cdot),
\end{cases}
\end{align}
where $H_\alpha$ is defined in~\eqref{defH} and
\begin{equation}
\label{defJ}
J(v)=\frac{d-1}{2}[\sin (2(\psi+v))- \sin (2\psi)-2\cos(2\psi)v].
\end{equation}
(notice that it satisfies $|J(v)| \lesssim |v|^2$).
The problem is reduced to that of showing the existence of 
$v$ such that 
\begin{align}
\|v\|_{L^{\infty}(\log \delta,\infty;L^{\infty}[w])} \lesssim \|f_0(\sqrt{\delta}\,\cdot)\|_{L^\infty[w]}.
\end{align}
It is now easy to solve \eqref{S2} by a standard contraction argument. Indeed by Duhamel's formula, it can be written
$$
v(s)=e^{-sH_\alpha}f_0(\sqrt{\delta}\,\cdot)+\int^s_{\log \delta} e^{(\sigma-s)H_\alpha}
\frac{J(v(\rho,\sigma))}{\rho^2}d\sigma=:B(v).
$$
Lemma \ref{linear}, along with the bound $|J(u)| \lesssim |u|^2$, gives the estimate
\begin{align*}
\|B(v)\|_{L^\I _s L^\I_\rho [w]} 
&\lec \|f_0(\sqrt{\delta}\,\cdot)\|_{L^\I [w]} + \left\|\frac{Z}{\rho^2} v^2\right\|_{L^\I_s L^\I_\rho [w]} \\
&\lec \|f_0(\sqrt{\delta}\,\cdot)\|_{L^\I [w]} + \|v\|_{L^\I_s L^\I_\rho[w]}^2,
\end{align*}
where we have used the fact $Zw \lesssim \rho^2$ in the second line.
Similarly we have 
$$\|B(v)-B(\tilde{v})\|_{L^\I _s L^\I_\rho [w]} 
\lec
{\rm max}(\|v\|_{L^\I_s L^\I_\rho [w]}, \|\tilde{v}\|_{L^\I_s L^\I_\rho [w]})
 \|v-\tilde{v}\|_{L^\I_s L^\I_\rho [w]}.
$$
Thus the map $B$ is a contraction on a small ball in 
$L^\I_t L^\I [w]$, which gives the desired result.

\bigskip
\noindent
\underline{Step 2: Limiting procedure as $\delta \to 0$}
By the assumption \eqref{eq:h_0} and the positivity of $w$, there exists 
$\delta_0>0$ such that for $\delta <\delta_0$, $\|f_0\|_{L^\infty[w(\cdot/\sqrt{\delta})]} \le \ve_1$ holds.
Hence, Step 1 gives a sequence of solutions $f^{\delta}$ of~\eqref{P} with data $f_0$ at time $\delta$: for any $\varphi \in \mathcal{C}^\infty ([0,\infty) \times \mathbb{R}^d)$, 
\begin{equation}
\begin{split}
\label{bird}
& \int_{\delta}^\infty \int f^{\delta} \left( -\varphi_t- \Delta \varphi \right)\,dx\,dt \\
& \qquad + \int_{\delta}^\infty \int  \frac{d-1}{2|x|^2}
\left( \sin(2(\psi(|x|/\sqrt{t})+f^{\delta})) - \sin(2 \psi(|x|/\sqrt{t})) \right)
\varphi \,dx\,dt \\
& \qquad \qquad = \int f_0 \, \varphi(t=\delta) \,dx.
\end{split}
\end{equation}
Furthermore,
\begin{align}
\label{est:delta}
\|f^{\delta}\|_{L^{\infty}(\delta,\infty;L^{\infty}[w(\cdot/\sqrt{t})])}
\lesssim \|f_0\|_{L^\infty[w(\cdot/\sqrt\delta)]}\le C\ve_1.
\end{align}
Arguing as in Section~\ref{swallow}, we obtain by maximal regularity of the Laplacian, for any $0 < T_1 < T_2$, $p<\infty$ and $q < \frac{d}{2}$,
bounds on the derivatives of $f^\delta$ which are uniform in $\delta$:
$$
\| \partial_t f^\delta \|_{L^p(T_1,T_2,L^q)} + \| \Delta f^\delta \|_{L^p(T_1,T_2,L^q)} \lesssim 1. 
$$
By the above bounds and the  Banach-Alaoglu and Rellich-Kondrakov theorems, we can find a sequence $(\delta_n)$ going to zero such that 
$f^{\delta_n}$ converges weakly locally in $L^p_{t,x}$ to some $f $ for all finite $p$; 
and furthermore $f^{\delta_n}$ converges strongly in $L^1$ on compact sets avoiding $\{t=0\}$.

Here $f $ is bounded since 
$$
 |f^{\delta}(t,r)| \le C\ve_1 w(\frac{r}{\sqrt{t}}) \le C \ve_1 \|w\|_{\infty}
$$
holds uniformly in $\delta>0$.
It is then easy to pass to the limit in~\eqref{bird} to see that $f$ satisfies
\begin{equation}
\begin{split}
\label{hummingbird}
& \int_{0}^\infty \int f \left( -\varphi_t- \Delta \varphi \right)\,dx\,dt \\
& \qquad + \int_0^\infty \int  \frac{d-1}{2|x|^2}
\left( \sin(2(\psi(|x|/\sqrt{t})+f)) - \sin(2 \psi(|x|/\sqrt{t})) \right)
\varphi \,dx\,dt \\
& \qquad \qquad = \int f_0 \, \varphi(t=0) \,dx.
\end{split}
\end{equation}
Finally since there exists a constant $C>0$ independent of $\delta$ such that 
$w(r/\sqrt{t}) \le Cw(r/\sqrt{\delta})$ for $t>\delta$, 
we see from \eqref{est:delta} that 
$$
|f^{\delta}(t,r)| \le |f_0(r)|\frac{w(\frac{r}{\sqrt{t}})}{w(\frac{r}{\sqrt{\delta}})} \le C|f_0(r)|,
$$
which yields \eqref{est:f}.

\subsection{Construction of sub- and super-solutions}
In order to deal with large perturbations, we construct 
local-in-time super- and subsolutions.
\begin{proposition}
\label{supersolution}
Consider the equation for the perturbation in self-similar variables:
\begin{equation}
\label{pinguin}
u_s + H_\alpha u = - \frac{J(u)}{\rho^2},
\end{equation}
(see~\eqref{S2}).
Let $M>0$, $R>0$, and $\phi$ a smooth non decreasing function such that
\begin{equation}
\label{def:cutoff}
0 \leq \phi \leq 1 \quad \mbox{and} \quad \left\{ \begin{array}{ll} \phi = 0 & \mbox{on $[0,R]$} \\ \phi>0 & \mbox{on $(R,\infty)$} \\ \phi = 1 & \mbox{on $[2R,\infty)$} \end{array} \right.
\end{equation}
Then there exist $\eta_0>0$, $A>0$, $s_0 \in \mathbb{R}$ such that for $\eta \in (0,\eta_0)$, the functions
\begin{align*}
u_+(s,\rho) & = \eta w(\rho) + \left( M + \frac{A}{\rho^2} \right) \phi(e^{s/2}\rho),  \\
u_-(s,\rho) & = - u_+ 
\end{align*}
are super- and subsolutions respectively of~(\ref{pinguin}), namely
\begin{align}
\label{hummingbird} & (u_+)_s + H_\alpha u_+ \geq -\frac{J(u_+)}{\rho^2}, 
\\
& (u_-)_s + H_\alpha u_- \leq -\frac{J(u_-)}{\rho^2} 
\end{align}
for $-\infty < s < s_0$.
\end{proposition}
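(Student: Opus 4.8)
The plan is to reduce both assertions to a single differential inequality and then verify it region by region, using the identity $H_\alpha w=\frac{\kappa}{Z}w$ from Lemma~\ref{perturbation} and the good sign of the term produced when the dilation field $\frac{\rho}{2}\partial_\rho$ hits $A\rho^{-2}$. Since $|J(v)|\lesssim |v|^2$, write $|J(v)|\le C_0 v^2$; it then suffices to prove
\[
(u_+)_s+H_\alpha u_+\ \ge\ \frac{C_0\,u_+^2}{\rho^2}\qquad\text{for }-\infty<s<s_0 ,
\]
because this implies $(u_+)_s+H_\alpha u_+\ge -\frac{J(u_+)}{\rho^2}$ (so $u_+$ is a supersolution) and, since $|J(-u_+)|\le C_0 u_+^2$ as well, $-\big[(u_+)_s+H_\alpha u_+\big]\le -\frac{J(-u_+)}{\rho^2}$, i.e.\ $u_-=-u_+$ is a subsolution.

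Next I would split off the $\eta w$ piece: write $u_+=\eta w+g$ with $g(s,\rho)=m(\rho)\,\phi(e^{s/2}\rho)$, $m(\rho)=M+A\rho^{-2}$. Since $w_s=0$ and $H_\alpha w=\frac{\kappa}{Z}w$, the piece $\eta w$ contributes the good term $\frac{\eta\kappa w}{Z(\rho)}\ge 0$. For $g$, writing $\sigma=e^{s/2}\rho$ (so $\partial_s\sigma=\sigma/2$, $\partial_\rho\sigma=\sigma/\rho$), a direct computation gives, after the cancellation of the two terms proportional to $\rho m\,\phi'(\sigma)$ (one from $g_s$, one from the dilation part of $H_\alpha g$),
\[
g_s+H_\alpha g=(H_\alpha m)\,\phi(\sigma)-e^{s/2}\Big(2m'+\tfrac{d-1}{\rho}\,m\Big)\phi'(\sigma)-m\,e^{s}\phi''(\sigma),
\]
with $H_\alpha m=A\rho^{-2}+(d-1)\cos(2\psi_\alpha)M\rho^{-2}+O(A\rho^{-4})$; the crucial point is the good large coefficient $A\rho^{-2}$ in front of $\phi(\sigma)$, again produced by $-\frac{\rho}{2}\partial_\rho$ acting on $A\rho^{-2}$.

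Then I would treat the regions separately. On $\{e^{s/2}\rho\le R\}$ one has $\phi\equiv 0$, so $u_+=\eta w$ and the inequality is $\frac{\kappa\rho^2}{Z(\rho)w(\rho)}\ge C_0\eta$; since $w(\rho)/\rho\to 1$ as $\rho\to0$ and $w(\rho)\to w(\infty)>0$ with $Z(\rho)=\rho^2$ as $\rho\to\infty$, the infimum over $\rho>0$ of the left-hand side is a positive number, so this holds for every $\rho$ once $\eta_0$ is small. On $\{e^{s/2}\rho> R\}$ one has $\rho>\rho_*:=Re^{-s_0/2}$, which we make large by taking $s_0$ very negative; then $H_\alpha m\ge \frac{A}{4\rho^2}$ provided $A\ge 4(d-1)M$ and $\rho_*$ is large. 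Where $\phi\equiv 1$ the inequality becomes $\frac{\eta\kappa w}{Z}+\frac{A}{4\rho^2}\ge \frac{C_0(\eta\|w\|_\infty+M+A\rho_*^{-2})^2}{\rho^2}$, which closes after choosing $A$ large (depending on $M$ and $\eta_0$) and then $\rho_*$ large. In the transition zone $R<e^{s/2}\rho<2R$ one has $\rho\in(Re^{-s/2},2Re^{-s/2})$, hence $\rho^{-2}\le e^{s}R^{-2}$, so every term there carries a common factor $e^{s}$ with coefficient controlled by $\|\phi'\|_\infty+\|\phi''\|_\infty$ and by $M,A$, and one balances these against the good terms $\frac{A}{4\rho^2}\phi(\sigma)$ and $\frac{\eta\kappa w}{Z}$, taking $s_0$ as negative as needed.

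The main obstacle is exactly this transition zone: the good coefficient $A\rho^{-2}\phi(\sigma)$ degenerates as $\sigma=e^{s/2}\rho\downarrow R$, whereas the $\phi'$-error does not vanish proportionally, so one must track the $e^{s}$-scaling carefully and use the freedom in $s_0$ (and, if needed, a convenient choice of the cutoff $\phi$) to close the estimate. Everything else — the reduction to the single inequality, the region $\{e^{s/2}\rho\le R\}$, and the $\phi\equiv1$ part of $\{e^{s/2}\rho\ge 2R\}$ — is routine once the identity $H_\alpha w=\frac{\kappa}{Z}w$ and the positive sign of the dilation term are in hand.
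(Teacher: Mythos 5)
Your reduction to the single inequality $(u_+)_s + H_\alpha u_+ \geq C_0 u_+^2/\rho^2$, the decomposition $u_+ = \eta w + m(\rho)\phi(\sigma)$ with the cancellation coming from the dilation vector field, and the treatment of the two extremal regions $\{\sigma \leq R\}$ (only $\eta w$ survives, close by taking $\eta_0$ small using $\inf \rho^2/(Zw) > 0$) and $\{\sigma \geq 2R\}$ (close by taking $A$ large, then $s_0$ negative) all match the paper's proof in substance.

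The gap is in the transition zone $R < \sigma < 2R$, and your proposed fix does not work. You write that the error terms carry a common factor $e^s$ and can be beaten by sending $s_0 \to -\infty$. But in that region $\rho \sim R e^{-s/2}$, so the \emph{good} term $\eta\kappa w/Z$ also scales like $e^s$ there (since $Z(\rho)=\rho^2$ and $w(\rho)\to w(\infty)$ for $\rho$ large, it is $\sim \eta\kappa w(\infty)\,e^s/R^2$), as does the right-hand side $C_0 u_+^2/\rho^2$. After dividing out $e^s$, nothing improves as $s_0 \to -\infty$: you are left needing something like $\eta\kappa w(\infty)/R^2 + A\phi(\sigma)/(4R^2) \geq C(M\|\phi''\|_\infty + M\|\phi'\|_\infty/R) + C(\eta\|w\|_\infty + 2M)^2/R^2$, which visibly fails for $\sigma$ just above $R$ where $\phi(\sigma)$ is negligible but $\phi'(\sigma)$, $\phi''(\sigma)$ are not. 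Nor can this be remedied by a clever choice of $\phi$, since $\phi$ smooth, nonnegative and identically zero on $[0,R]$ forces $\phi'/\phi$ to be unbounded near $R$.

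The missing idea, which is the paper's key step, is a continuity argument exploiting the fact that $\phi$, $\phi'$ and $\phi''$ \emph{all vanish at $\sigma=R$} (because $\phi\equiv 0$ on $[0,R]$ and $\phi$ is smooth). At $\sigma=R$ the whole inequality reduces to $\eta\kappa w/Z \geq C_0(\eta w)^2/\rho^2$, which holds \emph{strictly} for $\eta$ small since the left side is linear and the right side is quadratic in $\eta$. By continuity this strict inequality persists for $\sigma \in (R, R+\delta)$, for some $\delta>0$ with $\phi(R+\delta)>0$. Then on $(R+\delta, 2R)$ one has $\phi(\sigma)\geq \inf_{(R+\delta,2R)}\phi>0$, and the term $A\phi(\sigma)/\rho^2$ coming from $H_\alpha m$ can be made to dominate by enlarging $A$. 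Without this three-stage (strict inequality at $R$, continuity, then $A$ large) argument the transition region does not close.
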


\begin{proof}
We only prove here that $u = u_+$ is a supersolution, since the fact that $u_-$ is a subsolution can be proved identically. 
First examine the left-hand side of~(\ref{pinguin}). 
Decompose first
\begin{align*}
(\partial_s + H_\alpha) u_+ 
& = \underbrace{\eta H_\alpha w}_{I} + \underbrace{\phi (e^{s/2} \rho) H_\alpha \left( M + \frac{A}{\rho^2} \right)}_{II} \\
& \qquad + \underbrace{\left( M + \frac{A}{\rho^2} \right) \left( - e^s \phi''(e^{s/2} \rho) - \frac{(d-1) e^{s/2}}{\rho} \phi'(e^{s/2}\rho) \right) + \frac{4A}{\rho^3} e^{s/2} \phi'(e^{s/2}\rho)}_{III}.
\end{align*}
Lemma~\ref{perturbation} gives
$$
I = \frac{\kappa \eta}{Z} w.
$$
Recall that $r = e^{s/2}\rho$ and introduce for convenience the notation 
$$
V = \frac{d-1}{\rho^2} \cos(2 \psi_\alpha).
$$
A computation gives
$$
II = \phi(r) \left( V(\rho)M + V(\rho) \frac{A}{\rho^2} + \frac{A}{\rho^2} + \frac{2(d-4)A}{\rho^4} \right).
$$
Denote $\ell \in \mathbb{R}$ for the limit at infinity of $\rho^2V(\rho)$; we know that $V(\rho) = \frac{\ell}{\rho^
2} + O \left(\frac{1}{\rho^3}\right)$.
Due to the support property of $\phi$, this implies 
$$
II = \frac{\phi(r)}{\rho^2} \left( M \ell + A + (M+A) O\left( \frac{e^{s_0/2}}{R} \right)  \right). 
$$
Finally, since $\phi'$ and $A$ are nonnegative,
$$
III \geq e^s \left( M + \frac{A}{\rho^2} \right) \left( -\phi''(r) - \frac{(d-1)\phi'(r)}{r} \right).
$$

On the other hand,  it is easy from the definition of $J(u)$ in \eqref{defJ} 
to see that
$$
\left| J (u) \right| \leq C_1  \inf \left( 1, u^2 \right)
$$
for a constant $C_1$.

We now compare both sides of~\eqref{pinguin} by distinguishing between three regions.

\medskip
\noindent
\underline{If $\rho \leq R e^{-s/2}$}, $\phi(r)=0$, so that~(\ref{hummingbird}) is satisfied if 
$\displaystyle I = \frac{\eta \kappa w}{Z} \geq C_1 \frac{u_+^2}{\rho^2} = C_1 \frac{\eta^2 w^2}{\rho^2}$, 
which is implied by
$$
\eta < \frac{\kappa}{C_1} \inf_{\rho>0} \frac{\rho^2}{Zw}
$$
(notice that $ \inf_{\rho>0} \frac{\rho^2}{Zw}>0$), which can be achieved by choosing $\eta_0$ sufficiently small.

\medskip
\noindent
\underline{If $\rho \geq 2R e^{-s/2}$}, $\phi(r)=1$, so that~(\ref{hummingbird}) is satisfied if
$\displaystyle II \geq \frac{C_1}{\rho^2}$ or in other words
$$
M\ell + A + (M+A) O \left( \frac{e^{s_0/2}}{R} \right) \geq C_1,
$$
which can be guaranteed by first taking $A$ sufficiently large, and then $s_0$ sufficiently close to~$-\infty$.

\medskip
\noindent
\underline{If $\rho \in (R e^{-s/2}, 2R e^{-s/2})$}, none of $I$, $II$ and $III$ vanishes. First, we restrict $s_0$ to be so close
to $-\infty$ that
$$
\mbox{in $II$}, \;\;\left|(M+A)O\left( \frac{e^{s_0}}{R} \right) \right| < 1 \qquad \mbox{and} \qquad \frac{A}{\rho^2} < M
$$
(so that $s_0$ depends on $A$, $M$, and $R$). Then,~(\ref{hummingbird}) is satisfied if
$$
\frac{\kappa \eta \rho^2}{Z} w + \phi(r)(M\ell + A-1) - 8 R^2 M \left( |\phi''(r)| + (d-1) \frac{|\phi'(r)|}{|r|} \right) 
\geq C_1 \inf \left( 1,\left(\eta w + 2M \phi(r)\right)^2 \right).
$$
We can assume that $s_0$ is so close to $-\infty$ that $\rho>1$ and $|w(\rho) - w(\infty)| < \frac{1}{2} w(\infty)$ for $\rho e^{s_0/2} \geq R$. Then, assuming furthermore that $A$ is so big that the second summand above is nonnegative, it suffices to show that
$$
 \frac{\kappa \eta \rho^2}{2Z} w(\infty) - 8R^2M \left( |\phi''(r)| + (d-1) \frac{|\phi'(r)|}{|r|} \right) \geq C_1 \inf(1,(\frac 3 2 w(\infty) \eta + 2M\phi)^2 ). 
$$
This is true with a strict inequality for $r=R$ by requiring that
$$
\eta < \frac{\kappa}{10 C_1 w(\infty)} \inf_{\rho>1} \frac{\rho^2}{Z}.
$$ 
By continuity, this remains true for $r  \in (R, R+\delta)$, with $\delta>0$ and $\phi(R+\delta)>0$. 
Thus we only need to ensure that~(\ref{hummingbird}) holds if $r \in (R+\delta, 2R)$. For this to 
be true, it suffices that for $r \in (R+\delta, 2R)$,
$$
\phi(r)(M\ell + A-1) - 8 R^2 M \left( |\phi''(r)| + 2 \frac{|\phi'(r)|}{|r|} \right) \geq C_1.
$$
Using the fact that $\inf_{(R+\delta,2R)} \phi > 0$, this last inequality is ensured by taking $A$ even bigger if necessary.
\end{proof}

\subsection{Proof of Theorem \ref{thm:stability}$(ii)$: stability under large perturbations}

\

\noindent
\underline{Step 1: Solvability from time $t=\delta$.}
As in the proof of $(i)$ (Section~\ref{proofsmall}), 
we consider the problem \eqref{bird} 
with data $ f_0(r)=h_0(r)-l$ priscribed at time $t=\delta>0$.

Since $h_\delta/r$ is bounded,  \cite[Proposition III.1]{BS} can apply to show the existence of 
a smooth solution $f^\delta$ on $( \delta, T_{\delta})$ from the data $f(t=\delta)=f_0$ 
with some $T_{\delta} > \delta$. 

To prolong these solutions $f^\delta$ until some time $T_0>\delta$ uniformly in $\delta>0$, 
we switch to self-similar coordinates $(s,\rho)$ and will apply the extension criterion 
Proposition \ref{proposition:holder}. To this end, we will use the comparison with sub- and supersolutions.

For $\eta < \eta_0$ we apply Proposition \ref{supersolution} with some $R$, $M>0$ 
to be chosen later, which gives a subsolution $u_-$ and a supersolution $u_+ = - u_-$, 
both defined on $[0,s_0]$. We will show that there exist positive constants $R$, $M$, $\delta_0$ 
such that for any $\delta \in (0,\delta_0)$, 
$$
|f_0(\sqrt \delta \rho)| \le \eta w(\rho) + \left(M+\frac{A }{\rho^2}\right) \phi(\sqrt{\delta}\rho)
$$
holds, or equivalently, 
\begin{align}
\label{est:f_0}
|f_0(r)| \le \eta w\left(\frac{r}{\sqrt{\delta}}\right) + \left(M+\frac{\delta A }{r^2}\right) \phi(r).
\end{align}
Let  $M=\|f_0\|_{L^\infty}$, then   \eqref{est:f_0}
 holds for $r\ge 2R$,
because $M\phi(r)=\|f_0\|_{L^\infty}$. 
To see that \eqref{est:f_0} also holds when $r< 2R$, 
we set $\underline{w}(r):=\inf\{w(s)| 0<s<r\}$.
Then $\underline{w}(r)$ is monotone nondecreasing and satisfies
$\underline{w}(r) \le w(r)$. Moreover 
Lemma \ref{perturbation} shows that 
$\underline{w}(r) \gec r$ for small $r>0$ and 
$\underline{w}(r) \rightarrow w(\infty)>0$
as $ r \rightarrow \infty$. 
Therefore, choosing $R=\frac12 \inf \{\rho \,|\, f_0(\rho)=
\eta \frac{w(\infty)}{2}\}$
we can find  $\delta_0>0$ such that 
$$
|f_0(r)| \le \eta \underline{w}\left(\frac{r}{\sqrt{\delta_0}}\right) 
$$
holds for $r<2R$. By monotonicity of $\underline{w}$, 
$$
\eta \underline{w}\left(\frac{r}{\sqrt{\delta_0}}\right)  
\le 
\eta \underline{w}\left(\frac{r}{\sqrt{\delta}}\right) 
\le
\eta w\left(\frac{r}{\sqrt{\delta}}\right),
$$
which shows \eqref{est:f_0} for $r <2R$.

By the comparison principle (Lemma \ref{lemma:comparison}), we obtain that 
\begin{equation}
\label{est:f(t)}
|f^{\delta}(t,r)| \le u_+ \left(\log t,\frac{r}{\sqrt t} \right)
\end{equation}
for $ \delta<t<e^{s_0}$ as long as $f^{\delta}$ is smooth.

This bound along with the parabolic regularity result in Proposition~\ref{parabolicreg} 
allow us to apply the extension criterion in Proposition \ref{proposition:holder}, which shows that $f^\delta(t)$ can be prolonged until a time $t=T_0$ uniformly in $\delta$. 

\bigskip

\noindent
\underline{Step 2: Limiting procedure as $\delta \to 0$.} The previous step gives a sequence of solutions $(f_\delta)$ defined on $(\delta,T_0)$.
Arguing as in Step 2 of Section~\ref{proofsmall}, 
we obtain the limit function $f$ satisfying \eqref{hummingbird} 
for any $\varphi \in \mathcal{C}([0,T_0)\times \R^d)$, 
and hence $h(t,r) = f(t,r)-\psi(\frac{r}{\sqrt{t}})$ 
is the solution of \eqref{1}.

\bigskip
\noindent
\underline{Step 3: Bounds on $h$.}
To see that $|h(t,r)| \lesssim \min(1,\frac{r}{\sqrt{t}})$, 
it suffices to show $|f^{\delta}(t,r)| \lesssim \min(1,\frac{r}{\sqrt{t}})$ holds uniformly in $\delta$. Indeed 
\eqref{est:f(t)} shows 
\begin{align*}
|f^{\delta}(t,r)| 
&\le 
\eta w\left(\frac{r}{\sqrt{t}}\right) +\left(M+\frac{tA}{r^2}\right)\phi(r)
\\
&\le 
\eta w\left(\frac{r}{\sqrt{t}}\right) +\left(M+\frac{T_0A}{4R^2}\right)\phi(r)
\\
&\lec
\min\left(1,\frac{r}{\sqrt{t}}\right).
\end{align*}
Similarly, the above estimate gives that $|h - \psi ( \frac{r}{\sqrt t})| < \eta \|w\|_{L^{\infty}}$ for $r <R$. 
Since $\eta <\eta_0$ is arbitrary, this
gives the  desired bound \eqref{est:stability}.

\bigskip
\noindent
\underline{Step 4: Local energy inequality.} To prove that $h$ 
satisfies the local energy inequality, we first record that $|h(r,t)| \lesssim \frac{r}{\sqrt{t}}$. By our assumptions on $h_0$, Proposition~\ref{parabolicreg} gives the bound
$$
|\partial_t h(t,r)| \lesssim \frac{1}{r^2}.
$$
Furthermore, Propositions~\ref{parabolicreg3} gives
$$
|\partial_r h(t,r)| \lesssim \frac{1}{r + \sqrt t} \left( 1 + \langle \log \frac r {\sqrt t} \rangle \mathbf{1}_{r < \sqrt t} \right).
$$
These estimates enable us to use Proposition~\ref{energyinequality} to deduce that $h$ satisfies the local energy inequality.

\subsection{Convergence to the expanders for smooth data}
\label{sec:asymptotic}
In this section we prove the convergence to the expanders for smooth data close to a constant at $\infty$.
This result will play a key role in the uniqueness proof in the next section.

\begin{theorem}
\label{theorem:asymptotic}
Let $\ell \in [0,\pi/2)$ and  assume that 
$h_0 \in C([0,\infty))$ satisfies 
\begin{equation}
0 < h_0(r)\le \min\{ \frac{\pi}{2}-\delta,~  C_1r\} \qquad \textit{for}\ r>0 
\end{equation}
and 
\begin{equation}
|h_0(r)-\ell| \le C_2r^{-1} \qquad 
\textit{for}\ r\ge R
\label{snake}
\end{equation}
for some positive constants $\delta$, $C_1$, $C_2$ and $R>0$. 

Then there exist $\tilde{t}=\tilde{t}(\delta, C_1)>0$, 
$\tilde{C}=\tilde{C}(\delta,C_1,C_2,R)>0$ 
and 
the solution of \eqref{1} 
satisfying 
\begin{align}
\displaystyle{\left\| h(t) -
\psi_{N} \left(\frac{\cdot}{\sqrt{t+\tilde{t}}}\right)
\right\|_{L^\infty}
\le \tilde{C}\tilde{t}^{\frac12} (t+\tilde{t})^{-\frac12}} \qquad \textit{for} \ t>0, 
\end{align}
 where $\psi_N=\psi_N[\ell]$ is 
the self-similar profile given in Theorem \ref{thm:profile}.
\end{theorem}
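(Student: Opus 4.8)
The plan is to view $\psi_N(r/\sqrt{t+\tilde t})$ as one member of the one‑parameter family of exact solutions $\Psi_\tau(t,r):=\psi_N(r/\sqrt{t+\tau})$ of~\eqref{1} and to trap the solution $h$ emanating from $h_0$ between a sub- and a super-solution of the form $\Psi_{\tilde t}\pm g$, with
$$g(t,r)=C\sqrt{\tilde t}\,(t+\tilde t)^{-1/2}\,y(\rho)+C_0\,(t+\tilde t)^{-1}\,\tilde q(\rho),\qquad \rho=\frac{r}{\sqrt{t+\tilde t}}\,,$$
where $y=y_\alpha$ is the function from Lemma~\ref{lemma:1}, $\alpha$ being the shooting parameter with $\psi_N=\psi_N[\ell]=\psi_\alpha$. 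The choice of $y$ is forced by the linearisation: writing $h=\Psi_{\tilde t}+f$ turns~\eqref{1}, in the self-similar variables $s=\log(t+\tilde t)$, $\rho=r/\sqrt{t+\tilde t}$, into $f_s+H_\alpha f+\rho^{-2}J(f)=0$ (this is~\eqref{pinguin}); since $H_\alpha y=\frac12 y$, the function $e^{-s/2}y(\rho)$ solves the homogeneous linearised equation $f_s+H_\alpha f=0$ exactly, it decays in time at the rate $e^{-s/2}=(t+\tilde t)^{-1/2}$ demanded by the statement, and — by Lemma~\ref{lemma:1}, since $\rho y(\rho)$ has a positive limit — it has the spatial tail $e^{-s/2}y(r/\sqrt{t+\tilde t})\sim c/r$, exactly matching the decay $|h_0-\ell|\lesssim 1/r$ of the data.

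First I would fix $\tilde t=\tilde t(\delta,C_1)$ small enough that $\Psi_{\tilde t}$ is an admissible comparison object; in particular $\psi_\alpha'(0)/\sqrt{\tilde t}=\alpha/\sqrt{\tilde t}\ge C_1$, so that the initial slope of $\Psi_{\tilde t}$ at the origin dominates that of $h_0$. The secondary profile $\tilde q>0$ is taken to solve $(H_\alpha-1)\tilde q=(d-1)\sin(2\psi_\alpha)\,y^2/\rho^2$ with $\tilde q(0)=0$; its positivity and the asymptotics $\tilde q(\rho)\sim c'\rho$ as $\rho\to0$ and $\tilde q(\rho)=O(\rho^{-2})$ as $\rho\to\infty$ follow from the comparison-of-ODEs argument of~\cite{Naito} already invoked for Lemma~\ref{lemma:1} (one may add a multiple of $\underline{\varphi}_\alpha\in\ker(H_\alpha-1)$ to enforce positivity). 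The point of this term is that $(e^{-s}\tilde q)_s+H_\alpha(e^{-s}\tilde q)=e^{-s}(H_\alpha-1)\tilde q\ge 0$, so, with $C_0=2C^2\tilde t$, the positive excess it produces balances, with a factor-$2$ margin, the leading term $-(d-1)\sin(2\psi_\alpha)f^2/\rho^2$ of $-\rho^{-2}J(f)$ — recall $|J(f)+(d-1)\sin(2\psi_\alpha)f^2|\lesssim|f|^3$, and $J(f)<0$ for $f>0$ since $\psi_\alpha\in(0,\frac{\pi}{2})$ — the cubic remainder being absorbed once the amplitude of $g$ is small; near $\rho=0$ this excess and the nonlinear error \emph{both} vanish like $\rho$ (because $\sin(2\psi_\alpha)\lesssim\psi_\alpha\lesssim\rho$), so there is no degeneracy there. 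Choosing finally $C=C(\delta,C_1,C_2,R)$ large (so that, say, $C\sqrt{\tilde t}\gtrsim C_2$ and $C\sqrt{\tilde t}\gtrsim R$), the asymptotics $|\psi_\alpha(\rho)-\ell|\lesssim\rho^{-2}$ and $\psi_\alpha(\rho)\sim\alpha\rho$ near $0$, together with those of $y$ and $\tilde q$, give the trapping of the data, $\Psi_{\tilde t}(0,\cdot)-g(0,\cdot)\le h_0\le\Psi_{\tilde t}(0,\cdot)+g(0,\cdot)$. As for $h$ itself: Theorem~\ref{thm:stability}(ii) produces a solution for short times with the a priori bound $|h(t,r)|\lesssim\min(1,r/\sqrt t)$, comparison with the stationary solutions $h\equiv0$ and $h\equiv\frac{\pi}{2}$ together with the good sign of the nonlinearity on $[0,\frac{\pi}{2}]$ gives $0\le h\le\frac{\pi}{2}-\delta$ for all $t$, hence (with the parabolic estimates of Propositions~\ref{parabolicreg} and~\ref{parabolicreg3}) global existence; once the two functions above are known to be global super- and sub-solutions, the comparison principle (Lemmas~\ref{comparison} and~\ref{lemma:comparison}) yields $\underline h\le h\le\bar h$ for all $t>0$, whence $|h(t,r)-\psi_N(r/\sqrt{t+\tilde t})|\le g(t,r)\le\tilde C\,\tilde t^{1/2}(t+\tilde t)^{-1/2}$ with $\tilde C\sim C\,\|y\|_{L^\infty}+C^2\tilde t\,\|\tilde q\|_{L^\infty}$, which is the assertion.

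The hard part will be verifying that $\Psi_{\tilde t}\pm g$ are super-/sub-solutions for \emph{all} $t>0$, not merely asymptotically. Since $g(0,\cdot)$ must dominate $|h_0-\psi_N(\cdot/\sqrt{\tilde t})|$, which is in general of size $O(1)$, the amplitude of $g$ is $O(1)$ near $t=0$; there the nonlinearity $J$ is not a small perturbation and the differential inequalities cannot be obtained from the expansion above but must be checked by hand. I would handle this by splitting at $t\simeq C^2\tilde t$: on $0<t\lesssim C^2\tilde t$ the target $\tilde C\tilde t^{1/2}(t+\tilde t)^{-1/2}$ is itself $\gtrsim 1$, so — with $\tilde C$ taken large enough — the desired estimate holds trivially from $0\le h\le\frac{\pi}{2}-\delta$ and $0\le\psi_N<\frac{\pi}{2}$, and the barrier comparison is needed only on $t\gtrsim C^2\tilde t$, where the amplitude of $g$ is $\lesssim 1$ and the expansion applies; the hand-over at $t\simeq C^2\tilde t$ rests on the a priori control of $h$ near the origin and at spatial infinity. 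The remaining ingredients — the ODE analysis of $\tilde q$, the region-by-region check of the inequalities for $\Psi_{\tilde t}\pm g$ (origin, far field, intermediate), and the continuation argument for $h$ — are either already in the paper or are routine adaptations of Lemma~\ref{lemma:1}, Proposition~\ref{supersolution} and the arguments of~\cite{Naito}.
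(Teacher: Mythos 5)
Your proposal is a genuinely different route from the paper's, and while the overall strategy (trap $h$ between self-similar barriers built around $\psi_N(r/\sqrt{t+\tilde t})$) is the same, the construction of the barriers is not, and the paper's version is substantially simpler and more robust.

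The paper's sub/super-solutions (Lemma~\ref{lemma:u^+}) are
\[
u^+ = \min\bigl\{\psi_{\alpha_0}(\rho),\ \psi_\alpha(\rho)+b\,e^{-s/2}V(\rho)\bigr\},
\qquad
u^- = \max\bigl\{0,\ \psi_\alpha(\rho)-b\,e^{-s/2}W(\rho)\bigr\},
\]
with $V$ solving the Lemma~\ref{lemma:1} ODE for the \emph{frozen} potential $\cos(2\psi_{\alpha_0})$ and $W$ solving it for $\cos(2\psi_\alpha)$. The point you are missing is the concavity device: on $[0,\tfrac{\pi}{2}]$, $x\mapsto\sin(2x)$ is concave, so whenever $\psi_\alpha+b e^{-s/2}V\le\psi_{\alpha_0}$ one has $\sin\!\bigl(2(\psi_\alpha+b e^{-s/2}V)\bigr)\ge\sin(2\psi_\alpha)+2b e^{-s/2}V\cos(2\psi_{\alpha_0})$, and the extra term $2b e^{-s/2}V\,\bigl[\cos(2\psi_\alpha)-\cos(2\psi_{\alpha_0})\bigr]$ which this produces in the differential inequality is \emph{exactly} what cancels the negative contribution of the quadratic (and all higher-order) parts of the nonlinearity. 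This is why the paper gets away with a purely linear ansatz $\psi_\alpha\pm b e^{-s/2}\cdot(\text{profile})$, with no second-order correction at all, and also why it must take $V\ne W$: you instead take a single profile $y$ with potential $\cos(2\psi_\alpha)$, and then $e^{-s/2}y$ is only in the kernel of $\partial_s+H_\alpha$, so the leading term of $J$ has nothing to push against, which is precisely what forces you to introduce $\tilde q$. The $\min$ with $\psi_{\alpha_0}$ and the $\max$ with $0$ serve the second key purpose: they keep the barrier in $[0,\psi_{\alpha_0}]\subset[0,\tfrac{\pi}{2})$ for \emph{all} $s$, so the comparison principle applies from $t=0$ directly — no hand-over.

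These two ingredients are exactly the places where your argument has genuine gaps. First, the construction of $\tilde q$: you need a positive solution of $(H_\alpha-1)\tilde q=(d-1)\sin(2\psi_\alpha)y^2/\rho^2$ with $\tilde q(0)=0$, $\tilde q\sim\rho$ at the origin, and $\tilde q=O(\rho^{-2})$ at infinity; you only gesture at ``the argument of \cite{Naito}'' and propose adding a multiple of $\underline\varphi_\alpha$ to force positivity, but since $\underline\varphi_\alpha$ itself vanishes at the origin this does not obviously give what you need, and none of the paper's lemmata is stated for an inhomogeneous equation of this type. Second, and more seriously, the hand-over at $t_0\simeq C^2\tilde t$: to restart the comparison there you must verify $|h(t_0,\cdot)-\Psi_{\tilde t}(t_0,\cdot)|\le g(t_0,\cdot)$, and in the intermediate region $r\sim R$ the available a priori information (essentially $|\partial_t h|\lesssim r^{-2}$, giving $|h(t_0,r)-h_0(r)|\lesssim t_0/r^2$) yields a bound of order $C^2\tilde t/R^2$ while $g(t_0,r)\sim C\sqrt{\tilde t}/R$; closing this needs $C\sqrt{\tilde t}\lesssim R$, which is in direct tension with your own requirement $C\sqrt{\tilde t}\gtrsim R$ and $C\sqrt{\tilde t}\gtrsim C_2$. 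It is not clear this circle of constraints can be squared, and at a minimum it would require a much more careful bookkeeping than you have provided. The paper's approach avoids the whole issue: because the truncated barriers are valid and in range for all $s\ge\log\tilde t$, the only thing to check is the trapping of the data at $s=\log\tilde t$, which follows directly from $h_0(\sqrt{\tilde t}\rho)\le\psi_{\alpha_0}(\rho)$ (choice of $\tilde t$), $|h_0(r)-\ell|\le C_2/r$, and the fact that $\rho V(\rho)$, $\rho W(\rho)$ have positive limits (Lemma~\ref{lemma:1}), by taking $b$ large.

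So: correct diagnosis of which linearised mode governs the decay rate, correct overall scaffolding, but you are missing the concavity trick and the $\min/\max$ truncation that together make a purely linear barrier work from $t=0$; without them, both the quadratic correction and the hand-over introduce real difficulties that your sketch does not resolve.
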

\begin{remark}
A similar stability result was proved in Fila-Winkler-Yanagida~\cite{FWY} 
for the nonlinear heat equation.
\end{remark}
In order to prove stability, we will also use the self-similar variables:
$$
u(s,\rho)=h(e^s-\tilde t, e^{s/2}\rho)=h(t,r)
$$ for $\tilde{t}>0$.
Then $u$ satisfies the equation
\begin{align}
\displaystyle{
u_s 
- u_{\rho \rho}
-\left(\frac{d-1}\rho+\frac{\rho}{2} \right)u_{\rho}
+
\frac{d-1}{2\rho^2} \sin (2u) 
=0} \label{u}
\end{align} 
 for $s > \log \tilde t$, with the initial condition 
$u(\log \tilde t, \rho)=h_0(\sqrt{\tilde t}\rho)$.
As in \cite{FWY}, the following 
super- and  subsolutions for \eqref{u} play a key role in the proof:

\begin{lemma}

\label{lemma:u^+} Let $\alpha_0$ be the constant given in 
\eqref{defalpha0}.
For $\alpha \in (0, \alpha_0)$, let $\psi_\alpha$
be the expander satisfying the north pole boundary condition \eqref{northpole} given in Theorem \ref{thm:profile}(i).
{Let $V$ and $W$ be the solutions of 
\begin{align*}
&\begin{cases}
\displaystyle{V_{\rho\rho}+\left(\frac{d-1}\rho+\frac{\rho}2\right)
V_{\rho}-\frac{d-1}{\rho^2} \cos (2\psi_{\alpha_0})V+\frac{V}{2}=0
\qquad \rm{in} \ \R_+,} \\
V(0)=0, \qquad V_{\rho}(0)=1, 
\end{cases}
\\
&\begin{cases}
\displaystyle{W_{\rho\rho}+\left(\frac{d-1}\rho+\frac{\rho}2\right)
W_{\rho}-\frac{d-1}{\rho^2} \cos (2\psi_{\alpha})W+\frac{W}{2}=0
\qquad \rm{in} \ \R_+,} \\
W(0)=0, \qquad W_{\rho}(0)=1, 
\end{cases}
\end{align*}
respectively~(see Lemma~\ref{lemma:1}). 
Then for any $b>0$,
\begin{align*}
u^+(s,\rho)&:=\min 
\{\psi_{\alpha_0}(\rho), \psi_{\alpha}(\rho)+be^{-\frac{s}2}V(\rho) \},
\\
u^-(s,\rho)&:=\max 
\{0, \psi_{\alpha}(\rho)-be^{-\frac{s}2}W(\rho) \}
\end{align*}
are a supersolution and a subsolution of \eqref{u} respectively.
}
\end{lemma}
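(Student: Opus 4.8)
The plan is to argue in the self-similar variables $(s,\rho)$ and to reduce the statement to a pointwise differential inequality for each of the three building blocks $0$, $\psi_{\alpha_0}$, $\psi_\alpha+be^{-s/2}V$ (resp.\ $\psi_\alpha-be^{-s/2}W$). Writing
$$
\mathcal{L}[u]:=u_s-u_{\rho\rho}-\left(\frac{d-1}\rho+\frac\rho2\right)u_\rho+\frac{d-1}{2\rho^2}\sin(2u),
$$
so that \eqref{u} reads $\mathcal{L}[u]=0$, I would first recall the standard fact that the minimum of two supersolutions is again a supersolution, and the maximum of two subsolutions a subsolution — it is enough that each function be a super/sub-solution on the set where it realizes the extremum, together with the usual harmless behaviour at the contact set. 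Since $u\equiv0$ and $u=\psi_{\alpha_0}(\rho)$ are \emph{exact} stationary solutions of \eqref{u} (the latter because $\psi_{\alpha_0}$ solves \eqref{eqpsi}), it then remains only to prove that $g^+:=\psi_\alpha+be^{-s/2}V$ is a supersolution on $\{g^+\le\psi_{\alpha_0}\}$ and that $g^-:=\psi_\alpha-be^{-s/2}W$ is a subsolution on $\{g^->0\}$.

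The computation is a direct substitution. Plugging $g^\pm$ into $\mathcal{L}$, invoking \eqref{eqpsi} for $\psi_\alpha$, the linear ODEs defining $V$ and $W$, and the Taylor expansion $\sin(2\psi_\alpha+\sigma)-\sin 2\psi_\alpha=\sigma\cos 2\psi_\alpha-\frac{\sigma^2}{2}\sin 2\xi$ with $\sigma=\pm 2be^{-s/2}(V\text{ or }W)$: the $O(1)$ terms drop by \eqref{eqpsi}, the $s$-derivative cancels the ``$\frac12$'' zero-order part of the $V$- (resp.\ $W$-) equation, and the $O(be^{-s/2})$ contributions recombine, leaving
$$
\mathcal{L}[g^-]=-\frac{(d-1)b^2e^{-s}}{\rho^2}\,W^2\sin 2\zeta,\qquad
\mathcal{L}[g^+]=\frac{(d-1)be^{-s/2}}{\rho^2}\,V\Bigl\{\cos 2\psi_\alpha-\cos 2\psi_{\alpha_0}-be^{-s/2}V\sin 2\xi\Bigr\},
$$
with $\zeta\in(\psi_\alpha-be^{-s/2}W,\psi_\alpha)$ and $\xi\in(\psi_\alpha,\psi_\alpha+be^{-s/2}V)$. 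The cancellation is \emph{complete} for $g^-$ precisely because $W$ carries the potential $\cos(2\psi_\alpha)$, whereas for $g^+$ the mismatch between $\psi_\alpha$ and $\psi_{\alpha_0}$ in the potential of the $V$-equation is exactly what leaves the extra — and favourable — term $\cos2\psi_\alpha-\cos2\psi_{\alpha_0}$.

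It then remains to check signs on the active regions. For $g^-$: on $\{g^->0\}$ one has $0<\psi_\alpha-be^{-s/2}W<\zeta<\psi_\alpha<\frac\pi2$ (using $\alpha<\alpha_0$), hence $2\zeta\in(0,\pi)$, $\sin 2\zeta>0$ and $\mathcal{L}[g^-]\le0$; positivity $W>0$ is from Lemma~\ref{lemma:1} (applicable since $\alpha<\alpha_0<\alpha_*$ by Proposition~\ref{monotonicity}). For $g^+$: $V>0$ by Lemma~\ref{lemma:1} and $\psi_\alpha<\psi_{\alpha_0}$ by Proposition~\ref{GR}\,(v), so it suffices that the brace be $\ge0$ on $\{g^+\le\psi_{\alpha_0}\}$. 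There $be^{-s/2}V\le\psi_{\alpha_0}-\psi_\alpha$ and $2\xi\in(2\psi_\alpha,2\psi_{\alpha_0}]$, so $be^{-s/2}V\sin 2\xi\le(\psi_{\alpha_0}-\psi_\alpha)\max_{[2\psi_\alpha,2\psi_{\alpha_0}]}\sin$, and the claim reduces to the elementary inequality
$$
\cos a-\cos c\ \ge\ \frac{c-a}{2}\,\max_{[a,c]}\sin\qquad(0\le a\le c\le\pi),
$$
which follows by splitting $[a,c]$ at $\pi/2$ and using the chord bound $\frac{1}{c-a}\int_a^c\sin t\,dt\ge\frac{\sin a+\sin c}{2}$ (concavity of $\sin$ on $[0,\pi]$).

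The main obstacle is the supersolution case $g^+$: one must control the quadratic error $be^{-s/2}V\sin 2\xi$ by the ``linearized gain'' $\cos 2\psi_\alpha-\cos 2\psi_{\alpha_0}$ \emph{uniformly} on the set where $g^+$ is active and \emph{for every} $b>0$ (so no smallness of $b$ may be exploited); this is precisely why $V$ must be built from $\psi_{\alpha_0}$ rather than $\psi_\alpha$, and what the trigonometric inequality above is designed for. The remaining details — the behaviour at $\rho=0$ (all of $\psi_\alpha,\psi_{\alpha_0},V,W$ vanish there, so the singular coefficients are harmless and $u^\pm(s,0)=0$), the passage across the kinks of the $\min$ and $\max$, and bookkeeping of the precise dimension range — are routine.
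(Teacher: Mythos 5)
Your proposal is correct, and the overall architecture — min/max of super/subsolutions, reduce to the active branch, substitute the ODEs for $\psi_\alpha$, $V$, $W$ so that the zeroth- and first-order terms cancel — is the same as in the paper. The difference lies in how the crucial sign inequality for $g^+$ is established. The paper does it in one line: since $x\mapsto\sin(2x)$ is concave on $[0,\pi/2]$ (equivalently, $x\mapsto 2\cos(2x)$ is decreasing there) and $\psi_\alpha\le u^+\le\psi_{\alpha_0}\le\pi/2$, the mean value theorem gives
\[
\sin(2u^+)-\sin(2\psi_\alpha)=2\cos(2\theta)\,(u^+-\psi_\alpha)\ge 2\cos(2\psi_{\alpha_0})\,be^{-s/2}V,
\]
which is exactly what one needs after the ODE cancellations, with no quadratic remainder to control. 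You instead linearize around $\psi_\alpha$ with a second-order Taylor remainder, which leaves the mismatch $\cos 2\psi_\alpha-\cos 2\psi_{\alpha_0}$ to absorb a term quadratic in $be^{-s/2}V$, and you then prove and invoke the auxiliary inequality $\cos a-\cos c\ge\frac{c-a}{2}\max_{[a,c]}\sin$ (for $0\le a\le c\le\pi$). This works and correctly explains \emph{why} the potential in the $V$-equation must be $\cos(2\psi_{\alpha_0})$ rather than $\cos(2\psi_\alpha)$, but it is longer than necessary: replacing the center of the Taylor expansion by the upper endpoint (i.e., appealing directly to monotonicity of $\cos(2\cdot)$) eliminates both the quadratic error term and the auxiliary trigonometric lemma. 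The $g^-$ computation you give is also correct and matches what one gets by the paper's symmetric argument (here the potential is $\cos(2\psi_\alpha)$, so the linearization is exact and only the sign of the second-order remainder matters).
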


\begin{proof}
Since both are proved in the same way, we only prove that 
$u^+$ is a supersolution.
It suffices to check the case when $\rho$ satisfies
{
$\psi_{\alpha_0}(\rho) \ge \psi_{\al}(\rho) + be^{-\frac{s}{2}}V(\rho)$.  
From the fact that $\psi_{\alpha_0} <\pi/2$ and the 
concavity of $x \mapsto \sin (2 x)$ on $[0,\frac{\pi}{2}]$, we see that
\begin{align*}
\sin(2u^+)= \sin (2(\psi_{\al}+be^{-\frac{s}{2}}V) ) 
&\ge 
\sin (2\psi_\alpha) + 2be^{-\frac{s}{2}}\cos(2\psi_{\alpha_0})V,
\end{align*}
which implies
\begin{align*}
&(u^+)_s 
- (u^+)_{\rho \rho}
-\left(\frac{d-1}\rho+\frac{\rho}{2} \right)(u^+)_{\rho}
+
\frac{d-1}{2\rho^2} \sin (2u^+) 
\\
\ge &
(\psi_\al)_s 
- (\psi_\al)_{\rho \rho}
-\left(\frac{d-1}\rho+\frac{\rho}{2} \right)(\psi_\al)_{\rho}
+
\frac{d-1}{2\rho^2} \sin (2\psi_\al)
\\
&-
b e^{-\frac{s}{2}}\left\{V_{\rho\rho}+\left(\frac{d-1}\rho+\frac{\rho}2\right)
V_{\rho}-\frac{d-1}{\rho^2} \cos (2\psi_{\alpha_0})V+\frac{V}{2}\right\}
\\
=&0.
\end{align*}}
Hence $u^+$ is a supersolution of \eqref{u} as desired.
\end{proof}

\medskip

\noindent
{\it Proof of Theorem \ref{theorem:asymptotic}:} Choose first $\alpha$ such that $\psi_\alpha = \psi_N[\ell]$.
Taking $\tilde{t}(C_1, \delta)>0$ sufficiently small, 
we have  
$h_0(\sqrt{\tilde{t}}\rho) \le \psi_{\alpha_0}(\rho)$,
and then Lemma \ref{lemma:comparison} shows that
the solution of \eqref{u} globally exists and  
$$
0<u(s,\rho) \le \psi_{\alpha_0}(\rho) \qquad (s>\log \tilde{t}).
$$ 
{Let $V$, $W$, $u^+$ and $u^-$ be the functions given in Lemma \ref{lemma:u^+}.
 Lemma \ref{lemma:1} shows $\rho V$ and
 $\rho W$ have some positive limits} as $\rho \rightarrow \infty$ respectively.
Then by \eqref{snake} we can choose $b=b(\delta,C_1,C_2,R)>0$ such that
$$
u^{-}(0,\rho) 
\le h_0(\sqrt{\tilde t} \rho) = u(\log \tilde{t},\rho) 
\le u^+(0,\rho)
\qquad (\rho >0),
$$
and  Lemma \ref{lemma:comparison} yields 
$$u^{-}(s-\log \tilde{t}, \rho) 
\le 
u(s,\rho) 
\le 
u^+(s-\log \tilde{t}, \rho)
\qquad (s>\log \tilde{t}, \ \rho >0).
$$
By the definitions of $u^+$ and $u^{-}$, we have
$$
\|u(s,\rho)-\psi_N(\rho)\|_{L^\infty} \le \tilde{C}e^{-\frac{s-\log \tilde{t}}{2}}
\qquad s>\log \tilde{t}
$$
for $\tilde C = b \| W \|_\infty$. Changing back to the original variables implies  
\begin{align}
\displaystyle{\left\| h(t) -
\psi_{N} \left(\frac{\cdot}{\sqrt{t+\tilde{t}}}\right)
\right\|_{L^\infty}
\le 
\tilde{C}\tilde{t}^{\frac12}(t+\tilde{t})^{-\frac12}} \qquad {\rm for} \ t>0.
\end{align}
\boxend

\section{Uniqueness in $L^\infty$}

\label{sectionuniqueness}
As seen in the previous section, when $3 \le d\le 6$, 
there are at least two expanders evolving from the same data, 
which are both stable.  As stated in Theorem \ref{theounique}, 
we introduce an additional condition for the solution near the origin to restore uniqueness.

On the other hand, for $d \ge 7$, we will show unconditional uniqueness in $L^\infty$.
The precise statement is as  follows:


\begin{theorem}\label{theoremuniqueness}
\begin{itemize}
\item[(i)] If $d \geq 7$, then for any $h_0 \in L^\infty$,  there is at most one solution to~\eqref{1} in $L^\infty_t L^\infty_x$.
\item[(ii)] If $3 \leq d \leq 6$, then for any continuous $h_0$ such that $0 \leq h_0(r) \leq \frac{\pi}{4}$ for all $r\geq 0$, there exists at most one solution to~\eqref{1} such that $0 \leq h(t,r) \leq \frac{\pi}{4}$ for all $t,r \geq 0$.
\item[(iii)] If $3 \leq d \leq 6$, assume that 
$h_0$ is continuous, such that $0 \leq h_0(r) \leq \pi$ and
$$
h_0(0) < \frac{\pi}{2}, \quad |\partial_r h_0 (r)| \lesssim \frac{1}{r}, \quad |\partial_r^2 h_0 (r)| \lesssim \frac{1}{r^2}.
$$
Then for any $\delta>0$, there is at most one solution to~\eqref{1} in $L^\infty_t ([0,\delta], L^\infty_x)$ satisfying the local energy inequality
and such that
$$
h(t,r) \in [0,\frac{\pi}{2} - \delta] \quad \mbox{for $r,t \in [0,\delta]$}.
$$
Furthermore, it enjoys the bounds
$$
|\partial_r h(t,r) | \lesssim \frac{1}{\sqrt t + r}, \quad |\partial_t h(t,r)| \lesssim \frac{1}{t + r^2}.
$$
\end{itemize}
\end{theorem}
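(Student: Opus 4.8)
\emph{Proof proposal.}
Let $h_1,h_2$ be two solutions with the same data and set $g=h_1-h_2$. Writing
$$
\sin(2h_1)-\sin(2h_2)=b(t,r)\,g,\qquad b(t,r):=2\int_0^1\cos\bigl(2(h_2+\theta g)\bigr)\,d\theta\in[-2,2],
$$
we see that $g$ solves the linear parabolic equation $g_t-\Delta g+Vg=0$ with $V=\frac{d-1}{2r^2}\,b$, so $|V|\le\frac{d-1}{r^2}$, and $g(t=0)=0$. The dichotomy between $d\ge7$ and $3\le d\le6$ is already visible: by Hardy's inequality $\int_{\R^d}\frac{|\varphi|^2}{|x|^2}\le\frac{4}{(d-2)^2}\int_{\R^d}|\nabla\varphi|^2$, the quadratic form of $V$ is strictly subordinate to $-\Delta$ exactly when $4(d-1)<(d-2)^2$, i.e. $d\ge7$. \emph{Part (i).} Fix a weight $G(x)=e^{-\beta\langle x\rangle}$. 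Using the parabolic regularity of solutions in this class (so that the integrations by parts are legitimate and all weighted integrals converge, $g$ being bounded and $G$ integrable), a weighted energy estimate gives, for $t>0$,
$$
\frac{d}{dt}\int_{\R^d}g^2G\,dx=-2\int_{\R^d} G|\nabla g|^2\,dx+\int_{\R^d} g^2\,\Delta G\,dx-2\int_{\R^d} Vg^2G\,dx .
$$
The second term is $\lesssim\beta^2\int g^2G$. For the third, apply Hardy to $gG^{1/2}\in H^1(\R^d)$: $\int\frac{g^2G}{|x|^2}\le\frac{4(1+\theta)}{(d-2)^2}\int G|\nabla g|^2+C_\theta\beta^2\int g^2G$. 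Since $4(d-1)<(d-2)^2$, a small $\theta$ makes $\tfrac{8(d-1)(1+\theta)}{(d-2)^2}<2$, the gradient term is absorbed, and $\frac{d}{dt}\int g^2G\lesssim\beta^2\int g^2G$; Grönwall and $g(0)=0$ force $g\equiv0$.

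\emph{Part (ii).} On $[0,\tfrac\pi4]$ the map $h\mapsto\sin(2h)$ is nondecreasing, so $b\ge0$ and $V\ge0$: the nonlinearity $h\mapsto\frac{d-1}{2r^2}\sin(2h)$ is nondecreasing in $h$. The comparison principle (Lemma~\ref{lemma:comparison}) applied to the two solutions with equal data then yields $h_1\le h_2$ and $h_2\le h_1$. (Equivalently, $V\ge0$ and Kato's inequality make $|g|$ a bounded distributional subsolution of the heat equation with zero data, hence $|g|\equiv0$.)

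\emph{Part (iii).} Here $h\in[0,\tfrac\pi2-\delta]$ near the origin does \emph{not} make $V\ge0$, and $|V|\le\frac{d-1}{r^2}$ is critical for Hardy when $d\le6$. The remedy is to replace the linearization at an arbitrary $L^\infty$ state by the linearization at the expander $\psi:=\psi_N[\ell]$ with $\ell=h_0(0)$, whose potential $V_\alpha=\frac{d-1}{\rho^2}\cos(2\psi_\alpha)$ is \emph{positive} near $\rho=0$ (because $\psi_\alpha(0)=0$) and is tamed at large $\rho$ by the Ornstein--Uhlenbeck drift --- this is precisely what powers Lemma~\ref{linear}. \emph{Step 1 (regularity).} For any solution $h$ in the class, the a priori bound $h\in[0,\tfrac\pi2-\delta]$ near the origin, the local energy inequality, and the parabolic regularity results (Propositions~\ref{parabolicreg}, \ref{parabolicreg3}) give the stated bounds $|\partial_rh|\lesssim\frac1{\sqrt t+r}$, $|\partial_th|\lesssim\frac1{t+r^2}$; in particular $h$ is continuous for $t>0$ and $h(t,0)=0$ (regularity of the corotational map at $x=0$ forces $h(t,0)\in\pi\Z$). \emph{Step 2 (pinning).} For every $\epsilon>0$ there are $R,t_0>0$ with $\bigl\|h(t,\cdot)-\psi(\tfrac{\cdot}{\sqrt t})\bigr\|_{L^\infty_{r<R}[w(\cdot/\sqrt t)]}\le\epsilon$ for $0<t<t_0$, $w$ as in Lemma~\ref{perturbation}: one traps $h-\psi(\cdot/\sqrt t)$ between the sub/supersolutions $u_\mp$ of Proposition~\ref{supersolution}, just as in the proof of Theorem~\ref{thm:stability}(ii), the trapping condition and the continuity of $h_0$ at the origin allowing the ordering to be checked at a small positive initial time and then propagated by Lemma~\ref{lemma:comparison} (legitimate by Step~1); letting the barrier parameter $\eta\to0$ sends $\epsilon\to0$. \emph{Step 3 (contraction).} Writing $h_i=\psi+f_i$ and $v_i(s,\rho)=f_i(e^s,e^{s/2}\rho)$, one has $\partial_sv_i+H_\alpha v_i+\rho^{-2}J(v_i)=0$ with $J$ as in~\eqref{defJ}, $|J(v)|\lesssim v^2$, and $\|v_i\|_{L^\infty((-\infty,s_0);L^\infty[w])}\le\epsilon$. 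Then $V:=v_1-v_2$ solves the linear problem of Lemma~\ref{linear} with source $-\rho^{-2}(J(v_1)-J(v_2))$ and zero data, and $|J(v_1)-J(v_2)|\lesssim(|v_1|+|v_2|)|v_1-v_2|$, so Lemma~\ref{linear} and $Zw\lesssim\rho^2$ give
$$
\|V\|_{L^\infty_sL^\infty[w]}\lesssim\Bigl\|\tfrac{Z}{\rho^2}(J(v_1)-J(v_2))\Bigr\|_{L^\infty_sL^\infty[w]}\lesssim(\|v_1\|+\|v_2\|)\,\|V\|_{L^\infty_sL^\infty[w]}\lesssim\epsilon\,\|V\|_{L^\infty_sL^\infty[w]},
$$
hence $V\equiv0$ once $\epsilon$ is small: $h_1=h_2$ on $\{r<R,\ 0<t<t_0\}$. \emph{Step 4 (propagation).} Away from $\{r=0\}$ the equation is uniformly parabolic with smooth bounded coefficients, $h_1(0,\cdot)=h_2(0,\cdot)$, and $h_1=h_2$ near the origin; a standard parabolic uniqueness argument (a barrier vanishing on the finite boundary and decaying as $r\to\infty$ handles spatial infinity) propagates the equality to all of $[0,\delta]\times\R^d$.

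\emph{Main obstacle.} The delicate point is Step~2: the barrier comparison of Proposition~\ref{supersolution} must be run for an \emph{arbitrary} admissible solution, not just the one constructed in Section~\ref{oriole2}, which is why Step~1 (parabolic regularity, where the local energy inequality is used) is indispensable and why the initial ordering $u_-\le h-\psi(\cdot/\sqrt t)\le u_+$ at a small positive time must be verified with care. The passage in Step~4 from "equality near the origin" to "equality everywhere" is routine but not entirely automatic, since the pinning estimate only controls the inner region $r<R$. By contrast, Part~(i) reduces to a one-line consequence of the subcritical Hardy constant, and Part~(ii) to the monotonicity of $h\mapsto\sin(2h)$ on $[0,\tfrac\pi4]$.
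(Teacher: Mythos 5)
Your parts (i) and (ii) are essentially sound but take somewhat different routes from the paper, while part (iii) contains a genuine gap at its central step.

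For part (i), you run a single weighted energy estimate with the exponential weight $G=e^{-\beta\langle x\rangle}$, absorbing the singular potential by the sharp Hardy constant. The paper instead first proves uniqueness for $h_0\in L^2\cap L^\infty$ and then treats general $L^\infty$ data by a linear-weight truncation $z=H-\epsilon\langle x\rangle$. Your version is cleaner, and the underlying idea (subcritical Hardy constant for $d\ge 7$) is the same; you do gloss over the local regularity needed to justify the integrations by parts (the paper handles this carefully by mollification), but this can be repaired along standard lines.

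For part (ii), your primary argument invokes Lemma~\ref{lemma:comparison}, but that lemma requires both solutions to be in $BC([0,T]\times[0,\infty))$ with $h_i(t,0)=0$; for a general solution in the class $[0,\pi/4]$ with $h_0(0)>0$ neither property is available a priori (the solution need not be continuous at the space-time origin). The Kato inequality alternative you offer is the right idea and is essentially the same observation (positivity of the linearized potential when both solutions live in $[0,\pi/4]$) that powers the paper's $L^2$ energy estimate.

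For part (iii), the gap is in Step 2 (pinning). You want to order $f(\delta,\cdot)=h(\delta,\cdot)-\psi(\cdot/\sqrt{\delta})$ between $u_\pm$ at a small positive time $\delta$, with the barrier height near the origin being $\eta\, w(r/\sqrt{\delta})\sim\eta\, r/\sqrt{\delta}$. But the regularity of Step~1 only gives $|f(\delta,r)|\le C\,r/\sqrt{\delta}$ for $r<\sqrt{\delta}$ with a fixed constant $C$; the ordering would force $C\le\eta$, so it fails precisely in the regime $\eta\to 0$ that the contraction in Step~3 needs. Said differently, the ordering near $r=0$ would require $\sqrt{\delta}\,\partial_r h(\delta,0)\to\psi'(0)$, which is essentially the statement you are trying to prove. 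This is why the paper does not use the $\eta\,w$-barrier of Proposition~\ref{supersolution} for arbitrary admissible solutions: instead it relies on the unweighted $L^\infty$ stability of Proposition~\ref{goldfinch}, reduces uniqueness to $\liminf_{t\to 0}\|H(t)\|_\infty=0$, and proves the pinning indirectly by a parabolic-scaling compactness argument whose limit is identified as the expander through the constant-data case (which in turn uses the sharper $e^{-s/2}$-decaying barriers of Lemma~\ref{lemma:u^+}, not the barriers of Proposition~\ref{supersolution}). Your ``main obstacle'' paragraph correctly locates the delicate point, but the difficulty is not one of care: the initial ordering genuinely fails, and a different mechanism is needed.
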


\subsection{Proof of (i)}

\noindent \underline{Step 1: the energy estimate.} Let us for the moment assume in addition to the hypotheses of the theorem that $h \in L^\infty_t L^2_x$. Consider $g$ and $h$ two solutions of~\eqref{1} with the same data $h_0 \in L^\infty \cap L^2$, and denote $H = h-g$. It satisfies the equation
$$
\partial_t H - \Delta H = F, \qquad \mbox{with} \;\; F= \frac{d-1}{2} \frac{\sin(2g) - \sin(2h)}{r^2},
$$
in the sense of distributions; notice that $F \in L^\infty_t L^1_x + L^\infty_t L^2_x$. In order to perform an $L^2$ estimate on $F$, we first mollify this equation by convolving it with $\chi^\epsilon = \frac{1}{\epsilon^d} \chi \left( \frac{\cdot}{\epsilon} \right)$, where $\chi$ is smooth, nonnegative, compactly supported, and such that $\int \chi = 1$. Denoting $f^\epsilon = \chi^\epsilon * f$, the above becomes
$$
\partial_t H^\epsilon - \Delta H^\epsilon = F^\epsilon.
$$
We can now test this inequality against $\theta(t) H^\epsilon$, with $\theta \in \mathcal{C}^\infty_0$, to obtain (keeping in mind that $H(0) = 0$)
$$
- \frac{1}{2} \int_0^\infty \theta' \| H^\epsilon \|_{L^2}^2 \,dt + \int_0^\infty \theta \| \nabla H^\epsilon \|_{L^2}^2\,ds  = \int_0^\infty \theta \langle F^\epsilon , H^\epsilon \rangle \,ds.
$$
Since on the one hand $H^\epsilon$ is bounded in $L^\infty_t L^2_x$ and in $L^\infty_t L^\infty_x$  uniformly (in $\epsilon$), and on the other hand $F^\epsilon$ is bounded in $L^\infty_t L^1_x + L^\infty_t L^2_x$ uniformly, we deduce that $\nabla H^\epsilon$ is uniformly bounded in $L^2_{t} L^2_x$. This implies that $\nabla H \in L^2_t L^2_x$. Letting $\epsilon \to 0$, we obtain 
$$
- \frac{1}{2} \int_0^\infty \theta' \| H \|_{L^2}^2 \,dt + \int_0^\infty \theta \| \nabla H \|_{L^2}^2\,ds  = \int_0^\infty \theta \langle F , H \rangle \,ds.
$$
Letting $\theta$ approach a step function, and using that almost every point is a Lebesgue point for $t \mapsto H(t)$, we obtain that
$$
\mbox{for a.e. $t$}, \quad \frac{1}{2} \| H(t) \|_{L^2}^2 \leq - \int_0^t \| \nabla H \|_{L^2}^2\,ds + \int_0^t  \langle F , H \rangle \,ds.
$$

\bigskip

\noindent \underline{Step 2: conclusion for $h_0 \in L^2$.} We now rely on Hardy's inequality with the optimal constant, which reads, for a function $f \in \dot H^1(\mathbb{R}^d)$
$$
\left\| \frac{f}{|x|} \right\|_{L^2}^2 \leq \frac{4}{(d-2)^2} \| \nabla f \|_{L^2}^2
$$
(see for instance~\cite{BCD}). Together with the energy estimate above, this gives
\begin{align*}
\frac{1}{2} \| H(t) \|_{L^2}^2 & = - \int_0^t \| \nabla H \|_{L^2}^2 \,ds + \frac{d-1}{2} \int_0^t \int_{\mathbb{R}^d} \frac{\sin(2g) - \sin(2h)}{r^2} H \,dx\,ds \\
& \leq - \int_0^t \| \nabla H \|_{L^2}^2 \,ds + (d-1) \int_0^t \int_{\mathbb{R}^d} \frac{H^2}{r^2} \,dx \,ds  \\
& \leq \left[ - 1 + 4 \frac{(d-1)}{(d-2)^2} \right]\int_0^t \| \nabla H \|_{L^2}^2 \,ds \leq 0,
\end{align*}
where the last inequality follows when $d \geq 7$, since this implies that $4 \frac{(d-1)}{(d-2)^2} < 1$.

\bigskip

\noindent \underline{Step 3: conclusion for $h_0 \in L^\infty$.} We keep the above notations for $h$, $g$, and $H$, and define for $\epsilon > 0$ 
$$
z(t,x) = z^\epsilon(t,x) = H - \epsilon \langle x \rangle.
$$
The equation satisfied by $z$ reads
\begin{align*}
(\partial_t - \Delta) z & = (\partial_t - \Delta) H + \epsilon \Delta \langle x \rangle \\
& \leq \frac{d-1}{r^2} |H| + \epsilon \Delta \langle x \rangle \\
& \leq \frac{d-1}{r^2} |z| + \epsilon \frac{C}{r}.
\end{align*}
Setting $z_+^\epsilon = z_+ = \max(z,0)$, notice that it satisfies $\operatorname{Supp} z_+ \subset B(0,\frac{R}{\epsilon})$, where $R = {2 \| H \|_{\infty}}$. An energy estimate  gives
\begin{align*}
\frac{1}{2} \frac{d}{dt} \| z_+ \|_{L^2}^2 + \| \nabla z_+ \|_{L^2}^2 & \leq (d-1) \int_{\mathbb{R}^d} \frac{(z_+)^2}{r^2} \,dx + C \epsilon \int_{\mathbb{R}^d} \frac{z_+}{r} \,dx.
\end{align*}
Using successively the optimal Hardy inequality, the support property of $z_+$, and the Cauchy-Schwarz inequality,
\begin{align*}
\frac{1}{2} \frac{d}{dt} \| z_+ \|_{L^2}^2 & \leq C \epsilon \int_{\mathbb{R}^d} \frac{z_+}{r} \,dx 
\leq C \epsilon \| z_+ \|_{L^2} \left[ \int_{B(0,\frac R \epsilon)} \frac{1}{r^2} \,dx \right]^{1/2} \\
& \leq C \sqrt{\epsilon} + C \sqrt \epsilon \| z_+ \|_{L^2}^2.
\end{align*}
This differential inequality implies that
$$
\| z^\epsilon_+ \|_{L^2}^2 \leq e^{C \sqrt \epsilon t} - 1.
$$
As $\epsilon \to 0$, we obtain $\max(H,0) = 0$. Proceeding similarly gives $\min(H,0) = 0$, and the desired result.

\subsection{Proof of (ii)}

This proof being very similar to (i), we only indicate how to treat the case where $h_0 \in L^2$ and $0\leq h_0 \leq \frac{\pi}{4}$. 
Then, considering as above $g$ and $h$ two solutions of~\eqref{1}, and denoting $H = h-g$, it satisfies
$$
H_t -H_{rr}-\frac{d-1}r H_r = \frac{d-1}{2r^2} (\sin (2g) - \sin(2h))= - \frac{d-1}{r^2} \frac{\cos(g+h) \sin (g-h)}{g-h} H
$$ 
If $g$ and $h$ are valued in $[0,\frac{\pi}{4}]$, it is easy to check that $\frac{\cos(g+h) \sin (g-h)}{g-h} \geq 0$, hence an energy estimate gives
$$
\frac{1}{2} \frac{d}{dt} \| H \|_{L^2}^2 + \| \nabla H \|_{L^2}^2 \leq 0,
$$
from which uniqueness follows.

\subsection{Proof of (iii)}

\noindent \underline{Step 1: Parabolic regularity.} By Proposition~\ref{parabolicreg2}, any solution $h$ satisfying the hypotheses of (iii) is associated to a smooth $u$, and enjoys the following bounds: for $0<t<\delta$,
$$
|\partial_r h(t,r) | \lesssim \frac{1}{\sqrt t + r}, \quad |\partial_t h(t,r)| \lesssim \frac{1}{t + r^2}
$$
as well as
$$
h(t,0) = 0 \qquad \mbox{for $t>0$}.
$$

\bigskip

\noindent \underline{Step 2: The case $h_0$ constant.} We start by proving uniqueness of the solution when $h_0$ is a constant in $[0,\frac{\pi}{2})$, and we assume that $0<h(t,r)<\frac{\pi}{2}-\delta$ for all $t,r>0$. By Corollary~\ref{egret}, there exists a unique self-similar profile $\psi$ such that $\psi(\rho) \in [0,\frac{\pi}{2})$ for all $\rho$ and $\psi(\infty) = h_0$. We want to show that $h(t,r) = \psi\left( \frac{r}{\sqrt t} \right)$.

By Step 1,
\begin{align}
\label{est:h1}
& |h(t,r)| \leq \int_0^r |\partial_r h(t,r')|\,dr' \lec \frac{r}{\sqrt{t}},
\\
& |h(t,r) - h_0| \leq \int_0^t |\partial_t h(t',r)|\,dt' \lesssim \frac{t}{r^2}.
\label{est:h2}
\end{align}
We now fix $T>0$, and let $g_0(r) = h(T,\sqrt{T}r)$, and $g$ be 
the solution of~\eqref{1} with data $g_0$. 
Note that since $g_0$ is smooth, 
the local well-posedness theory in  \cite[Theorem 1.3]{Wang} 
 shows there exist $T_1>0$ and $g(t,r)$ such that 
 $u=v[g] \in X_{T_1}$ is a solution of \eqref{0}
for the initial data $u(t=0)=v[g_0]$ 
where $v[g]$ is the corotational map defined by 
$
v[g] = (
\cos(g(|x|)), \sin(g(|x|)) \frac{x}{|x|}
)^{T}
$ and $X_T$
is the space endowed with the norm
\begin{equation*}
\| g\| _{X_T}\! :=\!
\sup_{0 < t < T}t^{\frac12}\|\nabla u(t)\|_{L^\infty}+\!\!
\sup _{x\in \R^d, \ 0 < R^2 < T}\biggl
(\frac{1}{|B(x,R)|}\int _{B(x,R)} \int^{R^2}_0
\!\!\!\!\!|\nabla u(t,y)|^2dtdy\biggl)^\frac12.
\end{equation*} 
Moreover it is shown that such a solution is unique in the class 
 $\|u\|_{X_{T_1}}<\ve_0$ with some small $\ve_0>0$.
On the other hand, from Step 1 we see 
$\tilde{g}(t,r):=h(Tt+T,\sqrt{T}r)$ 
also satisfies $\|v[\tilde{g}]\|_{X_{T_1}}<\ve_0$. 
Therefore by the uniqueness we have
\begin{equation*}
g(t,r) = h(Tt + T,\sqrt{T}r).
\end{equation*}
We now show that $g$ globally exists and satisfies
\begin{equation}
\left| g(t,r) - \psi \left( \frac{r}{\sqrt t} \right) \right| \lesssim \frac{1}{\sqrt t} \qquad {\rm for} \quad t>0 
\label{bunting}
\end{equation}
uniformly in $T>0$.
Taking Theorem \ref{theorem:asymptotic} into account, it suffices to check that, for $g_0(r):=h(T, \sqrt{T}r)$, and uniformly in $T>0$,
\begin{align}
& g_0(r) \lec r, 
\label{est:g00}
\\
& |g_0(r) - h_0|  \lesssim \frac{1}{r} \qquad \mbox{for $r>1$},
\label{est:g01}
\\
& g_0(r)\le \frac{\pi}{2}-\delta
\qquad \textrm{for} \ r>0.
\label{est:g02}
\end{align}
Indeed \eqref{est:g00} and \eqref{est:g01} 
follow from \eqref{est:h1} and \eqref{est:h2} respectively, while \eqref{est:g02} is assumed.
Therefore \eqref{bunting}  gives
$$
\left| h( Tt + T,\sqrt T r) - \psi \left( \frac{r}{\sqrt t} \right) \right| \lesssim \frac{1}{\sqrt t},
$$
or, changing variables, for $t>0$,
$$
\left| h( t + T,r) - \psi \left( \frac{r}{\sqrt t} \right) \right| \lesssim \sqrt{\frac{T}{t}}.
$$
Letting $T \to 0$ gives $h( t,r) = \psi \left( \frac{r}{\sqrt t} \right)$, which is the desired result.

\bigskip
\noindent \underline{Step 3: Stability in $L^\infty$.}
Consider $g$ and $h$ two solutions of~\eqref{1} associated to the data $h_0$ and satisfying the assumptions in (iii). We know by Step 1 that $g$ and $h$ are smooth on $(0,\delta)$. Denoting $H = h-g$, this implies, by Proposition~\ref{goldfinch}, that, for $0<t<T<\delta$,
$$
\| H(T) \|_{L^\infty_r} \leq C \| H(t) \|_{L^\infty_r}.
$$
Thus the desired equality $g = h$ would follow from 
$$
\liminf_{t \to 0} \| H(t) \|_{L^\infty_r} = 0.
$$
In order to show that this indeed holds, we fix $\eta > 0$, and we will prove that, for a constant $A>0$ yet to be determined, we can ensure that, 
for a sequence $t_n \to 0$, $\| H(t_n) \|_{L^\infty_r(\{r > A \sqrt{t_n} \})} < \eta$ and $\| H(t_n) \|_{L^\infty_r(\{r < A \sqrt{t_n} \})} < \eta$.

\bigskip

\noindent \underline{Step 4: Region $r > A \sqrt{t}$.} Writing the equation satisfied by $H$ in Duhamel form, one sees that
$$
|H(t,r)| \lesssim \int_0^t e^{(t-s)\Delta} \frac{|\sin(2h) - \sin(2g)|}{|x|^2}\,dx \lesssim \int_0^t  e^{(t-s)\Delta} \frac{1}{|x|^2}\,ds.
$$
Observing now that, for $0<\sigma < 1$, $e^{\sigma \Delta} \frac{1}{|x|^2} \leq \frac{C}{|x|^2}$, it follows that
$$
|H(t,r)| \lesssim \int_0^t \frac{1}{r^2}\,ds = \frac{t}{r^2} < \frac{1}{A^2}
$$
since $r > A \sqrt{t}$. Choosing $A$ sufficiently big, we obtain $|H(t,r)| < \eta$ for $r > A \sqrt{t}$.

\bigskip

\noindent \underline{Step 5: Region $r < A \sqrt{t}$.} Define $h^n (t,r) = h(\frac{1}{n^2} t,\frac{1}{n} r)$, and similarly $g^n$. By the parabolic regularity estimates of Step 1, $h^n$ enjoys uniform estimates (in $n$, $t$, and $r$):
\begin{align}
|h^n(t,r) | \lesssim 1, \quad |\partial_r h^n(t,r) | \lesssim \frac{1}{\sqrt{t}+r}, \quad  |\partial_t h^n(t,r) | \lesssim \frac{1}{t+r^2}
\label{asm:h}
\end{align}
as long as $t < n^2 \delta$,  and solves~\eqref{1} with data $h_0^n(r) = h_0(\frac{1}{n} r)$. 

Defining $\overline{h}(t,r)$ to be the unique (by Step 2) solution of~\eqref{1} with data $h_0(0)$, we claim that $h^n$ converges to $\overline h$ in $L^\infty ([\epsilon,R] \times B(0,R))$ for all $\epsilon>0$, $R>0$ (and, of course, so does $g^n$). If this is indeed the case, we obtain, for $t_n = \frac{1}{n^2}$, that
\begin{align*}
&\| H(t_n) \|_{L^\infty_r(\{r < A \sqrt{t_n} \})}  = \sup_{r < \frac{A}{n}} \left| H \left( \frac{1}{n^2},r \right) \right| = \sup_{r < \frac A n} |(h^n-g^n)(1,nr)| \\
& \qquad \qquad  = \| (h^n-g^n)(t=1) \|_{L^\infty(B(0,A))} \\
& \qquad \qquad\leq  \| (h^n-\overline{h})(t=1) \|_{L^\infty(B(0,A))} + \| (g^n-\overline{h})(t=1) \|_{L^\infty(B(0,A))} \\
&  \qquad \qquad \overset{n \to \infty}{\longrightarrow} 0.
\end{align*}
Therefore, for $n$ sufficiently large, $\| H(t_n) \|_{L^\infty_r(\{r < A \sqrt{t_n} \})} < \eta$, which is the desired estimate.

\bigskip

\noindent \underline{Step 6: Proof of the claim that $h^n \to \overline h$.} 
There remains to show that $h^n$ converges to $\overline h$ in $L^\infty ([\epsilon,R] \times B(0,R))$ for all $\epsilon>0$, $R>0$. 

Arguing by contradiction, assume that this is not the case. Then we can find $\widetilde \epsilon$, $\widetilde R$, $\widetilde \delta$, and a subsequence, still denoted $(h^n)$, 
such that $\| h^n - \overline h \|_{L^\infty ([\widetilde \epsilon,\widetilde R] \times B(0,R))}  > \widetilde \delta > 0$. 
As we saw, this sequence is bounded in $\operatorname{Lip}([\epsilon,R] \times B(0,R))$ for all $\epsilon, R$, thus, by the Arzela-Ascoli theorem, 
it is precompact in $L^\infty([\epsilon,R] \times B(0,R))$. 
By a diagonal argument, we can find a subsequence, still denoted $h^n$, which is convergent in $L^\infty([\epsilon,R] \times B(0,R))$ for all $\epsilon, R$; denote $\widetilde h$ the limit. 
By continuity of $h_0$, it is clear that $h^n_0$ converges to $h_0(0)$ in $L^\infty(B(0,R))$ for all $R$. 

It is now easy to pass to the limit in the weak formulation of~\eqref{1} to obtain that $\widetilde h$ is a solution of~\eqref{1} with data $h_0(0)$. Furthermore, $\widetilde h(t,r)<\frac{\pi}{2}-\delta$ for any $(t,r)$. By Step 2, 
we conclude that $\widetilde h = \overline{h}$, which is the desired contradiction.

\section{Solutions obtained by Ginzburg-Landau regularization}

\label{sectionGL}

Consider the Ginzburg-Landau penalization problem
\begin{align}\label{penalizedequ}
\left\{
\begin{array}{l}
\displaystyle \partial_t u^{\varepsilon}-\Delta u^{\varepsilon} +\frac{1}{\varepsilon^2}(|u^{\varepsilon}|^2-1)u^{\varepsilon}=0 \\
u^{\varepsilon}(t=0) = u_0
\end{array}
\right.
\end{align}
It is the gradient flow for the energy functional $\int |\nabla u|^2\,dx + \int \frac{1}{2\varepsilon^2} (1-|u|^2)^2\,dx$.

\begin{theorem}
\label{theoGL}
Assume that $u_0$ is of finite energy.
\begin{itemize}
\item[(i)] There exists a sequence $\epsilon_n \to 0$ such that $u^{\epsilon_n}$ converges to a solution $u$ of~\eqref{0} as $n \to \infty$.
\end{itemize}
Consider any solution $u$ of~\eqref{0} obtained as the limit of a subsequence $u^{\epsilon_n}$, with $\epsilon_n \to 0$.
\begin{itemize}
\item[(ii)] Then $\partial_t u \in L^2_{t,x}$. 
\item[(iii)] Under the corotional ansatz~\eqref{corotationalansatz} for the data $u_0$, let $h_0$ be associated to $u_0$. Then $u$ also enjoys the corotational ansatz~\eqref{corotationalansatz}; let $h$ be the corresponding coordinate associated to $u$. Finally assume that $h_0$ is continuous, with $h_0(0) < \frac{\pi}{2}$. Then there exists $\delta > 0$ such that $h(t,x) < \frac{\pi}{2}$ if
$|t| + |x| < \delta$.
\end{itemize}
\end{theorem}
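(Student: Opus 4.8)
The plan is to establish the three items in turn: parts (i) and (ii) are the classical Ginzburg--Landau approximation scheme, while part (iii) requires a comparison argument at the level of $u^\epsilon$ that exploits the favourable sign of the penalization coefficient.

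\textbf{Parts (i)--(ii).} Equation~\eqref{penalizedequ} is a semilinear heat equation with cubic nonlinearity, globally well posed for finite-energy data $u_0\in\dot H^1\cap L^\infty$ (here $|u_0|=1$), and the maximum principle applied to $|u^\epsilon|^2-1$ gives $|u^\epsilon|\le 1$. The basic a priori estimate is the energy identity
\[
\int|\nabla u^\epsilon(t)|^2 \;+\; \int\frac{(1-|u^\epsilon(t)|^2)^2}{2\epsilon^2}\;+\;\int_0^t\!\!\int|\partial_t u^\epsilon|^2 \;=\;\int|\nabla u_0|^2 ,
\]
the potential term on the right vanishing since $|u_0|=1$. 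This yields, uniformly in $\epsilon$: $\nabla u^\epsilon$ bounded in $L^\infty_tL^2_x$, $\partial_t u^\epsilon$ bounded in $L^2_{t,x}$ with $\|\partial_t u^\epsilon\|_{L^2_{t,x}}^2\le\int|\nabla u_0|^2$, and $\int(1-|u^\epsilon|^2)^2\le 2\epsilon^2\int|\nabla u_0|^2$. By Aubin--Lions one extracts $\epsilon_n\to0$ with $u^{\epsilon_n}\to u$ strongly in $L^2_{\mathrm{loc}}$ and a.e., and weakly-$*$ in the energy spaces; the last bound forces $|u|=1$ a.e. To pass to the limit in the equation one uses that the penalization term is parallel to $u^\epsilon$, so $u^\epsilon\wedge(\partial_t u^\epsilon-\Delta u^\epsilon)=0$, i.e.\ $u^\epsilon\wedge\partial_t u^\epsilon=\operatorname{div}(u^\epsilon\wedge\nabla u^\epsilon)$ in $\mathcal D'$; the right-hand side is a product of a strongly and a weakly convergent factor, so $u\wedge\partial_t u=\operatorname{div}(u\wedge\nabla u)$, which together with $|u|=1$ is equivalent to~\eqref{0}. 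This gives (i); (ii) then follows from weak lower semicontinuity of the $L^2_{t,x}$ norm, namely $\|\partial_t u\|_{L^2_{t,x}}^2\le\int|\nabla u_0|^2<\infty$.

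\textbf{Part (iii): reduction.} By uniqueness for~\eqref{penalizedequ} and the corotational symmetry of $u_0$, each $u^\epsilon$ (hence $u$) is corotational, $u^\epsilon=(\phi_1^\epsilon(t,r),\,\phi_2^\epsilon(t,r)\tfrac{x}{|x|})$; since $|u|=1$ in the limit, $\phi_1=\cos h$, so the assertion ``$h<\tfrac\pi2$ near the origin'' is exactly ``$\phi_1=u\cdot e_1>0$ near the origin''. The first component of~\eqref{penalizedequ} reads
\[
\partial_t\phi_1^\epsilon-\Delta\phi_1^\epsilon=m^\epsilon\,\phi_1^\epsilon,\qquad m^\epsilon:=\tfrac{1}{\epsilon^2}(1-|u^\epsilon|^2)\ge 0,
\]
the sign of $m^\epsilon$ coming from $|u^\epsilon|\le1$. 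Using the continuity of $h_0$ and $h_0(0)<\tfrac\pi2$, fix $\rho_1,\mu>0$ with $h_0\le\tfrac\pi2-\mu$ on $\overline{B(0,\rho_1)}$, so $\phi_1^\epsilon(0,\cdot)=\cos h_0\ge\sin\mu$ there, while $|\phi_1^\epsilon|\le|u^\epsilon|\le1$ everywhere. Granting momentarily a uniform bound $0\le m^\epsilon\le C_0$ on $(0,T_0)\times B(0,\rho_1)$, one argues as follows. On this cylinder $\phi_1^\epsilon$ solves a uniformly parabolic equation with the bounded zeroth-order coefficient $-m^\epsilon$; let $\underline\phi^\epsilon$ solve the \emph{same} equation there with $\underline\phi^\epsilon(0,\cdot)=\sin\mu$ on $B(0,\rho_1)$ and $\underline\phi^\epsilon=-1$ on the lateral boundary. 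Since $\phi_1^\epsilon(0,\cdot)\ge\sin\mu$ and $\phi_1^\epsilon\ge-1$ there, the comparison principle gives $\underline\phi^\epsilon\le\phi_1^\epsilon$. Splitting $\underline\phi^\epsilon=\Phi_1^\epsilon-\Phi_2^\epsilon$ into the contributions of the (nonnegative) initial datum and of the lateral datum $1$, both are nonnegative: using $m^\epsilon\ge0$, $\Phi_1^\epsilon$ dominates the heat solution in $B(0,\rho_1)$ with zero Dirichlet condition and initial datum $\sin\mu$, hence $\Phi_1^\epsilon\ge c_0>0$ on a fixed neighbourhood of $(0,0)$ independent of $\epsilon$; using $m^\epsilon\le C_0$, $\Phi_2^\epsilon\le e^{C_0 t}h_2$ where $h_2$ is the heat solution with zero initial datum and unit lateral datum, and $h_2\le C e^{-c\rho_1^2/t}$ near the origin, so $\Phi_2^\epsilon\to0$ as $t\to0$ uniformly in $\epsilon$. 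Therefore $\phi_1^\epsilon\ge c_0/2>0$ on $\{t+|x|<\delta\}$ with $\delta>0$ independent of $\epsilon$, and letting $\epsilon_n\to0$ (using a.e.\ convergence of $\phi_1^{\epsilon_n}$) yields $\cos h\ge c_0/2$, i.e.\ $h<\tfrac\pi2$, on that set.

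\textbf{The main obstacle.} What remains, and is the crux, is the uniform bound $m^\epsilon\le C_0$ on a fixed spacetime neighbourhood of the origin. Here one uses that $u_0$ carries no genuine singularity at $x=0$: the defect $v^\epsilon:=1-|u^\epsilon|^2\ge0$ satisfies $v^\epsilon(0,\cdot)=0$ and
\[
\partial_t v^\epsilon-\Delta v^\epsilon+\tfrac{2|u^\epsilon|^2}{\epsilon^2}\,v^\epsilon=2|\nabla u^\epsilon|^2,
\]
so if one has, uniformly in $\epsilon$ on a neighbourhood of the origin, $|\nabla u^\epsilon|^2\le G$ and $|u^\epsilon|\ge\tfrac12$, then the large positive potential $\sim\epsilon^{-2}$ confines the ($O(1)$) lateral boundary values of $v^\epsilon$ to an $O(\epsilon)$-layer, while the source produces an interior value $O(G\epsilon^2)$, giving $v^\epsilon\lesssim G\epsilon^2$ near the origin, i.e.\ $m^\epsilon\lesssim G$. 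The uniform interior gradient and modulus bounds follow, via the $\epsilon$-independent small-energy regularity theory for~\eqref{penalizedequ}, from the smallness of $\int_{B(0,2\rho_1)}|\nabla u_0|^2$ for $\rho_1$ small, which holds because the corotational $u_0$ with $h_0$ continuous at the origin has locally finite Dirichlet energy there (for $d\ge3$) tending to $0$ as $\rho_1\to0$. I expect this $\epsilon$-regularity statement near a ``smooth point'' of possibly discontinuous data to be the heaviest technical ingredient.
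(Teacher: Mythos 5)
Your treatment of parts (i) and (ii) is the standard Chen/Rubinstein--Sternberg--Keller argument (energy identity, Aubin--Lions, passage to the limit via the $u\wedge$ formulation), which is exactly what the paper relies on by citation; note only that the potential term in the Ginzburg--Landau energy carries the coefficient $\tfrac{1}{4\epsilon^2}$, not $\tfrac{1}{2\epsilon^2}$, and the Dirichlet term carries $\tfrac12$ --- harmless typos that do not affect the conclusion $\|\partial_t u\|_{L^2_{t,x}}\lesssim\|\nabla u_0\|_{L^2}$.

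For part (iii), both you and the paper reduce to the scalar equation $\partial_t\phi_1^\epsilon-\Delta\phi_1^\epsilon=m^\epsilon\phi_1^\epsilon$ with $m^\epsilon=\epsilon^{-2}(1-|u^\epsilon|^2)\ge0$, exploiting the favourable sign of $m^\epsilon$. The paper's proof is explicitly written only for $h_0<\tfrac\pi2-\delta$ \emph{globally}: there the constant $\kappa=\cos(\tfrac\pi2-\delta)$ is a subsolution of $\partial_t-\Delta-m^\epsilon$, since $-m^\epsilon\kappa\le0$, and that single line gives $(u^\epsilon)^1\ge\kappa$ uniformly; the reduction of the general local statement to this global one is dismissed by the paper with the phrase ``a localization argument easily gives the full result,'' with no details. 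You instead go straight for the local statement, building a subsolution on a fixed cylinder and splitting $\underline\phi^\epsilon=\Phi_1^\epsilon-\Phi_2^\epsilon$. For $\Phi_1^\epsilon$ the sign $m^\epsilon\ge0$ again suffices, but to control $\Phi_2^\epsilon$ you need the uniform upper bound $m^\epsilon\le C_0$ on a fixed spacetime neighbourhood, which you propose to obtain from $\epsilon$-independent small-energy regularity for the Ginzburg--Landau flow (Chen--Struwe type) combined with the asymptotic expansion of the defect $1-|u^\epsilon|^2$.

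This is a genuinely different and heavier route than the global argument, and your closing caveat that the $\epsilon$-regularity near a ``smooth point'' of possibly discontinuous, merely finite-energy data is the heaviest ingredient is the honest thing to say: you have correctly identified that the localization is not trivial, and in particular that it cannot be done by naively running the same constant-comparison argument on a bounded cylinder (the unbounded potential $m^\epsilon$ spoils the comparison when the comparison function changes sign, precisely because only $m^\epsilon\ge0$ and not $m^\epsilon\le C$ is free). What the paper glosses over is exactly the step you flag. Your sketch is a valid way to close that gap, relying on monotonicity and small-energy regularity for~\eqref{penalizedequ} which indeed hold and are uniform in $\epsilon$. In short: same key idea and structure as the paper for the explicitly treated regime, plus an honest (if heavy) attempt at the localization the paper leaves to the reader. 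No mathematical error, but readers should be aware that the uniform bound on $m^\epsilon$ is not a formality and the paper's ``easily'' is optimistic.
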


\begin{proof}
The assertion $(i)$ can be found in~\cite{Chen} or~\cite{RSK}. The assertion $(ii)$ follows from the simple observation that $\partial_t u^\epsilon$ is uniformly bounded in $L^2_{t,x}$. 

If $u_0$ is given by the corotational ansatz~\eqref{corotationalansatz}, then so is $u$. Indeed, $u^\epsilon$ (which is unique) is of the form $u^\epsilon(t,x) = \left( \begin{array}{c} v^\epsilon(t,r) \\ w^\epsilon(t,r) \frac{x}{|x|} \end{array}\right)$, where $r = |x|$, and $v^\epsilon$ and $w^\epsilon$ are real valued functions. As $\epsilon \to 0$, $u(t,x)$ is therefore of the form $u(t,x) = \left( \begin{array}{c} v(t,r) \\ w(t,r) \frac{x}{|x|} \end{array}\right)$. Since we know furthermore that $u$ takes valued in $\mathbb{S}^d$, this implies that $u$ satisfies the corotional ansatz~\eqref{corotationalansatz}.

To prove$(iii)$, let us focus for simplicity on the case where $h_0(r)<\frac{\pi}{2}-\delta$; a localization argument easily gives the full result.
the point is that the equation satisfied by $u^\epsilon$ is subcritical in $L^\infty$, leading to existence and uniqueness of a solution in $L^\infty_t L^\infty_x$. Since the initial data takes values in the unit sphere, the solution $u$ takes values in the unit ball. Focus then on the equation satisfied by the first coordinate of $u^\epsilon$, $(u^\epsilon)^1$:
$$
\partial_t (u^\epsilon)^1 - \Delta (u^\epsilon)^1 {+} \frac{1}{\epsilon^2}(|u^\epsilon|^2 - 1)   (u^\epsilon)^1 = 0.
$$
In other words, $(u^\epsilon)^1$ solves a heat equation with a bounded potential, thus it satisfies the maximum principle, which implies that $(u^\epsilon)^1(t,x) \geq \kappa > 0$ for all $(t,x)$, and for $\kappa=\cos(\frac{\pi}2 - \delta)$. Passing to the limit leads to $u^1(t,x) \geq \kappa > 0$ for all $(t,x)$, which translates into $h(t,r) < \frac{\pi}{2} - \delta$, which is the desired result.
\end{proof}

It is easy to deduce from results we already proved uniqueness of the limit of $u^\epsilon$, as $\epsilon \to 0$.

\begin{theorem}
Assume that $d = 5$ or $6$, that the ansatz~\eqref{corotationalansatz} holds, and that $h_0$ is continuous, of finite energy, with 
$$
h_0(0) < \frac{\pi}{2}, \quad |\partial_r h_0(r)| \lesssim \frac{1}{r}, \quad \mbox{and} \quad  |\partial_r^2 h_0(r)| \lesssim \frac{1}{r^2}.
$$
Let $u$ be a limit as $\epsilon \to 0$ of a subsequence of solutions $u^\epsilon$ to~\eqref{penalizedequ}. Then $u$ is equal to the unique solution given by Theorem~\ref{theounique}.
\end{theorem}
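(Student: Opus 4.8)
The plan is to show that the limit $u$, equivalently its corotational coordinate $h$, belongs to the uniqueness class of Theorem~\ref{theounique}(i); since $d\in\{5,6\}\subset\{3,\dots,6\}$ that theorem then forces $u$ to coincide with the unique solution it provides.

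First I would collect what Theorem~\ref{theoGL} gives. As $u_0$ has finite energy, part $(i)$ produces a subsequence $u^{\epsilon_n}\to u$ which solves~\eqref{0}; part $(ii)$ gives $\partial_t u\in L^2_{t,x}$; and part $(iii)$ gives that $u$ is corotational, with coordinate $h$, and that there is $\delta_0>0$ with $h(t,x)<\frac{\pi}{2}$ for $|t|+|x|<\delta_0$. Adopting the usual convention $h=\arccos u^1\in[0,\pi]$ (legitimate since $u$ is $\mathbb S^d$-valued), one automatically has $h\in L^\infty_tL^\infty_x$ and $h\ge 0$, and likewise $h_0=\arccos u_0^1\ge 0$; combined with the hypotheses of the present theorem — $h_0$ continuous, $h_0(0)<\frac{\pi}{2}$, $|\partial_r h_0|\lesssim\frac1r$, $|\partial_r^2 h_0|\lesssim\frac1{r^2}$ — we see that $h_0$ meets the data hypotheses of Theorem~\ref{theounique}(i).

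Next I would upgrade the qualitative bound of Theorem~\ref{theoGL}$(iii)$ to the quantitative ``local maximum principle'' bound~\eqref{est:uniq}. Fix $\delta_1>0$ with $h_0(0)<\frac{\pi}{2}-2\delta_1$; by continuity of $h_0$ there is $r_0>0$ with $h_0(r)<\frac{\pi}{2}-2\delta_1$ for $r<r_0$. Re-running the maximum-principle argument in the proof of Theorem~\ref{theoGL}$(iii)$, localized at scale $r_0$ as indicated there: $(u^\epsilon)^1$ solves a heat equation with bounded potential, hence $(u^\epsilon)^1\ge\kappa:=\cos(\frac{\pi}{2}-\delta_1)>0$ on a fixed parabolic neighbourhood of the origin; passing to the limit, $u^1\ge\kappa$ there, i.e. $h(t,r)<\frac{\pi}{2}-\delta_1$ for $t+r<\delta_2$ for some $\delta_2>0$. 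Taking $\delta:=\min(\delta_1,\delta_2)$ gives $0\le h(t,r)<\frac{\pi}{2}-\delta$ for $t+r<\delta$, which is~\eqref{est:uniq}.

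It then remains to verify that $h$ satisfies the local energy inequality~\eqref{EnIn}. Here I would use that each $u^{\epsilon_n}$ is the gradient flow of $E_{\epsilon_n}(u)=\int|\nabla u|^2+\frac1{2\epsilon_n^2}(1-|u|^2)^2$ and hence obeys the corresponding localized energy identity. As $n\to\infty$, the penalization potential is nonnegative and its contribution at the initial slice vanishes because $u_0$ is $\mathbb S^d$-valued; moreover $u^{\epsilon_n}\to u$ strongly in $H^1_{\mathrm{loc}}$ — near the origin this uses that the bound $h<\frac{\pi}{2}-\delta$ places the flow in the small-data regularity regime of~\cite{KL,Wang}, and away from the origin one has a bounded potential — so the gradient terms pass to the limit and $\int\tau|\partial_t u^{\epsilon_n}|^2$ is lower semicontinuous for $\tau\ge 0$, which yields~\eqref{EnIn} for $u$. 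With all three properties in hand — $h\in L^\infty_tL^\infty_x$ solving~\eqref{1}, the local energy inequality, and~\eqref{est:uniq} — Theorem~\ref{theounique}(i) applies and identifies $h$, hence $u$, with the unique solution it provides. The main obstacle is precisely this last passage to the limit in the local energy inequality (equivalently, establishing the strong $H^1_{\mathrm{loc}}$ convergence $u^{\epsilon_n}\to u$), for which the structural inputs of Theorem~\ref{theoGL} — finite energy, $\partial_t u\in L^2$, and the sign of $h$ near the origin — are all used.
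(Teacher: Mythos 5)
Your overall plan is right — show that the limit $h$ lies in the uniqueness class of Theorem~\ref{theounique}(i) — and your handling of the data hypotheses and the ``local maximum principle'' bound $0\le h<\frac{\pi}{2}-\delta$ near the origin matches what the paper's proof of Theorem~\ref{theoGL}(iii) actually delivers. The gap is in how you propose to verify the local energy inequality.

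You try to pass to the limit in the localized energy identity for $u^{\epsilon_n}$, which requires strong $H^1_{\mathrm{loc}}$ convergence of $u^{\epsilon_n}\to u$; you flag this as the main obstacle but do not prove it, and the justification you sketch is not sound. The bound $h<\frac{\pi}{2}-\delta$ is a one-sided pointwise bound keeping $h$ away from the equator; it is not a smallness assumption, so it does not place $u$ in the small-data regularity regime of Koch--Lamm or Wang. Moreover those results concern the harmonic map heat flow itself, while $u^{\epsilon_n}$ solves the Ginzburg--Landau equation, so they do not directly yield compactness for the approximating sequence near the origin. The paper avoids all of this by applying Proposition~\ref{energyinequality}(ii), which is designed for exactly this situation: it requires only $d\geq 5$, $\partial_t h\in L^2_{t,x}$ locally (furnished by Theorem~\ref{theoGL}(ii)), and the stated bounds on $h_0$, and then derives the local energy inequality \emph{for the limit $u$ directly}, using mollification together with the fact (from Proposition~\ref{parabolicreg}) that $|\nabla u|\lesssim 1/r$, hence $|\nabla u|^2\in L^2_{t,x}$ once $d\geq 5$. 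Replacing your third step by a direct appeal to Proposition~\ref{energyinequality}(ii) closes the gap and reduces the proof to a few lines, as in the paper.
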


\begin{proof} Let $u$ be as in the statement of the theorem. By Theorem~\ref{theoGL}, it satisfies $\partial_t u \in L^2_{t,x}$, thus by Proposition~\ref{energyinequality} it satisfies the local energy inequality. Furthermore, by Theorem~\ref{theoGL}, there exists $\delta>0$ such that $h(t,x) < \frac{\pi}{2}$ if $|t| + |x| < \delta$. This gives uniqueness of $u$ by Theorem~\ref{theounique}.
\end{proof}

\appendix

\section{Some ODE results}

\subsection{The comparison principle}

We give below a version of the Sturm comparison principle for which we were not able to find an appropriate reference.

\begin{lemma}
\label{comparison}
Let $p, q_1, q_2$ be $C^1$ functions with $p>0$ and $q_2 \geq q_1$ for $t>0$. Consider the differential equations:
\begin{align*}
&(p(t)x_1')'+q_1(t)x_1= 0,
\\
&(p(t)x_2')'+q_2(t)x_2= 0
\end{align*}
with initial conditions 
$$
x_1(0)=x_2(0), 
\qquad 
x'_1(0) \geq x'_2(0).
$$
If $x_2(t)>0$ on $(0,T)$, then $x_1(t)\ge x_2(t)$ on $(0,T)$.
\end{lemma}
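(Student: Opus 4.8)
The plan is to run a Sturm-type argument based on the generalized Wronskian $W := p(x_1' x_2 - x_1 x_2')$: on any interval where $x_1 \ge 0$ the function $W$ will be monotone, which forces $x_1/x_2$ to be monotone there, and then a ``first zero of $x_1$'' argument should propagate the conclusion to all of $(0,T)$.

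First I would establish the Wronskian identity. Differentiating $W$ and using the two equations, the terms $p x_1' x_2'$ cancel and one is left with
\[
W' = (p x_1')' x_2 - x_1 (p x_2')' = -q_1 x_1 x_2 + q_2 x_1 x_2 = (q_2 - q_1)\, x_1 x_2 .
\]
Writing $c := x_1(0) = x_2(0)$ (here $c \ge 0$, since $x_2 > 0$ on $(0,T)$ and $x_2$ is continuous), one also has $W(0) = p(0)\, c\, (x_1'(0) - x_2'(0)) \ge 0$. Hence on any interval $(0,t_1)$ on which $x_1 \ge 0$, the hypotheses $q_2 \ge q_1$ and $x_2 > 0$ give $W' \ge 0$ and therefore $W \ge W(0) \ge 0$; since $(x_1/x_2)' = (x_1' x_2 - x_1 x_2')/x_2^2 = W/(p x_2^2) \ge 0$ there, the ratio $x_1/x_2$ is nondecreasing on $(0,t_1)$. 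Its limit as $t \to 0^+$ equals $1$ if $c > 0$, and equals $x_1'(0)/x_2'(0) \ge 1$ if $c = 0$ (by l'H\^opital, using $x_2'(0) > 0$); in either case $x_1/x_2 \ge 1$, i.e.\ $x_1 \ge x_2$, on $(0,t_1)$.

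Next I would propagate positivity of $x_1$. The initial data make $x_1$ positive on some interval $(0,\varepsilon)$ (either $c > 0$, or $c = 0$ and then $x_1'(0) \ge x_2'(0) > 0$), so one may let $t^* \in (0,T]$ be the first zero of $x_1$ in $(0,T)$, or $t^* = T$ if it has none; then $x_1 > 0$ on $(0,t^*)$. Applying the previous step with $t_1 = t^*$ gives $x_1 \ge x_2 > 0$ on $(0,t^*)$. If $t^* < T$, continuity would force $x_1(t^*) = 0$, whereas passing to the limit in $x_1 \ge x_2$ gives $x_1(t^*) \ge x_2(t^*) > 0$ --- a contradiction. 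Hence $t^* = T$ and $x_1 \ge x_2$ on $(0,T)$, as claimed.

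The computations here are elementary; the one place I expect to need care is the behavior at $t = 0$: ensuring that $x_1$ is positive immediately to the right of $0$ and that $x_1/x_2$ has the right one-sided limit. Both are guaranteed by the mild nondegeneracy $x_2'(0) > 0$ present in every application of the lemma (e.g.\ $x_2'(0) = 1$ in Proposition~\ref{GR}(v) and in Lemmas~\ref{perturbation} and~\ref{lemma:1}). If $p$ or the $q_i$ were allowed to be singular at the origin, I would instead replace $W(0)$ by $\lim_{t \to 0^+} W(t)$ and run the monotonicity step on $(\tau, t^*)$, letting $\tau \to 0^+$.
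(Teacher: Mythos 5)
Your proof follows essentially the same route as the paper's: derive the Wronskian identity $W' = (q_2 - q_1)x_1 x_2$ for $W = p(x_1'x_2 - x_1 x_2')$, observe $W(0)\ge 0$ so that $W\ge 0$ and hence $x_1/x_2$ is nondecreasing wherever $x_1\ge 0$, and conclude $x_1\ge x_2$. The only difference is that you handle the degenerate case $x_1(0)=x_2(0)=0$ (via l'H\^opital and the first-zero argument) more explicitly than the paper, which simply writes $(x_1/x_2)(0)=1$; this is a refinement, not a change of method.
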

\begin{proof}
From the equations, we have
$$
(px'_1)'x_2-(px'_2)'x_1=(q_2-q_1)x_1x_2.
$$
Integrating on $(0,t)$ implies, as long as $x_1 \geq 0$ and $x_2 \geq 0$, that
$$
[p(x'_1x_2-x_1x'_2)]^t_0 =\int^t_0 (q_2-q_1)x_1x_2
\ge 0
$$
where the last inequality holds because $q_2\ge q_1$. Since $p(x'_1x_2-x_1x'_2) \geq 0$ at $t=0$, we find
$$
x'_1x_2-x_1x'_2 \ge {0}.
$$
Therefore as long as $x_1 \geq 0$ and $x_2> 0$,
$$
\frac{d}{dt}\left( \frac{x_1}{x_2}\right)
=\frac{1}{x_2^2}
\left\{ 
x'_1x_2-x_1x'_2
\right\}
\ge 
0.
$$
Since $(x_1/x_2)(0)=1$, we conclude $x_1(t) \ge x_2(t)$ as long as $x_1 \geq 0$ and $x_2> 0$. This implies $x_1(t) >0$ on $(0,T)$.
\end{proof}

\subsection{Asymptotic behavior for a class of ODE}

We derive below a few properties of a class of ODE which plays a central role in Section~\ref{oriole}. 
Though we were not able to find a convenient reference for the result below, it is close in spirit to material in~\cite{CL}.

\begin{lemma}
\label{lemmaODE}
Consider the ODE
\begin{equation}
\label{ODEODE}
\phi''+\left(\frac{d-1}{\rho}+\frac{\rho}{2}\right) \phi' - V(\rho) \phi = 0,
\end{equation}
where $\rho$ ranges in $(0,\infty)$ and $V$ is such that $|V(\rho)| \leq \frac{C_0}{\rho^2}$ and $|V'(\rho)| \leq \frac{C_0}{\rho^3}$.

There exists a basis of solutions $\phi_1$, $\phi_2$ such that, as $\rho \to \infty$,
$$
\left\{ \begin{array}{l} \phi_1 (\rho) = \rho^{-d} e^{-\frac{\rho^2}{4}} (1 + O(1/\rho^2)) \\
\phi_1'(\rho) = -\frac{1}{2} \rho^{1-d} e^{-\frac{\rho^2}{4}} (1 + O(1/\rho^2)) \end{array} \right. \qquad
\left\{ \begin{array}{l} \phi_2(\rho) = 1 + O(1/\rho^2) \\ \phi_2'(\rho) = O(1/\rho^3). \end{array} \right.
$$
(where the implicit constants only depend on $C_0$). In particular, $\phi(\infty) = \lim_{\rho \to \infty} \phi(\rho)$ exists. 

Furthermore, there exist $R$ and $C$ depending only on $C_0$ such that, for any solution $\phi$ and $r>R$,
\begin{equation}
\label{bdcv}
\left| \phi(\rho) - \phi(\infty) \right| \leq C \frac{ |\phi(R)| + |\phi'(R)|}{\rho^2}.
\end{equation}
\end{lemma}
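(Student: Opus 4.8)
The plan is to put the equation in self-adjoint form, prove one robust a priori estimate from which both the existence of $\phi(\infty)$ and the bound \eqref{bdcv} follow at once, and then read off the sharp asymptotics of a basis from that estimate together with an explicit reduction of order.

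\emph{Step 1: self-adjoint form and the basic a priori estimate.} Multiplying \eqref{ODEODE} by the integrating factor $p(\rho)=\rho^{d-1}e^{\rho^2/4}$, which satisfies $p'/p=\frac{d-1}{\rho}+\frac{\rho}{2}$, recasts it as $(p\phi')'=pV\phi$. For a solution $\phi$ on $(0,\infty)$ and $R>0$, integrating from $R$ gives
\[
\phi'(\rho)=\frac{p(R)}{p(\rho)}\,\phi'(R)+\frac{1}{p(\rho)}\int_R^\rho p(t)V(t)\phi(t)\,dt .
\]
The prefactor $p(R)/p(\rho)$ decays faster than any power of $\rho$, so it is $\le C\rho^{-3}$ for $\rho\ge R$ (once $R$ is fixed below). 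The hypothesis $|V(t)|\le C_0t^{-2}$ together with the elementary Laplace-type bound $\frac{1}{p(\rho)}\int_R^\rho p(t)t^{-2}\,dt\le C\rho^{-3}$ — proved by one integration by parts plus a short bootstrap, valid once $R$ is large enough — then yields $|\phi'(\rho)|\le C\rho^{-3}\big(|\phi(R)|+|\phi'(R)|+\sup_{[R,\rho]}|\phi|\big)$. Integrating $|\phi'|$ on $[R,\rho]$ and using $\int_R^\infty s^{-3}\,ds=\tfrac1{2R^2}$ gives $\sup_{[R,\rho]}|\phi|\le |\phi(R)|+\tfrac{C}{2R^2}(|\phi(R)|+|\phi'(R)|)+\tfrac{C}{2R^2}\sup_{[R,\rho]}|\phi|$; choosing $R$ large enough (depending only on $C_0$) that $\tfrac{C}{2R^2}\le\tfrac12$, we absorb the last term and obtain $\sup_{[R,\infty)}|\phi|\le 3(|\phi(R)|+|\phi'(R)|)$. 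Feeding this back, $|\phi'(\rho)|\le C\rho^{-3}(|\phi(R)|+|\phi'(R)|)$, so $\phi'\in L^1(R,\infty)$, the limit $\phi(\infty):=\lim_{\rho\to\infty}\phi(\rho)$ exists, and $|\phi(\rho)-\phi(\infty)|=\big|\int_\rho^\infty\phi'\big|\le C\rho^{-2}(|\phi(R)|+|\phi'(R)|)$, which is precisely \eqref{bdcv}.

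\emph{Step 2: construction of the basis.} Applying \eqref{bdcv} to the solution with $\phi(R)=1$, $\phi'(R)=0$ at a large $R$ gives $|\phi(\infty)-1|\le C/R^2<1$, so the linear functional $\phi\mapsto\phi(\infty)$ on the two-dimensional solution space is not identically zero, and we may fix a solution $\phi_2$ with $\phi_2(\infty)=1$. Step 1 then gives at once $\phi_2(\rho)=1+O(\rho^{-2})$ and $\phi_2'(\rho)=O(\rho^{-3})$. Since $\phi_2\to1$, there is $R_1$ with $\phi_2\ge\tfrac12$ on $[R_1,\infty)$, and reduction of order produces a companion solution
\[
\phi_1(\rho):=\tfrac12\,\phi_2(\rho)\int_\rho^\infty\frac{dt}{p(t)\,\phi_2(t)^2},
\]
for which Abel's identity gives $p\,W(\phi_1,\phi_2)\equiv\tfrac12\neq0$, so (after extending $\phi_1$ from $[R_1,\infty)$ to a solution on $(0,\infty)$) $\{\phi_1,\phi_2\}$ is a basis. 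Using $\phi_2(t)^{-2}=1+O(t^{-2})$ and the explicit expansion $\int_\rho^\infty t^{1-d}e^{-t^2/4}\,dt=2\rho^{-d}e^{-\rho^2/4}\big(1+O(\rho^{-2})\big)$ (one integration by parts) one finds $\int_\rho^\infty\frac{dt}{p\phi_2^2}=2\rho^{-d}e^{-\rho^2/4}(1+O(\rho^{-2}))$, hence $\phi_1(\rho)=\rho^{-d}e^{-\rho^2/4}(1+O(\rho^{-2}))$. Differentiating, $\phi_1'=\tfrac12\phi_2'\int_\rho^\infty\frac{dt}{p\phi_2^2}-\frac{1}{2p\phi_2}$, where the first term is $O(\rho^{-d-3}e^{-\rho^2/4})$, negligible against the second, which equals $-\tfrac12\rho^{1-d}e^{-\rho^2/4}(1+O(\rho^{-2}))$; this is the claimed asymptotics for $\phi_1'$, and completes the lemma.

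\emph{Main obstacle.} The only genuine work is Step 1: closing the bootstrap for $\sup|\phi|$ with constants depending only on $C_0$, and in particular establishing the uniform bound $\frac{1}{p(\rho)}\int_R^\rho p(t)t^{-2}\,dt\lesssim\rho^{-3}$. Once these are in hand, Step 2 is routine bookkeeping with Gaussian integrals, and the hypothesis on $V'$ is not even needed.
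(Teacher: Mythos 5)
Your proposal is correct, and it takes a genuinely different route from the paper's. The paper substitutes $s=\rho^2$, rewrites the second-order equation as a $2\times 2$ first-order system, diagonalizes that system (eigenvalues $\lambda_\pm$, eigenvectors $e_\pm$), proves a Gronwall-type uniform bound on the coefficients $\alpha_\pm$, and finally builds $\phi_1$ by a contraction argument for an integral system and $\phi_2$ by shooting from a large $s_0$. You instead stay in the self-adjoint form $(p\phi')'=pV\phi$, integrate once, and close a bootstrap on $\sup_{[R,\rho]}|\phi|$ using only the Laplace-type bound $\frac{1}{p(\rho)}\int_R^\rho p(t)t^{-2}\,dt\lesssim\rho^{-3}$; the conclusion $|\phi'(\rho)|\lesssim\rho^{-3}(|\phi(R)|+|\phi'(R)|)$ gives \eqref{bdcv} at once, and then you read off $\phi_2$ directly and obtain $\phi_1$ by reduction of order $\phi_1=\tfrac12\phi_2\int_\rho^\infty (p\phi_2^2)^{-1}\,dt$ with $pW(\phi_1,\phi_2)\equiv\tfrac12$. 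Two differences are worth noting. First, your argument establishes the sharp estimate \eqref{bdcv} \emph{before} constructing the basis, whereas the paper obtains both as outputs of the diagonalized-system analysis; your ordering makes the existence of $\phi_2$ (nontriviality of $\phi\mapsto\phi(\infty)$) essentially free. Second — and this is a genuine simplification — your proof never uses the hypothesis $|V'(\rho)|\lesssim\rho^{-3}$. The paper does use it: the off-diagonal coefficients $f_i=-\langle e_\pm',\tilde e_\mp\rangle$ involve $\lambda_\pm'$, hence $W'$, hence $V'$, and the decay $|f_i|\lesssim s^{-2}$ that drives the Gronwall argument requires the derivative bound on the potential. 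Thus your approach proves a slightly stronger statement with weaker hypotheses, at comparable length. The only places where you should be a bit more explicit in a final write-up are (a) that the constant in $\frac{p(R)}{p(\rho)}\le C\rho^{-3}$ on $[R,\infty)$ is $C=R^3$ (hence depends on $C_0$ through the choice of $R$, which is fine), and (b) that $R$ is chosen once at the outset large enough both to close the bootstrap and to ensure $C/R^2<1$ for the nonvanishing of $\phi\mapsto\phi(\infty)$.
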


\begin{proof} \underline{Step 1: Change of independent variable.} 
The equation~\eqref{ODEODE} becomes, when written for $\omega(s) = \phi(\sqrt{s})$,
$$
\omega''(s) + \omega'(s) \left(\frac{d}{2s} + \frac{1}{4} \right) - W(s) \omega(s) = 0,
$$
where $W(s) = \frac{1}{4s}V(\sqrt{s})$ satisfies $|W(s)| \leq \frac{C_0}{s^2}$ and $|W'(s)| \leq \frac{C_0}{s^3}$.

\bigskip

\noindent \underline{Step 2: Diagonalization.} The ODE for $\omega$ can be written in matrix form
$$
\frac{d}{ds} \left( \begin{array}{c} \omega \\ \omega' \end{array} \right) = 
\left(  \begin{array}{cc} 0 &  1 \\ W(s) & -\frac{d}{2s} - \frac{1}{4} \end{array} \right)
\left( \begin{array}{c} \omega \\ \omega' \end{array} \right).
$$
The eigenvalues of the above matrix are given by
$$
\lambda_{\pm} = \frac{1}{2} \left( - \frac{d}{2s} - \frac{1}{4} \pm \sqrt{ \left( \frac{d}{2s} + \frac 14 \right)^2 + 4 W } \right)
$$
and enjoy the asymptotics as $s \to \infty$
$$
\lambda_- = -\frac{1}{4} - \frac{d}{2s} + O \left( \frac{1}{s^2} \right), \quad \lambda_+ = 4 W(s) + O \left( \frac{1}{s^3} \right).
$$
In particular, they are real and different for $s$ sufficiently big, which we assume in the following.
A basis of eigenvectors is provided by $e_{\pm} = \left( \begin{array}{c} 1 \\ \lambda_{\pm} \end{array} \right)$, with dual basis $\widetilde{e}_{\pm} = \frac{1}{\lambda_{\pm} - \lambda_{\mp}} \left( \begin{array}{c} -\lambda_{\mp} \\ 1 \end{array} \right)$ such that $\langle e_{\pm} \,,\, \widetilde{e}_{\pm} \rangle = 1$ and $\langle e_{\pm} \,,\, \widetilde{e}_{\mp} \rangle = 0$. Decomposing $\left( \begin{array}{c} \omega \\ \omega' \end{array} \right)$ in this basis, 
$$
\left( \begin{array}{c} \omega \\ \omega' \end{array} \right) 
= \alpha_+ e_+ + \alpha_- e_-,
$$
and the ODE becomes
$$
\left\{ 
\begin{array}{l} \alpha_+' = \lambda_+ \alpha_+ + f_1 \alpha_+ + f_2 \alpha_- \\
 \alpha_-' = \lambda_- \alpha_- + f_3 \alpha_+ + f_4 \alpha_- \end{array} 
\right. ,
$$
where $f_1 = - \langle e_+'\,,\,\widetilde{e}_+ \rangle$, $f_2 = - \langle e_-'\,,\,\widetilde{e}_+ \rangle$, 
$f_3 =- \langle e_+'\,,\,\widetilde{e}_- \rangle$, and $f_4 = - \langle e_-'\,,\,\widetilde{e}_- \rangle$ all enjoy bounds of the type $|f_i(s)| < \frac{C}{s^2}$, $|f_i'(s)| < \frac{C}{s^3}$.

\bigskip

\noindent
\underline{Step 3: Uniform bound.} First choose $S$ such that $-1 <\lambda_-(s)<-\frac{1}{8}$ and $-\frac{1}{16}<\lambda_+(s)<1$ for $s>S$. Then
$\alpha_{\pm} = \langle (\omega,\omega')\,,\,\widetilde{e}_{\pm} \rangle $ and therefore
$$
|\alpha_+(S)| + |\alpha_-(S)| \leq C \left( |\omega(S)| + |\omega'(S)| \right).
$$
Furthermore, for $s>S$,
\begin{align*}
\frac{d}{ds} \left( |\alpha_+| + |\alpha_-| \right) & \leq \lambda_- |\alpha_-| + \left( |\lambda_+| + |f_1| + |f_2| + |f_3| + |f_4| \right) \left( |\alpha_+| + |\alpha_-| \right) \\
& \leq \left( |\lambda_+| + |f_1| + |f_2| + |f_3| + |f_4| \right) \left( |\alpha_+| + |\alpha_-| \right),
\end{align*}
thus by Gronwall's lemma, for any $s > S$,
\begin{align*}
\left( |\alpha_+(s)| + |\alpha_-(s)| \right) & \leq e^{ \int_{S}^s  |\lambda_+| + |f_1| + |f_2| + |f_3| + |f_4|} \left( |\alpha_+(S)| + |\alpha_-(S)| \right) \\
& \leq C \left( |\omega(S)| + |\omega'(S)| \right),
\end{align*}
which is the desired uniform bound.
\bigskip

\noindent
\underline{Step 4: Uniform convergence.} The previous uniform bound immediately gives that  
$|\alpha_+'(s)| = O \left( \frac{1}{s^2} \right)$, from which we see that 
\begin{equation}
\label{bdplus}
|\alpha_+(s) - \alpha_+(\infty)| \leq C \frac{|\omega(S)| + |\omega'(S)|}{s}.
\end{equation}
Next, the equation on $\alpha_-$ can be written
$$
\left( s^{\frac{d}{2}} e^{\frac{s}{4}} \alpha_- \right)'
= s^{\frac{d}{2}} e^{\frac{s}{4}} \left( f \alpha_- + f_3 \alpha_+ + f_4 \alpha_- \right),
$$
where $f = \lambda_- + \frac{1}{4} + \frac{d}{2s}$ is such that $|f(s)| < \frac{C}{s^2}$. 
Thus, by the uniform bounds, for $s>S$,
$$
\left| \left( s^{\frac{d}{2}} e^{\frac{s}{4}} \alpha_- \right)' \right| \leq  C s^{\frac{d}{2}-2} e^{\frac{s}{4}} (|\omega(S)| + |\omega'(S)|).
$$
Integrating this differential inequality gives, for $s>S$
\begin{equation}
\label{bdminus}
|\alpha_-(s)| \leq C\left( \frac{S^{\frac{d}{2}} e^{\frac{S}{4}}}{s^{\frac{d}{2}} e^{\frac{s}{4}}} + \frac{1}{s^{\frac{d}{2}}e^{\frac{s}{4}}}  
\int_S^s \sigma^{\frac{d}{2}-2} e^{\frac{\sigma}{4}} \,d\sigma \right) (|\omega(S)| + |\omega'(S)|) \leq C \frac{|\omega(S)| + |\omega'(S)|}{s^2}.
\end{equation}
Combining~\eqref{bdplus} and~\eqref{bdminus}, and coming back to the original independent variable, gives~\eqref{bdcv}.

\bigskip

\noindent
\underline{Step 5: Construction of $\phi_1$.} Recall that $f(s) = \lambda_-(s) + \frac{1}{4} + \frac{d}{2s}$ is such that $|f(s)| < \frac{C}{s^2}$. 
Setting furthermore $\beta_-(s) = s^{\frac{d}{2}} e^{\frac s4} \alpha_-(s)$, it satisfies the equation
$$
\left\{ \begin{array}{l} \alpha_+' = \lambda_+ \alpha_+ + f_1 \alpha_+ + f_2 \beta_- s^{-\frac{d}{2}} e^{-\frac s4}\\
\beta_-' = (f_4 + f) \beta_- + f_3 s^{\frac{d}{2}} e^{\frac s4} \alpha_+
\end{array} \right. .
$$
We are looking for a solution such that $\beta_- \to 1 $ and $\alpha_+ < C t^{-\frac{d}{2}-2} e^{-\frac t4}$ as $t \to \infty$. This is equivalent to solving the integral system
$$
\left\{ \begin{array}{l} 
\alpha_+(s) = - \int_s^\infty \left( (f_1+\lambda_+) \alpha_+ + f_2  \beta_- \sigma^{-\frac{d}{2}} e^{-\frac \sigma 4}\right)\,d\sigma \\
\beta_-(s) = 1 - \int_s^\infty \left( (f_3 + f)\beta_- + f_4 \sigma^{\frac{d}{2}} e^{\frac \sigma 4} \alpha_+\right)\,d\sigma.
\end{array} \right.
$$
It is easy to solve this integral system on $[T,\infty)$, for $T$ sufficiently big, by a contraction argument in the space given by the norm 
$\| \beta_-\|_\infty + \| s^{\frac d2 + 1} e^{\frac s4} \alpha_+ \|_\infty$. This gives a decay $\sim  s^{-\frac d2 - 1} e^{-\frac s4}$ for $\alpha_+$, but using once again the equation improves it to $s^{-\frac d2 - 2} e^{-\frac s4}$. Summarizing, we obtain
$$
|\alpha_+| \leq C s^{-\frac d2 - 2} e^{-\frac s4}, \quad \mbox{and} \quad \alpha_- = s^{-\frac d2} e^{-\frac s4} (1+ O (1/s)).
$$
Coming back to the $\phi$ variable gives the desired solution, $\phi_1$.

\bigskip

\noindent
\underline{Step 6: Construction of $\phi_2$.} Solve the ODE for $s>s_0>0$ with data $\alpha_+(s_0) = 1$, $\alpha_-(s_0) = 0$. By the argument in steps 3 and 4, we see that
$$
|\alpha_+(s_0) - \alpha_+(\infty)| \leq \frac{C}{s_0},
$$
guaranteeing that $\alpha_+(\infty)>0$ for $s_0$ sufficiently big. Furthermore, the decay of $\alpha_-$ is given by \eqref{bdminus}. Rescaling $(\alpha_+,\alpha_-)$, 
we can ensure that $\alpha_+(\infty) = 1$. Coming back to the $\phi$ variable gives the desired solution, $\phi_2$.
\end{proof}

\begin{corollary} 
\label{loon} \begin{itemize}
\item[(i)]
Given a potential $V$ satisfying the hypotheses of Lemma~\ref{lemmaODE}, there exists $R_0>0$ such that: if $\phi$ is a solution of~\eqref{ODEODE} with
$$
\phi(R) = 0,\quad \phi'(R)>0,
$$
with $R>R_0$, then $\phi(\rho)\neq 0$ for $\rho>R$, and $\phi(\infty) \neq 0$. 
\item[(ii)] Any nonzero solution of~\eqref{ODEODE} can vanish only once to the right of $R_0$.
\end{itemize}
\end{corollary}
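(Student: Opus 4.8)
The plan is to reduce \eqref{ODEODE} to a first‑order integral equation adapted to the weight $p(\rho)=\rho^{d-1}e^{\rho^2/4}$, for which \eqref{ODEODE} reads $(p\phi')'=pV\phi$. Integrating this twice from the zero $\rho=R$, and using $\phi(R)=0$, I would write
$$
\phi(\rho)=G(\rho)+E(\rho),\qquad G(\rho)=p(R)\phi'(R)\int_R^\rho\frac{d\tau}{p(\tau)},\qquad E(\rho)=\int_R^\rho\frac{1}{p(\tau)}\int_R^\tau p(\sigma)V(\sigma)\phi(\sigma)\,d\sigma\,d\tau.
$$
Since $p$ grows like $e^{\rho^2/4}$, the integral $\int_R^\infty p^{-1}$ is finite, so $G$ is strictly increasing from $0$ to a finite limit $G_\infty=p(R)\phi'(R)\int_R^\infty p^{-1}>0$ (positivity because $\phi'(R)>0$). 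The error term is controlled by the elementary estimate $\int_R^\tau \frac{p(\sigma)}{\sigma^2}\,d\sigma\le C_d\,\frac{p(\tau)}{\tau^3}$, valid for $R$ large — this is where $d\ge 3$ is used, so that the integration‑by‑parts remainder is genuinely lower order — which together with $|V|\le C_0\rho^{-2}$ yields, for every $\rho\ge R$,
$$
|E(\rho)|\le C_0 C_d\,\Big(\sup_{[R,\rho]}|\phi|\Big)\int_R^\rho\frac{d\tau}{\tau^3}\le\frac{C}{R^2}\sup_{[R,\rho]}|\phi|,
$$
with $C$ depending only on $d$ and $C_0$.

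From this I would first obtain the a priori bound $\|\phi\|_{L^\infty(R,\infty)}\le 2G_\infty$, provided $R_0$ is large enough: on each interval $[R,N]$ the displayed estimate gives $\sup_{[R,N]}|\phi|\le G_\infty+\frac{C}{R^2}\sup_{[R,N]}|\phi|$, and absorbing the last term (possible once $C/R^2<1/2$) and letting $N\to\infty$ does it. To rule out a second zero, suppose $\rho_1>R$ were the smallest zero of $\phi$ in $(R,\infty)$; then $\phi>0$ on $(R,\rho_1)$ and attains a maximum $M_1>0$ at some interior point $\rho^*$ with $\phi'(\rho^*)=0$. On $[R,\rho_1]$ the error bound reads $|E|\le\frac{C}{R^2}M_1$; combined with $G(\rho_1)=-E(\rho_1)$ (from $\phi(\rho_1)=0$) and monotonicity of $G$, this gives $G(\rho^*)\le G(\rho_1)\le\frac{C}{R^2}M_1$, whence $M_1=G(\rho^*)+E(\rho^*)\le\frac{2C}{R^2}M_1$ — impossible once $R^2>2C$. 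Hence $\phi>0$ on $(R,\infty)$. Finally, $E(\rho)$ converges as $\rho\to\infty$ (its integrand is absolutely integrable, by the above together with $\|\phi\|_\infty\le 2G_\infty$), so $\phi(\infty)=G_\infty+E_\infty$ exists and $\phi(\infty)\ge G_\infty-|E_\infty|\ge G_\infty\left(1-\frac{2C}{R^2}\right)>0$. Taking $R_0$ larger than $\sqrt{2C}$ (and than the threshold appearing in Lemma~\ref{lemmaODE}) proves (i).

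Statement (ii) then follows from (i) together with the linearity of \eqref{ODEODE}. If a nonzero solution $\phi$ vanished at two points $R_0<\rho_1<\rho_2$, then $\phi'(\rho_1)\neq 0$ (otherwise $\phi\equiv 0$ by uniqueness for the ODE); applying (i) at $R=\rho_1$ to $\phi$ if $\phi'(\rho_1)>0$, or to the solution $-\phi$ if $\phi'(\rho_1)<0$, forces $\phi\neq 0$ on $(\rho_1,\infty)$, contradicting $\phi(\rho_2)=0$.

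I expect the main obstacle to be the behaviour of $\phi$ near $\rho=R$, where $G$ and $E$ are both small so that a crude ``$G$ dominates $E$'' argument is unavailable; the device that resolves it is to estimate $E$ against the \emph{local} maximum $M_1$ of $\phi$ on $[R,\rho_1]$ rather than against a fixed quantity, so that $M_1$ cancels in the final inequality. The one computation requiring genuine care is the uniform‑in‑$\tau$ bound $\int_R^\tau \frac{p(\sigma)}{\sigma^2}\,d\sigma\lesssim \frac{p(\tau)}{\tau^3}$, which is precisely where the hypothesis $d\ge 3$ enters.
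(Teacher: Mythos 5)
Your proof is correct but takes a genuinely different route from the paper's. The paper's argument leans entirely on Lemma~\ref{lemmaODE}: it expands $\phi = \alpha\phi_1 + \beta\phi_2$ in the asymptotic basis constructed there (one solution exponentially decaying, one tending to $1$), solves for $\alpha/\beta$ from $\phi(R)=0$, and reads off positivity directly from the explicit expansions of $\phi_1,\phi_2,\phi_1',\phi_2'$, showing $\phi$ is increasing on a short interval $(R, R + 100/R)$ and then stays above $1/2$. You instead cast~\eqref{ODEODE} in self-adjoint form $(p\phi')'=pV\phi$ with $p(\rho)=\rho^{d-1}e^{\rho^2/4}$, integrate twice from the zero to obtain $\phi = G + E$, and run a bootstrap: the free term $G$ is positive, increasing and convergent, while the Volterra remainder $E$ is $O(R^{-2})\sup|\phi|$ once the elementary bound $\int_R^\tau p(\sigma)\sigma^{-2}\,d\sigma \lesssim p(\tau)\tau^{-3}$ is established by integration by parts. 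Your approach is more self-contained --- it uses only $|V|\lesssim \rho^{-2}$, not the derivative bound $|V'|\lesssim\rho^{-3}$ that the diagonalization in Lemma~\ref{lemmaODE} requires --- and your device of estimating $E$ against the \emph{local} maximum $M_1$ of $\phi$ on $[R,\rho_1]$ (so that $M_1$ cancels from both sides of $M_1 \le \frac{2C}{R^2}M_1$) is a clean replacement for the paper's inspection of $\phi,\phi'$ near $\rho=R$. The treatment of (ii) is the same in both: two zeros to the right of $R_0$ would let (i) apply (to $\pm\phi$) at the first of them. One small inaccuracy in your commentary only: the integration-by-parts bound on $\int_R^\tau p/\sigma^2$ holds for every $d$ once $R$ is large --- the remainder $\sim (d-4)\int p/\sigma^4$ is absorbed by the $R^{-2}$ gain regardless of its sign --- so $d\ge 3$ is not in fact used at that step, nor anywhere essential in this corollary.
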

\begin{proof} $(i)$ Consider $\phi$ as in the statement of the corollary, and decompose it in the basis given by $\phi_1$ and $\phi_2$: $\phi = \alpha \phi_1 + \beta \phi_2$. Since $\phi(R) = 0$, we obtain
$$
\alpha = - R^d e^{R^2/4} (1 + O(R^{-2})) \beta.
$$
Therefore, $\alpha$ and $\beta$ are nonzero. By linearity, we can assume without loss of generality that $\beta = 1$.
Using the expressions for $\phi_1$, $\phi_1'$, $\phi_2$, and $\phi_2'$ in Lemma~\ref{lemmaODE}, we obtain that
\begin{align*}
& \phi(\rho) = 1 + O(\rho^{-2}) - \frac{R^d e^{R^2/4}}{\rho^d e^{\rho^2/4}}(1 + O(R^{-2})) \\
& \phi'(\rho) = O(\rho^{-3}) + \frac{1}{2} \frac{R^d e^{R^2/4}}{\rho^{d-1} e^{\rho^2/4}}(1 + O(R^{-2})).
\end{align*}
It is now easy to see from these expressions that, choosing $R$ sufficiently big, $\phi'(\rho)>0$ for $R < \rho < R+\frac{100}{R}$, 
while $\phi(\rho) > \frac{1}{2}$ for $\rho > R+\frac{100}{R}$. This implies the desired result.

\bigskip
\noindent $(ii)$ It is a simple consequence of $(i)$ once one observes that, for any $\rho_0$, $\phi(\rho_0) = \phi'(\rho_0) = 0$ implies that $\phi\equiv 0$. 
\end{proof}

\begin{corollary} \label{grebe} There exists $R_0>0$ such that any solution of~\eqref{ODE} with
$$
\psi(R) = \frac{\pi}{2}, \quad \psi'(R) > 0
$$
at $R>R_0$ satisfies $\psi(\rho) > \frac{\pi}{2}$ for $\rho>R$, and $\psi(\infty)>\frac{\pi}{2}$.  In particular, any solution of~\eqref{ODE} can cross $\frac{\pi}{2}$ only once to the right of $R_0$.
\end{corollary}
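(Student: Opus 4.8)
The plan is to realize this statement as a consequence of Corollary~\ref{loon}, applied to the shifted function $\phi := \psi - \frac{\pi}{2}$. First I would record the equation satisfied by $\phi$: substituting $\psi = \frac{\pi}{2} + \phi$ into the differential equation in~\eqref{ODE} and using $\sin(2\psi) = \sin(\pi + 2\phi) = -\sin(2\phi)$ gives
\[
\phi'' + \left( \frac{d-1}{\rho} + \frac{\rho}{2} \right) \phi' + \frac{d-1}{2\rho^2} \sin(2\phi) = 0 ,
\]
and, factoring $\sin(2\phi) = 2\phi \cdot \frac{\sin(2\phi)}{2\phi}$ (the quotient being read as $1$ where $\phi$ vanishes, i.e.\ where $\psi = \frac{\pi}{2}$), this is exactly equation~\eqref{ODEODE} of Lemma~\ref{lemmaODE} with potential
\[
V(\rho) = -\frac{d-1}{\rho^2} \cdot \frac{\sin(2\phi(\rho))}{2\phi(\rho)} ,
\]
a smooth function of $\rho$ on $(0,\infty)$ since $x \mapsto \frac{\sin x}{x}$ extends smoothly to $\R$.

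Next I would check that $V$ satisfies the hypotheses of Lemma~\ref{lemmaODE}. The bound $|V(\rho)| \le \frac{d-1}{\rho^2}$ is immediate from $\left| \frac{\sin x}{x} \right| \le 1$. For the derivative, one computes $V'$ by the chain rule; the only contribution that is not manifestly $O(\rho^{-3})$ involves $\frac{d}{d\rho}\frac{\sin(2\phi)}{2\phi} = \bigl(\text{bounded function of } \phi\bigr)\,\phi'$, and there the decay $|\psi'(\rho)| = |\phi'(\rho)| \lesssim \rho^{-3}$ of Proposition~\ref{GR}~$(ii)$ yields $|V'(\rho)| \lesssim \rho^{-3}$. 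Hence Corollary~\ref{loon} applies and produces a threshold $R_0$ depending only on the constant $C_0$ appearing in those bounds.

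With this, the corollary follows immediately. If $\psi$ solves the differential equation in~\eqref{ODE} with $\psi(R) = \frac{\pi}{2}$ and $\psi'(R) > 0$ at some $R > R_0$, then $\phi(R) = 0$, $\phi'(R) > 0$, so Corollary~\ref{loon}~$(i)$ gives $\phi(\rho) \ne 0$ for all $\rho > R$ and $\phi(\infty) \ne 0$; since $\phi$ is positive just to the right of $R$ and never vanishes again, $\phi(\rho) > 0$ for every $\rho > R$, that is $\psi(\rho) > \frac{\pi}{2}$ there, and $\psi(\infty) = \frac{\pi}{2} + \phi(\infty) > \frac{\pi}{2}$. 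For the final assertion, if $\psi \not\equiv \frac{\pi}{2}$ then $\phi$ is a nontrivial solution of~\eqref{ODEODE}, hence by Corollary~\ref{loon}~$(ii)$ vanishes at most once to the right of $R_0$; equivalently $\psi$ meets $\frac{\pi}{2}$ at most once there. (Alternatively: at a crossing $\psi'$ cannot vanish---otherwise ODE uniqueness would force $\psi \equiv \frac{\pi}{2}$---so a crossing with $\psi' < 0$ reduces to the previous case through the symmetry $\psi \mapsto \pi - \psi$ of~\eqref{ODEpsi}.)

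The point requiring some care is the verification of the hypotheses on $V$---specifically the estimate $|V'(\rho)| \lesssim \rho^{-3}$ near $\rho = 0$ and the observation that the constant $C_0$, and therefore $R_0$, may be chosen independently of the individual solution $\psi$ (uniformity over bounded ranges of $\alpha$ is all that is subsequently needed); this hinges on the decay estimate of Proposition~\ref{GR}~$(ii)$.
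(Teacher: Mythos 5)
Your proof is correct and follows essentially the same route as the paper's: set $\phi = \psi - \frac{\pi}{2}$, note $\phi$ solves~\eqref{ODEODE} with $V(\rho) = -\frac{d-1}{2\rho^2}\frac{\sin(2\phi)}{\phi}$, verify the hypotheses of Lemma~\ref{lemmaODE} via the decay estimate of Proposition~\ref{GR}, and apply Corollary~\ref{loon}. You have simply supplied the details (the explicit bounds on $V$ and $V'$, the uniformity remark, and the elementary observation that a positive $\phi$ just to the right of $R$ that never vanishes again stays positive) that the paper compresses into a single sentence.
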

\begin{proof} Setting $\phi = \psi - \frac{\pi}{2}$, observe that it solves
$$
\phi'' + \left( \frac{d-1}{\rho} + \frac{\rho}{2}  \right) \phi' - V(\rho) \phi = 0, \quad \mbox{with} \quad \left\{ \begin{array}{l} \phi(R) = 0 \\ \phi'(R)>0 \end{array} \right.
$$
where we set
$$
V(\rho) = - \frac{d-1}{2\rho^2} \frac{\sin(2 \phi(\rho))}{\phi(\rho)}.
$$
By the decay estimates in Proposition~\ref{GR}, $V$ satisfies the hypotheses of Lemma~\ref{lemmaODE}, and thus Corollary~\ref{loon} applies, giving the desired result.
\end{proof}

\section{General properties of the harmonic map heat flow}

\subsection{Parabolic regularity}

\begin{proposition}
\label{parabolicreg}
If $h_0$ satisfies
$$
|h_0(r)| \lesssim 1,\quad |\partial_r h_0(r)| \lesssim \frac{1}{r},
$$
then any bounded solution $h(t,r)$ of~\eqref{1} on $[0,T]$ 
satisfies 
$$
|\partial_r h(t,r)| \lesssim \frac{1}{r}.
$$
uniformly on $[0,T]$.
If $h_0$ further satisfies
$|\partial_r^2 h_0(r)| \lesssim \frac{1}{r^2}$, then 
$$
 |\partial_r^2 h (t,r)| \lesssim \frac{1}{r^2} \quad \mbox{and} \quad |\partial_t h(t,r)| \lesssim \frac{1}{r^2}.
$$
holds uniformly on $[0,T]$.
\end{proposition}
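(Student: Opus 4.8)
\textit{Proof of Proposition~\ref{parabolicreg} (plan).}
The plan is to reduce everything to a single fixed-scale parabolic estimate by exploiting the scaling invariance $h(t,r)\mapsto h(\lambda^2 t,\lambda r)$ of~\eqref{1}, and then to run a standard parabolic bootstrap. Fix $r_1>0$ and set $\tilde h(\tau,\rho)=h(r_1^2\tau,r_1\rho)$. A direct computation shows that $\tilde h$ solves~\eqref{1} (with the same $d$) on $[0,T/r_1^2]$, that $\|\tilde h\|_{L^\infty}=\|h\|_{L^\infty}$, and that the rescaled data $\tilde h_0(\rho)=h_0(r_1\rho)$ satisfies $|\partial_\rho\tilde h_0(\rho)|\lesssim 1$ for $\rho\sim 1$, and $|\partial_\rho^2\tilde h_0(\rho)|\lesssim 1$ for $\rho\sim 1$ under the additional hypothesis; all implicit constants are independent of $r_1$. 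Since $\partial_r h(r_1^2\tau,r_1\rho)=r_1^{-1}\partial_\rho\tilde h(\tau,\rho)$, $\partial_r^2 h=r_1^{-2}\partial_\rho^2\tilde h$ and $\partial_t h=r_1^{-2}\partial_\tau\tilde h$, it suffices to prove that $|\partial_\rho\tilde h(\tau_1,1)|\lesssim 1$ (resp. $|\partial_\rho^2\tilde h(\tau_1,1)|\lesssim 1$ and $|\partial_\tau\tilde h(\tau_1,1)|\lesssim 1$) for every $\tau_1\in(0,T/r_1^2]$, with a constant depending only on $d$, $\|h\|_{L^\infty}$ and the constants in the hypotheses on $h_0$. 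Taking $\tau_1=t_1/r_1^2$ and unscaling then yields the claim at the arbitrary point $(t_1,r_1)$.

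For the fixed-scale estimate, work on the spatial annulus $\{1/3<\rho<3\}$, on which $\partial_\tau-\partial_\rho^2-\frac{d-1}{\rho}\partial_\rho$ is uniformly parabolic with smooth coefficients, and on which $\tilde F:=\frac{d-1}{2\rho^2}\sin(2\tilde h)$ is a bounded forcing term (since $|\tilde h|\le\|h\|_{L^\infty}$). Thus $\tilde h$ is an $L^\infty$ distributional solution of the linear equation $\partial_\tau\tilde h-\partial_\rho^2\tilde h-\frac{d-1}{\rho}\partial_\rho\tilde h=\tilde F$ with $\tilde F\in L^\infty$. Split into two cases. If $\tau_1>1$, a full backward cylinder $\{1/3<\rho<3\}\times[\tau_1-1,\tau_1]$ is available: interior $L^p$ parabolic regularity upgrades $\tilde h$ so that $\partial_\rho\tilde h,\partial_\rho^2\tilde h,\partial_\tau\tilde h\in L^p_{\mathrm{loc}}$ for all finite $p$, whence $\tilde h$ is parabolically H\"older on a slightly smaller cylinder; then $\tilde F$ is also parabolically H\"older there, and interior Schauder estimates give $\|\tilde h\|_{C^{2,1}(\{1/2<\rho<2\}\times[\tau_1-1/2,\tau_1])}\lesssim\|\tilde h\|_{L^\infty}+\|\tilde F\|_{L^\infty}\lesssim 1$. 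If $\tau_1\le 1$ one uses the analogous estimates up to the initial time $\tau=0$ on $\{1/3<\rho<3\}\times[0,\tau_1]$; since the spatial interval is always shrunk, no lateral boundary condition and hence no compatibility condition is imposed, and the only extra input needed is the regularity of $\tilde h_0$ on $\{1/3<\rho<3\}$ — which is exactly where the hypotheses enter, in a graded way: $|\partial_\rho\tilde h_0|\lesssim 1$ suffices for the $C^1$-up-to-$\tau=0$ bound and hence for the $\partial_r h$ conclusion, while $|\partial_\rho^2\tilde h_0|\lesssim 1$ is added for the $W^{2,\infty}$-up-to-$\tau=0$ bound and hence for the $\partial_r^2 h$ and $\partial_t h$ conclusions. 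The legitimacy of the up-to-$\tau=0$ estimates rests on the fact that $\tilde h$ attains $\tilde h_0$, which follows from the weak formulation of~\eqref{1} recalled in the introduction together with the a priori $L^\infty$ and local $L^p$ bounds produced by the first round of regularity. In either case we obtain $|\partial_\rho\tilde h(\tau_1,1)|+|\partial_\rho^2\tilde h(\tau_1,1)|+|\partial_\tau\tilde h(\tau_1,1)|\lesssim 1$, and unscaling finishes the proof; as a by-product $h$ is smooth in $\{t>0,\ r>0\}$.

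The main obstacle is purely organizational: keeping all constants uniform — in $T$, in $t$, and above all in $r$ — which is precisely what the scaling reduction buys (the parabolic estimates are always applied on a cylinder of fixed, $r$-independent geometry away from $\rho=0$, where the coefficients $\tfrac{d-1}{\rho}$ and $\tfrac{d-1}{2\rho^2}$ have bounded derivatives of all orders). The one genuinely delicate point within this is the behaviour at the initial time: one must arrange the parabolic theory in the "interior in space, up to $t=0$" form, so that exactly the stated amount of regularity of $h_0$ is consumed at each level, and one must check the (routine, but necessary) fact that the distributional solution attains its data strongly enough away from the origin to feed those estimates.

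\textit{(End of plan.)}
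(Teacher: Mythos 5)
Your proposal is correct in spirit but follows a genuinely different route from the paper's proof. The paper proves the gradient estimate directly from the Duhamel formula on the whole space,
$$h(t)=G_t*h_0-\int_0^t G_{t-s}*\Bigl(\tfrac{(d-1)\sin 2h}{2|x|^2}\Bigr)\,ds,$$
and concludes by the pointwise kernel estimate
$\int_0^t \bigl|\nabla_x G_{t-s}\bigr|*\tfrac{1}{|x|^2}\,ds\lesssim\tfrac{1}{|x|}$,
which exploits the decay $|\nabla_x G_s(x)|\lesssim(s+|x|^2)^{-\frac{d+1}{2}}$ to absorb the $1/r^2$-weighted forcing; the higher-order bounds are not proved there. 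Your approach replaces the singular kernel computation by the parabolic scaling $\tilde h(\tau,\rho)=h(r_1^2\tau,r_1\rho)$ followed by interior-in-space parabolic regularity on a fixed annulus $\{1/3<\rho<3\}$ away from the origin, where the equation is uniformly parabolic with bounded coefficients and bounded right-hand side. This is more modular (the three bounds $\partial_r h,\partial_r^2 h,\partial_t h$ come out of the same machinery), whereas the paper's method is more explicit and self-contained for the single bound it proves.

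Two points in your plan deserve more care. First, the phrase ``interior Schauder estimates give $\|\tilde h\|_{C^{2,1}}\lesssim\|\tilde h\|_{L^\infty}+\|\tilde F\|_{L^\infty}$'' is imprecise: Schauder requires a H\"older modulus on $\tilde F$, which you obtain from the prior $L^p$ bootstrap; you should say the bound is in terms of $\|\tilde F\|_{C^{\alpha,\alpha/2}}$, itself controlled by the $L^p$ step. Second, and more substantively, the up-to-$\tau=0$ step is not a black-box application of Schauder or $W^{2,1}_p$ theory, because those require initial data in $C^{2,\alpha}$ or a Besov trace space, while you only have $\tilde h_0\in W^{1,\infty}_{\mathrm{loc}}$ (resp.\ $W^{2,\infty}_{\mathrm{loc}}$). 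The clean way to close this is the localized Duhamel argument with nested annuli and the spatial separation trick: writing $v=\chi\tilde h$, the commutator term $\nabla\chi\cdot\nabla\tilde h$ has a cutoff supported at positive distance from the target set, so after integrating by parts the resulting second- (or third-) order heat kernel factors carry a Gaussian $e^{-c/(t-s)}$ that renders them uniformly bounded in $s$, and exactly one spatial derivative of $\tilde h_0$ is consumed at each level. Your write-up gestures at this (``no lateral boundary condition and hence no compatibility condition is imposed'') but does not spell out why the cutoff error terms do not re-introduce the unknown gradient; this is the genuine organizational point and should be made explicit. With that filled in, the argument is valid and actually covers the higher-order bounds that the paper leaves to the reader.
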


\begin{proof} We only deal with the estimate
$|\partial_r h(t,r)| \lesssim \frac{1}{r}$. 
We see by the Duhamel principle that
$$
h(t)=G_t*h_0-\int^t_0 G_{t-s}*\left(\frac{(d-1)\sin 2h}{2|x|^2}\right)ds,
$$
where $G_t$ is the heat kernel in $\R^d$.
It suffices to show that
$$
\int^t_0 |\nabla_x G_{t-s}| *\left(\frac{1}{|x|^2}\right)ds \lec \frac{1}{|x|},
$$
Using the heat kernel estimate 
$|\nabla_x G_s(x)| \lec (s+|x|^2)^{\frac{-d-1}{2}}$, 
we have
\begin{align*}
\left|\int^t_0\nabla_x G_{t-s}*\left(\frac{1}{|x|^2}\right)ds\right|
&\lec \int \int^t_0  (s+|y|^2)^{\frac{-d-1}{2}}ds |x-y|^{-2}dy
\\
&\lec \int [|y|^{-d+1}-(t+|y|^2)^{\frac{-d+1}{2}}] |x-y|^{-2}dy
\\
&\lec \frac{1}{|x|},
\end{align*}
which is the desired estimate. 
 \end{proof}

\bigskip

For general solutions of the harmonic map heat flow with $3 \leq d \leq 6$, the above result is optimal in that a decay estimate such as $|\partial_r h(t,r)| \lesssim \frac{1}{t^\alpha}$, for some $\alpha>0$, cannot be expected. This can be seen by taking $h_0 \equiv \frac{\pi}2$, 
for which a solution is given by $h(t,r) = h_0$ for $t<T$, and 
$h(t,r) = \psi (\frac{r}{\sqrt t-T}) $ for $t>T$, where $\psi$ is a self-similar profile associated to $h_0$.

However, this result can be improved under further assumptions: this is the content of the following two propositions.

\begin{proposition}
\label{parabolicreg3}
Assume that $h$ is a solution of ~\eqref{1} on $[0,T]$ 
with data $h_0$ satisfying
$$
|\partial_r h_0(r)|\lesssim \frac{1}{r} \quad \mbox{and} \quad |h(t,r)| \lesssim \min\left( 1, \frac r {\sqrt{t}} \right).
$$
Then we have
$$
|\partial_r h(t,r)| \lesssim \frac{1}{r + \sqrt t} \left( 1 + \langle \log \frac{r}{\sqrt t} \rangle \mathbf{1}_{r < \sqrt t} \right). 
$$
\end{proposition}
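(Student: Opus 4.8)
The plan is to follow the Duhamel argument already used for Proposition~\ref{parabolicreg}, the only new ingredient being that the crude bound $|\sin 2h|\lesssim 1$ on the nonlinearity may be upgraded using the hypothesis $|h(t,r)|\lesssim\min(1,r/\sqrt t)$. Writing Duhamel's formula
\[
h(t)=G_t*h_0-\frac{d-1}{2}\int_0^t G_{t-s}*\Bigl(\frac{\sin 2h(s,\cdot)}{|\cdot|^2}\Bigr)\,ds ,
\]
differentiating in $x$ and moving the gradient onto the heat kernel, it suffices to estimate $|\nabla(G_t*h_0)(x)|$ together with
\[
\mathcal I(t,x):=\int_0^t\int_{\mathbb R^d}|\nabla G_{t-s}(x-y)|\,\frac{|\sin 2h(s,y)|}{|y|^2}\,dy\,ds ;
\]
the finiteness of $\mathcal I$ is exactly what justifies, a posteriori, differentiating under the integral and passing to the limit, just as in Proposition~\ref{parabolicreg}. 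For the linear term I would use $|\nabla h_0|\lesssim|x|^{-1}$ and $h_0\in L^\infty$ to write $\nabla(G_t*h_0)=G_t*\nabla h_0$, and then the elementary estimate $(G_\sigma*|\cdot|^{-1})(x)\lesssim(\sqrt\sigma+|x|)^{-1}$ — immediate by scaling, since for $d\ge 3$ the function $G_1*|\cdot|^{-1}$ is bounded near the origin and decays like $|\cdot|^{-1}$ — to conclude $|\nabla(G_t*h_0)(x)|\lesssim(\sqrt t+|x|)^{-1}$, which is stronger than required.

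The heart of the matter is the bound on $\mathcal I$. The key observation is that $|\sin 2h(s,y)|\lesssim|h(s,y)|\lesssim\min(1,|y|/\sqrt s)$, and hence
\[
\frac{|\sin 2h(s,y)|}{|y|^2}\ \lesssim\ \frac{\min(1,|y|/\sqrt s)}{|y|^2}\ \le\ \frac{1}{\sqrt s\,|y|} .
\]
Combining this with $|\nabla G_\sigma(x)|\lesssim\sigma^{-1/2}G_{2\sigma}(x)$ and the convolution bound above gives
\[
\mathcal I(t,x)\ \lesssim\ \int_0^t\frac{ds}{\sqrt{t-s}\,\sqrt s\,(\sqrt{t-s}+|x|)} .
\]
I would then split this integral at $s=t/2$: on $(0,t/2)$ one has $\sqrt{t-s}\sim\sqrt t$, so that part is $\lesssim(\sqrt t+|x|)^{-1}$; on $(t/2,t)$ one has $\sqrt s\sim\sqrt t$, and the substitutions $u=t-s$ then $v=\sqrt u$ bound that part by $\tfrac{1}{\sqrt t}\int_0^{\sqrt{t/2}}\tfrac{dv}{v+|x|}\lesssim\tfrac{1}{\sqrt t}\log\bigl(1+\tfrac{\sqrt t}{|x|}\bigr)$. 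Altogether
\[
|\partial_r h(t,r)|\ \lesssim\ \frac{1}{\sqrt t+r}+\frac{1}{\sqrt t}\,\log\Bigl(1+\frac{\sqrt t}{r}\Bigr),
\]
and this is dominated by $\tfrac{1}{r+\sqrt t}\bigl(1+\langle\log(r/\sqrt t)\rangle\,\mathbf 1_{r<\sqrt t}\bigr)$: for $r\ge\sqrt t$ the inequality $\log(1+z)\le z$ shows the second term is $\lesssim 1/r$ (consistently with Proposition~\ref{parabolicreg}), while for $r<\sqrt t$ one has $\sqrt t+r\sim\sqrt t$ and $\log(1+\sqrt t/r)\lesssim 1+\langle\log(r/\sqrt t)\rangle$.

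The one genuinely delicate point is the bookkeeping of the time integral near $s=t$: one cannot simply bound $(\sqrt{t-s}+|x|)^{-1}$ by $(t-s)^{-1/2}$ there, since $\int_0^t(t-s)^{-1}s^{-1/2}\,ds=\infty$; it is precisely the retention of the factor $|x|$ near $s=t$ that produces the logarithmic loss. Everything else — the Duhamel representation for the bounded distributional solution, moving the derivative onto the kernel, and the finiteness of $\mathcal I$ — is routine and mirrors the proof of Proposition~\ref{parabolicreg}.
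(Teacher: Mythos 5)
Your proof is correct and follows the same overall strategy as the paper: Duhamel's formula, the heat-kernel gradient bound, and the hypothesis $|h|\lesssim\min(1,r/\sqrt s)$ to tame the nonlinearity, with the logarithmic loss arising from the $s\to t$ endpoint. The one place you diverge is in the handling of the potential and the convolution: the paper keeps the sharper bound $\frac{\min(1,|y|/\sqrt s)}{|y|^2}\lesssim\frac{1}{|y|(|y|+\sqrt s)}$ and establishes the two-case convolution estimate (distinguishing $s\gtrless t-s$) giving $\left|\nabla G_{t-s}*\frac{1}{|y|(|y|+\sqrt s)}\right|\lesssim\frac{1}{\sqrt{t-s}(\sqrt{t-s}+|x|)(\sqrt s+|x|)}$ for $s>t-s$, which ultimately yields the slightly stronger intermediate bound $\frac{1}{\sqrt t+r}\bigl(1+\log(1+\sqrt t/r)\bigr)$; you instead relax to the simpler $\frac{1}{\sqrt s|y|}$, use the Gaussian comparison $|\nabla G_\sigma|\lesssim\sigma^{-1/2}G_{2\sigma}$ together with the single scaling estimate $(G_\sigma*|\cdot|^{-1})(x)\lesssim(\sqrt\sigma+|x|)^{-1}$, and arrive at $\frac{1}{\sqrt t+r}+\frac{1}{\sqrt t}\log(1+\sqrt t/r)$. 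This is weaker than the paper's intermediate bound in the regime $r\gg\sqrt t$ (where the paper's $\log$ term decays like $\sqrt t/r^2$ rather than $1/r$), but as you verify at the end, it still dominates the claimed estimate in both regimes. The trade-off is a cleaner one-shot convolution bound at the cost of a slightly lossier potential estimate; both buy the same conclusion.
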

\begin{proof} Arguing as in the previous proposition, we write
$$
h(t)=G_t*h_0-\int^t_0 G_{t-s}*\left(\frac{(d-1)\sin 2h}{2|x|^2}\right)ds.
$$
The desired bound on $G_t*h_0$ is immediate due to the assumption that $|\partial_r h_0(r)|\lesssim \frac{1}{r}$. To estimate the integral term, we use the hypothesis that $|h(t,r)| \lesssim \min\left( 1, \frac r {\sqrt{t}} \right)$ to obtain
$$
\left| \int^t_0 \nabla G_{t-s}*\left(\frac{(d-1)\sin 2h}{2|x|^2}\right)ds \right| \lesssim \left| \int^t_0 \nabla G_{t-s}*\left(\frac{1}{|x|(|x| + \sqrt s)}\right)ds \right|.
$$
Using the heat kernel bound $|\nabla_x G_s(x)| \lec (s+|x|^2)^{\frac{-d-1}{2}}$, one can prove that
\begin{align*}
& \left| \nabla G_{t-s}*\left(\frac{1}{|x|(|x| + \sqrt s)}\right) \right|\lesssim \frac{1}{\sqrt{t-s}( \sqrt{t-s} + |x|)(\sqrt s + |x|)} \quad \mbox{if $s > t-s$} \\
&  \left| \nabla G_{t-s}*\left(\frac{1}{|x|(|x| + \sqrt s)}\right) \right| \lesssim \frac{1}{\sqrt{t-s}( t-s + |x|^2)}\qquad \mbox{if $s < t-s$}.
\end{align*}
One can then easily check that $\left| \int^t_0 \nabla G_{t-s}*\left(\frac{1}{|x|(|x| + \sqrt s)}\right)ds \right|$ satisfies the desired bound.
\end{proof}

\begin{proposition} 
\label{parabolicreg2} 
Assume that 
$$
|h_0(r)| \lesssim 1,\quad |\partial_r h_0(r)| \lesssim \frac{1}{r}, \quad |\partial_r^2 h_0(r)| \lesssim \frac{1}{r^2},
$$
and that $h$ solves~\eqref{1} on $[0,T]$ , satisfies the local energy inequality~\eqref{EnIn}, and, 
$$
h(t,r) \in [0,\frac{\pi}{2} - \delta]  \quad \mbox{for $(t,r) \in [0,T] \times [0,\delta]$}.
$$
with some $\delta, T>0$. 
Then $h(t,0)=0$ for $t>0$ and $h$ enjoys the bounds
$$
|\partial_r h(t,r)| \lesssim \frac{1}{\sqrt t + r},  \qquad |\partial_t h(t,r)| \lesssim \frac{1}{t + r^2}
\quad \mbox{for $(t,r) \in [0,T] \times [0,\infty)$}.
$$
\end{proposition}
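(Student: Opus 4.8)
The plan is to reduce the whole statement to a neighbourhood of the spatial origin: away from $r=0$ and from $t=0$, equation~\eqref{1} is uniformly parabolic with bounded coefficients, so $h$ is smooth there by classical interior theory, and the content lies at $r=0$. The engine is to convert the constraint $0\le h\le\frac{\pi}{2}-\delta$ on $\{r\le\delta\}$ into quantitative decay of $h$ at the origin. On $\{r\le\delta\}$ write $\frac{d-1}{2r^2}\sin 2h=\frac{(d-1)a(t,r)}{r^2}h$ with $a=\frac{\sin 2h}{2h}\in[c_0,1]$, $c_0:=\frac{\sin(\pi-2\delta)}{\pi-2\delta}>0$; then $h\ge 0$ and $h_t-\Delta h+\frac{(d-1)a}{|x|^2}h=0$, hence $h_t-\Delta h+\frac{(d-1)c_0}{|x|^2}h\le 0$, i.e.\ $h$ is a nonnegative subsolution for an operator with a \emph{positive} Hardy potential. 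For each $t_1\in(0,T)$, comparing $h$ on $(t_1,T]\times\{r<\delta\}$ with the solution of $\bar h_t-\Delta\bar h+\frac{(d-1)c_0}{|x|^2}\bar h=0$ whose parabolic boundary data is the constant $\frac{\pi}{2}\ge h$, the parabolic comparison principle gives $h\le\bar h$; and since this equation is invariant under $(t,x)\mapsto(\lambda^2 t,\lambda x)$ while a positive Hardy potential forces $|x|^{\gamma_0}$ vanishing at the origin, with $\gamma_0:=\tfrac12\big(-(d-2)+\sqrt{(d-2)^2+4(d-1)c_0}\big)\in(0,1)$ the positive indicial root, we obtain (letting $t_1\downarrow 0$) that $0\le h(t,r)\lesssim\min(1,(r/\sqrt t)^{\gamma_0})$ on $\{r\le\delta\}$. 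In particular $h(t,\cdot)$ is continuous at $r=0$ and $h(t,0)=0$ for $t>0$. (The local energy inequality is used only to guarantee that $h$ is a solution regular enough for this comparison, and for the interior estimates below, to be applicable.)

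I then bootstrap to linear decay. Re-inserting $|h|\lesssim(r/\sqrt t)^{\gamma_0}$ gives $a=1+O(h^2)=1+O((r/\sqrt t)^{2\gamma_0})$, so near the origin $h$ solves $h_t-\Delta h+\frac{d-1}{|x|^2}h=O\big(\tfrac{(r/\sqrt t)^{2\gamma_0}}{r^2}|h|\big)$; since the equivariant operator $-\Delta+\frac{d-1}{|x|^2}$ annihilates the radial function $r$ (its positive indicial root being $\gamma=1$), comparing with $C\,r/\sqrt t$ plus a genuinely lower-order remainder upgrades the exponent, and iterating yields $|h(t,r)|\lesssim\min(1,r/\sqrt t)$ for all $(t,r)$. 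Together with $|\partial_r h_0|\lesssim 1/r$ this is exactly the hypothesis of Proposition~\ref{parabolicreg3}, which gives $|\partial_r h(t,r)|\lesssim\frac{1}{\sqrt t+r}\big(1+\langle\log\tfrac{r}{\sqrt t}\rangle\mathbf{1}_{r<\sqrt t}\big)$; for $r\ge\sqrt t$ this is already the desired $\frac{1}{\sqrt t+r}$.

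It remains to treat $r\lesssim\sqrt t$ and to bound $\partial_t h$. For $r\lesssim\min(\sqrt t,\delta)$, the map $u$ takes the parabolic cylinder of size $\sim\min(\sqrt t,\delta)$ about the origin into a geodesically convex spherical cap of geodesic radius $\lesssim\min(1,\delta/\sqrt t)<\frac{\pi}{2}$ around the north pole; the interior estimates for the harmonic map heat flow into a convex (and, where $h$ is small, small) ball (cf.\ the model computation in Proposition~\ref{parabolicreg}, and~\cite{Jost}) then give $|\nabla u|\lesssim t^{-1/2}$ and $|\nabla^2 u|+|\partial_t u|\lesssim t^{-1}$ near the origin, i.e.\ $|\partial_r h(t,r)|\lesssim t^{-1/2}$ and $|\partial_t h(t,r)|\lesssim t^{-1}$ for $r\lesssim\sqrt t$ (with constants allowed to depend on $\delta,T$); together with the previous step this gives $|\partial_r h|\lesssim\frac{1}{\sqrt t+r}$ everywhere. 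Finally, for $\partial_t h$ when $r\gtrsim\sqrt t$, differentiate the Duhamel formula $h(t)=G_t*h_0-\int_0^t G_{t-s}*\big(\tfrac{(d-1)\sin 2h}{2|x|^2}\big)\,ds$ in $t$: since $|\Delta h_0|\lesssim r^{-2}$ one controls $\partial_t(G_t*h_0)$ by $r^{-2}$ as in Proposition~\ref{parabolicreg}, and since $|\sin 2h|\lesssim\min(1,r/\sqrt s)$ while $|\partial_t G_\sigma(x)|\lesssim(\sigma+|x|^2)^{-\frac{d}{2}-1}$ one controls the integral term by $\frac{1}{t+r^2}$ away from the origin; combined with the bound just obtained for $r\lesssim\sqrt t$, $|\partial_t h(t,r)|\lesssim\frac{1}{t+r^2}$ for all $(t,r)$.

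The main obstacle is twofold. First, closing the bootstrap so as to reach \emph{genuinely linear} decay $r/\sqrt t$: a decay $(r/\sqrt t)^{\gamma}$ with $\gamma<1$ would not suffice to invoke Proposition~\ref{parabolicreg3}, so one must exploit that $a\to 1$ precisely because $h\to 0$, together with $\gamma=1$ being the indicial root of the equivariant operator, and carefully track the self-similar $\sqrt t$-weights through the iteration. Second, the interior parabolic estimates \emph{through} the singular point $r=0$: a naive Duhamel/convolution estimate is borderline there because of the $|x|^{-2}$ nonlinearity, so one genuinely needs the convex/small-image regularity theory for the flow (whose applicability to the --- possibly merely weak --- solution $h$ is where the local energy inequality enters). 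Everything else is routine, resting on the parabolic comparison principle, Proposition~\ref{parabolicreg}, and the already-established Proposition~\ref{parabolicreg3}.
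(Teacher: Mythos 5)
Your proposal takes a genuinely different route from the paper, and it has gaps serious enough that I don't think it closes as stated.

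The paper's actual proof never passes through pointwise decay of $h$ at the origin at all. Instead it exploits the algebraic identity
$(\partial_t - \Delta)h^2 = -\tfrac{d-1}{r^2}\sin(2h)\,h - 2|\nabla h|^2 \lesssim -|\nabla u|^2$
(valid because $0\le h\le \tfrac{\pi}{2}-\delta$), which, after Duhamel, gives the a priori space-time bound $\int_0^t e^{(t-s)\Delta}|\nabla u|^2(s)\,ds \lesssim 1$. From this bound one reads off smallness of $\int_{P^t_j}|\nabla u|^2$ on a nested family of parabolic cylinders centred at $(t,0)$, and then the $\epsilon$-regularity theorem for the harmonic map heat flow (Moser, Lin--Wang) yields smoothness of $u$ across $r=0$ (hence $h(t,0)=0$) and the quantitative bounds $|\nabla u|\lesssim t^{-1/2}$, $|\partial_t u|\lesssim t^{-1}$; Proposition~\ref{parabolicreg} then supplies the $1/r$ and $1/r^2$ parts. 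It is precisely here that the local energy inequality is used: $\epsilon$-regularity relies on Struwe's monotonicity quantity, which requires the energy inequality. Your proposal instead tries to get at the behaviour through $r=0$ by a barrier/comparison argument with a positive Hardy potential and then a bootstrap.

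The gaps in your proposal, concretely:
\begin{itemize}
\item \emph{The bootstrap is the hard part and is not done.} You reach $h\lesssim (r/\sqrt t)^{\gamma_0}$ with $\gamma_0<1$ and then say the indicial root $\gamma=1$ of $-\Delta + \tfrac{d-1}{r^2}$ lets you upgrade, while simultaneously flagging this as ``the main obstacle.'' That obstacle is real: $\gamma=1$ is exactly borderline, the iterated exponents $3^n\gamma_0$ may hit the threshold resonantly (producing a logarithm), and the self-similar $t$-weight in the source $O(h^3/r^2)$ has to be tracked through a singular parabolic Duhamel operator whose kernel you have not controlled. Since everything downstream (Proposition~\ref{parabolicreg3}, the Duhamel bound on $\partial_t h$) requires genuinely linear decay $h\lesssim r/\sqrt t$, a missing step here is a missing proof.
\item \emph{The comparison principle is applied to an object that is not yet known to be regular.} You invoke parabolic comparison for the operator $\partial_t-\Delta + \tfrac{(d-1)c_0}{|x|^2}$ on $\{r<\delta\}$, including through the singular point $r=0$. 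For a solution that is a priori only bounded and distributional, this is not automatic, and the paper's route (via $\epsilon$-regularity) is exactly what establishes the regularity that makes such pointwise arguments available. You acknowledge the local energy inequality enters here ``to guarantee that $h$ is regular enough,'' but that is circular unless you explain the mechanism, which is the whole content of the paper's Steps~1--2.
\item \emph{The quantitative interior bounds near the origin are asserted, not derived.} Proposition~\ref{parabolicreg} only gives $|\partial_r h|\lesssim 1/r$, which degenerates at $r=0$, and the reference to convexity/Jost concerns existence and interior theory for smooth or finite-energy flows; it does not, as stated, deliver $|\nabla u|\lesssim t^{-1/2}$, $|\partial_t u|\lesssim t^{-1}$ from an $L^\infty$ weak solution. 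In the paper that estimate comes from $\epsilon$-regularity applied at a well-chosen scale $\tau\sim t$ extracted from Struwe's monotone quantity $\Phi(R)$.
\end{itemize}

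The comparison-principle/bootstrap strategy is an attractive idea and may well be salvageable (positive Hardy potential forcing $r^{\gamma_0}$ vanishing, then refinement towards $\gamma=1$), but as written it leaves precisely the hardest points open, and the paper's $h^2$-identity plus $\epsilon$-regularity argument sidesteps them entirely.
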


\begin{proof} 
\bigskip 
\noindent
\underline{Step 1: Space-time bounds on the gradient.} 
We first treat the case where $h(t,r) \in [0,\frac{\pi}{2} - \delta]$ for any $(t,r)$.
Recall that the solution $u$ of the general formulation~\eqref{0} and the solution $h$ of the equivariant formulation~\eqref{1} of the harmonic map heat flow are related by
\eqref{corotationalansatz}
Therefore, $|\nabla u|^2 = |\partial_r h|^2 + \frac{d-1}{2r^2} \sin(h)^2$.
This formula implies that
$$
(\partial_t - \Delta) h^2 = - \frac{d-1}{r^2} \sin(2h) h - 2 |\nabla h|^2 
\lesssim -|\nabla u|^2,
$$
where the last inequality follows since $0\leq h \leq \frac{\pi}{2}-\delta$.
Duhamel's formula gives that
\begin{align}
\label{est:h^2}
h^2(t,x) - e^{t\Delta} h_0^2 = \int_0^t e^{(t-s) \Delta} \left[ - \frac{d-1}{r^2} \sin(2h) h - 2 |\nabla h|^2\right] (s)\,ds \lesssim - \int_0^t e^{(t-s) \Delta} |\nabla u|^2 (s)\,ds
\end{align}
Therefore, by boundedness of $h$ and $h_0$, for any $(t,x) \in [0,T] \times \mathbb{R}^d$, 
\begin{equation}
\label{owl}
\int_0^t \int\frac{1}{(4\pi(t-s))^{d/2}} e^{-\frac{|x-y|^2}{4(t-s)}} |\nabla u|^2 (s,y)\,dy\,ds = 
\int_0^t [e^{(t-s) \Delta} |\nabla u|^2(s)](x) \,ds  
\lesssim 1.
\end{equation}

\bigskip
\noindent
\underline{Step 2: Smoothness of $u$.} Our first goal is to prove that $u$ is smooth for $t>0$. By Proposition~\ref{parabolicreg}, $u$ is smooth away from $r=0$, so we only need to prove that $u$ is smooth in a neighbourhood of $r=0$. 

For $t>0$, define the (overlapping) parabolic cylinders
$$
P^{t}_j = \left\{ (s,x) \in \left[ t - 2^{-2j} - \frac{5}{6} 2^{-2j},  t - 2^{-2j} + \frac{5}{6} 2^{-2j} \right] \times B \left( 0, 2^{-j} \right) \right\}.
$$
Taking $x=0$, the bound~\eqref{owl} implies that
$$
\sum_{2^j > \frac{\sqrt t}{100}} 2^{-jd} \int_{P^{t}_j} |\nabla u|^2 \,dx\,ds \lesssim 1.
$$
This implies that, for every $\epsilon>0$, $\int_{P^{t}_j} |\nabla u|^2 \,dx\,ds < \epsilon$ for $j$ sufficiently large, say $j>j_0$. Pick now $\epsilon$ as in the $\epsilon$-regularity theorem appearing as  in~\cite[Proposition 5.3]{Moser}. 
As a consequence, for $j$ sufficiently big, $u$ is smooth on $Q^{t}_j$, where $Q^{t}_j$ is a parabolic cylinder with the same center as $P^{t}_j$, but slightly smaller. The union of the $Q^t_j$ for $j>j_0$ contains a (parabolic) cone with top $(t,x)$, over which $u$ is smooth. Therefore, $u$ is smooth on a time slice $[t-\alpha, t]$, with $\alpha>0$.

One can now use a quantitative argument (as will be done in Step 3 below) to obtain a bound on $(\partial_t u, \nabla u)$ on $[t-\gamma^2,t] \times B(0,\gamma)$ for some $\gamma>0$. By using local well-posedness theory, we obtain that $u$ is smooth in a neighbourhood of $(t,0)$. Since this holds for any $t>0$, $u$ is smooth for $t>0$.

As a first consequence, $h(t,0) = 0$ for $t>0$.

\bigskip
\noindent
\underline{Step 3: Quantitative bounds on $\nabla u$.} Fix $t>0$, $x \in \mathbb{R}^d$, and define
$$
\Phi(R) = R^2 \int_{\mathbb{R}^d} \frac{1}{(4 \pi R^2)^{d/2}} e^{-\frac{|x-y|^2}{4R^2}} |\nabla u|^2 (t-R^2,y)\,dy.
$$
The bound~\eqref{owl} implies that
$$
\int_0^t \frac{1}{\sigma} \Phi(\sqrt{\sigma}) \,d\sigma \lesssim 1.
$$
Therefore, there exists $\tau\ge ct$ with some $c>0$ such that $\Phi(\sqrt{\tau}) < \epsilon_0^2$. Here, $\epsilon_0$ is as in the $\epsilon$-regularity result appearing as  in~\cite[Proposition 7.1.4]{LW}, which implies that
$$
|\partial_t u(t,x)| \lesssim \frac{1}{\tau} \lesssim \frac{1}{t}, \quad |\nabla u(t,x)| \lesssim \frac{1}{\sqrt \tau} \lesssim \frac{1}{\sqrt t}.
$$
This gives the desired result in combination with Proposition~\ref{parabolicreg}.

\bigskip

\noindent
\underline{Step 4: General case.} 
The general case follows by a standard localization argument, 
Indeed by Proposition~\ref{parabolicreg}, we may show that 
$|\partial_r h(t,r)| \lec t^{-\frac12} $ for 
$(t,r) \in [0,T] \times [0,\delta]$. 
Let $\phi$ be a cut off fucntion defined in \eqref{def:cutoff}
 with $R =\frac{\delta}{2}$ and $\tilde{h}:=h\phi$, then $\tilde{h}$ satisfies $\tilde{h} \in [0,\frac{\pi}{2}-\delta]$ and 
$$
\tilde{h}_t -\tilde{h}_{rr}-\frac{d-1}r \tilde{h}_r+\frac{d-1}{2r^2} 
\sin (2\tilde{h})=F,
$$
with some compactly supported bounded function $F$. 
Therefore arguing as in the previous steps, 
we obtain the desired bound.
\end{proof}

\subsection{The local energy inequality}

The following proposition gives criteria for a solution of the harmonic map heat flow to satisfy the local energy inequality~\eqref{EnIn}.

\begin{proposition}
\label{energyinequality}
For $h_0$ locally in $H^1_x$, and $h$ a bounded solution on $[0,T) \times \mathbb{R}^d$ locally in $H^1_{t,x}$, assume that 
\begin{itemize}
\item[(i)] Either 
$h$ enjoys the bounds
$$
|h(t,r)| \lesssim \frac{r}{\sqrt t + r},\quad  |\partial_r h(t,r)| \lesssim \frac{1}{\sqrt t + r}, \quad \mbox{and} \quad |\partial_t h(t,r)| \lesssim \frac{1}{t + r^2}.
$$
\item[(ii)] Or $d \geq 5$, $\partial_t h \in L^2_{t,x}$ locally, and $h_0$ satisfies
$$
|h_0(r)| \lesssim 1,\quad |\partial_r h_0(r)| \lesssim \frac{1}{r}, \quad |\partial_r^2 h_0(r)| \lesssim \frac{1}{r^2}.
$$
\item[(iii)] Or $h$ enjoys the bounds
$$
|h(t,r)| \lesssim \frac{r}{r+\sqrt t},\quad |\partial_t h(t,r)| \lesssim \frac{1}{r^2}, \quad \mbox{and} \quad |\partial_r h(t,r)| \lesssim \frac{1}{\sqrt t + r} \left( 1 + \langle \log \frac{r}{\sqrt  t} \rangle \mathbf{1}_{r < \sqrt t} \right) 
$$
\end{itemize}
Then $h$ belongs locally to $H^1_{t,x}$, and satisfies the local energy 
inequality~\eqref{EnIn}\footnote{The proof actually shows that the energy equality (where the sign $\geq$ in~\eqref{EnIn} is replaced by $=$) is satisfied}.
\end{proposition}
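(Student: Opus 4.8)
The plan is to work with the map formulation~\eqref{0} and exploit its gradient‑flow algebra. Pair the equation $\partial_t u-\Delta u=|\nabla u|^2u$ against the multiplier $\tau\,\partial_t u^k+\psi^\ell\,\partial_\ell u^k$: the right‑hand side contributes nothing, because $|\nabla u|^2u$ is parallel to $u$ while $\partial_t u$ and $\partial_\ell u$ are orthogonal to $u$ — the orthogonality being the identities $u\cdot\partial_t u=u\cdot\partial_\ell u=0$, valid a.e.\ (as $L^1_{\operatorname{loc}}$ functions) by the Sobolev chain rule applied to $|u|^2\equiv1$. What then survives, after integrations by parts in $x$ and $t$, is exactly the identity~\eqref{EnIn} with equality in place of the inequality; so the whole issue is the rigorous justification of those integrations by parts, especially at the symmetry axis $\{r=0\}$ and at the initial time.

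\emph{Step 1: local regularity.} Using the corotational identity $|\nabla u|^2=|\partial_r h|^2+(d-1)r^{-2}\sin^2h$ and $|\partial_t u|=|\partial_t h|$, one checks in each case that $u$ lies in $H^1_{t,x}$ on compact subsets of $[0,T)\times\R^d$ and that $\nabla u_0\in L^2_{\operatorname{loc}}$. In (i) and (iii) the hypotheses give $|\nabla u|^2\lesssim(\sqrt t+r)^{-2}\lesssim r^{-2}$ and $|\partial_t u|^2\lesssim(t+r^2)^{-2}$ (resp.\ $\lesssim r^{-4}$), which lie in $L^1_{\operatorname{loc}}$ since $d\ge3$ (the radial weight makes $\int_0^R r^{d-3}\,dr$ finite). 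In (ii), Proposition~\ref{parabolicreg} supplies $|\partial_r h|\lesssim r^{-1}$ and $|\partial_t h|\lesssim r^{-2}$, so again $|\nabla u|^2,|\partial_t u|^2\in L^1_{\operatorname{loc}}$; the restriction $d\ge5$ is precisely what is needed below to discard the contribution of $\{r=0\}$ (it makes $r^{-2}$ lie in $L^2_{\operatorname{loc}}$ and $|\nabla u|\,|\partial_t u|\cdot\mathrm{meas}(\{r=\eta\})=O(\eta^{d-4})$).

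\emph{Step 2: smoothness and the pointwise identity.} Away from $t=0$ the solution is smooth: in (i) and (iii) the bound $|\nabla u|^2\lesssim(\sqrt t+r)^{-2}$ makes $\int_P|\nabla u|^2$ small on small parabolic cylinders $P$, so the $\varepsilon$‑regularity theorem (as invoked in Proposition~\ref{parabolicreg2}) together with bootstrapping gives smoothness on all of $(0,T)\times\R^d$; in (ii) the same argument gives smoothness on $(0,T)\times(\R^d\setminus\{0\})$ only, the axis possibly remaining singular (e.g.\ for $h_0\equiv\pi/2$). Wherever $u$ is smooth, the equation gives $\Delta u\cdot\partial_t u=|\partial_t u|^2$, and multiplying by $\tau\,\partial_t u^k+\psi^\ell\,\partial_\ell u^k$ and rearranging produces the pointwise identity
\begin{equation*}
\tau|\partial_t u|^2+\psi^\ell\partial_t u^k\partial_\ell u^k+\partial_i\tau\,\partial_i u^k\partial_t u^k+\partial_i\psi^\ell\,\partial_i u^k\partial_\ell u^k+\tfrac{\tau}{2}\partial_t|\nabla u|^2+\tfrac{\psi^\ell}{2}\partial_\ell|\nabla u|^2=\div\big(\tau\,\partial_t u^k\nabla u^k+\psi^\ell\,\partial_\ell u^k\nabla u^k\big).
\end{equation*}
In case (ii) this must be extended across the axis: run the computation on $\{r>\eta\}$ and let $\eta\to0$; the boundary integrals on $\{r=\eta\}$ are $O(\eta^{d-4})\to0$ because $d\ge5$, while the interior integrals converge by the $L^1_{\operatorname{loc}}$ bounds of Step~1 (equivalently: the difference of the two sides is a distribution of order at most $1$ supported on the codimension‑$d$ axis and of the form $\partial_t(L^1_{\operatorname{loc}})+\div_x(L^1_{\operatorname{loc}})+L^1_{\operatorname{loc}}$, and such a distribution must vanish).

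\emph{Step 3: integration down to $t=0$ — the main obstacle.} Integrate the pointwise identity over $[\varepsilon,\infty)\times\R^d$: the divergence term drops out ($\tau,\psi^\ell$ compactly supported in $x$), and integrating $\tfrac\tau2\partial_t|\nabla u|^2$ by parts in $t$ and $\tfrac{\psi^\ell}2\partial_\ell|\nabla u|^2$ by parts in $x$ turns the left side into $\tfrac12\int\tau(\varepsilon,\cdot)|\nabla u(\varepsilon,\cdot)|^2\,dx$ minus the integral over $[\varepsilon,\infty)\times\R^d$ of the bracket in~\eqref{EnIn}, the latter converging as $\varepsilon\to0$ to $\int_0^\infty\!\!\int[\cdots]$ by dominated convergence (Step~1). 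Thus everything reduces to $\int\tau(\varepsilon,\cdot)|\nabla u(\varepsilon,\cdot)|^2\,dx\to\int\tau(0,\cdot)|\nabla u_0|^2\,dx$, i.e.\ continuity of the localized Dirichlet energy at $t=0$ — the delicate point, since these solutions need not be continuous at $t=0$ in $L^\infty$. I would prove it via Duhamel: $\nabla u(t)=e^{t\Delta}\nabla u_0+\int_0^t\nabla e^{(t-s)\Delta}\big(|\nabla u|^2u\big)(s)\,ds$, where $\big\||\nabla u|^2(s)\big\|_{L^q(B_{2R})}\lesssim1$ uniformly in small $s$ for any fixed $q\in(\tfrac{2d}{d+2},\tfrac d2)$ (nonempty since $d\ge3$), so the Duhamel term is bounded in $L^2(B_R)$ by $\int_0^t(t-s)^{-\theta}\,ds$ with $\theta<1$ and hence tends to $0$, while $e^{t\Delta}\nabla u_0\to\nabla u_0$ in $L^2_{\operatorname{loc}}$; this gives $\nabla u(t)\to\nabla u_0$ \emph{strongly} in $L^2_{\operatorname{loc}}$, hence the convergence of the boundary term and~\eqref{EnIn} with equality, as anticipated in the footnote. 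The parts I expect to cost the most work are this $t\downarrow0$ analysis (including justifying the Duhamel representation for $\nabla u$) and, in case (ii), the removal of the singular axis in Step~2.
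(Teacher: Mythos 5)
Your argument rests on the same core observation as the paper: the nonlinear term $|\nabla u|^2 u$ is orthogonal (a.e.) to $\partial_t u$ and $\partial_\ell u$, so that pairing against the multiplier $\tau\,\partial_t u+\psi^\ell\partial_\ell u$ produces only the energy identity. Where you differ is in the regularization. The paper mollifies $u$ in space, pairs the mollified equation against the mollified multiplier, and lets $\epsilon\to0$; the left-hand side converges to~\eqref{EnIn} because $u_0^\epsilon\to u_0$ in $H^1_{\mathrm{loc}}$ and $u^\epsilon\to u$ in $H^1_{t,x,\mathrm{loc}}$, while the right-hand side vanishes by orthogonality in the limit (uniform convergence away from the singularity plus integrability of $|\nabla u|^2 u\cdot(\tau\partial_t u+\psi^\ell\partial_\ell u)$ near it, or, in case~(ii), Cauchy--Schwarz with $|\nabla u|^2\in L^2_{t,x}$ and $\partial_t u\in L^2_{t,x}$). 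Notice that spatial mollification already makes $\nabla u^\epsilon$ time-continuous as a map to $L^2_{\mathrm{loc}}$ (since $\partial_t\nabla u^\epsilon=\partial_t u * \nabla\chi_\epsilon\in L^2_t(L^\infty_{x,\mathrm{loc}})$), so the paper's integration by parts in time is painless; you instead integrate on $\{t>\varepsilon\}$ (and $\{r>\eta\}$ in case (ii)) and must therefore supply the Duhamel argument for $\nabla u(t)\to\nabla u_0$ in $L^2_{\mathrm{loc}}$ --- a genuine extra step, not present in the paper, though correct once localized as you sketch. Your axis-removal computation ($O(\eta^{d-4})$ and $O(\eta^{d-3})$ boundary fluxes vanishing for $d\ge5$) is sound and plays exactly the role that $|\nabla u|^2\in L^2_{t,x}$ plays in the paper. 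One small imprecision: in case~(iii) the gradient bound carries a logarithmic factor, so $|\nabla u|^2\lesssim(\sqrt t+r)^{-2}\bigl(1+\log^2(\sqrt t/r)\,\mathbf 1_{r<\sqrt t}\bigr)$ rather than $r^{-2}$; this does not affect $L^1_{\mathrm{loc}}$ integrability but should be recorded. In summary: same key idea, but the paper's spatial mollification handles both the $t\to0$ and $r\to0$ singularities uniformly, whereas your version separates them, making the delicate time-continuity of the localized Dirichlet energy explicit.
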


\begin{proof} \underline{Case $(i)$.} We will be working with $u$ instead of $h$; recall that it is given by the ansatz~\eqref{corotationalansatz}. Since
$$
|\nabla u| \lesssim |\partial_r h| + \frac{1}{r} |h|, \quad \mbox{and} \quad |\partial_t u| \lesssim |\partial_t h|,
$$
our assumptions imply that $u$ is locally $H^1_{t,x}$. 

Let $\chi$ be a smooth, nonnegative, compactly supported function on $\mathbb{R}^d$ with integral one: $
\int_{\mathbb{R}^d}  \chi = 1$. For a function $g$ on $\mathbb{R}^d$, define its regularization $g^\epsilon = \frac{1}{\epsilon^d} \chi\left(\frac{\cdot}{\epsilon} \right) * g$. The regularization of the equation satisfied by $u$ reads
$$
\left\{
\begin{array}{l}
\partial_t u^\epsilon - \Delta u^\epsilon = ( |\nabla u|^2 u)^\epsilon \\
u^\epsilon(t=0) = u^\epsilon_0.
\end{array}
\right.
$$
The local energy inequality can be derived by taking the scalar product of the above by $\tau \partial_t u^\epsilon + \psi^\ell \partial_\ell u^\epsilon$, 
and integrating the equation on $[0,T] \times \mathbb{R}^d$. Indeed, the left-hand side gives
\begin{align*}
& \int_0^\infty \int_{\mathbb{R}^d} (\partial_t u^\epsilon - \Delta u^\epsilon) (\tau \partial_t u^\epsilon + \psi^\ell \partial_\ell u^\epsilon)\,dx\,dt \\
& = \int_0^\infty \int_{\mathbb{R}^d} \left[ \tau |\partial_t u^\epsilon|^2 - \frac{1}{2} \left( \partial_t \tau + \partial_\ell \psi^\ell \right) |\nabla u^\epsilon|^2
+ \partial_i \tau\partial_i (u^\epsilon)^k  \partial_t (u^\epsilon)^k \right.\\
& \qquad \qquad \left. + \psi^\ell \partial_t (u^\epsilon)^k  \partial_\ell (u^\epsilon)^k 
+ \partial_i \psi^\ell \partial_i (u^\epsilon)^k  \partial_\ell (u^\epsilon)^k \right] \,dx\,dt - \int_{\mathbb{R}^d} \frac{1}{2} \tau(t=0) |\nabla u_0^\epsilon|^2 \,dx, 
\end{align*}
which converges to the desired expression as $\epsilon \to 0$. Therefore, it suffices to check that the right-hand side converges to zero: we need to show that
$$
\int_0^T \int_{\mathbb{R}^d}( |\nabla u|^2 u)^\epsilon \cdot \left[ \tau \partial_t u^\epsilon + \psi^\ell \partial_\ell u^\epsilon \right] \,dx\,dt \overset{\epsilon \to 0}{\longrightarrow} 0.
$$
Since $u$ is smooth away from $(t,x) = (0,0)$, we get, for $(t,x)$ away from $(0,0)$, that $( |\nabla u|^2 u)^\epsilon \cdot \left[ \tau \partial_t u^\epsilon + \psi^\ell \partial_\ell u^\epsilon \right] \to 0$ uniformly as $\epsilon \to 0$. 
Therefore, the only problem is to deal with the singularity at $(t,x) = (0,0)$. But our assumptions imply that $|\nabla u|^2 u \cdot \left[ \tau \partial_t u + \psi^\ell \partial_\ell u\right]$ is integrable near $(t,x) = (0,0)$, thus giving the desired result.

\bigskip

\noindent
\underline{Case $(ii)$.} Observe that, by Proposition~\ref{parabolicreg}, $|\nabla u| \lesssim \frac{1}{r}$. Since $d \geq 5$, it means that $|\nabla u|^2 \in L^2_{t,x}$; since furthermore $\partial_t u \in L^2$, we obtain that $|\nabla u^\epsilon|^2 u^\epsilon \cdot \partial_t u^\epsilon \rightarrow |\nabla u|^2 u \cdot \partial_t u = 0$ in $L^1$ as $\epsilon \to 0$, hence the desired result follows as in case $(i)$.

\bigskip

\noindent
\underline{Case $(iii)$.} Once again, it suffices to observe that the assumptions made on $h$ imply that $|\nabla u|^2 u \cdot \left[ \tau \partial_t u + \psi^\ell \partial_\ell u\right]$ is integrable near $(t,x) = (0,0)$.
\end{proof}

\subsection{Stability in $L^\infty$}

The following result can be found in~\cite{JK}. We recast the proof in the framework of corotational solutions, for which it becomes very elementary.

\begin{proposition} 
\label{goldfinch}

Consider $h_1$ and $h_2$ two solutions of~\eqref{1} on $[0,T] \times (0,\infty)$ which are space and time continuous at $t=0$ and smooth for $t>0$.

\begin{itemize}
\item[(i)] For $\delta > 0$ there exists $C = C(\delta)$ such that: if $h_1(t,r), h_2(t,r) \in [0, \frac{\pi}{2} - \delta]$ for all $(t,r) \in [0,T] \times (0,\infty)$, then
$$
\| (h_1 - h_2) (t=T) \|_\infty \leq C \| (h_1 - h_2) (t=0) \|_\infty
$$
\item[(ii)] For $\delta, T > 0$ there exists $C = C(\delta,T)$ such that: if $h_1(t,r), h_2(t,r) \in [0, \frac{\pi}{2} - \delta]$ for all $(t,r) \in [0,T] \times (0,\tau)$, then
$$
\| (h_1 - h_2) (t=T) \|_\infty \leq C \| (h_1 - h_2) (t=0) \|_\infty
$$
\end{itemize}
\end{proposition}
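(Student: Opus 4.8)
The plan is to work with the scalar difference $H:=h_1-h_2$. Using $\sin(2h_1)-\sin(2h_2)=2\cos(h_1+h_2)\sin(h_1-h_2)$, it solves (distributionally on $[0,T]\times(0,\infty)$, classically for $t>0$)
\[
H_t-\Delta H+V\,H=0,\qquad V:=\frac{d-1}{r^2}\cos(h_1+h_2)\,\frac{\sin H}{H},
\]
where $\Delta=\partial_r^2+\tfrac{d-1}{r}\partial_r$ is the radial Laplacian in $\mathbb R^d$. Since $|H|<\tfrac{\pi}{2}$ we have $\tfrac{\sin H}{H}\in(\tfrac2\pi,1]$, so $V\ge 0$ wherever $h_1+h_2\le\tfrac\pi2$, but $V$ can be negative — of order $-C_\delta/r^2$ — on $\{h_1+h_2>\tfrac\pi2\}$; for $3\le d\le 6$ and small $\delta$ this coefficient exceeds the Hardy/critical threshold, so neither a weighted $L^2$ estimate nor a naive comparison function can work directly, and this is the heart of the difficulty. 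The device that makes the argument elementary is that, on the half of space $\{h_1\ge h_2\}$, the positive function $\cos h_1$ (the first coordinate of the corotational map $u_1$, bounded below by $\sin\delta$) is a supersolution of $L:=\partial_t-\Delta+V$. Indeed $(\partial_t-\Delta)\cos h_1=|\nabla u_1|^2\cos h_1$ with $|\nabla u_1|^2=(\partial_rh_1)^2+\tfrac{d-1}{r^2}\sin^2h_1$, so $L(\cos h_1)\ge 0$ is equivalent to $|\nabla u_1|^2\ge -V$.

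To check this inequality on $\{h_1\ge h_2\}$: if $h_1+h_2\le\tfrac\pi2$ then $-V\le 0\le|\nabla u_1|^2$ and there is nothing to prove. If $h_1+h_2>\tfrac\pi2$, then from $h_1\ge h_2$ we get $h_1\ge\tfrac{h_1+h_2}{2}$, hence (monotonicity of $\sin$ on $[0,\tfrac\pi2]$)
\[
\sin^2h_1\ \ge\ \sin^2\!\frac{h_1+h_2}{2}\ =\ \frac{1-\cos(h_1+h_2)}{2}\ \ge\ -\cos(h_1+h_2)\ \ge\ -\cos(h_1+h_2)\,\frac{\sin H}{H},
\]
where the third inequality uses $h_1+h_2\le\pi-2\delta<\pi$ and the last uses $-\cos(h_1+h_2)\ge 0$ and $\tfrac{\sin H}{H}\le 1$. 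Multiplying by $\tfrac{d-1}{r^2}$ gives $|\nabla u_1|^2\ge\tfrac{d-1}{r^2}\sin^2h_1\ge -V$, as claimed. Symmetrically, $\cos h_2$ is a supersolution of $L$ on $\{h_2\ge h_1\}$.

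The bound then follows by a maximum principle for the quotient. Fix $0<t_0<T$ (on $[t_0,T]$ the $h_i$ are smooth up to $r=0$, so $h_i(t,0)=0$, $H(t,0)=0$, and the drift below is bounded). On $\{H>0\}=\{h_1>h_2\}$ set $\Phi:=H/\cos h_1$; from $LH=0$ and $L(\cos h_1)\ge0$ one computes
\[
\Phi_t-\Delta\Phi-2\,\frac{\nabla\cos h_1}{\cos h_1}\cdot\nabla\Phi=-\frac{\Phi\,L(\cos h_1)}{\cos h_1}\le 0,
\]
so $\Phi$ is a subsolution of a uniformly parabolic equation with bounded drift on that region. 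Its parabolic boundary consists of $\{t=t_0\}$, $\{H=0\}$, $\{r=0\}$ and $r=\infty$; on the last three $\Phi$ vanishes ($\Phi=0$ on $\{H=0\}\cup\{r=0\}$, and $r=\infty$ is treated by subtracting an $\epsilon(Ms+\log\langle r\rangle)$ corrector exactly as in Step~2 of the proof of Lemma~\ref{linear}). Hence $\sup_r\Phi(T)\le\sup_r\Phi(t_0)\le\|H(t_0)\|_\infty/\sin\delta$, so $\sup_rH_+(T,r)\le\|\cos h_1(T)\|_\infty\sup_r\Phi(T)\le\|H(t_0)\|_\infty/\sin\delta$; running the symmetric argument with $\cos h_2$ controls $\sup_rH_-(T,r)$, giving $\|H(T)\|_\infty\le\tfrac1{\sin\delta}\|H(t_0)\|_\infty$. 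Letting $t_0\to0$ and using continuity at $t=0$ yields (i) with $C=C(\delta)=1/\sin\delta$ (independent of $T$, since $\cos h_i$ is a supersolution on all of $\{h_i\ge h_j\}$). For (ii), where the range restriction on the $h_i$ is only available on a bounded set, one multiplies $\cos h_i$ by a spatial cutoff before running the same scheme; the cutoff produces a bounded inhomogeneity supported in a fixed annulus, whose effect is absorbed by a Gronwall estimate, at the cost of a constant $C=C(\delta,T)$. I expect the only genuine subtlety to be the step "$|\nabla u_1|^2\ge -V$" — once it is in place, the singular potential never has to be analysed — together with the standard but slightly fussy points: the mollification justifying the distributional manipulations, the smoothness of the $h_i$ for $t>0$ (and hence $h_i(t,0)=0$), and the corrector at spatial infinity.
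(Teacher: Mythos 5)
Your proof is correct and takes a genuinely different route from the paper's. The paper introduces the symmetric quantity $\theta = \frac{1-\cos(h_1-h_2)}{\cos h_1\cos h_2}$ together with the weight $A=\cos h_1\cos h_2$, and establishes by a ``lengthy but straightforward computation'' (essentially a trace--determinant check on a $2\times 2$ quadratic form) that $A\,\partial_t\theta-\nabla\cdot(A\nabla\theta)\le 0$, after which the maximum principle applies; this is the J\"ager--Kaul strategy, recast for corotational maps. You instead linearise: $H=h_1-h_2$ solves $(\partial_t-\Delta+V)H=0$ with the singular potential $V=\tfrac{d-1}{r^2}\cos(h_1+h_2)\tfrac{\sin H}{H}$, and you observe that $\cos h_i$ (the first component of the corotational map, hence obeying $(\partial_t-\Delta)\cos h_i=|\nabla u_i|^2\cos h_i$) is a \emph{positive supersolution} of $\partial_t-\Delta+V$ on the half $\{h_i\ge h_j\}$. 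This is a ground-state transformation: the quotient $\Phi=H/\cos h_1$ then solves a drift--heat equation with no zero-order term, and the weak maximum principle for $\Phi_+$ closes the estimate. Both proofs tame the critical $1/r^2$ potential by weighting with $\cos h_i$; the mechanism is the same, but your supersolution verification is a two-line inequality ($\sin^2 h_1\ge-\cos(h_1+h_2)$, i.e.\ $1+\cos(h_1+h_2)\ge 0$) rather than the sign check of a $2\times 2$ form, at the price of a one-sided case split ($h_1\ge h_2$ versus $h_2\ge h_1$) that the paper's symmetric $\theta$ avoids. Two points deserve slightly more care than the sketch gives them: the maximum principle is applied on the free-boundary region $\{H>0\}$, so it is cleanest to phrase it as a Stampacchia-type estimate on $\Phi_+$ over the whole half-line (as the paper does in Step 2 of Lemma~\ref{linear}), rather than invoking a boundary-value statement; and for (ii), ``multiplying $\cos h_i$ by a spatial cutoff'' does not produce a valid positive weight on the outer region --- what you want (and what parallels the paper's $\Theta=\chi\theta+(1-\chi)|H|^2$) is to \emph{glue} $\cos h_i$ near the origin to the constant supersolution $1$ away from it, where $V$ is bounded, and absorb the gluing inhomogeneity by Gronwall. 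With these two clarifications the argument is complete.
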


The proof of the previous proposition essentially consists of the following lemma. Indeed, it suffices to notice that $\theta$ or $\Theta$, defined below, control $|h_1 - h_2|^2$, 
and to apply the maximum principle.

\begin{lemma} 
\begin{itemize}
\item[(i)] Under the assumptions of (i) in the previous proposition, let $\theta = \frac{1 - \cos(h_1-h_2)}{\cos h_1 \cos h_2}$ and $A = \cos(h_1) \cos(h_2)$. Then
$$
A \partial_t \theta - \nabla \cdot (A \nabla \theta) \leq 0.
$$
\item[(ii)] Under the assumptions of (ii) in the previous proposition, 
let $\chi(r)$ be a smooth, positive, nonincreasing function on $\mathbb{R}_+$ such that $\chi = 1$ if $r<\frac{\delta}{2}$, $\chi=0$ if $r>\delta$ and $A(t,r)$ be a positive, 
smooth function away from $t=0$ or $r=0$ such that
$$
A(t,r) =
\left\{ \begin{array}{ll}
-\log( \cos(h_1) \cos(h_2)) & \mbox{if $r<\delta$}\\
1 & \mbox{if $r>2\delta$}.
\end{array} \right.
$$
Define further 
$$
\Theta = \chi \theta(h_1,h_2) + (1 - \chi) |h_2 - h_1|^2 \quad \mbox{where} \quad \theta(h_1,h_2) = \frac{1 - \cos(h_1-h_2)}{\cos h_1 \cos h_2}.
$$
Then
$$
A \partial_t \Theta - \nabla \cdot (A \nabla \Theta) \lesssim \Theta + C |\nabla \Theta|.
$$
\end{itemize}
\end{lemma}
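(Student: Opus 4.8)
The plan is to work with the corotational maps $u_i=\bigl(\cos h_i,\ \sin h_i\,\tfrac{x}{|x|}\bigr)$, $i=1,2$, attached to $h_1,h_2$. These solve $\partial_t u_i-\Delta u_i=|\nabla u_i|^2u_i$, so each Cartesian component solves a scalar heat equation with potential $|\nabla u_i|^2$; moreover $u_1\cdot u_2=\cos(h_1-h_2)$ and $u_i^1=\cos h_i$ for the first (north-pole) coordinate. Set $A=u_1^1u_2^1=\cos h_1\cos h_2>0$ and $P=1-u_1\cdot u_2=\tfrac12|u_1-u_2|^2=1-\cos(h_1-h_2)\ge0$, so that $\theta=P/A$. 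Writing $P=A\theta$ and expanding derivatives gives the purely algebraic identity, valid for any positive $A$,
\[
A\partial_t\theta-\nabla\cdot(A\nabla\theta)=(\partial_t-\Delta)P-\theta\,(\partial_t-\Delta)A+\nabla A\cdot\nabla\theta .
\]

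Next I would evaluate the right-hand side using the equations. One finds $(\partial_t-\Delta)P=(|\nabla u_1|^2+|\nabla u_2|^2)P-|\nabla(u_1-u_2)|^2$ and $(\partial_t-\Delta)A=(|\nabla u_1|^2+|\nabla u_2|^2)A-2\,\nabla u_1^1\cdot\nabla u_2^1$; the decisive point is that the factor $(|\nabla u_1|^2+|\nabla u_2|^2)P$ cancels out of $(\partial_t-\Delta)P-\theta(\partial_t-\Delta)A$, leaving $-|\nabla(u_1-u_2)|^2+2\theta\,\nabla u_1^1\cdot\nabla u_2^1$. Adding $\nabla A\cdot\nabla\theta=A^{-1}\bigl(\nabla A\cdot\nabla P-\theta|\nabla A|^2\bigr)$ and inserting the explicit corotational formulas for every gradient (each a function of $h_i$, $h_i'$, $r$ only), the whole thing collapses to
\[
A\partial_t\theta-\nabla\cdot(A\nabla\theta)=-\frac{d-1}{r^2}(\sin h_1-\sin h_2)^2-Q(h_1',h_2'),
\]
where $Q$ is the quadratic form in $(h_1',h_2')$ whose symmetric matrix, with $\phi=h_1-h_2$ and $t_i=\tan h_i$, is
\[
M=\begin{pmatrix} 1+t_1\sin\phi+(1-\cos\phi)t_1^2 & -\cos\phi-\tfrac{\sin\phi}{2}(t_1-t_2)\\[1mm] -\cos\phi-\tfrac{\sin\phi}{2}(t_1-t_2) & 1-t_2\sin\phi+(1-\cos\phi)t_2^2\end{pmatrix}.
\]

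The step I expect to be the genuine obstacle is showing $M\succeq0$. The diagonal entries are positive because the quadratic $x\mapsto(1-\cos\phi)x^2+(\sin\phi)x+1$ has discriminant $\sin^2\phi-4(1-\cos\phi)=(1-\cos\phi)(\cos\phi-3)\le0$. For the determinant I would use $t_1-t_2=\tfrac{\sin\phi}{\cos\phi}(1+t_1t_2)$ (true since $h_1=h_2+\phi$) to rewrite $M_{12}$ and $M_{11}M_{22}$; one obtains $M_{11}M_{22}=\bigl(\tfrac{1+(1-\cos\phi)t_1t_2}{\cos\phi}\bigr)^2$, and then a difference of squares yields
\[
\det M=\frac{(1-\cos\phi)\,\bigl[(1+\cos\phi)+(1-\cos\phi)t_1t_2\bigr]\,\bigl[(3+\cos^2\phi)+(1-\cos\phi)(3+\cos\phi)t_1t_2\bigr]}{4\cos^2\phi}.
\]
Since $h_1,h_2\in[0,\tfrac{\pi}{2}-\delta]$ forces $|\phi|<\tfrac{\pi}{2}$, hence $0<\cos\phi\le1$, while $t_1t_2=\tan h_1\tan h_2\ge0$, every factor on the right is nonnegative, so $\det M\ge0$; thus $Q\ge0$ and $A\partial_t\theta-\nabla\cdot(A\nabla\theta)\le0$, proving (i). The hard part is the bookkeeping that makes this factorization appear — in particular the sign of the first-order cross term in $M_{12}$, which is exactly what the weight $\cos h_1\cos h_2$ (as opposed to a naive choice) provides.

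For (ii) the modified weight $A=-\log(\cos h_1\cos h_2)$ near the origin is chosen precisely so that the inequality of (i) converts into the claimed one. Where $\chi\equiv1$ one has $\Theta=\theta$, and (i) rearranges to $(\partial_t-\Delta)\theta\le\nabla\log(\cos h_1\cos h_2)\cdot\nabla\theta=-\nabla A\cdot\nabla\theta$; multiplying by $A\ge0$ gives $A\partial_t\theta-\nabla\cdot(A\nabla\theta)\le-(A+1)\nabla A\cdot\nabla\theta$, which is controlled by $|\nabla\Theta|$ since $\nabla A=\tan h_1\nabla h_1+\tan h_2\nabla h_2$ has bounded coefficients where $h_i<\tfrac{\pi}{2}-\delta$. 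Where $r>\delta/2$ the potential $\frac{d-1}{r^2}$ is bounded, so a routine pointwise estimate on $|h_1-h_2|^2$ gives $(\partial_t-\Delta)|h_1-h_2|^2\lesssim|h_1-h_2|^2=\Theta$; and on the transition annulus the extra terms generated by $\nabla\chi$ and by the change of $A$ are of order $\Theta+|\nabla\Theta|$. Splicing the three estimates produces $A\partial_t\Theta-\nabla\cdot(A\nabla\Theta)\lesssim\Theta+|\nabla\Theta|$, as claimed.
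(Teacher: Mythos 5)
Your proof of (i) is correct, and it takes a genuinely different route from the paper's. Where the paper computes $A\partial_t\theta - \nabla\cdot(A\nabla\theta)$ directly in the $h$--variables and splits the result into a term $I$ containing $(\partial_t-\Delta)h_i$ and a quadratic form $II$ in $(\partial_r h_1,\partial_r h_2)$, you lift to the corotational maps $u_i$ and exploit $P=1-u_1\cdot u_2$, $A=u_1^1u_2^1$ together with the clean algebraic identity $A\partial_t\theta-\nabla\cdot(A\nabla\theta)=(\partial_t-\Delta)P-\theta(\partial_t-\Delta)A+\nabla A\cdot\nabla\theta$; the cancellation of the $(|\nabla u_1|^2+|\nabla u_2|^2)P$ terms then isolates $-|\nabla(u_1-u_2)|^2+2\theta\,\nabla u_1^1\cdot\nabla u_2^1$, which is conceptually cleaner than the paper's grind and makes it clear where the angular term $-\frac{d-1}{r^2}(\sin h_1-\sin h_2)^2$ comes from. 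Both arguments reduce to positivity of a quadratic form in $(h_1',h_2')$; I checked that your matrix $M$ coincides with the paper's (after clearing the $\frac{1}{\psi_1\psi_2}$ normalization) in the test case $h_2=0$, and your determinant factorization checks out there. One small remark: your factorization of $\det M$ invokes $t_1t_2\ge 0$, which holds under the hypothesis $h_i\in[0,\tfrac{\pi}{2}-\delta]$; the paper's reduction to $(1-\sin h_1\sin h_2)^2\ge\tfrac14(2-\sin^2h_1-\sin^2h_2)^2$ is sign-free and a touch more robust, but in context both are valid.

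For (ii) your argument is also correct, and in fact it is \emph{more careful} than the paper's on one point. The paper decomposes $A\partial_t\Theta-\nabla\cdot(A\nabla\Theta)$ into $I+II+III$ with $I=\chi\bigl[A\partial_t\theta-\nabla\cdot(A\nabla\theta)\bigr]$ and asserts $I\le 0$ ``by Step 1''; but Step 1 established that inequality for the weight $\cos h_1\cos h_2$, not for $A=-\log(\cos h_1\cos h_2)$. What is actually true, and what you correctly derive, is $A\partial_t\theta-\nabla\cdot(A\nabla\theta)\le -(A+1)\nabla A\cdot\nabla\theta\lesssim|\nabla\Theta|$ on $\{\chi=1\}$. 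This is still enough to conclude, so the paper's final estimate survives, but your handling of this step is the rigorous one. Your treatment of the far region $r>\delta/2$ and of the transition-annulus error terms matches the paper's $II$ and $III$ in spirit; it would be worth spelling out, as the paper does, that the $\nabla\chi$ commutator terms are $\lesssim\Theta+|\nabla\Theta|$ because $|h_2-h_1|^2+\theta\lesssim\Theta$ and $|\nabla\theta|+|h_2-h_1||\nabla(h_2-h_1)|\lesssim\Theta+|\nabla\Theta|$ on the annulus, but these are routine and your sketch is adequate.
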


\noindent
\begin{proof}
\underline{Proof of (i)} Let us introduce some further notations: set, for $i = 1,2$, $\phi_i = \sin h_i$ and $\psi_i = \cos h_i$.
A lengthy but straightforward computation shows that
\begin{align*}
& A \partial_t \theta(h_1,h_2) - \nabla \cdot (A  \nabla \theta(h_1,h_2)) \\
& \qquad = - \underbrace{\left[  (\phi_1 \psi_2 - (1+\theta) \psi_1 \phi_2 ) (\partial_t - \Delta) h_2 + (\phi_2 \psi_1 - (1+\theta) \psi_2 \phi_1)(\partial_t - \Delta) h_1 \right] }_{I} \\
& \qquad \quad \underbrace{+\frac{1}{\psi_1 \psi_2}  (\psi_1^2 + \psi_2^2) \partial_r h_1 \partial_r h_2- (1+\theta) (\psi_2^2 (\partial_r h_1)^2 + \psi_1^2 (\partial_r h_2)^2)}_{II}.
\end{align*}
We will now show that $I \geq 0$ while $II \leq 0$. Starting with $I$, observe that
$$
[\phi_1 \psi_2 - (1+\theta) \psi_1 \phi_2 ] (\partial_t - \Delta) h_2 = -\frac{d-1}{r^2}(\phi_1 \psi_2 - (1+\theta) \psi_1 \phi_2) \psi_2 \phi_2
= - \frac{d-1}{r^2}(\phi_1 \phi_2 - \phi_2^2),
$$
with the symmetric formula 
$$
[\phi_2 \psi_1 - (1+\theta) \psi_2 \phi_1 ] (\partial_t - \Delta) h_1 = - \frac{d-1}{r^2}(\phi_1 \phi_2 - \phi_1^2).
$$
Then
$$
I = \frac{d-1}{r^2} (\phi_1 - \phi_2)^2 \geq 0.
$$
Turning to $II$, it is a quadratic form in $(\partial_r h_1, \partial_r h_2)$, which can be represented by the matrix 
$\frac{1}{\psi_1 \psi_2} \left( \begin{array}{cc}- (1+\theta) \psi_2^2 & \frac{1}{2} (\psi_1^2 + \psi_2^2) \\   \frac{1}{2} (\psi_1^2 + \psi_2^2) &- (1+\theta) \psi_1^2 \end{array} \right)$. The trace of this matrix is obviously nonpositive, so $II \leq 0$ if and only if its determinant is nonnegative. This gives the condition
$$
(1+\theta)^2 \psi_1^2 \psi_2^2 - \frac{1}{4} (\psi_1^2 + \psi_2^2)^2 \geq 0 \Leftrightarrow (1 - \phi_1 \phi_2)^2 - \frac{1}{4}(2 - \phi_1^2 - \phi_2^2)^2 \geq 0,
$$
which is immediately seen to be true, yielding the desired conclusion.

\bigskip

\noindent \underline{Proof of (ii)} A simple computation gives
\begin{align*}
A \partial_t \Theta - \nabla \cdot (A \nabla \Theta) = & \underbrace{ \chi \left[ A \partial_t \theta - \nabla(A \nabla \theta) \right]}_I 
+  \underbrace{(1 - \chi) \left[  A \partial_t |h_1 - h_2|^2 - \nabla \cdot (A \nabla |h_1-h_2|^2 ) \right]}_{II} \\
& + \underbrace{(\Delta \chi - \nabla A \cdot \nabla \chi)(|h_2-h_1|^2 - \theta) - 2 \nabla \chi \cdot \nabla \theta + 4(h_2 - h_1) \nabla \chi \cdot \nabla (h_2-h_1)}_{III}
\end{align*}
We saw in Step 1 that 
\begin{equation}
\label{bumblebee1}
I \leq 0.
\end{equation}
Turning to $II$, it reads
\begin{align*}
II & = 2(1-\chi) A (h_2 - h_1) (\partial_t - \Delta) (h_2 - h_1) - 2 (1-\chi) |\nabla (h_2 - h_1)|^2  + 2(1-\chi)  (h_2 - h_1)  \nabla A \cdot \nabla (h_2 - h_1) \\
& = 2(1-\chi) A (h_2 - h_1) \frac{(d-1)}{2 r^2} [\sin(h_1) - \sin(h_2)] - 2 (1-\chi) |\nabla (h_2 - h_1)|^2  + 2(1-\chi) \nabla A \cdot \nabla (h_2 - h_1) \\
& \lesssim (1-\chi)  |h_2 - h_1|^2 + |h_2 - h_1| |\nabla(h_2 - h_1)|
\end{align*}
where we used in the second equality the equation~\eqref{1} satisfied by $h_1$ and $h_2$, and in the last inequality the fact that $r \gtrsim 1$ on $\operatorname{Supp}(1-\chi)$ as well as the bound $|\nabla A| \lesssim 1$. Using in addition that $|h_2 - h_1|^2 \lesssim \Theta$ and $|h_2 - h_1| |\nabla(h_2 - h_1)| \lesssim \Theta + |\nabla \Theta|$ gives the desired result:
\begin{equation}
\label{bumblebee2}
II \lesssim \Theta + |\nabla \Theta|.
\end{equation}
Finally, using the bounds $|h_2 - h_1|^2 + \theta \lesssim \Theta$ and $|\nabla \theta| + |h_2 - h_1| |\nabla(h_2 - h_1)| \lesssim \Theta + |\nabla \Theta|$ yields once again
\begin{equation}
\label{bumblebee3}
III \lesssim \Theta + |\nabla \Theta|.
\end{equation}
Combining~\eqref{bumblebee1},~\eqref{bumblebee2} and~\eqref{bumblebee3} gives the desired estimate.
\end{proof}

\subsection{Comparison principle for equivariant heat flows}
We state a comparison principle for the equivariant heat flows in $\R^d$.
It is a version of the result \cite[Lemma 4.1]{GGT}  for
the equivariant heat flow in $\R^2$ (see also \cite{CD, GS} for the case when the domain is the disk). 
Since their argument can be adapted for any dimensions, 
we will omit the proof.
A super solution $h(t,r)$ of~\eqref{1} is, by definition, such that
$$
h_t -h_{rr}-\frac{d-1}r h_r+\frac{d-1}{2r^2} \sin (2h) \geq 0
$$
in the sense of distributions. Similarly, a subsolution satisfies the above with a $\leq$ sign.
\begin{lemma}
\label{lemma:comparison}
Let $h_1, h_2 \in BC([0,T] \times[0, \infty))\ \cap \ C^2((0,T) \times (0,\infty))$ 
be respectively sub- and supersolutions of the problem \eqref{1} 
with boundary condition $h_1|_{r=0}=h_2|_{r=0}=0$ and initial data $h_{1,0}$, 
$h_{2,0}$. 
If $h_{1,0}\le h_{2,0}$, then
$$
h_1(t,r)\le h_2(t,r) \qquad {\it for}\quad (t,r) \in [0,\infty) \times [0,T].
$$
\end{lemma}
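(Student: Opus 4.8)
The plan is to prove the lemma by a barrier / maximum-principle argument for the difference $w := h_1 - h_2$, the only delicate point being the singular zeroth-order term at $r=0$, which turns out to be \emph{cooperative} there and can therefore be handled without any Hardy-type inequality (a crude Hardy bound would only close the estimate for $d\ge 7$).

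First I would record the differential inequality for $w$. On the open set $(0,T)\times(0,\infty)$ the functions $h_i$ are $C^2$, so the distributional sub-/super-solution inequalities hold pointwise there; subtracting them and writing $\sin(2h_1)-\sin(2h_2)=c(t,r)\,w$ with $c(t,r)=2\cos(h_1+h_2)\tfrac{\sin(h_1-h_2)}{h_1-h_2}$, so that $|c|\le 2$, gives
\[
\partial_t w-\partial_r^2 w-\frac{d-1}{r}\partial_r w+V(t,r)\,w\le 0 \quad\text{on }(0,T)\times(0,\infty),\qquad V:=\frac{d-1}{2r^2}\,c,\ \ |V|\le\frac{d-1}{r^2}.
\]
The key observation is that $V$ has a favourable sign near $r=0$: since $h_i\in C([0,T]\times[0,\infty))$ and $h_i(\cdot,0)\equiv 0$, uniform continuity of $h_i$ on the compact set $[0,T]\times[0,1]$ produces $\rho_0\in(0,1)$ such that $|h_1|,|h_2|<\pi/4$ on $[0,T]\times[0,\rho_0)$, whence $\cos(h_1+h_2)>0$ and $\tfrac{\sin(h_1-h_2)}{h_1-h_2}>0$ there, hence $V\ge 0$ on that strip; on $\{r\ge\rho_0\}$ the potential is merely bounded, $|V|\le (d-1)\rho_0^{-2}$.

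Next I would construct a one-parameter family of strict supersolutions. Viewing everything radially on $\R^d$, set $\eta_\epsilon(t,r)=\epsilon\,e^{Kt}\langle r\rangle$. Since $\big|\partial_r^2\langle r\rangle+\tfrac{d-1}{r}\partial_r\langle r\rangle\big|\le d$ and $\langle r\rangle\ge 1$, choosing $K=K(d,\rho_0)$ large enough — using $V\ge 0$ for $r<\rho_0$ and $|V|\le (d-1)\rho_0^{-2}$ for $r\ge\rho_0$ — yields $\partial_t\eta_\epsilon-\partial_r^2\eta_\epsilon-\tfrac{d-1}{r}\partial_r\eta_\epsilon+V\eta_\epsilon\ge \epsilon e^{Kt}>0$ for all $r>0$, $t\in(0,T)$, with $K$ \emph{independent} of $\epsilon$. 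Moreover $\eta_\epsilon>0\ge w$ at $t=0$, $\eta_\epsilon(t,0)=\epsilon e^{Kt}>0=w(t,0)$, and $\eta_\epsilon\ge\epsilon\langle r\rangle>\|h_1\|_\infty+\|h_2\|_\infty\ge|w|$ for $r$ large, uniformly in $t$. Then a routine first-contact argument applies (working on $[0,T']$, $T'<T$, and letting $T'\to T$ if necessary): if $w\le\eta_\epsilon$ failed, let $t_*$ be the first failure time; by the boundary and decay facts just listed, $t_*>0$ and $w-\eta_\epsilon$ attains the value $0$ at an interior spatial maximum $(t_*,r_*)$ with $r_*>0$, where $\partial_t(w-\eta_\epsilon)\ge 0$ and $(\partial_r^2+\tfrac{d-1}{r}\partial_r)(w-\eta_\epsilon)\le 0$. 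Subtracting the two differential inequalities and using $(w-\eta_\epsilon)(t_*,r_*)=0$ forces $\partial_t(w-\eta_\epsilon)-(\partial_r^2+\tfrac{d-1}{r}\partial_r)(w-\eta_\epsilon)\le -\epsilon e^{Kt_*}<0$, a contradiction. Hence $w\le\eta_\epsilon$ on $[0,T]$ for every $\epsilon>0$, and letting $\epsilon\to 0$ gives $w\le 0$, i.e. $h_1\le h_2$.

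The main obstacle — and the only place where the specific structure of the flow, rather than soft parabolic theory, is used — is precisely the singular potential $\tfrac{d-1}{2r^2}(\sin 2h_1-\sin 2h_2)$ at $r=0$: a crude $O(r^{-2})$ bound is neither absorbable by the barrier nor of the right sign, and the fix is the cooperativity of the nonlinearity near the pole $h=0$, which must be phrased with uniformity in $t$ (supplied by uniform continuity of the $h_i$ together with $h_i|_{r=0}=0$). A minor point is justifying the pointwise inequality for $w$ from the distributional hypotheses, which is immediate on $(0,T)\times(0,\infty)$ since the $h_i$ are $C^2$ there. Alternatively one could replace the pointwise maximum principle by a Stampacchia $L^2$-energy estimate on $w_+=\max(w,0)$, as in Step~2 of the proof of Lemma~\ref{linear}; the barrier version is preferable here because no $L^2$-integrability of $w$ is assumed.
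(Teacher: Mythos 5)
The paper does not give its own proof of this lemma: it cites Lemma~4.1 of \cite{GGT} for the two-dimensional equivariant flow and asserts that the argument adapts to all $d\ge 3$, so there is nothing in the paper to compare against line by line. Your barrier argument is a correct, self-contained proof. The two decisive observations are identified and carried out correctly: first, the zeroth-order coefficient $V=\tfrac{d-1}{2r^2}\cdot 2\cos(h_1+h_2)\tfrac{\sin(h_1-h_2)}{h_1-h_2}$ in the linearized inequality for $w=h_1-h_2$ is nonnegative on a strip $\{r<\rho_0\}$, uniformly in $t$, thanks to continuity of the $h_i$ on the compact set $[0,T]\times[0,1]$ together with $h_i|_{r=0}=0$ (this removes the dangerous $r^{-2}$ singularity, which a crude Hardy-type absorption could only handle for $d\ge 7$, exactly as you remark); and second, the barrier $\eta_\epsilon=\epsilon e^{Kt}\langle r\rangle$ is a strict supersolution for $K$ large independently of $\epsilon$ (using $V\ge 0$ for $r<\rho_0$ and $|V|\le(d-1)\rho_0^{-2}$ for $r\ge\rho_0$), grows at infinity so as to dominate the bounded $w$, and is strictly positive on $\{t=0\}\cup\{r=0\}$, so the first-contact argument at an interior $(t_*,r_*)$ with $r_*>0$ closes cleanly. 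The minor regularity points you flag — pointwise validity of the sub-/super-solution inequalities on the open set where the $h_i$ are $C^2$, and working on $[0,T']$ and letting $T'\to T$ — are handled correctly. I see no gaps.
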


\subsection{Extension criterion}
In this subsection, we prove the following extension criterion 
for the equivariant heat flow. 
\begin{proposition}
\label{proposition:holder}
Let $h$ be a bounded solution for \eqref{1}
in $[0,T_*)$. If $h$ satisfies 
$$
\left\{ \begin{array}{l}
0\le h(t,r) \lec r^{\beta} \\
|\partial_r h(t,r)| \lesssim \frac{1}{r}
\end{array} \right.
\qquad \textrm{for}\ (t,r) \in 
[0,T_*) \times [0,\infty)
$$
with some $\beta>0$, 
then there exists $\delta>0$ such that 
\begin{align}
\sup_{T_*/2<t<T_*}
\|h(t)\|_{C^\delta([0,\infty))}<\infty.
\label{holder}
\end{align}
In particular, $h$ can be extended beyond $t=T_*$. 
\end{proposition}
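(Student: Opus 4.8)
The plan is to prove interior parabolic Hölder regularity for $h$ up to (and including) the spatial origin, which then gives a uniform-in-time bound on $\|h(t)\|_{C^\delta}$ on the slice $t \in (T_*/2, T_*)$, and finally feed this into the local well-posedness theory to extend past $T_*$. The key point is that the equivariant nonlinearity $\frac{d-1}{2r^2}\sin(2h)$ is the source of the singularity at $r=0$, and the hypothesis $0 \le h(t,r) \lesssim r^\beta$ says precisely that this term is controlled near the origin: since $|\sin(2h)| \lesssim |h| \lesssim r^\beta$, we get $\left|\frac{d-1}{2r^2}\sin(2h)\right| \lesssim r^{\beta-2}$, which is locally integrable against the relevant kernels once we also use $|h| \lesssim r^\beta$ a second time. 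So the strategy is: (1) away from $r=0$ the solution is smooth by the interior parabolic regularity already established (Proposition \ref{parabolicreg} together with standard bootstrapping), with estimates that may blow up as $r \to 0$; (2) near $r=0$ one works with $u$ via the corotational ansatz \eqref{corotationalansatz} rather than with $h$ directly, because $u$ solves a genuine parabolic system on $\mathbb{R}^d$ with no coordinate singularity.

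First I would pass from $h$ to $u = \bigl(\cos h, \sin h \tfrac{x}{|x|}\bigr)$. The assumption $0 \le h \lesssim r^\beta$ gives $|u(t,x) - (1,0,\dots,0)| \lesssim r^\beta + h \lesssim r^\beta$, and $|\partial_r h| \lesssim 1/r$ together with $|h| \lesssim r^\beta$ gives $|\nabla u| \lesssim |\partial_r h| + \frac{|h|}{r} \lesssim \frac{1}{r} + r^{\beta - 1}$. In particular $u$ is continuous at $x=0$ with $u(t,0) = (1,0,\dots,0)$ for all $t$, and $u$ is a bounded weak solution of $\partial_t u - \Delta u = |\nabla u|^2 u$ on $[0,T_*) \times \mathbb{R}^d$. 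Now write the Duhamel formula
$$
u(t) = e^{(t - T_*/4)\Delta} u(T_*/4) - \int_{T_*/4}^t e^{(t-s)\Delta}\bigl(|\nabla u|^2 u\bigr)(s)\,ds.
$$
The first term is smooth and Hölder (indeed $C^\infty$) uniformly on $(T_*/2, T_*)$ since it is the heat evolution of a bounded function. For the Duhamel term, the task is to show $\bigl\| \int_{T_*/4}^t e^{(t-s)\Delta} N(s)\,ds \bigr\|_{C^\delta_x}$ is bounded uniformly, where $N = |\nabla u|^2 u$. Using the equation in equivariant form, $|\nabla u|^2 u$ has its worst behaviour concentrated near $r = 0$, and there $|N| \lesssim |\nabla u|^2 \lesssim r^{-2} + r^{2\beta - 2}$; away from a neighbourhood of $r = 0$ it is bounded by the interior smoothness from step (1). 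The key estimate is that convolution of $r^{-2}$ (localized near the origin) against the heat kernel and its gradient is controlled: $\int_0^t |\nabla_x G_{t-s}| * \bigl(\mathbf{1}_{|y|<1}|y|^{-2}\bigr)\,ds \lesssim \min(1, |x|^{-1}\langle \log \rangle)$ type bounds, exactly the computation carried out in the proof of Proposition \ref{parabolicreg} and Proposition \ref{parabolicreg3}. Combining these with a Schauder-type estimate (the heat semigroup gains, in an appropriate sense, when integrated in time against a source that is bounded in a subcritical $L^q$ space, $q < d/2$ here since $|\nabla u|^2 \in L^\infty_t L^q_{loc}$ for $q < d/2$ when $|\nabla u| \lesssim 1/r$ and, say, $d \ge 3$), one obtains $u \in C^\delta_x$ uniformly on $(T_*/2, T_*)$ for some small $\delta > 0$, in fact $\delta$ depending only on $d$ and $\beta$. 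Transferring back, $h = \arccos(u^1)$ (valid since $u^1 \ge \cos(\|h\|_\infty) > 0$ near the origin, and smoothness away from the origin handles the rest) inherits the same Hölder bound, which is \eqref{holder}.

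For the extension statement, once $\sup_{T_*/2 < t < T_*}\|h(t)\|_{C^\delta} < \infty$ is known, pick $t_0$ close to $T_*$; then $h(t_0)$ is Hölder and bounded, and $u(t_0) \in C^\delta(\mathbb{R}^d; \mathbb{S}^d)$. Such data lie in the range of the local well-posedness theory for the harmonic map heat flow (e.g.\ via \cite{KL} or \cite{Wang}, noting $C^\delta \hookrightarrow L^\infty$ with small oscillation at small scales near any point, which is what those results require, together with the corotational structure that keeps the solution equivariant); this produces a solution on $[t_0, t_0 + \tau]$ for some $\tau > 0$ independent of $t_0$ (by uniformity of the $C^\delta$ bound), which agrees with $h$ by uniqueness of bounded solutions. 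Choosing $t_0 > T_* - \tau$ extends $h$ strictly beyond $T_*$, as claimed. I expect the main obstacle to be the Schauder/parabolic-estimate step: one must carry out the kernel estimates uniformly up to $r = 0$ and in time up to $T_*$, juggling the two competing bounds $|\nabla u| \lesssim 1/r$ (which is only barely subcritical, hence gives the logarithmic losses) and $|h| \lesssim r^\beta$ (which provides the decay that tames the nonlinearity near the origin); making these interact correctly to land a genuine positive Hölder exponent, rather than merely continuity, is the delicate part, though the required kernel computations are essentially those already appearing in Propositions \ref{parabolicreg} and \ref{parabolicreg3}.
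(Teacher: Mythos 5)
Your proposal goes through the PDE (Duhamel formula, heat-kernel convolution estimates, parabolic Schauder theory) to establish the H\"older bound, and you rightly flag that this is delicate: $|\nabla u|\lesssim 1/r$ is only borderline, logarithmic losses appear, and landing a genuine positive exponent requires care. But all of that machinery is unnecessary, and the paper's proof dispenses with it entirely. The key observation you missed is that the H\"older estimate~\eqref{holder} is a \emph{purely pointwise interpolation} between the two assumed bounds $0\le h\lesssim r^\beta$ and $|\partial_r h|\lesssim 1/r$; the equation plays no role at all in that step. Concretely, for $0<r_1<r_2<1$ one has $|h(r_2)-h(r_1)|\lesssim\min\{r_2^\beta,\ |r_2-r_1|/r_1\}$, the first bound from $0\le h\lesssim r^\beta$ and the second by integrating $|\partial_r h|\lesssim 1/r$. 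Fix $\delta\le\beta$ to be chosen. If $(r_2/2)^\beta\le|r_2-r_1|^\delta$ the first bound already gives $|r_2-r_1|^\delta$. Otherwise $r_2/2>|r_2-r_1|^{\delta/\beta}\ge|r_2-r_1|$, which forces $r_1\gtrsim r_2$, and then the second bound gives $|h(r_2)-h(r_1)|\lesssim|r_2-r_1|/r_2\lesssim|r_2-r_1|^{1-\delta/\beta}$. Taking $\delta=\beta/(\beta+1)$ balances the two exponents and yields~\eqref{holder}. Away from the origin the H\"older bound is standard interior parabolic regularity. The extension beyond $T_*$ is then, as you say, an application of local well-posedness for bounded uniformly continuous data (e.g.\ \cite[Theorem 6.1]{KL}).

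So while your approach would likely be made to work with enough effort, it is an indirect route to a fact that the hypotheses hand you for free; the delicate part you correctly identified (turning borderline kernel estimates into a genuine H\"older exponent rather than mere continuity) is exactly the difficulty that the elementary interpolation avoids. The one thing your sketch buys that the paper's argument does not is a H\"older bound for $\nabla u$ and not just for $u$, which is not needed here.
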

\begin{proof}
By the parabolic regularity we have the H\"{o}lder continuity away from the origin, 
and it is enough to show
$$
|h(t,r_2)-h(t,r_1)|\le C|r_1-r_2|^{\delta}
\qquad \textrm{for}\ t \in (T_*/2,T_*), \ 0<r_1<r_2<1.
$$
with some $\delta>0$.
By the assumption, we have
$$
|h(r_2)-h(r_1)| \lec \inf \left\{r_2^\beta,\frac{|r_2-r_1|}{r_1}\right\},
$$
where we abbreviate $t$ variable for simplicity.
Let $\delta>0$ be a positive constant to be chosen later.
Since \eqref{holder} clearly holds if $(\frac{r_2}{2})^{\beta}\le|r_2-r_1|^{\delta}$,
we consider the case $(\frac{r_2}{2})^{\beta}>|r_2-r_1|^{\delta}$.
If we take $0<\delta \le \beta$, 
$$
\frac{r_2}{2}>|r_2-r_1|^{\delta/\beta}\ge|r_2-r_1|,
$$
which implies $r_1 \gec r_2$.
Hence we have
\begin{align*}
|h(r_2)-h(r_1)| 
&\lec \frac{|r_2-r_1|}{r_1}
\\
&\lec  \frac{|r_2-r_1|}{r_2}
\\
&\lec \frac{|r_2-r_1|}{|r_2-r_1|^{\delta/\beta}}
\\
&=|r_2-r_1|^{1-\delta/\beta}.
\end{align*}
Thus choosing $\delta=\frac{\beta}{\beta+1}(\le\beta)$ so that $\delta= 1-\delta/\beta$,
we have the desired estimate \eqref{holder}.
The last assertion follows from the local solvability 
\cite[Theorem 6.1]{KL} for bounded uniformly continuous initial data.
\end{proof}

\noindent
{\bf Acknowledgements:} The authors are grateful to F.H. Lin for very helpful conversations and references.

\small

\end{document}